\documentclass{amsart}

\usepackage[utf8]{inputenc}
\usepackage{babel}
\usepackage[nomain,symbols]{glossaries}
\usepackage[author-year]{amsrefs}

\usepackage{graphicx,amssymb,amscd,amsxtra,amsmath,amsfonts,amsthm,footnpag,bm,fge}
\usepackage{url}
\usepackage{xcolor} 
\usepackage{dirtytalk}
 
\usepackage[labelfont=bf]{caption}
\DeclareCaptionLabelSeparator{period}{.  }
\renewcommand{\thefootnote}%
{\fnsymbol{footnote}}

\theoremstyle{definition}

\newtheorem*{definition*}{Definition}
\newtheorem*{notation*}{Notation}

\swapnumbers
\theoremstyle{plain}
\newtheorem{Thm}{Theorem}[section]
\swapnumbers
\newtheorem{theorem}[Thm]{Theorem}

\newtheorem{lemma}[Thm]{Lemma}

\newtheorem{corollary}[Thm]{Corollary}

\newtheorem{proposition}[Thm]{Proposition}

\newtheorem*{Thm*}{Theorem}
\newtheorem*{theorem*}{Theorem}
\newtheorem*{Lemm*}{Lemma}
\newtheorem*{lemma*}{Lemma}
\newtheorem*{Propn*}{Proposition}
\newtheorem*{proposition*}{Proposition}
\newtheorem*{corollary*}{Corollary}
\newtheorem*{main*}{Main Theorem}
\newtheorem*{IMT*}{Invariant Manifold Theorem}

\newtheorem*{HGT*}{Hartman-Grobman Theorem}

\theoremstyle{remark}
\newtheorem*{question}{\bf Question}

\newtheorem*{remark*}{\bf Remark}

\newtheorem*{Ups}{\bf Upshot}

\newtheorem*{Example*}{\bf Example}
\newtheorem*{example*}{\bf Example}

\newtheoremstyle{vThm}%
{}{}%
{\itshape}%
{0pt}{\bfseries}%
{}{ }%
{}%
\theoremstyle{vThm}

\newtheoremstyle{vThm*}%
{}{}%
{\itshape}%
{-3pt}{\bfseries}%
{}{ }%
{\thmnote{#3}}%
\theoremstyle{vThm*}
\newtheorem*{nThm*}{}

\newcommand{\abs}[1]{\lvert#1\rvert}

\DeclareMathSymbol
{\rightrightarrows}
{\mathrel}{AMSa}{"13}

\DeclareSymbolFont{Csc}{U}{zcsc}{m}{n}
\DeclareMathSymbol{\bigsquareunion}{\mathop}{Csc}{"7C}
\DeclareMathSymbol{\bigintersection}{\mathop}{Csc}{"3C}
\DeclareMathSymbol{\bigunion}{\mathop}{Csc}{"3E}
\DeclareMathOperator{\Cos}{\mathrm{Cos}}
\DeclareMathOperator{\Sin}{\mathrm{Sin}}

\newcommand{\defun}[5]{\ensuremath{\begin{array}{lrcl}
#1:&#2 & \longrightarrow & #3\\&#4 & \longmapsto & #5\end{array}}}

\newcommand{\cell}[1]{\text{\em \small{#1}}}
\newcommand{\binomial}[2]{\ensuremath{\left( \begin{matrix}#1 \\ #2 \end{matrix} \right)}}

\newcommand{\sing}{\mathrm{sing}}
\newcommand{\bundle}[4]{\ensuremath{#1 \rightarrow #2 \stackrel{#3}{\rightarrow}{#4}}}

\newcommand{\high}{\mathrm{high}}
\newcommand{\low}{\mathrm{low}}
\renewcommand{\mid}{\mathrm{mid}}

\newcommand{\dd}{\,\mathrm{d}}

\makenoidxglossaries
\newglossaryentry{V}{
	sort=a010,
	name={\ensuremath{V(\Theta)}},
	description={Kuramoto potential}}
\newglossaryentry{K}{
	sort=a020,
	name={\ensuremath{K(\Theta)}},
	description={Kuramoto vector field $K(\Theta)=-\nabla V(\Theta)$}}
\newglossaryentry{phi}{
	sort=a030,
	name={\ensuremath{\varphi_t(\Theta)}},
	description={Time $t$ flow for Kuramoto vector}}
\newglossaryentry{DTheta}{
	sort=a040,
	name={\ensuremath{D_{\Theta}}},
	description={Diagonal passing through $\Theta \in \mathbb T^m$}}
\newglossaryentry{D}{
	sort=a050,
	name={\ensuremath{D}},
	description={Main diagonal $D=D_{\mathbf 0}$}}
\newglossaryentry{M}{
	sort=a060,
	name={\ensuremath{M(\Theta)}},
	description={Perfect Morse potential $M(\Theta)=\sum_{k=1}^m 1-\cos(\theta_k)$}}
\newglossaryentry{Q}{
	sort=a070,
	name={\ensuremath{\mathcal Q}},
	description={Quotient space of $\mathbb T^m$ by the diagonal action}} 
\newglossaryentry{classTheta}{
	sort=a080,
	name={\ensuremath{[\Theta]}},
	description={Coset $[\Theta] = \Theta+D$ in $\mathcal Q$}}
\newglossaryentry{psi}{
	sort=a090,
	name={\ensuremath{\psi_t([\Theta])}},
	description={Quotient Kuramoto flow}}
\newglossaryentry{VQ}{
	sort=a100,
    name={\ensuremath{V_{\mathcal Q}}},
    description={Kuramoto potential in quotient space $\mathcal Q$} }
\newglossaryentry{Z}{
	sort=a110,
	name={\ensuremath{Z}},
	description={The centroid in $\mathbb C$}}
\newglossaryentry{H}{
	sort=a120,
	name={\ensuremath{H}},
	description={The Hessian of $-V(\Theta)$, that is $DK(\Theta)$}}
\newglossaryentry{TI}{
	sort=a130,
	name={\ensuremath{\mathbb T^I}},
	description={Subtorus of $\mathbb T^m$ with coordinates in $I \subset [m]$}}
\newglossaryentry{QI}{
	sort=a140,
	name={\ensuremath{\mathcal Q^I}},
	description={Projection of $\mathbb T^I$ under diagonal action}}
\newglossaryentry{Kskeleton}{
	sort=a150,
	name={\ensuremath{\mathcal K}},
	description={The lower half of $\mathcal Q$, a CW complex}}
\newglossaryentry{pI}{
	sort=a160,
	name={\ensuremath{p_I}},
	description={Saddle $[\Theta] \in \mathcal Q$ with $\theta_i=0$ for all $i \not \in I$, $\theta_i=\pi$ otherwise.}}
\newglossaryentry{Vmax}{
	sort=a170,
	name={\ensuremath{\mathcal V^{\max}}},
	description={The set of points with $V_{\mathcal Q}$ maximal in $\mathcal Q$}}
\newglossaryentry{Vsing}{
	sort=a180,
	name={\ensuremath{\mathcal V^{\sing}}},
	description={The locus of non-smooth points of $\mathcal V^{\max}$}}
\newglossaryentry{Vhigh}{
	sort=a190,
	name={\ensuremath{\mathcal V^{\high}}},
	description={High potential region as a subset of $\mathcal Q$}}
\newglossaryentry{Vmid}{
	sort=a200,
	name={\ensuremath{\mathcal V^{\mid}}},
	description={Mid potential region as a subset of $\mathcal Q$}}
\newglossaryentry{Vlow}{
	sort=a210,
	name={\ensuremath{\mathcal V^{\low}}},
	description={Low potential region as a subset of $\mathcal Q$}}

\setglossarystyle{long3col}
    {\begin{longtable}{lp{\glsdescwidth}r}}%
    {\end{longtable}}%

\title{Global analysis of the Kuramoto flow}
\thanks{This work is supported by the Air Force Office of Scientific Research under award No. FA9550-22-1-0215  and a gift from Smale Institute (IR) 
}
\subjclass{34D06 (Primary), 37D05, 34D30 (Secondary)}
\keywords{Sinchronization, Kuramoto Differential Equation; Global Analysis}
\author[Burns]{Daniel Burns}
\address{Daniel Burns: Department of Mathematics, University of Michigan, Ann Arbor}
\email{dburns@umich.edu}
\author[Malajovich]{Gregorio Malajovich}
\address{Gregorio Malajovich: Departamento de Matem\'atica Aplicada, Instituto de Matem\'atica, Universidade Federal do Rio de Janeiro}
\email{gregorio@im.ufrj.br}
\urladdr{https://dma.im.ufrj.br/gregorio/}
\author[Pugh]{Charles Pugh}
\address{Charles Pugh: Department of Mathematics, University of California at Berkeley}
\email{pugh@math.berkeley.edu}
\author[Rajapakse]{Indika Rajapakse}
\address{Indika Rajapakse: 
Department of Computational Medicine \& Bioinformatics, Department of Mathematics and Department of Biomedical Engineering,
University of Michigan, Ann Arbor}
\email{indikar@umich.edu}
\urladdr{https://rajapakse.lab.medicine.umich.edu/}
\author[Smale]{Steve Smale}
\address{Steve Smale: Department of Mathematics, University of California at Berkeley}
\email{smale@berkeley.edu}
\date{\today}

\begin{document}  
\maketitle

\begin{abstract}
Kuramoto's differential equation describes a synchronization process between several harmonic oscillators. It has been used to model biological phenomena such as the synchronization of heart cells, the circadian rhythm, or brain waves. It is also used in power system control.

The simplest possible model assumes that all oscillators are identical and connected to each other with equal pairwise attraction. In this paper, we give a full geometric description of its global dynamics in terms of Morse theory and dynamical systems. Most of this description is stable in the sense that it is topologically preserved under small perturbations of the parameters.
\end{abstract}

\tableofcontents

\renewcommand{\glsglossarymark}[1]{}
\printnoidxglossary[type={symbols}, sort=def, title={List of Notations}]

\section{Introduction}
\label{s.intro}

The $m$-torus is  $\mathbb{T}^m$  where $\mathbb{T} = S^1$ is the unit circle.   It is an Abelian group with respect to componentwise addition modulo $2\pi $. We  write $\Theta = (\theta _1, \dots , \theta _m)$ for an element of $\mathbb{T}^m$.  
The standard \textbf{Kuramoto potential} $V: \mathbb{T}^m \rightarrow \mathbb{R}$ is 
$$
\gls{V} \;=\; \frac{1}{2}\sum_{\ell, k = 1}^m (1 -  \cos(\theta _\ell - \theta _k)) \;=\; \sum_{\ell,k = 1}^m  \sin^2((\theta _\ell - \theta _k)/2).
$$
 The \textbf{Kuramoto vector field} $K$ on $\mathbb{T}^m$ is the negative gradient of $V$.  
 $$
 \gls{K}\;=\; -\Big(\frac{\partial V}{\partial \theta _1}, \dots , \frac{\partial V}{\partial \theta _m}\Big)^T \;=\; \Big(\sum_k \sin(\theta _k - \theta _1), \dots , \sum_k \sin(\theta _k - \theta _m)\Big)^T.
 $$
$K$ generates the \textbf{Kuramoto flow} $\gls{phi}$.  Altogether, $V, K$, and $\varphi $ constitute the \textbf{Kuramoto dynamics}.

 \medskip
\noindent\textbf{Main classical references:} The Kuramoto model was introduced by \ocite{Kuramoto75}. See also
\ocite{Kuramoto},
\ocite{Strogatz},
\ocite{IzhikevichKuramoto}, 
\ocite{DorflerBullo},
\ocite{Kuramoto2026}
and references therein.
\medskip

\noindent\textbf{The Goal of this Paper:}
\emph{A geometrical description of   the   Kuramoto dynamics in the spirit of Morse theory and global dynamical systems.}

 \medskip
 
The first thing to notice is that $\varphi $ is actually a  flow of circles in $\mathbb{T}^m =\mathbb{R}^m \mod 2 \pi \mathbb Z^m$, not just a flow of points.  The torus $\mathbb{T}^m $ is foliated by \textbf{diagonals}
$$
\gls{DTheta} \;=\;  \{\Theta + \alpha\boldsymbol{1} : \alpha \in \mathbb R \mod 2 \pi \mathbb Z\}
$$
where  $\Theta  \in \mathbb{T}^m$, $\boldsymbol{1}$ is the $m$-tuple of ones, $(1,\dots , 1)$, and addition is performed in $\mathbb{T}^m$.  \emph{Note that each diagonal is a circle} in $\mathbb{T}^m$. Diagonals are either equal or disjoint.  All are parallel to the \textbf{main diagonal} $\gls{D} = D_{\boldsymbol{0}}$ where $\boldsymbol 0 = (0, \dots , 0)$.  


Since $\left< \boldsymbol{1}, K\right> = \sum_{k,\ell} \sin(\theta _k - \theta _{\ell}) = 0$, $\boldsymbol{1} \perp K$ .  Clearly, $K$ is constant along each diagonal, so $\varphi $ moves one diagonal to another and the motion is  normal to the diagonals. Thus, the set of fixed points of $\varphi $ is the union of the $\varphi $-invariant diagonals. That is, $\varphi $ fixes one point of $D_{\Theta }$ if and only if it fixes all of them.   
Figure~\ref{f.T2} shows the effect of $\varphi $ on the diagonals  when $m =2$.

\begin{figure}[ht]

\centering
\includegraphics[scale=.55]{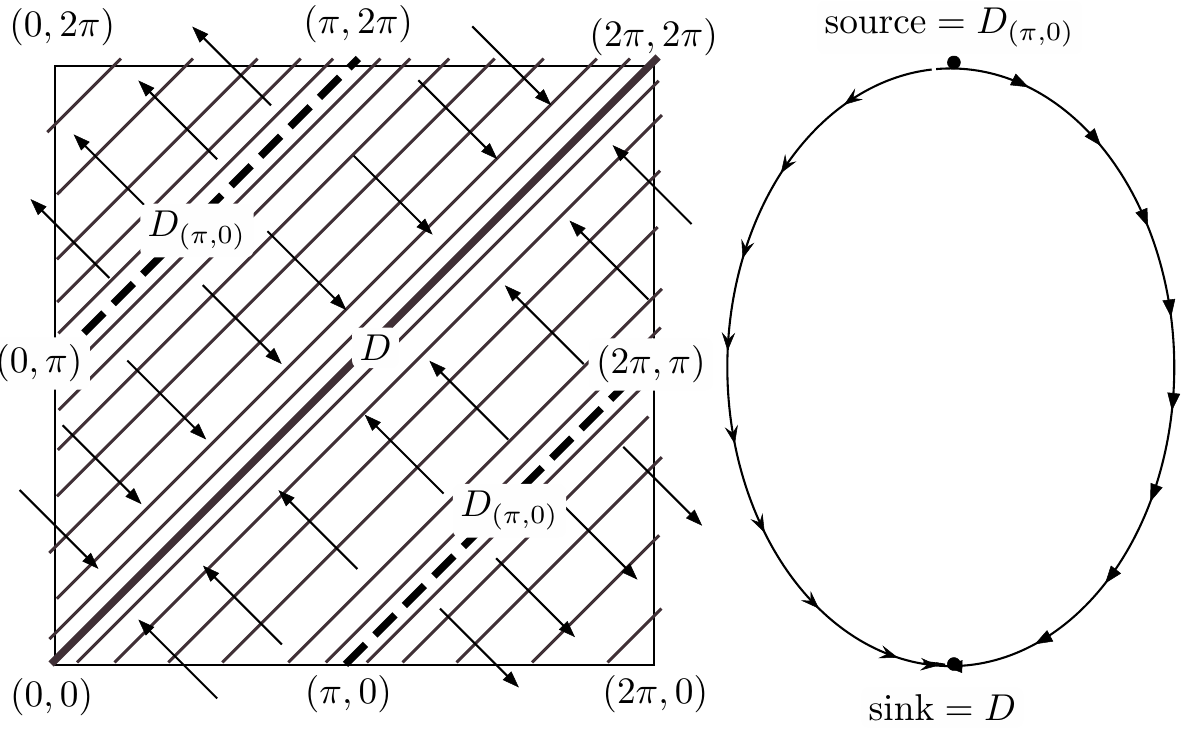}
\caption{When $m=2$, $\varphi $ fixes the diagonals $D_{(\pi ,0 )}$ and $D$.  Under $\varphi $, $D_{(\pi,0) }$  is a repeller and $D$ is an attractor.  This source/sink phenomenon becomes clearer when we represent diagonals as points on $S^1$, as in the righthand figure.}
\label{f.T2}
\end{figure}

We will use some standard hyperbolic concepts to go beyond $m=2$.  If an $m\times m$  matrix has $u$ eigenvalues with positive real part then its  \textbf{unstable index} is $u$.  If it has $s$ eigenvalues with negative real part, its \textbf{stable index} is $s$, and if $u + s =m$ then it is \textbf{hyperbolic}.  \emph{When we speak of \say{the} index, we refer to the unstable index.}

In the Kuramoto case, the derivative matrix $DK$ is constant along diagonals, so it makes sense to say a diagonal  has  index $u$ if its points do.  The matrix $DK$ is never hyperbolic since it always has the eigenvalue $0$ corresponding to the diagonal eigenvector $\bf{1}$.

\begin{definition*}
	A $\varphi $-invariant diagonal  is a \textbf{saddle diagonal} (resp. \textbf{sink diagonal}) if  $u + s = m-1$ and $s,u > 0$ (resp. $s=m-1$ and $u=0$). 
\end{definition*}

\begin{definition*}
The {\bf diagonal action} on $\mathbb T^m$ is the group of maps $T_{\alpha}: \mathbb T^m \rightarrow \mathbb T^m$ that send $\Theta$  to  $\Theta + \alpha \boldsymbol{1}$  where $\alpha \in \mathbb R  \mod  2 \pi\mathbb Z$.
\end{definition*}

From ODE theory, points $\Theta $ of a saddle diagonal have unstable and stable manifolds of dimensions $u$ and $s$.  They are denoted $W^u(\Theta )$ and $W^s(\Theta )$.  
Since $\varphi$ is the gradient flow of $V$, its fixed points (its {\bf equilibria}) are critical points of  $V$, and $V(\varphi_t(p))$ as a function of  $t$  is strictly decreasing for each non-critical point   $p$.

\begin{main*}  The Kuramoto flow $\varphi$ has the following properties:
\begin{itemize}
\item[(a)] Each trajectory $\varphi_t(p)$  has a unique $\alpha$-limit point  $\alpha(p)$ and a unique $\omega$-limit point  $\omega(p)$.
\item[(b)]  The minimum of $V$  is zero and occurs at the main diagonal  $D$.  The maximum of  $V$  occurs at a compact set  $V^{\max}$ and equals $m^2/2$.  The other critical values arise from saddle diagonals.   
There are precisely $\binom{m}{u}$ saddle diagonals of index $u$, $1 \le u < m/2$. Index $u$ saddles have $V$-value  $2u(m-u) < m/2$.

\item[(c)] $D$  is a sink in the sense that  $\omega(p) \in  D$  for all $p$  near  $D$, while  $V^{\max}$ is a source in the sense that $\alpha(p) \in V^{\max}$  for all $p$  near  $V^{\max}$.  The set of points $p$ with  $\alpha(p) \in V^{\max}$ and  $\omega(p)  \in  D$  is open-dense in $\mathbb T^m$.  That is, most Kumamoto  trajectories run directly from  source to sink.

\item[(d)]  $V^{\max}$ has an interesting topology.  It’s a product  $S^1 \times \mathcal V^{\max}$ where $\mathcal V^{\max}$ is a codimension 2 submanifold of $\mathbb T^{m-1}$,  which is smooth when $m$  is odd, and has only finitely many singularities when  $m$  is even.  See Sections \ref{s.high}--\ref{sec-imprints}.

\item[(e)]   Modulo diagonal action the saddle diagonals and their unstable manifolds form a CW complex of\, {\bf templates} on each of which  $\varphi$  is topologically equivalent to a Perfect Morse Flow, as described below.   
\end{itemize}
\end{main*}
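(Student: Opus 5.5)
The whole plan rests on the centroid identity. Put $Z=\sum_k e^{i\theta_k}$. Then $\sum_{\ell,k}\cos(\theta_\ell-\theta_k)=|Z|^2$, so
\[
V(\Theta)=\tfrac12\bigl(m^2-|Z|^2\bigr),\qquad K_j(\Theta)=\operatorname{Im}\bigl(\bar{e}^{\,i\theta_j}Z\bigr).
\]
I would derive every part of the Main Theorem from this, in the order (b), (a), (c), (d), (e).

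For (b), the formula gives $V\ge 0$ with equality iff all $\theta_k$ coincide, i.e.\ on $D$. It also gives $V\le m^2/2$ with equality iff $Z=0$, which defines $V^{\max}$.

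At a critical point with $Z\neq0$, each $e^{i\theta_j}$ must be real-parallel to $Z$. Hence $\theta_j\in\{\phi,\phi+\pi\}$, with say $u$ coordinates at $\phi+\pi$. Then $|Z|=m-2u$, and $Z\ne0$ forces $u\neq m/2$. Replacing $\phi$ by $\phi+\pi$ lets us take $u<m/2$, so mod diagonal action these points are exactly the $p_I$ with $|I|=u$, and there are $\binom mu$ of them. Their value is $V=\tfrac12\bigl(m^2-(m-2u)^2\bigr)=2u(m-u)$. (I would also check here the stated bound on the saddle values, which should be compared with $m^2/2$; the computation $2u(m-u)<m^2/2$ for $u<m/2$ is what the later steps use.)

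The index comes from a direct Hessian computation at $p_I$. With signs $\epsilon_j=\pm1$, we have $DK=\bigl(\epsilon_j\epsilon_k\bigr)-(m-2u)\,\mathrm{diag}(\epsilon_j)$, up to the diagonal correction. Conjugating by $\mathrm{diag}(\epsilon)$ reduces this to a rank-one perturbation of a diagonal matrix. The $+$ block gives eigenvalue $-(m-2u)$ with multiplicity $m-u-1$, the $-$ block gives $+(m-2u)$ with multiplicity $u-1$, and the two-dimensional remaining block gives $0$ (along $\mathbf 1$) and one more positive eigenvalue. So the diagonal is hyperbolic with unstable index exactly $u$.

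For (a), $V$ is real-analytic, so Łojasiewicz's gradient inequality gives finite length of each bounded trajectory, hence unique $\alpha$- and $\omega$-limit points.

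For (c), the critical values form a finite list. The minimum is $0$, the maximum is $m^2/2$, and in between lie the values $2u(m-u)$, all of which are $\ge 2(m-1)$ and $<m^2/2$. If $V(p)<2(m-1)$, then $\omega(p)$ is a critical point of value $\le V(p)$, so it lies in $D$. If $V(p)>\max_u 2u(m-u)$, then $\alpha(p)\in V^{\max}$. This gives the sink and source statements without needing normal hyperbolicity of $V^{\max}$ at its singular points.

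The complement of the source-to-sink set is $\bigcup W^s(p_I)\cup\bigcup W^u(p_I)$ over saddle diagonals. Since each saddle diagonal is a normally hyperbolic circle, I would apply the stable manifold theorem for normally hyperbolic invariant manifolds. This makes these sets finitely many injectively immersed submanifolds of dimension $s+1$ or $u+1$, both at most $m-1$. Because the flow is gradient-like, their union is closed, so it is closed and nowhere dense, which gives openness and density.

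For (d), I would pass to the quotient $\mathcal Q\cong\mathbb T^{m-1}$ by fixing $\theta_m=0$. Then $V^{\max}=S^1\times\mathcal V^{\max}$ with $\mathcal V^{\max}=\{Z=0\}$. The map $Z:\mathbb T^{m-1}\to\mathbb C$ fails to be a submersion exactly where all $e^{i\theta_j}$ are real-collinear. Together with $Z=0$, this forces $m$ even and an equal split between $\phi$ and $\phi+\pi$, which gives finitely many points mod the diagonal. Away from these points, $0$ is a regular value and $\mathcal V^{\max}$ is a smooth codimension-two submanifold.

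Part (e) is the main obstacle. My first attempt would use the flow-invariant subtori on which prescribed groups of coordinates coincide; these are invariant because $K$ is equivariant under permutations. Inside the relevant subtorus, I would identify $\overline{W^u(p_I)}$ in $\mathcal Q$ as a closed $u$-cell whose boundary consists of $\overline{W^u(p_J)}$ for $J\subsetneq I$, each of these lying in lower potential. The hoped-for outcome is that the closure is the image of $\mathcal Q^I$ cut to the lower-potential part.

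On each such template I would then build a topological equivalence with the gradient flow of $M(\Theta)=\sum(1-\cos\theta_k)$ on $\mathbb T^I$. The first tool to try is to match critical points and their unstable manifolds inductively on $|I|$, using a Lyapunov-function and fundamental-domain argument between consecutive level sets. The delicate points are to verify transversality of stable and unstable manifolds (the Morse–Smale property) within each template, and to check that the cell boundaries attach correctly.
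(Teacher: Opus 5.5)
Your arguments for (a)--(d) go through, but (e) is not proved. You list tools and name the ``delicate points'' (Morse--Smale transversality, attaching of cells) without resolving them, and resolving them is the whole content of (e).

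Your description of the templates is also slightly off. The right statement is $W^u(p_I)=\mathring{\mathcal Q}^I$ and $\overline{W^u(p_I)}=\mathcal Q^I$, the \emph{entire} subtorus, with nothing ``cut to the lower-potential part''. With your unnormalized $Z$ one has $|Z|\ge m-2|I|>0$ on $\mathcal Q^I$. So $\mathcal Q^I$ misses $\mathcal V^{\max}$, $p_I$ is the maximum of $V$ there, and the critical points lying on $\mathcal Q^I$ are exactly the $p_J$ with $J\subseteq I$. The open cells $\mathring{\mathcal Q}^J$ are invariant. For $|I|,|J|<m/2$ they are pairwise disjoint: some index lies outside $I\cup J$, so $\mathcal Q^I\cap\mathcal Q^J=\mathcal Q^{I\cap J}$. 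Induction on $|I|$ then gives the identification.

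The more serious missing idea is how to obtain the topological equivalence. Matching fundamental domains by hand would require two things. First you must prove the template flow is Morse--Smale. Then you need a Palis-type construction that also matches stable manifolds: in cube-face coordinates the unstable cells of the two flows coincide, but their stable manifolds do not.

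The paper sidesteps this. On $\mathcal Q^I\cong\mathbb T^d$ in cube-face coordinates it takes the linear homotopy $F_s=(1-s)F_0+sF_1$ from the Perfect Morse field to the Kuramoto field. It then checks three things for every $s\in[0,1]$:
the zeros are the same points $p_J$;
the coordinate subtori $\mathcal Q^J$ stay invariant;
and, for $s>0$, $\tfrac12\sum_{k,l\le d}(1-\cos(\theta_k-\theta_l))+\bigl(m-d+\tfrac{1-s}{s}\bigr)M$ is a strict Lyapunov function.

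Hence $\overline{W^u(p_J)}=\mathcal Q^J$ for all $s$, and transversality comes for free. Near $p_J$ one has $TW^u(p_K)=T\mathcal Q^K\supseteq T\mathcal Q^J=E^u(p_J)$, which is complementary to $E^s(p_J)\approx TW^s(p_J)$, and transversality propagates along orbits. Every $F_s$ is therefore Morse--Smale, hence structurally stable. Compactness of $[0,1]$ then chains finitely many local equivalences from $F_0$ to $F_1$.

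Two side remarks. Your \L{}ojasiewicz route to (a) differs from the paper, which appeals to isolated saddles plus normal hyperbolicity of $\mathcal V^{\max}\setminus\mathcal V^{\sing}$. Yours is more robust, since smoothness of a gradient flow alone does not force point limits. In (c), the complement of the source-to-sink set also contains $D$ and $V^{\max}$ themselves. This is harmless, since both are closed and nowhere dense.
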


The \textbf{Perfect Morse potential} $\gls{M} : \mathbb{T}^u \rightarrow  \mathbb{R}$ over-simplifies the Kuramoto potential.  It is defined as 
$$
M(\Theta ) \;=\;  \sum_{k=1}^u 1 - \cos(\theta _k).
$$
Its gradient is $(\sin\theta _1, \dots , \sin \theta _u)^T$, and  the gradient flow of $-M$ is the \textbf{Perfect Morse flow}.  It is the Cartesian product of $u$ source/sink flows on the circle,  for the solutions of $\theta ^{\prime} = -\sin \theta $ are $2\arctan(e^{-t + c})$ where $c$ is  a constant.

We can summarize the Main Theorem as follows.  $\mathbb{T}^m$ is divided into three regions or regimes:

\begin{itemize}

\item[(a)]
The {\bf low potential regime} where $0 \leq  V \leq L$ and $L$ is the second largest critical value of $V$. Specifically, $L=2u(m-u)$ when $u=\left \lceil \frac{m-1}{2} \right \rceil$.
\item[(b)]
  The {\bf mid potential regime} where $L < V < m^2/2 - \epsilon $,
\item[(c)]
The {\bf high potential regime} where $m^2/2 - \epsilon \leq V \leq m^2/2$.
\end{itemize}

We describe the Kuramoto dynamics fully in the low potential region; the dynamics are trivial in the mid potential region; and  the high potential region retracts to $V^{\textrm{max}}$ under the reverse $\varphi $-flow.  See Figure~\ref{f.FlowDiagram}.

\begin{figure}[ht]
\centering
\includegraphics[scale=.65]{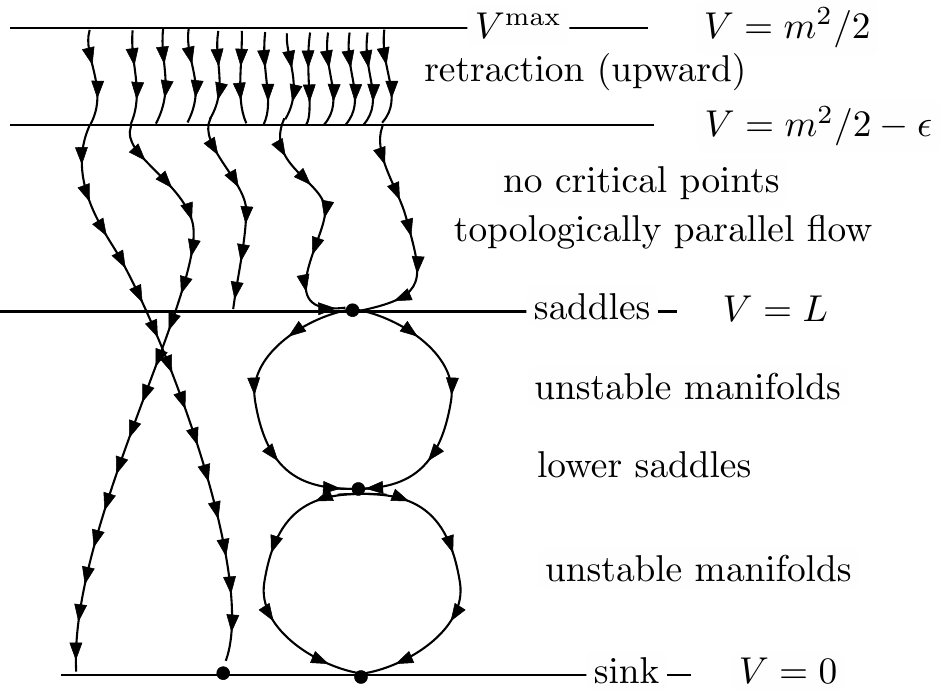}
\caption{The three potential regimes of the Kuramoto flow.}
\label{f.FlowDiagram}
\end{figure}

Finally, Figure~\ref{f.4handled} gives an example that indicates the complexity possible for $V^{\textrm{max}}$.
It's the product of $S^1$ with a 4-handled torus, and occurs when $m=5$. In higher dimension, its topology is more complicated and involves  the parity of $m$. It is described in Sections \ref{sec-cell}--\ref{sec-homology}.

\begin{figure}[ht]
\centering
\includegraphics[scale=.65]{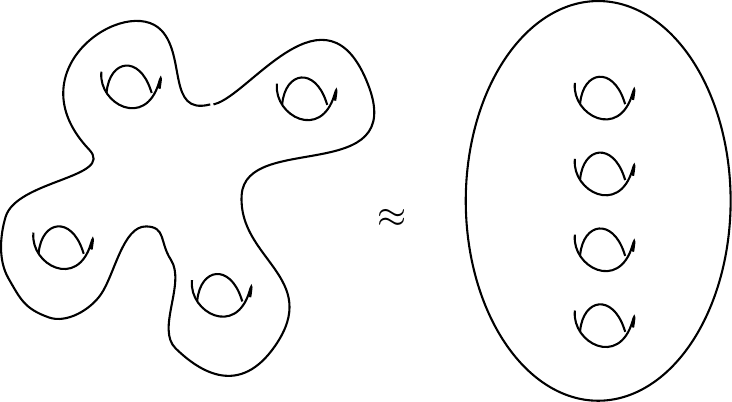}
\caption{When $m=5$, $V^{\textrm{max}}$ is 3-dimensional.  It's the product of the circle and a 4-handled torus.}
\label{f.4handled}
\end{figure}
\medskip

\paragraph{\bf Outline of the paper}
The Kuramoto vector field is invariant through diagonal action, and this suggests considering the Kuramoto vector field in the quotient of $\mathbb T^m$ by the diagonal action. This is explained in Section \ref{sec-quotient}, and is used in Section~\ref{s.picturing} to produce the phase portraits when $m=2$ and $m=3$. We also establish the {\bf equality principle}, to be used in the rest of the paper, that if two of the coordinates, say $\theta_1$ and $\theta_2$, are equal for some finite $t$, then $\theta_1\equiv \theta_2$ for all $t$, that is, on the whole orbit through that point. The equation for $V^{\max}$ in terms of the {\bf centroid} is deduced in Section~\ref{s.centroid}. The points of equilibrium outside $V^{\max}$ are characterized in Section \ref{s.antipodality}, and may be represented by an {\bf exemplar} (Section \ref{s.exemplars}). The Hessian at those exemplars is computed in Section~\ref{s.eigenstructure}. 

In Section~\ref{s.invariantsubtori} we investigate an interesting class of invariant subtori in quotient space, they are obtained as the projection of the principal subtori of $\mathbb T^m$. They are called {\bf templates}, each of them is the closure of the unstable manifold of a saddle point. In Section~\ref{s.templates} we describe the Kuramoto dynamics inside those templates. Then in Section~\ref{s.conjugacy} we show that the Kuramoto vector field, when restricted to a template, is topologically equivalent to the Perfect Morse potential. More examples of invariant subtori appear in Section~\ref{s.skew}. Section \ref{s.proof} is the proof
of the Main Theorem.

To complete the picture of the Kuramoto flow, we need to consider the high potential regime, that is the dynamics
near the source $V^{\max}$. In Section~\ref{s.high} we investigate the normal bundle of the projection $\mathcal V^{\max}$ of $V^{\max}$
to the quotient space. It turns out that for odd values of $m \ge 5$ this normal bundle is trivial, and the source is
a normally hyperbolic compact manifold. When $m$ is even, there is a finite number of singularities. Section~\ref{s.alpha}
is more technical, as we show that the $\alpha$-limit induces a retraction of the high potential region onto $\mathcal V^{\max}$. A cell decomposition of $\mathcal V^{\max}$ is produced in Section \ref{sec-cell}. Singularities of $\mathcal V^{\max}$ when $m$ is odd are described in Section \ref{sec-sing}. The homology of $V^{\max}$ for all values of $m$ is computed in section \ref{sec-homology}. The global description of the Kuramoto flow is completed in Section~\ref{sec-imprints} where we explain how the stable manifolds of the hyperbolic saddles connect to
the source set $\mathcal V^{\max}$, at least in dimension 5. In the conclusion we explain how
our picture is stable under small perturbations. 
This includes a more general model of the Kuramoto flow, 
for certain values of the parameters.
\medskip

\paragraph{\bf Acknowledgements.} The authors wish to thank Frederick Leve for the useful remarks and comments.

\section{The  Kuramoto Quotient}
\label{sec-quotient}

Standard Morse Theory  deals with critical points, not critical circles, so  we ``divide out by the diagonals.'' 
It's most natural to do so  algebraically.
The main diagonal $D$  is a subgroup of $\mathbb{T}^m$.  As a subgroup, it's  $D = \{\tau \boldsymbol{1}: \tau  \in \mathbb R \mod 2 \pi \mathbb Z\}$.
The quotient group $\gls{Q} = \mathbb{T}^m / D$ is diffeomorphic to the $(m-1)$-torus and consists of cosets. 
The coset containing $\Theta  \in \mathbb{T}^m$ is denoted $\gls{classTheta}$ and 
$$
[\Theta ] \;=\; \Theta +D \;=\;  \{\Theta + \tau  \boldsymbol{1} : \tau   \in S^1\}.
$$ 
Since $\varphi $ just permutes  the diagonals, it descends to a  flow $\psi $ on $\mathcal Q$

\begin{equation}\label{commdiag}
\begin{CD}
 \mathbb{T}^m @>\text{\normalsize
$\qquad 
\varphi _t\qquad$}>> 
\mathbb{T}^m
\\
@V\text{\normalsize$
q 
$}VV @VV\text{\normalsize$
q 
$}V
\\
\mathcal Q  @>\text{\normalsize$\qquad 
\psi _t  
\qquad$}>>
\mathcal Q
\end{CD}
\end{equation}
where $q : \Theta \mapsto [\Theta ]$ is the quotient map and $\gls{psi} 
= [\varphi _t(\Theta )]$.  We call $\psi $ the \textbf{quotient Kuramoto flow}.   Just as in  the $m=2$ case, this  captures the essential dynamics of $\varphi $.   

\medskip

\emph{Our overall strategy is to understand $\varphi $ by studying $\psi $.}

\medskip

  Algebraically, the coset $[\Theta ]$ is  an element of $\mathcal Q = \mathbb{T}^m/D$, but it can also be viewed as a subset of $\mathbb{T}^m$.  For example,  the  main diagonal  $D=D_{\boldsymbol 0} \subset \mathbb{T}^m$
is also the zero coset in $\mathcal Q$. The other elements of $\mathcal Q$ are, as subsets of $\mathbb{T}^m$, the other diagonals $D_{\Theta }$. 

\begin{notation*}
We will write $[\Theta ]$ for the element of $\mathcal Q$ and $D_{\Theta }$ for the corresponding diagonal, although too often we fail to distinguish them properly.  We also write $[m]$  to indicate the set $\{1, 2, \dots , m\}$, and $\abs{S}$ for the cardinality of a set $S$.  
\end{notation*}

The commutative diagram \eqref{commdiag} defines the Kuramoto quotient flow $\psi $   on the quotient $\mathcal Q$.  It's purely algebraic.  Here are two similar coordinate ways to picture it.

\begin{itemize}

	\item[(a)] \emph{Cube face coordinates.}\label{cube-face-coords}
$\mathbb{T}^m$ can be pictured as the $m$-cube $[0,2\pi ]^m$ with opposite faces identified.  Fix some $(m-1)$-dimensional face of $\mathbb{T}^m$, say $F_m = \{\Theta \in \mathbb{T}^m: \theta _m = 0 \}$.  The $(m-1)$-tuple $(\theta _1, \dots  , \theta _{m-1})$ gives $F_m$ a coordinate system, and since each diagonal  meets $F_m$ at a unique point, we get a coordinate system on $\mathcal Q$ in which to express $\psi  $.

\item[(b)]
\emph{Counterdiagonal coordinates.}
The \textbf{counterdiagonal} or {\bf antidiagonal} of $\mathbb{T}^m$  is the set
$$
A  \;=\;  \{ \Theta : \Theta \perp \textbf{1}\} = \{ \Theta : \sum \theta _j = 0\mod 2\pi \}.
$$
The counterdiagonal is an $(m-1)$-dimensional subtorus of $\mathbb{T}^m$ and is invariant under the Kuramoto flow since 
$$
\left< K(\Theta ), \textbf{1} \right> \;=\;  \sum _{j, k}  \sin (\theta _j - \theta _k) = 0
$$
implies that the Kuramoto gradient $K$ is everywhere tangent to $A$.  Even though $A$ is  an $(m-1)$-dimensional torus, it does not support a canonical coordinate system.  
\end{itemize}
Those two systems are related as follows. 
Let  $\mathrm e_1, \dots , \mathrm e_m$  be the standard basis vectors of $\mathbb{R}^m$.  The vectors $\mathrm f_j = \mathrm e_j - \frac{1}{m}\textbf{1}$, $ 1 \leq  j \leq  m$, span the counterdiagonal. They are not independent, since $\sum \mathrm f_j = 0$.
Any choice of $m-1$ of the $\mathrm f_j$ gives a coordinate basis for $A$ and hence a coordinate system for $\mathcal Q$.
For instance, if we pick the vectors $\mathrm f_j$ for $j \le m-1$, we obtain
cube face coordinates 
\[
	(\theta_1, \dots, \theta_{m-1}) \mapsto [\theta_1 \mathrm f_1, \dots,
	\theta_{m-1} \mathrm f_{m-1}].
\]

The Kuramoto potential on $\mathcal Q$ is well defined by commutativity of
 $$
\begin{CD}
 \mathbb{T}^m @>\text{\normalsize
$\qquad 
V\qquad$}>> 
\mathbb{R}
\\
@V\text{\normalsize$
q 
$}VV @VV\text{\normalsize$
=
$}V
\\
\mathcal Q  @>\text{\normalsize$\qquad 
	\gls{VQ}
\qquad$}>>
\mathbb{R}
\end{CD}
$$ 
since $V$ is constant along each diagonal.

\begin{theorem}
\label{t.grads}
The quotient flow $\psi $ is the gradient flow of $-V_{\mathcal Q}$.  \end{theorem}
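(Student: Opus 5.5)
The key point is that "gradient" needs a Riemannian metric on $\mathcal Q$. We take the quotient metric: the flat metric on $\mathcal Q$ for which the quotient map $q:\mathbb T^m\to\mathcal Q$ is a Riemannian submersion. The horizontal distribution is the orthogonal complement $\boldsymbol 1^\perp$ of the diagonal direction. Equivalently, we identify $\mathcal Q$ with the counterdiagonal $A$ with its induced flat metric. The restriction $q|_A:A\to\mathcal Q$ is a surjective group homomorphism with finite kernel $A\cap D=\{\tfrac{2\pi k}{m}\boldsymbol 1\}$, hence a local isometry and a finite covering. With this metric fixed, the proof reduces to a linear-algebra identity at each point.

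First, since $V=V_{\mathcal Q}\circ q$, the chain rule gives $dV_\Theta = d(V_{\mathcal Q})_{[\Theta]}\circ dq_\Theta$. Second, $dq_\Theta$ kills $\boldsymbol 1$ and maps $\boldsymbol 1^\perp$ isometrically onto $T_{[\Theta]}\mathcal Q$. Third, $K(\Theta)=-\nabla V(\Theta)$ lies in $\boldsymbol 1^\perp$, because $\langle K,\boldsymbol 1\rangle=0$ as computed in the introduction. We must then show $dq_\Theta(K(\Theta)) = -\nabla V_{\mathcal Q}([\Theta])$. For any $w\in T_{[\Theta]}\mathcal Q$, let $\tilde w\in\boldsymbol 1^\perp$ be its horizontal lift. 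Then
\[
\langle dq_\Theta K(\Theta), w\rangle = \langle K(\Theta),\tilde w\rangle = -dV_\Theta(\tilde w) = -d(V_{\mathcal Q})_{[\Theta]}(w).
\]
This identifies $dq_\Theta K(\Theta)$ with $-\nabla V_{\mathcal Q}$.

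Finally, the flow statement follows by differentiating the commutative diagram \eqref{commdiag}:
\[
\tfrac{d}{dt}\psi_t([\Theta]) = dq\bigl(K(\varphi_t\Theta)\bigr) = -\nabla V_{\mathcal Q}(\psi_t[\Theta]).
\]
So $\psi$ is the flow of $-\nabla V_{\mathcal Q}$. Well-definedness is automatic because $K$ is constant along diagonals. Alternatively, one can phrase everything on $A$: $K$ is tangent to $A$, and the gradient of $V|_A$ in the induced metric is the orthogonal projection of $\nabla V$ onto $TA=\boldsymbol 1^\perp$, which is $\nabla V$ itself.

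The main obstacle is not computational but definitional: making precise which metric on $\mathcal Q$ is meant. Under the cube-face coordinates, the pulled-back metric is not the Euclidean one in $(\theta_1,\dots,\theta_{m-1})$. In those coordinates the gradient is $G^{-1}\partial V_{\mathcal Q}$, with $G$ the Gram matrix of $\mathrm f_1,\dots,\mathrm f_{m-1}$. I would state the theorem with respect to the submersion metric, and remark that the flat structures coming from the counterdiagonal coordinates all agree with it.
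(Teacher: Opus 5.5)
Your proof is correct and follows essentially the same route as the paper. Both arguments give $\mathcal Q$ the metric that makes $Tq$, restricted to the orthogonal complement of the diagonals, an isometry, and both rely on $\nabla V$ being horizontal because $V$ is constant on diagonals. You phrase this invariantly, via horizontal lifts and the chain rule, and you spell out the differentiation of the commutative diagram; the paper instead works in an orthonormal frame with $\partial/\partial x_m$ tangent to the diagonal.
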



\begin{proof}[\bf Proof]  
	Specifying the gradient of $V_{\mathcal Q}$ on $\mathcal Q$ requires a  ``Riemann structure'' on $\mathcal Q$, i.e., we need preferred  inner products on the tangent spaces of $\mathcal Q$.  We get them from $\mathbb{T}^m$ as follows.

The tangent bundle of $\mathbb{T}^m$ is trivial: it is a product
$
T(\mathbb{T}^m) =  \mathbb{T}^m \times  \mathbb{R}^m 
$.  In the same sense, we give it  a trivial Riemann structure by taking the standard inner product of $\mathbb{R}^m$ on each $T_{\Theta }(\mathbb{T}^m)$.
 Let $E$ be the subbundle of $T(\mathbb{T}^m)$ orthogonal  to the diagonals.  The tangent to the quotient map $q$, when restricted to $E$, gives a bundle  isomorphism   $Tq: E \rightarrow  T\mathcal Q$.  We choose the Riemann structure on $\mathcal Q$ to make this a bundle isometry.

	When we compute the gradient of $V$ at $\Theta $, we can use any convenient orthonormal coordinate system.  We choose one, say $x = (x_1, \dots , x_m)$, such  that $\partial /\partial x_m$ is tangent to the diagonal.  The $m^{\textrm{th}}$ component of the gradient is zero because $V$ is constant on each diagonal.  The calculation of the gradient of $V_{\mathcal Q}$ is exactly the same because we can use the orthonormal coordinate system $q(x_1, \dots , x_{m-1},0)$ on $\mathcal Q$  at $[\Theta ]$. 
\end{proof}

\begin{corollary*}

$q$ converts saddle diagonals to saddle points without changing the index.
\end{corollary*}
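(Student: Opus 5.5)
The plan is to compare the linearization of $K$ at a point $\Theta$ of a saddle diagonal with the linearization of the quotient field at $[\Theta]$. By Theorem~\ref{t.grads}, $\psi$ is the gradient flow of $-V_{\mathcal Q}$. So the quotient field is $K_{\mathcal Q} = -\nabla V_{\mathcal Q}$, and it is $q$-related to $K$: $Tq(K(\Theta)) = K_{\mathcal Q}([\Theta])$. The first step is to note that $[\Theta]$ is an equilibrium of $\psi$ exactly when $D_\Theta$ consists of equilibria of $\varphi$. This holds because $K \perp \boldsymbol 1$, so $K(\Theta)\in E_\Theta$, and $Tq|_E$ is an isomorphism.

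Next I would compute the derivative. Write $H = DK(\Theta)$, which is symmetric since $K$ is a gradient. Constancy of $K$ along diagonals gives $H\boldsymbol 1 = 0$. Symmetry then gives that $H$ preserves $E_\Theta = \boldsymbol 1^\perp$. So in an orthonormal basis of $\mathbb R^m$ whose last vector is $\boldsymbol 1/\sqrt m$, the matrix $H$ is block diagonal, with blocks $H|_{E_\Theta}$ and $0$.

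The key step is to show that $D K_{\mathcal Q}([\Theta])$ corresponds to $H|_{E_\Theta}$ under the isometry $Tq|_E$. I would use the local chart from the proof of Theorem~\ref{t.grads}, $(x_1,\dots,x_{m-1}) \mapsto q(x_1,\dots,x_{m-1},0)$, with $x$ orthonormal and $\partial/\partial x_m$ along the diagonal. Since $q$ is a group homomorphism, this chart is affine and isometric. In it, $K_{\mathcal Q}$ is given by the first $m-1$ components of $K$ restricted to the slice $x_m = 0$; the last component of $K$ vanishes. Differentiating gives exactly the upper-left $(m-1)\times(m-1)$ block of $H$, which is $H|_{E_\Theta}$.

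Finally, the spectrum of $H$ is the spectrum of $H|_{E_\Theta}$ together with the extra eigenvalue $0$ in direction $\boldsymbol 1$. For a saddle diagonal we have $u+s=m-1$ with $u,s>0$. Hence $H|_{E_\Theta}$ has no zero eigenvalue and has $u$ positive and $s$ negative eigenvalues. Therefore $[\Theta]$ is a hyperbolic saddle of $\psi$ with the same unstable index $u$.

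The only delicate point is the identification of the two derivatives. That step is easy here because the chart is linear and the diagonal direction is exactly invariant, so I do not expect a real obstacle.
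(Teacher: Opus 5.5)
Your proposal is correct and follows the paper's route. The paper gives no separate proof: the corollary is read off from Theorem~\ref{t.grads}, using the same orthonormal chart $q(x_1,\dots,x_{m-1},0)$ with $\partial/\partial x_m$ along the diagonal, in which $V$ does not depend on $x_m$, so the Hessian of $V_{\mathcal Q}$ at $[\Theta]$ is the upper-left $(m-1)\times(m-1)$ block of the Hessian of $V$. You just make explicit the linear algebra the paper leaves implicit: $H$ symmetric, $H\boldsymbol 1=0$, and $u+s=m-1$ forcing $H|_{E_\Theta}$ to be nonsingular with $u$ positive and $s$ negative eigenvalues.
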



\begin{remark*}
It is tempting to say the whole proof of Theorem~\ref{t.grads}  is trivial.  However, the gradient of $V$, when computed relative to a cube face's coordinates, does not generate $\psi $.  That's because the face's  Riemann structure does not match  the quotient Riemann structure.
\end{remark*}


It may be useful to express the Kuramoto vector field in terms of cube face coordinates. The computation below will be needed in Section~\ref{s.conjugacy}.
\begin{proposition}\label{l.cfcoords} Consider the cube face coordinates $(\theta_1, \dots, \theta_{m-1})$ in $\mathcal Q$. The negative gradient of $V_{\mathcal Q}$ has coordinates 
\[
	-(\nabla V_{\mathcal Q})_i = \sum_{k=1}^{m-1} \sin (\theta_k - \theta_i)
	- \sin \theta_i - \sum_{k=1}^{m-1} \sin \theta_k .
\]
\end{proposition}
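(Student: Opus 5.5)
The plan is to avoid computing any metric on $\mathcal Q$ directly. Instead I would use Theorem~\ref{t.grads}: the quotient flow $\psi$ is the gradient flow of $-V_{\mathcal Q}$ for the quotient Riemann structure. So the vector field $-\nabla V_{\mathcal Q}$ is exactly the infinitesimal generator of $\psi$. Its components in cube face coordinates are therefore the time derivatives of those coordinates along $\psi$, and these can be read off from $\varphi$ via the commutative diagram \eqref{commdiag}. (Here ``coordinates of the gradient'' means the components of the tangent vector in the coordinate frame $\partial/\partial\theta_i$. By the Remark after Theorem~\ref{t.grads}, this differs from the naive coordinate gradient of $V_{\mathcal Q}$.)

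Step 1: write the cube face chart as a map on $\mathbb T^m$. A point $\Theta\in\mathbb T^m$ has cube face coordinates
\[
x_i([\Theta]) \;=\; \theta_i-\theta_m, \qquad 1\le i\le m-1.
\]
This is well defined on cosets because it is invariant under $\Theta\mapsto\Theta+\tau\boldsymbol 1$. It agrees with the chart of item (a), since the unique point of $D_\Theta$ on $F_m$ is $\Theta-\theta_m\boldsymbol 1$.

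Step 2: differentiate along the flow. Take a trajectory $\Theta(t)=\varphi_t(\Theta)$, so that $[\Theta(t)]=\psi_t([\Theta])$. Then
\[
\frac{d}{dt}x_i \;=\; \dot\theta_i-\dot\theta_m \;=\; K_i(\Theta)-K_m(\Theta).
\]
Because $K$ is constant along diagonals, this may be evaluated at the representative with $\theta_m=0$, whose first $m-1$ entries are exactly the cube face coordinates.

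Step 3: substitute $\theta_m=0$ into the formula for $K$. This gives
\[
K_i=\sum_{k=1}^{m-1}\sin(\theta_k-\theta_i)+\sin(0-\theta_i),
\qquad
K_m=\sum_{k=1}^{m-1}\sin\theta_k .
\]
Hence $K_i-K_m$ is precisely the stated expression for $-(\nabla V_{\mathcal Q})_i$.

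The only delicate point is Step 1 together with the identification with Theorem~\ref{t.grads}. One must make sure that the components of the generator of $\psi$ in the coordinate frame really are these derivatives $\dot x_i$, that is, that the differential of $q$ in the chart sends $K(\Theta)$ to $(K_i-K_m)_i$. One must also be clear that the Remark's caveat concerns only the metric, not the vector field itself. Once that is settled, the rest is the direct substitution above.
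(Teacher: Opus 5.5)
Your proof is correct, but it takes a different route from the paper.

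\textbf{The paper's route.} It works purely metrically. It parameterizes the counterdiagonal by $X(\theta_1,\dots,\theta_{m-1})=\sum_j\theta_j\mathrm f_j$ and computes $V_{\mathcal Q}=V\circ X$. It then computes the pulled-back metric $g_{ij}=I-\frac1m\boldsymbol u\boldsymbol u^T$ and its inverse $g^{ij}=I+\boldsymbol u\boldsymbol u^T$, and raises the index of $dV_{\mathcal Q}$. The rank-one part of $g^{ij}$ contributes $-\sum_j\partial V_{\mathcal Q}/\partial\theta_j$. There the double sum $\sum_{j,k}\sin(\theta_k-\theta_j)$ cancels by antisymmetry, leaving the extra term $-\sum_k\sin\theta_k$.

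\textbf{Your route.} You use Theorem~\ref{t.grads} to identify $-\nabla V_{\mathcal Q}$ with $Tq(K)$, the generator of $\psi$. You then apply the chain rule to the chart $[\Theta]\mapsto(\theta_i-\theta_m)_i$. No metric computation is needed. The extra term also gets a transparent meaning: it is $-K_m$, the velocity of the reference oscillator that the chart pins at $0$. This is exactly what the rank-one correction in $g^{ij}$ produces in the paper's calculation.

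\textbf{Your ``delicate point.''} It is not an obstacle. The components of a vector field in a coordinate frame are by definition the derivatives of the coordinates along its flow. Also, $q$ followed by the chart is the linear map $\Theta\mapsto(\theta_i-\theta_m)_i$, whose differential sends $K$ to $(K_i-K_m)_i$. You also read the Remark after Theorem~\ref{t.grads} correctly: it concerns only the metric, not the vector field.

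\textbf{Trade-off.} Your argument rests entirely on Theorem~\ref{t.grads}, that is, on $\nabla V\perp\boldsymbol 1$ and on $Tq|_E$ being an isometry. The paper's computation is self-contained once the metric is fixed. It also gives the metric explicitly in cube face coordinates, and serves as a coordinate check of Theorem~\ref{t.grads} and of the Remark following it.
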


\begin{proof}
	Parameterize the counterdiagonal $A$ of $\mathbb R^m$ by
$X: \mathbb R^{m-1} \rightarrow A$,
	\[
		X: \begin{pmatrix} \theta_1 \\ \vdots \\ \theta_{m-1} \end{pmatrix}
			\mapsto \theta_1 \mathrm f_1 + \dots + 
			\theta_{m-1} \mathrm f_{m-1} = 
		\begin{pmatrix} \theta_1 - \bar \theta \\ \vdots \\ \theta_{m-1} - \bar \theta \\ - \bar \theta\end{pmatrix},
	\]
with $\bar \theta = \frac{1}{m} \sum_{j=1}^{m-1} \theta_j$.
	This map induces a parameterization $X: \mathbb T^{m-1} \rightarrow \mathcal Q$,
	which is clearly a diffeomorphism.
	The potential $V_{\mathcal Q}=V \circ X$ on the quotient set is precisely 
\[
	V_{\mathcal Q}(\theta_1, \dots,
\theta_{m-1})=
	\frac{1}{2}\left(\sum_{k,l=1}^{m-1} (1-\cos(\theta_k-\theta_l))\right)
	+ \sum_{k=1}^{m-1} (1-\cos (\theta_k)).
\]
Partial differentials are
\[
	\frac{\partial V_{\mathcal Q}}{\partial \theta_j} = - \sum_{k=1}^{m-1} \sin(\theta_k - \theta_j) + \sin(\theta_j)
.\]
Computing the gradient (a vector) requires 
	``raising the indices'' of the derivative (a covector or ``row vector'').
The Riemannian metric of $\mathcal Q$ is by construction the Riemannian metric of the counterdiagonal as a subset of $\mathbb R^m$. Hence, the Riemannian metric in cube face coordinates is its pull-back $g_{ij} = DX^T DX$. Let $\boldsymbol 1 = (1, \dots, 1)^T \in \mathbb R^m$ and
	$\boldsymbol u = (1, \dots, 1)^T \in \mathbb R^{m-1}$. The derivative of $X$ is 
\[
	DX = \begin{pmatrix} I \\
		0 \dots 0
	\end{pmatrix} - \frac{1}{m}\boldsymbol 1 \boldsymbol u^T
.
\]
Hence,
\[
g_{ij} = I - \frac{1}{m}\, \boldsymbol u \,\boldsymbol u^T
	\hspace{2em}
	\text{and}
	\hspace{2em}
g^{ij} = (g_{ij})^{-1} =  I + \boldsymbol u \boldsymbol u^T
\]

	The gradient of $V_{\mathcal Q}$ can now be computed, and the quotient Kuramoto vector
field is:
\[
	- (\nabla V_{\mathcal Q})_i = - \sum_j g^{ij} \frac{\partial V_{\mathcal Q}}{\partial \theta_j} 
	=
	\sum_{k=1}^{m-1} \sin (\theta_k - \theta_i)
	- \sin \theta_i - \sum_{k=1}^{m-1} \sin \theta_k .
\]
\end{proof}

\section{Picturing $\mathbb{T}^m$, $\varphi $, and $\psi $ when $m$ is small}

\label{s.picturing}

A common  mental picture of $\mathbb{T}^m$ is that when $m=2$, it looks like the surface of a donut.  The donut picture  is of little use for understanding  Kuramoto.  Instead, as in Section~\ref{s.intro}, we can picture $\mathbb{T}^2$ as the square   $[0, 2\pi ]^2$ with opposite edges identified, i.e., just interpret the square's coordinates modulo $2\pi $.

   Each diagonal is a line that meets the vertical edge $0 \times [0, 2\pi ]$ once.  It also meets the horizontal edge $[0, 2\pi ] \times  0$ once.  Because everything is modulo $2\pi $, there is only one vertical edge and only one horizontal edge.  Writing the Kuramoto vector field  as an ODE gives
\begin{equation*}
\begin{split}
\theta _1^{\prime} & \;=\; \sin (\theta _1 - \theta _1) + \sin(\theta _2 - \theta _1) \;=\; \sin (\theta _2 - \theta _1)
\\
\theta _2^{\prime} & \;=\; \sin (\theta _1 - \theta _2) + \sin(\theta _2 - \theta _2) \;=\; \sin(\theta _1 - \theta _2).
\end{split}
\end{equation*}
The main diagonal consists of equilibria, and so does its parallel through the point $(0, \pi ) = (\pi , 2\pi ) = (\pi , 0) = (2\pi , \pi )$.  (These equalities occur  in $[0,2\pi ]^2$ modulo $2\pi $.)  In fact, the main diagonal is a sink and the other diagonal is a source.  The $K$-trajectories are perpendicular to the diagonals.  See Figure~\ref{f.T2}.

\medskip

If   $m =3$, $\mathbb{T}^3$ is the  $3$-cube modulo $2\pi $.   
Its main diagonal is the line segment from $(0, 0, 0)$ to $(2\pi , 2\pi , 2\pi )$.  It is a circle, i.e., a sink-diagonal.   Three of the diagonals pass through the centerpoints of the three edges of the cube modulo $2\pi $.  They consist of equilibria and are saddle-diagonals.  As elements of the quotient, they are $[\pi, 0, 0], [0, \pi, 0], [0, 0, \pi ]$.  Finally, there are two diagonals through $(2\pi /3, 4\pi /3, 0)$ and $(4\pi /3, 2\pi /3, 0)$.  They are source-diagonals, and their union is $V^{\max}$.  In terms of the quotient flow, we have two source-points, three saddle-points, and one sink-point.  Diagonals in $\mathbb{T}^3$ become points in the quotient.  See Figure~\ref{f.KV2}.

\begin{figure}[ht]
\centering
\includegraphics[scale=.65]{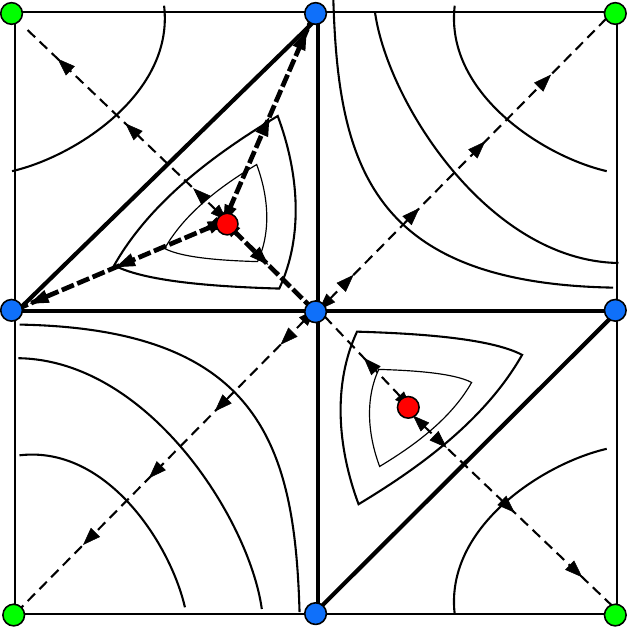}
\caption{The $\psi $-flow on $\mathbb{T}^2$, drawn on the square: red = source = $V^{\textrm{max}}$, blue = saddle, green = sink, black = constant $V$, dashed black with arrows = trajectory.}
\label{f.KV2}
\end{figure}

A further way to picture a Kuramoto trajectory $\varphi _t(\Theta) $ is to draw $m$ dots on the unit circle  indicating each angle $\theta _j$ of $\Theta $ as $e^{i\theta _j}$.  Think of  the dots  signaling to each other.    The signal between distant dots is weak, and so is the signal between  nearby  dots.  Antipodal dots or equal dots make no signal to each other.   
Each dot reads the signals from the other dots, adds its incoming signals up, and moves accordingly.

Here is a basic, useful fact about the Kuramoto flow, consistent with the signaling picture.

\begin{theorem} \textbf{\normalfont{\textbf{(Equality Principle)}}}
\label{t.equality}
If two angles of $\Theta \in \mathbb{T}^m$ are equal then they stay equal under $\varphi$.  If they are unequal then they stay unequal.
\end{theorem}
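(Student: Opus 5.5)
The plan is to use the symmetry of $K$ under coordinate permutations together with uniqueness of solutions. Let $\sigma$ be the transposition swapping coordinates $i$ and $j$, acting on $\mathbb T^m$ by $P_\sigma$. The potential $V$ is symmetric in all its arguments, so $V\circ P_\sigma = V$. Since $P_\sigma$ is an isometry of the flat metric, it follows that $K\circ P_\sigma = P_\sigma\circ K$. One can also see this directly: the $\ell$-th component $\sum_k \sin(\theta_k-\theta_\ell)$ depends only on $\theta_\ell$ and on the symmetric multiset of all angles. Hence $P_\sigma$ conjugates the flow to itself, that is, $\varphi_t\circ P_\sigma = P_\sigma\circ\varphi_t$ for all $t\in\mathbb R$.

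The first assertion then follows at once. The fixed set $F_{ij}=\{\theta_i=\theta_j\}$ of $P_\sigma$ is a subtorus. If $\Theta\in F_{ij}$, then $P_\sigma\varphi_t(\Theta)=\varphi_t(P_\sigma\Theta)=\varphi_t(\Theta)$, so $\varphi_t(\Theta)\in F_{ij}$ for all $t$.

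For the second assertion, suppose $\theta_i(t_0)=\theta_j(t_0)$ at some finite time $t_0$. Apply the first assertion to the point $\varphi_{t_0}(\Theta)$, running the flow both forward and backward. This uses the fact that $\varphi$ is a genuine flow defined for all real $t$, since $\mathbb T^m$ is compact and $K$ is smooth. It gives $\theta_i\equiv\theta_j$ along the whole orbit, and in particular at $t=0$. Taking the contrapositive, unequal angles remain unequal for all time.

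No step is a real obstacle. The only points needing care are that equivariance holds in both time directions and that uniqueness of solutions of smooth ODEs is what licenses it. Alternatively, I could argue directly that $\delta=\theta_i-\theta_j$ satisfies $\delta' = -2\sin\delta\,(\text{bounded smooth term}) - \sin\delta$-type identities, i.e. $\delta'=\delta\, g(t)$ with $g$ continuous, so $\delta(t)=\delta(0)e^{\int g}$ vanishes identically or never. Indeed
\[
\theta_i'-\theta_j' = \sum_k\bigl(\sin(\theta_k-\theta_i)-\sin(\theta_k-\theta_j)\bigr) - 2\sin(\theta_i-\theta_j),
\]
and each bracket equals $-2\cos(\theta_k-\tfrac{\theta_i+\theta_j}{2})\sin\tfrac{\delta}{2}$. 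Writing $\sin\delta = 2\sin\tfrac\delta2\cos\tfrac\delta2$ shows that $\sin(\delta/2)$ satisfies a linear ODE $u'=g(t)u$, which gives the same conclusion. I would present the symmetry argument and mention this computation as a check.
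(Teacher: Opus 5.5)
Your proof is correct, but it reaches the invariance of $\{\theta_i=\theta_j\}$ by a different route than the paper.

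\textbf{The paper's route.} It computes directly that the component of $K$ along the normal $n=\mathrm e_1-\mathrm e_2$, namely $K_1-K_2$, vanishes identically on $M=\{\theta_1=\theta_2\}$. So $K$ is tangent to $M$, and $M$ is invariant. Uniqueness of solutions and the second assertion (unequal angles stay unequal) are left implicit.

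\textbf{Your route.} You use that $K$ commutes with the coordinate transposition $P_\sigma$. By uniqueness, $\varphi_t$ then commutes with $P_\sigma$, so the fixed set of $P_\sigma$ is invariant. This explains the paper's cancellation: at a fixed point of $P_\sigma$, the relation $K=P_\sigma K$ says exactly $K_i=K_j$. Your explicit use of the backward flow from $\varphi_{t_0}(\Theta)$ also supplies the second assertion, which the paper does not spell out.

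\textbf{What the tangency check buys.} It needs only the local condition $K_i=K_j$ on the hyperplane, which is weaker than full equivariance. That is the form the paper reuses (``the proof is the same'') for the homotopy $F_s$ of Section~\ref{s.conjugacy}. There one needs invariance of $\{\theta_j=0\}$, and the $j$-th component of $F_s$ simply vanishes on it.

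\textbf{Your linear equation.} The equation $u'=g(t)u$ for $u=\sin(\delta/2)$, taken along a continuous lift, is the sharpest of the three arguments. It gives both assertions at once, in both time directions, with the quantitative bound $|u(t)|\ge|u(0)|e^{-m|t|}$, since $|g|\le m$.

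In that computation your displayed identity is right if $\sum_k$ runs over $k\ne i,j$. Summing over all $k$ is cleaner, because the half-angle identity also holds for $k=i,j$ and absorbs the $-2\sin(\theta_i-\theta_j)$ term. This gives $\theta_i'-\theta_j'=-2\sin\tfrac{\delta}{2}\sum_{k=1}^m\cos\bigl(\theta_k-\tfrac{\theta_i+\theta_j}{2}\bigr)$ directly.
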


\begin{proof}[\bf Proof]
Suppose $\Theta  = (\theta _1, \dots , \theta _m)$ and  without loss of generality the equal angles are $\theta _1, \theta _2$.  Consider the manifold $M = \{\Theta: \theta_1 = \theta_2\}$. Its normal is $n = (1, -1, 0,\ldots, 0)$ and

\[ \langle n, - \nabla V \rangle\, =\, - \sum_{\ell} \sin(\theta_{\ell} - \theta_1) + \sum_{\ell} \sin(\theta_{\ell} - \theta_2).\]

But on $M$ the right-hand side of this equation is $\equiv 0$, so $-\nabla V$ is tangent to $M$ and the flow of any point in $M$ must stay in $M$.


\emph{The same principle applies to more than two angles, and to more than one set of equal angles.}
\end{proof} 
After all, if $\theta_1$ and $\theta_2$ are equal then they receive the same signals from the other angles, and this prevents divergence.


\section{The Centroid and $V^{\max}$}   
\label{s.centroid}

A useful tool for understanding   Kuramoto dynamics is the \textbf{centroid} of $\Theta  \in \mathbb{T}^m$.  It is the average of $e^{i\theta _1}, \dots , e^{i\theta _m}$, 
$$
\gls{Z} \;=\; \frac{1}{m}\sum _{k=1}^m e^{i \theta _k} \in \mathbb{C}. 
$$

The centroid is also known in the Kuramoto literature as the {\bf phase order parameter}.  
The following result is well known, see e.g. \ocite{DorflerBullo}*{Sec 4.2}.
\begin{theorem}
\label{t.centroid}
\begin{equation}
\label{eq.centroid}
V(\Theta )  = (1-\abs{Z}^2) \dfrac{m^2}{2}.
\end{equation}
\end{theorem}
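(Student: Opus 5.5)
The plan is to expand $\abs{Z}^2$ directly as a double sum and compare it term by term with the definition of $V$. Write
\[
m^2\abs{Z}^2 \;=\; \Big(\sum_{k=1}^m e^{i\theta_k}\Big)\Big(\sum_{\ell=1}^m e^{-i\theta_\ell}\Big) \;=\; \sum_{k,\ell=1}^m e^{i(\theta_k-\theta_\ell)} .
\]
Since $m^2\abs{Z}^2$ is real, only the real parts survive. The imaginary parts $\sin(\theta_k-\theta_\ell)$ cancel in pairs under the swap $k\leftrightarrow\ell$, because sine is odd. This leaves
\[
m^2\abs{Z}^2 \;=\; \sum_{k,\ell=1}^m \cos(\theta_k-\theta_\ell).
\]

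Next, use the first expression for the Kuramoto potential in the Introduction, $V(\Theta)=\frac12\sum_{k,\ell}(1-\cos(\theta_\ell-\theta_k))$. The constant term contributes $\frac12 m^2$, since there are $m^2$ ordered pairs. Cosine is even, so the cosine part equals $-\frac12 m^2\abs{Z}^2$ by the previous display. Hence
\[
V(\Theta) \;=\; \frac{m^2}{2} - \frac{m^2}{2}\abs{Z}^2 \;=\; (1-\abs{Z}^2)\frac{m^2}{2},
\]
which is \eqref{eq.centroid}.

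There is no real obstacle here. The only care needed is the bookkeeping: the sum runs over all ordered pairs, diagonal terms included (each contributes $\cos 0=1$ to both sides consistently), and the factor $\frac12$ in the definition of $V$ matches the $\frac{m^2}{2}$ in the statement. As a sanity check, at the main diagonal $D$ we have $\abs{Z}=1$ and $V=0$. When $\abs{Z}=0$, for instance at the $m=3$ source points $(0,2\pi/3,4\pi/3)$, the formula gives $V=m^2/2$, consistent with the maximum asserted in the Main Theorem.
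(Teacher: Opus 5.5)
Your proof is correct and is essentially the paper's argument. Both expand $m^2\abs{Z}^2=\sum_{k,\ell}e^{i(\theta_k-\theta_\ell)}$, cancel the sine terms by antisymmetry, and substitute the cosine sum into the definition of $V$; the paper additionally rotates so that $Z$ is real first, a step your version shows is unnecessary.
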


\begin{proof}[\bf Proof]
We have $Z = re^{i\eta }$ for some $\eta $ and $r = \abs{Z}$.  Rotating  angles lets us assume $\eta  = 0$ without changing $\abs{Z}$ or $V$.  Express $Z$ in terms of the rotated angles,
$$
Z \;=\; \frac{1}{m} \sum e^{i\theta _k} \;=\; \frac{1}{m}\sum \cos(\theta _k) +  \frac{i}{m}\sum \sin(\theta _k)  \;=\;   r
$$
This implies that the sine sum of the  $\theta _{k}$ is zero, $\sum \sin (\theta _{k}) = 0$.

Consider the complex conjugates $ e^{-i\theta _{k}}$.  Their centroid is the complex conjugate of the centroid of the $e^{i\theta _k}$, which is therefore also  $r$.  This implies that 
\begin{equation*}
\begin{split}
r^2 &\;=\; \frac{1}{m^2}\Big(\sum_{\ell} e^{i\theta _{\ell}}\Big)\Big(\sum_k e^{-i \theta_k}\Big)\;=\; \frac{1}{m^2}\sum _{{\ell},k}e^{i(\theta _{\ell} - \theta _k)}
\\
&\;=\; \frac{1}{m^2}\Big( \sum_{{\ell},k} \cos(\theta _{\ell} - \theta _k) +
\sum_{{\ell},k} i \sin (\theta _{\ell} - \theta _k)\Big) \;=\; \frac{1}{m^2}\sum_{{\ell},k} \cos(\theta _{\ell} - \theta _k)
\end{split}
\end{equation*}
because $\sin(\theta _{\ell} - \theta _k) = - \sin (\theta _k - \theta _{\ell})$ makes the  sum of the sine terms vanish.  Thus $m^2r^2 = \sum \cos(\theta _{\ell} - \theta _k)$.

There are  $m^2$ terms in the sum
$$
V(\Theta ) \;=\; \frac{1}{2}\sum_{\ell, k = 1}^m (1 -  \cos(\theta _\ell - \theta _k))
$$
 so it's
$$
V(\Theta ) \;=\; \frac{m^2}{2} - \frac{1}{2}\sum_{{\ell},k} \cos(\theta _{\ell}-\theta _k) \;=\; \frac{m^2}{2} - \frac{m^2r^2}{2} \;=\; (1-r^2)\frac{m^2}{2}.
$$
\end{proof}

\begin{corollary}
\label{c.centroid} The minimum of $V$ is $0$, and occurs if and only if $|Z|=1$, that is, $\theta_i=\theta_j$ for all $i,j$.
The maximum of $V$   is  $m^2/2$, and occurs if and only if $\abs{Z}=0$, i.e., if and only if the centroid is the origin $O$ of $\mathbb{C}$. 
\end{corollary}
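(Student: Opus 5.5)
The corollary follows directly from the identity $V(\Theta) = (1-\abs{Z}^2)\,m^2/2$ of Theorem~\ref{t.centroid}. The first step is to bound the centroid. By the triangle inequality,
\[
\abs{Z} \le \frac{1}{m}\sum_{k=1}^m \abs{e^{i\theta_k}} = 1,
\]
so $0 \le \abs{Z}^2 \le 1$, and therefore $0 \le V \le m^2/2$. Both bounds are attained: $\Theta = \boldsymbol 0$ gives $Z=1$ and $V=0$, while for $m\ge 2$ the $m$-th roots of unity, $\theta_k = 2\pi k/m$, give $Z=0$ and hence $V = m^2/2$. So the minimum of $V$ is $0$ and the maximum is $m^2/2$.

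For the characterization of the minimum, $V(\Theta)=0$ holds exactly when $\abs{Z}=1$. I then need equality in the triangle inequality for the unit vectors $e^{i\theta_k}$. Equality holds if and only if all the vectors are nonnegative real multiples of one another, and since they all have modulus one, this means they are all equal. Thus $e^{i\theta_i}=e^{i\theta_j}$, that is, $\theta_i=\theta_j$ in $\mathbb T$, for all $i,j$. Conversely, if all the angles are equal then $Z = e^{i\theta_1}$ has modulus $1$. Equivalently, this is the statement that the minimum set is the main diagonal $D$.

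The maximum is immediate from the same formula: $V(\Theta) = m^2/2$ if and only if $\abs{Z}^2 = 0$, that is, if and only if $Z=O$.

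The only point that needs any care is the equality case of the triangle inequality. I would verify it directly by expanding
\[
\abs{\textstyle\sum_k e^{i\theta_k}}^2 = \sum_{\ell,k}\cos(\theta_\ell-\theta_k).
\]
This sum equals $m^2$ only if every cosine equals $1$, which forces $\theta_\ell=\theta_k$ modulo $2\pi$ for all $\ell,k$. This is the same computation as in the proof of Theorem~\ref{t.centroid}.
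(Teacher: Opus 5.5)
Your proof is correct and follows the paper's route. The paper simply declares the corollary obvious from the identity $V(\Theta)=(1-\abs{Z}^2)\,m^2/2$ of Theorem~\ref{t.centroid}, and you supply the details it leaves implicit: the bound $\abs{Z}\le 1$, the equality case of the triangle inequality, and attainment of both extremes (your restriction to $m\ge 2$ is apt, since for $m=1$ the maximum of $V$ is $0$, not $1/2$).
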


\begin{proof}[\bf Proof]
Obvious from  Theorem~\ref{t.centroid}.
\end{proof}

\begin{definition*}
$V^{\max}$ is the set of points at which $V$ has a maximum.
\end{definition*}
Every maximum of $V$ is a critical point of $V$.  (This is true for any potential, not just the Kuramoto potential, of course.)

\begin{theorem}
\label{t.Vmax}
	$V^{\max}$ is  nonempty and compact.  If $m$ is odd then it is a codimension $2$ submanifold of $\mathbb{T}^m$. 
	If $m$ is even then it is a singular codimension $2$ submanifold, with $\frac{1}{2} \binomial{m}{m/2}$ isolated
singularities.
\end{theorem}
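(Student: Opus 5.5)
The plan is to realize $V^{\max}$ as the zero set of the centroid map $Z:\mathbb T^m\to\mathbb C\cong\mathbb R^2$ and apply the regular value theorem wherever it is available. By Corollary~\ref{c.centroid}, $V^{\max}=Z^{-1}(0)$.

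\emph{Nonemptiness and compactness.} Taking $\theta_k=2\pi k/m$ gives $Z=0$ for $m\ge 2$, so $V^{\max}\neq\emptyset$. Since $V^{\max}$ is the preimage of a point under a continuous map, it is closed in the compact torus, hence compact.

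\emph{Rank of $DZ$.} Writing $Z=\frac1m\sum_k(\cos\theta_k,\sin\theta_k)$, the $k$-th column of $DZ$ is $\frac1m(-\sin\theta_k,\cos\theta_k)$, which is $i e^{i\theta_k}/m$ viewed as a vector in $\mathbb R^2$. So $DZ$ has rank $<2$ exactly when all $e^{i\theta_k}$ lie on one real line through the origin, i.e. $\theta_k\in\{\alpha,\alpha+\pi\}$ for all $k$. If $p$ of the angles equal $\alpha$ and $m-p$ equal $\alpha+\pi$, then $Z=\frac{2p-m}{m}e^{i\alpha}$. Hence $Z=0$ forces $m$ even and $p=m/2$.

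\emph{Odd $m$.} When $m$ is odd, $0$ is a regular value of $Z$. The regular value theorem then makes $V^{\max}$ a smooth codimension $2$ submanifold.

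\emph{Even $m$: locating the critical points.} When $m$ is even, the critical points on $V^{\max}$ are exactly the diagonals $D_\Theta$ for which $\Theta$ has $m/2$ coordinates equal to $0$ and $m/2$ equal to $\pi$. Such a $\Theta$ is determined by its set $S\subset[m]$ of size $m/2$ where $\theta_k=\pi$. The sets $S$ and $[m]\setminus S$ give the same diagonal, because translating by $\pi\boldsymbol 1$ swaps them. So there are $\frac12\binom{m}{m/2}$ such diagonals, i.e. that many points in $\mathcal Q$. They are isolated modulo the diagonal action, and each is a singular circle in $\mathbb T^m$. Away from them the regular value theorem again gives a codimension $2$ manifold.

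\emph{Even $m$: the local model (main obstacle).} The remaining and hardest step is to show these points are genuine singularities, and that codimension $2$ persists across them. I would expand $Z$ at such a point. Put $\theta_k=\delta_k$ for $k\notin S$ and $\theta_k=\pi+\delta_k$ for $k\in S$, and restrict to the counterdiagonal slice to kill the diagonal direction. Then
\[
mZ=\Big(-\tfrac12\sum_{k\notin S}\delta_k^2+\tfrac12\sum_{k\in S}\delta_k^2\Big)+i\Big(\sum_{k\notin S}\delta_k-\sum_{k\in S}\delta_k\Big)+O(|\delta|^3).
\]
The imaginary part is a submersion, so its zero set is a smooth hypersurface. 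Restricted to that hypersurface, the real part is a quadratic form of split signature (roughly $(m/2-1,m/2)$ after also removing the diagonal). By the Morse lemma, near the point $V^{\max}$ is then homeomorphic to the cone on a product of two spheres. This cone has dimension $m-2$ (codimension $2$), but it is not a manifold at its vertex once $m\ge4$, since the link is $S^{a}\times S^{b}$ rather than a sphere. The degenerate case $m=2$ needs a separate direct check: there $V^{\max}$ is just the single diagonal $D_{(0,\pi)}$, and the statement has to be read in the quotient.
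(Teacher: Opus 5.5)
Your proof is correct. For nonemptiness, compactness, the odd case, and the location and count of the bad diagonals, it is the paper's argument in different clothing. The paper tests the $2\times2$ minors $\sin(\theta_k-\theta_j)$ of $D(mZ)$, you test collinearity of the columns $ie^{i\theta_k}$, and both arrive at ``all angles in $\{\alpha,\alpha+\pi\}$, equally many of each''.

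You genuinely go further in the even case. The paper's proof stops at ``the only possible singularities are\dots''. It never checks that these points are really singular, or that $V^{\max}$ still has codimension $2$ through them. That local structure only appears later, in Section~\ref{sec-sing} (Theorem~\ref{t-blow-up}), assembled from incident cells and tangent cones. Your Morse-lemma model gets it directly. It is legitimate because $D\,\mathrm{Re}(mZ)$ vanishes at the point, so the Lagrange multiplier is zero and the Hessian of $\mathrm{Re}(mZ)|_N$ is just the quadratic part restricted to $T_0N$.

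You need to fix the count, though. On the slice $\sum_k\delta_k=0$ you get $T_0N=\{\sum_{k\in S}\delta_k=0=\sum_{k\notin S}\delta_k\}$, of dimension $m-2$. On it, $\frac12\bigl(\sum_{k\in S}\delta_k^2-\sum_{k\notin S}\delta_k^2\bigr)$ has signature $(m/2-1,m/2-1)$, not $(m/2-1,m/2)$. Consequently:
\begin{itemize}
\item the link is $S^{m/2-2}\times S^{m/2-2}$, exactly the blow-up $S_p$ of Theorem~\ref{t-blow-up};
\item the cone has dimension $m-3$ in the slice, i.e.\ codimension $2$ in $\mathcal Q$;
\item the dimension is $m-2$ only in $\mathbb T^m$, after multiplying by the diagonal circle.
\end{itemize}
The non-manifold conclusion for $m\ge4$ is unaffected.

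Finally, $m=2$ is not a matter of reading the statement in the quotient. There $V^{\max}=\{\theta_2=\theta_1+\pi\}$ is a smooth circle of codimension $1$ in $\mathbb T^2$, and a single point of the circle $\mathcal Q$. So the theorem as stated is false for $m=2$ and requires $m\ge3$.
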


\begin{proof}[\bf Proof]
Compactness of $\mathbb{T}^m$ implies that $V^{\max}$ is nonempty and compact.
Theorem~\ref{t.centroid} implies that $V^{\max}$ is the pre-image of the origin under the centroid map $Z : \mathbb{T}^m \rightarrow  \mathbb{C}$, and it    remains to show that the origin is a regular value of $Z$ when $m$ is odd.  Assume $Z(\Theta ) = O$.

We view $\mathbb{C}$ as $\mathbb{R}^2$, and write $mZ = (Z_1, Z_2) = (\sum \cos\theta _j, \sum \sin\theta _j)$.  The derivative of $mZ$ at $\Theta $ is the $2 \times  m$ matrix, 
$$
\begin{bmatrix}
-\sin\theta _1 &-\sin\theta _2 & \dots & -\sin\theta _m
\\
\cos\theta _1 & \cos\theta _2 & \dots & \cos\theta _m
\end{bmatrix}.
$$
 It is nonsingular at $\Theta $  if and only if the determinant of some $2\times  2$ submatrix 
 $$
\det \begin{bmatrix}
-\sin\theta _j & -\sin\theta _k
\\
\cos\theta _j &  \cos\theta _k 
\end{bmatrix}
\;=\; \sin\theta _k\cos\theta _j - \cos\theta _k\sin\theta _j \;=\; \sin(\theta _k - \theta _j)
$$
	is nonzero.  The only way all these submatrices are singular is that every angle difference  $\theta _k - \theta _j$ equals zero modulo $\pi $, which means there is an angle $\alpha $ such that each $\theta _j$ equals $\frac{\pi}{2}+\alpha $ or equals $-\frac{\pi}{2}+\alpha $. By diagonal action, assume $\alpha=0$. Now there are an odd number of angles $\theta _j$, so either there are more that equal $\pi/2 $, or there are more that equal $-\pi/2$.  Say there are more that equal $\pi/2$.  This shows that $Z(\Theta) $ is nearer to $i$ than to $-i$, so $Z(\Theta ) \not= O$, contrary to    $\Theta $ being in $V^{\max} $.  Hence, at least one of the $2 \times 2$ submatrices is nonsingular,   $O$ is a regular value $Z$, and its pre-image is a smooth submanifold of codimension $2$.

Now we consider the case where $m$ is even. The only possible singularities are the ones with half of the $\theta_j=\pi/2$ and the other half equal to $-\pi/2$.
\end{proof}

\begin{figure}
\centerline{
\includegraphics[width=0.5\textwidth]{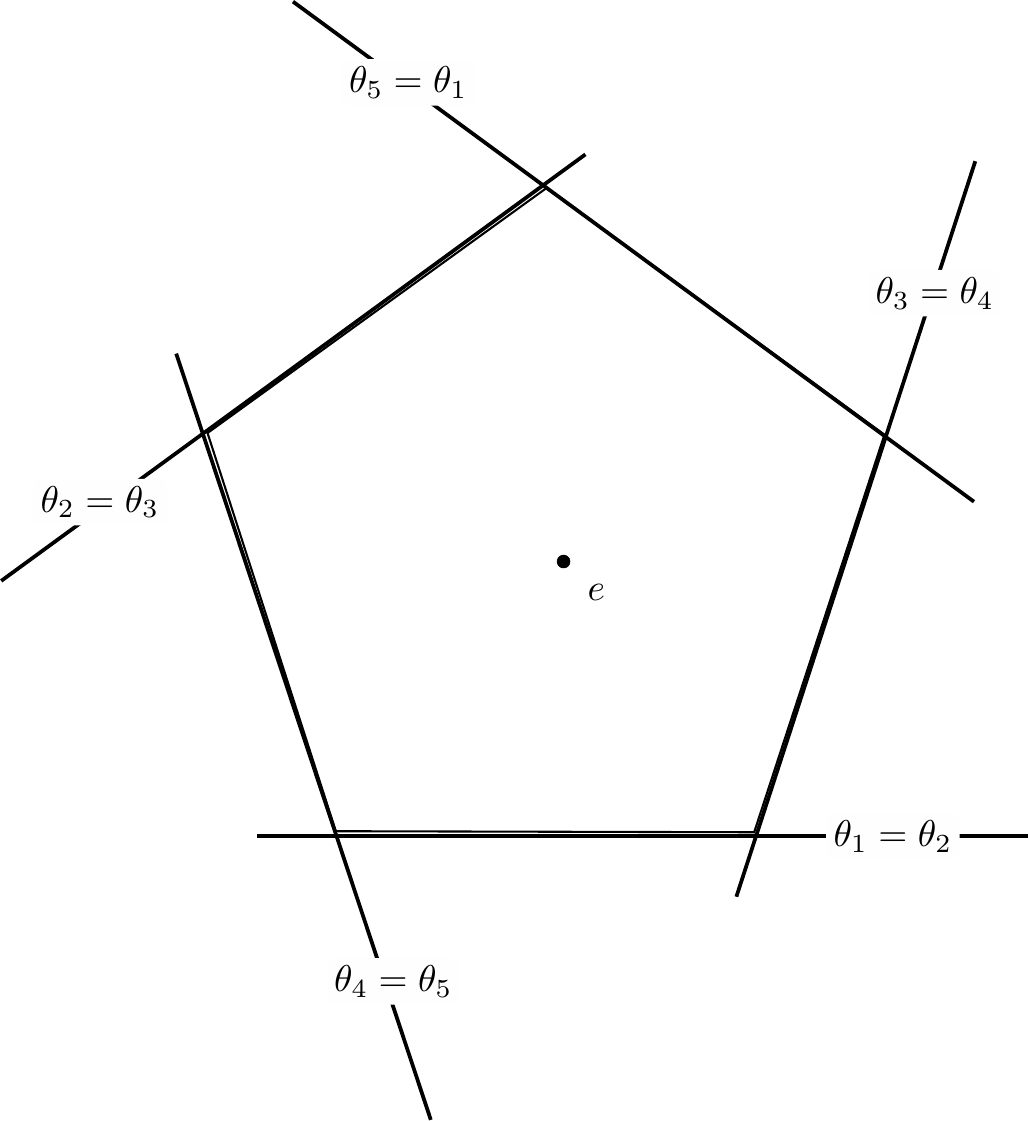}
}
\caption{When $m=5$, $\mathcal V^{\max}$ is the union of 24 pentagons like this one.
	\label{pentagon}}
\end{figure}

Here's a sketch of why $\mathcal{V}^{\textrm{max}} = \{ \Theta \in \mathbb{T}^m : \theta _1 = 0 \textrm{ and } Z(\Theta ) = O\}$ is a  4-handled torus when $m = 5$.  There are five $4$-planes in $\mathbb{T}^5$ defined by $\theta _1 = \theta _2$, \dots , $\theta _4 = \theta _5$, $\theta _5 = \theta _1$.  They intersect $\mathcal{V}^{\textrm{max}}$ in five curves, defining a pentagon centered at $e = (0, 2\pi /5, 4\pi /5, 6\pi /5, 8\pi /5)$ (Figure~\ref{pentagon}).   There are 4!=24 permutations of the non-zero angles $\theta_2 \dots, \theta_5$ defining $e$, so we get 24 pentagons that cover $\mathcal{V}^{\textrm{max}}$.  Each edge is shared by two adjacent pentagons, and each vertex is shared by four pentagons.  Thus the Euler characteristic of $\mathcal{V}^{\textrm{max}}$ is
$$
\chi  \;=\; F-E+V  \;=\; 24 - (24 \cdot 5) \div 2 \;+\;   (24 \cdot 5) \div 4 \;=\; -6,
$$
and modulo a few things like connectedness of $\mathcal{V}^{\textrm{max}}$ we see that it's a 4-handled torus.

\begin{question}
   Is the curvature of $\mathcal V^{\max}$ constant?
\end{question}

\section{Antipodality} 
\label{s.antipodality}

The critical points of $V$ are the points where $\operatorname{grad}V =0$.  They are exactly the equilibria of the system, and they are exactly the fixed points of $\varphi $.
$$
\textrm{zeros of $\operatorname{grad}V$ = critical points of $V$ = equilibria = fixed points of $\varphi $.}
$$

\begin{definition*}
$\Theta = (\theta _1, \dots , \theta _m) \in \mathbb{T}^m$ is \textbf{antipodal} if every difference $\theta _j - \theta _k$ equals zero modulo $\pi $.  (Note: $\pi $, not $2\pi $.)
\end{definition*}

\begin{theorem}
\label{t.antipodal}
All non-maximal critical points $\Theta $   are antipodal, and conversely, all antipodal $\Theta $ are critical. 
\end{theorem}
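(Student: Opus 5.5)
The plan is to rewrite the Kuramoto vector field in terms of the centroid $Z$ and read off the critical-point equations. Write $mZ = \sum_k e^{i\theta_k} = m r e^{i\eta}$. For each $j$,
\[
\sum_k \sin(\theta_k - \theta_j) \;=\; \operatorname{Im}\Big(e^{-i\theta_j}\sum_k e^{i\theta_k}\Big) \;=\; m r \sin(\eta - \theta_j).
\]
So $K(\Theta)_j = m r\sin(\eta-\theta_j)$, and $\Theta$ is critical if and only if $r\sin(\eta - \theta_j) = 0$ for all $j$.

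For the first assertion, take a non-maximal critical point $\Theta$. By Corollary~\ref{c.centroid}, non-maximality means $r = \abs{Z} \neq 0$. The equations then force $\sin(\eta - \theta_j)=0$ for every $j$, that is, $\theta_j \equiv \eta \pmod \pi$. Consequently every difference $\theta_j - \theta_k$ is $0$ modulo $\pi$, which is exactly antipodality.

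For the converse, let $\Theta$ be antipodal. Using the diagonal action we may assume $\theta_1 = 0$; since this only shifts the angles, it does not change criticality. Then each $\theta_j \in \{0,\pi\}$, so $\sin(\theta_k - \theta_j) = 0$ for all $j,k$. Every component of $K$ therefore vanishes termwise, and $\Theta$ is critical. We can also see this directly: $Z$ is real, so $\eta\in\{0,\pi\}$ when $r\ne 0$, and the same formula gives zero. When $r=0$ the point is a maximum and is trivially critical.

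There is no real obstacle here. The only point needing care is the identity expressing $K_j$ through $Z$, which is a one-line computation with complex exponentials. One should also note that antipodal points can be maxima when $m$ is even (half the angles at $0$, half at $\pi$). This does not contradict the statement, because the converse only claims criticality.
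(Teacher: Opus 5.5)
Your proof is correct and is essentially the paper's argument: the paper rotates so that $Z$ is real and differentiates $V=(1-r^2)\frac{m^2}{2}$ in the $\theta_1$-direction, getting a derivative proportional to $Z\sin\theta_1$, which is your identity $K_j = mr\sin(\eta-\theta_j)$ computed one coordinate at a time; the converse is the same termwise vanishing of $\sin(\theta_k-\theta_j)$. Your version is slightly cleaner, since it argues directly rather than by contradiction and needs no separate treatment of the case $r=1$ or of the sign of $\sin\theta_1$.
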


\begin{proof}[\bf Proof]
First assume $\Theta $ is antipodal.   The Kuramoto ODE is
$$
\theta _j{}^{\prime} \;=\;  \sum_k \sin(\theta _k - \theta _j) \qquad 1 \; \leq  \; j \; \leq  \; m,
$$
which makes  every term in the sum  zero, so $\Theta $ is fixed by $\varphi $ and critical for $V$.

Conversely, suppose that some critical   $\Theta \in \mathbb{T}^m \setminus V^{\max}$   has  non-antipodal angles.  We will derive a contradiction.

The centroid is $Z = re^{i\psi }$ with $0 \leq  r \leq  1$.  Since $\Theta  \notin V^{\max}$, $r > 0$.  On the other hand, if $r=1$ then $V = 0$ and all the angles of $\Theta $ are equal, which is an extreme form of antipodality.  Thus, we can assume $0 < r < 1$.  By   angle rotation  we can also assume that $Z$ is real, and
$$
Z \;=\;  \frac{1}{m}\sum \cos\theta _j \qquad \textrm{while} \qquad \sum \sin\theta _j = 0.
$$ 
Since the angles of $\Theta $ are not all antipodal, for at least one, say $\theta _1$,   $e^{i\theta _1}$ is not real, so we can assume that
$$
0 < r = \abs{Z} < 1 \qquad 0 < \theta _1 < \pi .
$$  

 Set $\Theta (s) = (\theta _1 + s, \theta _2, \dots , \theta _m)$.  It is a smooth curve through $\Theta $. Theorem~\ref{t.centroid}  states that $$
V(\Theta (s)) \;=\;  (1-r^2(s))\frac{m^2}{2}
$$
where 
$$
r^2(s) \;=\;  \Big(\frac{1}{m} \sum_j \cos(\theta _j(s)) \Big)^2 \;\; +\;\; \Big(\frac{1}{m} \sum_j \sin(\theta _j(s)) \Big)^2.
$$
The derivative with respect to $s$  of the first squared sum is   
$$
2\Big(\frac{1}{m} \sum_j \cos\theta _j(s) \Big)\Big(\frac{1}{m}\sum_j - \sin\theta _j(s)\frac{d\theta _j}{ds}\Big)
$$
which at $s=0$ is
$$
\frac{-2 Z \sin \theta _1}{m} \not= 0 .
$$
The derivative with respect to $s$ of the second squared sum is 
$$
2\Big(\frac{1}{m} \sum \sin\theta _j(s) \Big)\Big(\frac{1}{m}\sum \cos\theta _j(s)\frac{d\theta _j}{ds}\Big),
$$
which equals zero at $s=0$ since $\sum \sin\theta _j = 0$.  Hence, the derivative of  $r^2(s)$ with respect to $s$ at $s =0$ is nonzero.  This implies that  $V$ varies at first order in the $\theta _1$-direction  at $\Theta $, which contradicts the assumption that  $\Theta $ is  critical.  
\end{proof}

\section{Exemplars} 
\label{s.exemplars}

\begin{definition*}
If $\Theta $ is a non-maximal critical point then the diagonal  through $\Theta $ is a \textbf{critical diagonal}.  (All its points are critical.) 
An \textbf{exemplar} of  a critical   diagonal is a point $\Theta ^{\displaystyle \ast}$ in the critical diagonal whose angles are either zero or $\pi $, and which has fewer angles $\pi $ than angles zero.  It is a ``best example'' of points in the critical diagonal. 
\end{definition*}

Note that since $\Theta $ is non-maximal its centroid is nonzero, so an exemplar can't have equally many angles $\pi $ and zero.

\begin{theorem}
\label{t.exemplar} 
The critical set of $V$ consists of $V^{\max}$ and   finitely many disjoint critical diagonals, each of which has index $u < m/2$ and contains a unique exemplar.  
\end{theorem}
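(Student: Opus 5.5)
By Theorem~\ref{t.antipodal}, the critical set of $V$ is the union of $V^{\max}$ with the set of non-maximal antipodal points. So I will work only with antipodal $\Theta\notin V^{\max}$ and prove four things: existence and uniqueness of the exemplar, finiteness and disjointness of the critical diagonals, and the index bound $u<m/2$.

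\emph{Exemplars.} If $\Theta$ is antipodal then every $\theta_j\equiv\theta_1 \pmod \pi$. Acting diagonally by $T_{-\theta_1}$ gives a point of $D_\Theta$ whose angles are all $0$ or $\pi$. Let $a$ be the number of angles equal to $\pi$. The centroid is then real and equals $(m-2a)/m$, which is nonzero because $\Theta\notin V^{\max}$. So $a\neq m/2$. If $a>m/2$, apply $T_\pi$ instead, which swaps $0$ and $\pi$. This yields an exemplar.

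For uniqueness: a point of $D_\Theta$ with all angles in $\{0,\pi\}$ must be $\Theta^\ast+\tau\boldsymbol 1$ with $\tau\in\{0,\pi\}$. The shift $\tau=\pi$ exchanges the counts $a\leftrightarrow m-a$, so only one of the two choices has fewer $\pi$'s.

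\emph{Finiteness and disjointness.} Exemplars are determined by the subset of coordinates equal to $\pi$, so there are only finitely many critical diagonals. Distinct diagonals are disjoint, as noted in the introduction. The non-maximal critical diagonals are also disjoint from $V^{\max}$, since $V$ is constant along diagonals.

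\emph{Index.} This is the main obstacle, and I would attack it by computing the Hessian at the exemplar. Let $I$ be the set of indices with $\theta_i=\pi$, with $|I|=a<m/2$. For $DK=-\mathrm{Hess}\,V$ we have $\partial K_j/\partial\theta_k=\cos(\theta_k-\theta_j)$ for $k\ne j$ and $\partial K_j/\partial\theta_j=-\sum_{k\ne j}\cos(\theta_k-\theta_j)$. Set $\sigma_j=\pm1$ according to whether $j\notin I$ or $j\in I$. Then $\cos(\theta_k-\theta_j)=\sigma_j\sigma_k$. Conjugating by the diagonal sign matrix $S=\mathrm{diag}(\sigma)$ gives
$$S\,DK\,S=\boldsymbol 1\boldsymbol 1^T-\mathrm{diag}(c_j),\qquad c_j=\sigma_j(m-2a).$$
Here I have used $\sum_k\sigma_k\sigma_j=\sigma_j(m-2a)$; the diagonal term works out to $1-c_j$, which fits the stated form. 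So, up to this similarity, $DK$ is a rank-one perturbation of a diagonal matrix. Its diagonal entries are $-(m-2a)<0$ on the $m-a$ indices outside $I$ and $+(m-2a)>0$ on the $a$ indices in $I$.

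Eigenvectors supported inside $I$ with coordinate sum zero give the eigenvalue $m-2a>0$ with multiplicity $a-1$. Eigenvectors supported outside $I$ with coordinate sum zero give $-(m-2a)$ with multiplicity $m-a-1$. The remaining two-dimensional invariant block is spanned by the indicator vectors of $I$ and of $I^c$. It contains the zero eigenvalue coming from the diagonal direction, since $S\boldsymbol 1$ maps to zero, and its trace is $m-c_I-c_{I^c}$. I expect the trace computation to show the other eigenvalue of this block is $+m$, or at least positive when $a\ge1$.

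Granting that, the unstable index is $u=a$ and the stable index is $m-a-1$, so the critical diagonal is hyperbolic transversally and $u=a<m/2$. Checking this $2\times2$ block carefully, including the sign when $a=0$ (the sink case, where the block is degenerate), is the step I would verify most carefully.
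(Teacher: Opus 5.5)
Your argument is correct. The exemplar, uniqueness and finiteness parts match the paper's proof: rotate an antipodal point so its angles lie in $\{0,\pi\}$, use the nonzero centroid to exclude a tie, and count subsets.

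Where you differ is the index bound $u<m/2$. The paper's proof of this theorem does not address it. It is supplied only afterwards by Theorem~\ref{t.eigenstructure}, where an explicit orthogonal family of eigenvectors of the Hessian at $(\pi,\dots,\pi,0,\dots,0)$ is written down and checked row by row. Your sign conjugation $S\,DK\,S=\boldsymbol 1\boldsymbol 1^T-\mathrm{diag}(c)$ derives the same spectrum instead of verifying a guessed basis. It also shows why the eigenvalues are $\pm(m-2a)$ on sum-zero vectors supported in $I$ or in $I^c$. The paper's route, in exchange, produces a concrete orthogonal basis (including the $dz$-vector) that it reuses later for templates and imprints.

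The step you left open is immediate. In the basis $(\mathbf 1_I,\mathbf 1_{I^c})$ the remaining block is $\begin{pmatrix} m-a & m-a\\ a & a\end{pmatrix}$, with trace $m$ and determinant $0$, so its eigenvalues are $0$ and $m$. The corresponding eigenvector $S\bigl((m-a)\mathbf 1_I+a\mathbf 1_{I^c}\bigr)$ is, up to sign, the paper's $dz$-vector. For $a=0$ the block degenerates to the line through $\boldsymbol 1$ and the index is $0$, so in every case $u=a<m/2$.
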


\begin{proof}[\bf Proof]  Let $\Theta $ be a non-maximal critical point of $V$.  
Theorem~\ref{t.antipodal} states that $\Theta $ has antipodal angles; i.e.,   there is some $\alpha  \in \mathbb R \mod 2 \pi $ such that each angle $\theta _i$ of $\Theta $ equals $\alpha $ or equals  $\alpha  + \pi $.  It is clear that angle rotation has no effect on antipodality, so diagonals consist either wholly of critical points or have none at all.  

As remarked above, since the centroid of $\Theta $ is nonzero, there can't be just as many $\theta _i$ that equal $\alpha $ as equal $\alpha + \pi $.   So suppose that there are more $\theta _i$ equal to $\alpha $ than equal to $\alpha  +\pi $.  Rotate by an angle of $-\alpha$ to get  an exemplar  $\Theta^{\displaystyle \ast} $.   Uniqueness is clear.  And so is the fact that there are only finitely many non-maximal  critical diagonals, since there are only $\binom{m}{d}$    $m$-tuples having $d$ entries $\pi $ and the rest zero. 
\end{proof}

\begin{corollary}
\label{c.psifixedpoints}
	The quotient Kuramoto flow has $2^{m-1}$ non-maximal fixed points when $m$ is odd, and $2^{m-1}-{\frac{1}{2}}\binom{m}{m/2} $ when $m$ is even.
\end{corollary}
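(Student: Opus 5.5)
The plan is to count exemplars. By Theorem~\ref{t.exemplar}, the non-maximal critical points of $V$ form finitely many disjoint critical diagonals, each containing a unique exemplar. Under the quotient map $q$, each diagonal becomes a single point of $\mathcal Q$, and by Theorem~\ref{t.grads} the fixed points of $\psi$ are exactly the critical points of $V_{\mathcal Q}$, i.e.\ the images of critical diagonals. Hence the non-maximal fixed points of $\psi$ are in bijection with the exemplars.

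Next I will describe the exemplars concretely. These are the points $\Theta^{\ast}$ with all angles in $\{0,\pi\}$ and with $d$ angles equal to $\pi$, where $d<m-d$, i.e.\ $d<m/2$. Every such $m$-tuple is antipodal, hence critical by Theorem~\ref{t.antipodal}. Its centroid is $(m-2d)/m\neq 0$, so it is non-maximal. Distinct tuples lie on distinct diagonals, since a diagonal contains at most one exemplar. The count is therefore $\sum_{d<m/2}\binom{m}{d}$.

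Finally I will evaluate this sum using the symmetry $\binom{m}{d}=\binom{m}{m-d}$ and $\sum_{d=0}^m\binom md=2^m$.
\begin{itemize}
\item If $m$ is odd, no index satisfies $d=m/2$, so the indices split evenly into $d<m/2$ and $d>m/2$, and the sum is $2^{m-1}$.
\item If $m$ is even, the middle term $\binom{m}{m/2}$ is removed first and the remainder is halved, giving $2^{m-1}-\frac12\binom{m}{m/2}$.
\end{itemize}
The only point needing care is the injectivity from exemplar tuples to diagonals, which is the uniqueness statement in Theorem~\ref{t.exemplar}. If one prefers a direct check: two $\{0,\pi\}$-tuples on the same diagonal differ by $\alpha\boldsymbol 1$ with $\alpha\in\{0,\pi\}$. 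The choice $\alpha=\pi$ swaps $d$ and $m-d$, which is excluded by the strict inequality $d<m/2$.
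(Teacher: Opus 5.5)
Your proposal is correct and follows the paper's argument. The non-maximal fixed points of $\psi$ correspond bijectively to exemplars, that is, to $\{0,\pi\}$-tuples with fewer than $m/2$ entries equal to $\pi$, and the resulting count $\sum_{d<m/2}\binom{m}{d}$ is evaluated using $\binom{m}{d}=\binom{m}{m-d}$ and $\sum_d\binom{m}{d}=2^m$, removing the middle term when $m$ is even. You are more explicit than the paper about why fixed points, critical diagonals and exemplars are in bijection, which the paper leaves implicit.
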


\begin{proof}[\bf Proof] Use  $\binom{m}{d} = \binom{m}{m-d}$ and
 the Binomial Theorem $\sum_{d=0}^m \binom{m}{d}r^d = (1+r)^m$ with $r=1$.  When $m$ is even we delete the middle term $\binom{m}{m/2}$ because it counts $m$-tuples $\Theta $ with equally many angles $\pi $ and zero, all of which are in $V^{\max}$.  When $m$ is odd, there is no middle term $\binom{m}{m/2}$ to delete.  
  \end{proof}

\section{Critical Eigenstructure} 
\label{s.eigenstructure}

 Consider a critical diagonal and its exemplar $\Theta ^{\displaystyle \ast}$.  After permuting   angles, we can assume  $\Theta ^{\displaystyle \ast} = (\pi , \dots , \pi , 0, \dots , 0)$ where there are $d$ angles $\pi $,  $z$ angles $0$,  $d < z$, and $d + z = m$.  See Theorem~\ref{t.exemplar}. The local behavior of the Kuramoto dynamics at the critical diagonal is determined by the eigenvectors and eigenvalues of   the Hessian matrix  of $-V$ there.  We have
 $$
V(\Theta ) \;=\; \frac{1}{2}\sum_{\ell, k = 1}^m (1 -  \cos(\theta _\ell - \theta _k)) .
$$
The Hessian is the symmetric   $m \times  m$ matrix 
\begin{equation*}
\begin{split}
\frac{\partial ^2(-V)}{\partial \theta _i\partial \theta  _j} &\;=\;  \sum_{k = 1}^m \frac{\partial \sin (\theta _k - \theta _j)}{\partial \theta _i}
\\
& \;=\; \cos(\theta _i - \theta _j) - \delta _{ij}\sum_{k=1}^m  \cos(\theta _k - \theta _i).
\end{split}
\end{equation*}

\begin{remark*}
Since we are dealing with gradients, the Hessian of $-V$ and the   Jacobian of $K =  \operatorname{grad}(-V)$ are the same matrix.  On the other hand,  the Jacobian of the $K$-flow $ \varphi _t$ at a critical point is    $e^{tH}$.
\end{remark*}

To fill in the entries of the Hessian, we just need to check the cases $i=j\leq d$, $i < j \leq d$, $i\leq d <  j$, $d < i =j $, and $d < i <j$.  The others follow from symmetry of the Hessian.  Set 
$$
a \;=\; 1+(z-d)  \quad \textrm{and} \quad b = 1 - (z-d).
$$
Then the Hessian is  
$$
\gls{H} \;=\;  
\begin{bmatrix}
a  & 1 & \dots &1 & &-1 &\dots &\dots & \dots &-1
\\
1 &  a  &\dots & 1 &&-1 &\dots & \dots &\dots &-1
\\
\vdots &\ddots&\ddots&\vdots &&\vdots &&&&\vdots 
\\
1 & 1    & \dots & a & &-1  & \dots  & \dots &\dots   & -1 
\\
{\,}
\\
-1 & \dots    & \dots  & -1  & & b & 1 & 1 & \dots &1
\\
-1 & \dots    & \dots  & -1  & & 1 & b & 1 & \dots &1 
\\
-1   & \dots  & \dots  & -1   & & 1 &  1 & b &\dots &1
\\
\vdots  & &  &\vdots  & & \vdots&&\ddots&\ddots&\vdots 
\\
-1   & \dots  & \dots  & -1   & & 1 &  \dots & \dots & 1 &b 
\end{bmatrix}
$$
Let's check $H_{ii}$ for $i \leq d$.  It's
\begin{equation*}
\begin{split}
&\cos(\theta _i - \theta _i) - \sum_{k=1}^m\cos(\theta _k - \theta _i) 
\\
& \;=\;  1 - \sum_{k=1}^d \cos(\theta _k -\theta _i) - \sum_{k=d+1}^m \cos(\theta _k - \theta _i) 
\\
& \;=\; 1 - d\cos(\pi -\pi ) -z\cos(0 - \pi ) \;=\; 1 + (z-d) \;=\; a
\end{split}
\end{equation*}
as claimed.

\begin{theorem}
\label{t.eigenstructure}
The eigenstructure of $H$ at $\Theta^{\displaystyle \ast}  = (\pi , \dots , \pi , 0, \dots , 0)$ is as follows.
\begin{itemize}

\item[(a)]
The diagonal vector $(1, \dots , 1)^T$ is an eigenvector with eigenvalue $0$.
\item[(b)]
  The $dz$-vector $(z, \dots , z, -d, \dots , -d)^T$, with $d$ entries $z$ and $z$ entries $-d$ is an eigenvector with eigenvalue $m$.  
\item[(c)] 
The $d-1$ vectors 
$$
(1, -1, 0, \dots , 0)^T, \dots  , (1, 1, \dots , 1, -(d-1), 0, \dots , 0)^T
$$ are eigenvectors with eigenvalue $z-d$.  (Each has a tail of $z $ zeros.)
\item[(d)]
The $z-1$ vectors 
$$
(0, \dots  , 0, 1, -1, 0, \dots , 0)^T, \dots , (0, \dots , 0, 1, \dots 1, -(z-1))^T
$$ 
are eigenvectors with eigenvalue $d-z$. (Each has a head of $d$ zeros.)
\item[(e)] These eigenvectors are mutually orthogonal. 
\end{itemize}
\end{theorem}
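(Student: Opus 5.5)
The plan is to write $H$ in block form and reduce each eigenvector claim to a short matrix–vector computation. Let $\boldsymbol 1_d,\boldsymbol 1_z$ be all-ones vectors and $J$ the all-ones matrices of the indicated sizes. The diagonal blocks are $J_{dd}+(a-1)I_d=J_{dd}+(z-d)I_d$ and $J_{zz}+(b-1)I_z=J_{zz}-(z-d)I_z$. The off-diagonal blocks are $-J_{dz}$ and $-J_{zd}$. The key observation is that $J$ applied to a vector gives the sum of its entries times $\boldsymbol 1$. So on any vector $v=(x,y)$ with $x\in\mathbb R^d$, $y\in\mathbb R^z$, we get
\[
Hv=\bigl((\Sigma x-\Sigma y)\boldsymbol 1_d+(z-d)x,\;(\Sigma y-\Sigma x)\boldsymbol 1_z-(z-d)y\bigr),
\]
where $\Sigma$ denotes the sum of entries. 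I will first establish this formula from the entries computed just before the statement; the remaining diagonal-type cases are checked exactly like $H_{ii}$.

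Each item then follows by substitution.

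(a) For $v=\boldsymbol 1$, $\Sigma x=d$ and $\Sigma y=z$. The first block is $(d-z)\boldsymbol 1_d+(z-d)\boldsymbol 1_d=0$, and the second is $(z-d)\boldsymbol 1_z-(z-d)\boldsymbol 1_z=0$.

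(b) For $x=z\boldsymbol 1_d$, $y=-d\boldsymbol 1_z$, we have $\Sigma x-\Sigma y=2dz$. The first block is $(2dz+z^2-dz)\boldsymbol 1_d=z(d+z)\boldsymbol 1_d=mx$. The second is $(-2dz-(z-d)(-d))\boldsymbol 1_z=-d(z+d)\boldsymbol 1_z=my$. So the eigenvalue is $m$.

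(c) The listed vectors have $y=0$ and $\Sigma x=0$, so $Hv=((z-d)x,0)$ and the eigenvalue is $z-d$.

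(d) Symmetrically, $x=0$ and $\Sigma y=0$ give $Hv=(0,-(z-d)y)$, so the eigenvalue is $d-z$.

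(e) Orthogonality comes in two layers.

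First, the Helmert-type vectors $(1,\dots,1,-(k),0,\dots)$ are mutually orthogonal within each family. The $k$-th vector is supported on the first $k+1$ coordinates, and a later vector is constant on that support, so the inner product equals that constant times the sum of entries of the $k$-th vector, which is $0$.

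Second, vectors from different families are orthogonal:
\begin{itemize}
\item (c) and (d) have disjoint supports;
\item (a) and (b) are orthogonal because $dz-zd=0$;
\item every (c) or (d) vector has zero entry sum on each block, while (a) and (b) are constant on each block, so these pairs are orthogonal too.
\end{itemize}

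The count is $1+1+(d-1)+(z-1)=m$ nonzero orthogonal vectors, so they form a full orthogonal eigenbasis. When $d=0$, family (b) is the zero vector and (c) is empty; I will note this degenerate case separately, in which only $\boldsymbol 1$ and family (d) remain, giving $1+(m-1)=m$ vectors.

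No step is a real obstacle. The only care needed is the bookkeeping of the block form of $H$, in particular confirming the signs $a-1=z-d$ and $b-1=d-z$ on the diagonal. The rest is mechanical.
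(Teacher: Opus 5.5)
Your proof is correct and follows the same route as the paper: direct multiplication of the explicit matrix $H$ against each candidate vector. Your single identity $Hv=(\Sigma x-\Sigma y)(\boldsymbol 1_d,-\boldsymbol 1_z)+(z-d)(x,-y)$, that is $H=\sigma\sigma^T+(z-d)\,\mathrm{diag}(I_d,-I_z)$ with $\sigma=(\boldsymbol 1_d,-\boldsymbol 1_z)$, is a tidier packaging of the paper's row-by-row check, and it actually carries out the cases in (c)--(e) that the paper dismisses as ``similar.''
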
 

\begin{remark*}
If $\Theta $ is a permutation or a rotation  of $\Theta ^{\displaystyle \ast}$ then the corresponding eigenstructure   assertions hold at $\Theta $.
\end{remark*}

\begin{remark*}
We have written $(x_1, \dots , x_m)^T$ to indicate the transpose in order to make the formulas  look right.
\end{remark*}

\begin{remark*}  Consider the edge cases $d=0, 1$.  

\underline{Case 1.} $d=0$. Assertion (b) is true but trivial: The $dz$-vector is the zero vector.  (c) is vacuous since there are no vectors of this form.  (d) needs no special proof when $d=0$.

\underline{Case 2.}  $d = 1$. (b) and (d) need no special proof.  (c) is vacuous since there are no vectors of this form.   
\end{remark*}

\begin{remark*}
(c) means that $e^H$ has an eigenspace of dimension $d-1$  on which each vector is expanded by the factor $e^{z-d} > 1$, while (d) means that $e^H$ has an eigenspace of dimension $z-1$ on which each vector is contracted by the factor $e^{d-z} < 1$.   The expanding eigenspace is $E^u$, the contracting eigenspace is $E^s$, and the diagonal eigenspace is $E^c$. The  orthogonal splitting
$$
T_{\Theta ^{\displaystyle \ast}}(\mathbb{T}^m) \;=\;  E^u \oplus E^c  \oplus E^s
$$
	extends to a $T\varphi $-invariant orthogonal splitting over the critical diagonal  with exemplar $\Theta ^{\displaystyle \ast}$.  This demonstrates that  non-maximal critical diagonals are normally hyperbolic under the Kuramoto flow, so  it makes sense to refer to them as \textbf{saddle diagonals}. A reference for Normal Hyperbolicity is the book by \ocite{HirschPughShub}.
\end{remark*}

\begin{remark*}
	The dimension of the expanding space of $e^H$ is $d$ since it is spanned by the $d-1$  eigenvectors appearing in (d) and the $dz$-vector.  No other tangent vectors are expanded by $e^H$.  Therefore the  index is $d$.  (By definition \emph{the index is the dimension of the  unstable manifold.} Rotational invariance implies that we have the same index at all points of a saddle circle.)  For example, the main diagonal $D_{\boldsymbol 0}$ containing $\boldsymbol{0}= (0, \dots , 0)$  is a sink  for the Kuramoto flow. Its unstable manifold at each  $\Theta \in D_{\boldsymbol 0}$ is just the point $\Theta $ itself, and the  index is zero.
\end{remark*}

\begin{proof}[\bf Proof of  Theorem~\ref{t.eigenstructure}]
Each row of the matrix $H$ has sum zero.  Hence it kills the diagonal vector $(1, \dots  , 1)^T$, verifying (a).

The first row of $H$ times the $dz$-vector is
\begin{equation*}
\begin{split}
\begin{pmatrix}
a & 1 & \dots &1&-1&\dots &-1
\end{pmatrix}
\begin{pmatrix}
z
\\
z
\\
\vdots 
\\
z
\\
-d
\\
\vdots
\\
-d
\end{pmatrix} &=
az+(d-1)(z) +zd 
\\
&=\; (a + d-1 +d)(z) 
\\
&=\; (1 + z-d +d -1 +d)(z) \;=\; mz.
\end{split}
\end{equation*}
The same holds for the other rows and we see that $H$ multiplies the $dz$-vector by $m$, verifying (b).

Consider the first row of  $H$ times the column  $(1,-1, 0, \dots , 0)^T$,
\begin{equation*}
\begin{pmatrix}
a & 1 & \dots &1&-1&\dots &-1
\end{pmatrix}
\begin{pmatrix}
1
\\
-1
\\
\vdots 
\\
0
\\
0
\\
\vdots
\\
0
\end{pmatrix} 
\;=\; a-1 =  (z-d)(1).
\end{equation*}
The product of the second row times $(1, -1, 0 \dots , 0)^T$ is  
$
-1 + a  = z-d,
$ 
while the product of the 
third, fourth, and $m^{\textrm{th}}$ row times $(1, -1, 0, \dots , 0)^T$ is zero.  All together, $(1,-1, 0, \dots , 0)^T$ is an eigenvector with eigenvalue $z-d$, which verifies (c) for the first  eigenvector with eigenvalue $z-d$. Verification that the other vectors in (c) are eigenvectors with eigenvalue $z-d$ is similar, and so are (d), (e).
\end{proof}

\begin{Ups}
Theorem~\ref{t.eigenstructure} implies  that the set   of equilibria of the Kuramoto flow consists of   $V^{\max}$ and finitely many normally hyperbolic critical diagonals.  \emph{$\varphi $  is like a Morse flow, except that the critical points are diagonals and there is the presence of a set $V^{\max}$
which is smooth and normally hyperbolic for $m$ odd, and contains
isolated singularities for $m$ even.}
\end{Ups}


\begin{remark*}
The three eigenspaces are called unstable, center, and stable. At the sink exemplar $(0, \dots , 0)$ the unstable eigenspace is zero.   At points of $V^{\max}$ there is a similar splitting of $T(\mathbb{T}^m)$ in which the stable eigenspace is zero, while the center eigenspace is tangent to $V^{\max}$. 
\end{remark*}

\begin{remark*}
Here's   a  presentation of the  Kuramoto eigenvectors with an alternate notation.   We're looking at $I = \{1, \dots , d\} \subset [m]$ with $ d < m/2$.  All    should be column vectors, so mentally add a $T$ where needed. The unstable eigenvectors are 
\begin{equation*}
\begin{split}
(1,-1,0, \dots , 0&\;|\; 0, \dots , 0)
\\
(1,1,-2, 0, \dots , 0&\;|\;0, \dots , 0)
\\
(1,1,1, -3, 0, \dots , 0&\;|\;0, \dots , 0)
\\
\dots 
\\
(1, 1, \dots , 1, -(d-1)&\;|\;0, \dots , 0)
\end{split}
\end{equation*}

The vertical bar has $d$ entries to its left and $z = m-d$ entries to its right.  Each vector is an $m$-tuple,
\begin{equation*}
\begin{split}
 d \textrm{ entries} &\;|\;  z \textrm{ entries}.
\end{split}
\end{equation*}
There are  $d-1$ of these eigenvectors, each with eigenvalue  $m-2d > 0$.

Then there's the  $dz$-eigenvector
\begin{equation*}
\begin{split}
( z, \dots , z &\;|\; -d , \dots , -d )
\end{split}
\end{equation*}
with  eigenvalue $m$.  

Finally there are the $z-1$ stable eigenvectors
\begin{equation*}
\begin{split}
(0, \dots , 0) &\;|\; (1, -1, 0, \dots , 0)
\\
(0, \dots , 0) &\;|\; (1, 1, -2,0, \dots , 0)
\\
(0, \dots , 0) &\;|\; (1, 1, 1,-3,0, \dots , 0)
\\
\dots 
\\
(0, \dots , 0) &\;|\; (1, 1, \dots , 1, -(z-1)).
\end{split}
\end{equation*}
Each has eigenvalue $d-z < 0$.  
\end{remark*}

\section{Invariant Subtori} 
\label{s.invariantsubtori}

We want to show that in some sense, Kuramoto and  Perfect Morse are topologically equivalent.  
To some extent we can break down the Kuramoto flow using the subtori of $\mathbb{T}^m$.

\begin{definition*}
If $I \subset  [m]$  then the \textbf{principal $I$-torus} is
$$
	\gls{TI} \;=\; \{\Theta \in \mathbb{T}^m : \textrm{ all $\theta _j$ with $ j \notin I$ are zero}\}.$$
\end{definition*}
Since it's a product, the exponent $I$ in $\mathbb{T}^I$ is appropriate.  For $\Theta \in \mathbb{T}^I$, the components $\theta _i $ with $i \in I$ can vary freely, while the other components $\theta _j$ are constrained to equal zero.  By ``varying freely'' we mean that the $\theta _i$ with $i \in I$ can be any angles in $[0, 2\pi ]$ without  constraint.  Thus, the dimension of $\mathbb{T}^I$ equals the cardinality of $I$. 

Thus, $\mathbb{T}^I$ is a smooth, $\abs{I}$-dimensional   subtorus of $\mathbb{T}^m$, and the collection $\mathbb{T}^I$ with $I \subset [m]$ is a filtration of $\mathbb{T}^m$ in the sense that 
\begin{itemize}

\item[(a)]
 $J\subset I$ if and only if $\mathbb{T}^J \subset  \mathbb{T}^I$.

\item[(b)]
Each $\Theta \in \mathbb{T}^m$ belongs to a unique smallest $\mathbb{T}^I$.  

\item[(c)]  $\mathbb{T}^{[m]} = \mathbb{T}^m$ and $\mathbb{T}^{\emptyset} = (0, \dots , 0)$.
\end{itemize}

Moreover, $\mathbb T^I \cap \mathbb T^{J} = \mathbb T^{I \cap J}$.

\begin{definition*}
The (cellular)  \textbf{boundary} and (cellular)  \textbf {interior} of $\mathbb{T}^I$ are
\begin{equation*}
\begin{split}
\partial \mathbb{T}^I &\;=\;  \{\Theta  \in \mathbb{T}^I : \textrm{ at least one $ \theta _i$ with $i \in I$ is zero}\},
\\
\mathring{\mathbb{T}}^I &\;=\; \{\Theta  \in \mathbb{T}^I : \textrm{ all $\theta _i$ with $i \in I$ are nonzero}\}.
\end{split}
\end{equation*}
\end{definition*}

Clearly, $\mathbb{T}^I$ is $\abs{I}$-dimensional, $\mathbb{T}^I = \mathring{\mathbb{T}}^I \sqcup \partial \mathbb{T}^I$, and $\partial \mathbb{T}^I$ is the union of the $\mathbb{T}^J$ with $J \subsetneqq I$. 
Thinking  of $\mathbb{T}^m$ as the unit $m$-cube modulo $2\pi$, see Section \ref{s.intro}, $\mathring{\mathbb{T}}^I$ corresponds to its open $I$-face.  This gives $\mathbb{T}^m$ the structure of a \emph{cell complex} whose   (open) cells are the open faces, $\mathring{\mathbb{T}}^I$, and whose closed cells are the principal subtori $\mathbb{T}^I$.  
%

 The principal subtori $\mathbb{T}^I$ with $I \subsetneqq \mathbb{T}^m$  are not invariant under $\varphi $ since $\varphi _t$ immediately destroys the equality  $\theta _i = 0$ for $i \notin I$.  On the other hand, we can look at the \textbf{oblique subtorus} 
 $$
\operatorname{Diag}(\mathbb{T}^I )
 $$
 where we saturate $\mathbb{T}^I$ by including all the diagonals through its points.  $\operatorname{Diag}(\mathbb{T}^I)$ has dimension $ \abs{I} + 1$. Let $\gls{QI} \subset Q$ be the projection of $\operatorname{Diag}(\mathbb{T}^I)$ under the quotient map 
 $q$. 
The $\mathcal Q^I$ are also a filtration of $\mathcal Q$, but the property 
$\mathcal Q^{I} \cap \mathcal Q^J = \mathcal Q^{I \cap J}$ does not hold in general:
\[
	\mathcal Q^{\{3,4,5\}} \cap \mathcal Q^{\{1,2,3\}}= \{ (\alpha, \alpha, \theta_3, \beta, \beta) \in \mathbb T^5 \} \supsetneqq \mathcal Q^{\{3\}} .
\]

\begin{lemma}\label{l.intersection} If $\abs{I}, \abs{J} < m/2$, then 
$\mathcal Q^{I} \cap \mathcal Q^J = \mathcal Q^{I \cap J}$.
\end{lemma}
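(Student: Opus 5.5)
The inclusion $\mathcal Q^{I\cap J}\subset \mathcal Q^I\cap\mathcal Q^J$ is immediate from $\mathbb T^{I\cap J}\subset \mathbb T^I\cap\mathbb T^J$, so the plan is to prove the reverse inclusion by a counting argument on coordinates. Let $[\Theta]\in\mathcal Q^I\cap\mathcal Q^J$. By definition of $\mathcal Q^I$ there is $\alpha$ with $\Theta+\alpha\boldsymbol 1\in\mathbb T^I$, i.e. $\theta_k=-\alpha$ for every $k\notin I$. Similarly there is $\beta$ with $\theta_k=-\beta$ for every $k\notin J$. So I will work with a representative $\Theta$ that is constant equal to $a=-\alpha$ off $I$ and constant equal to $b=-\beta$ off $J$.

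The key step is to show $a=b$. Since $\abs{I}<m/2$ and $\abs{J}<m/2$, we get $\abs{I\cup J}\le\abs I+\abs J<m$, so the complement $[m]\setminus(I\cup J)$ is nonempty. Pick $k$ in it; then $k\notin I$ gives $\theta_k=a$ and $k\notin J$ gives $\theta_k=b$, hence $a=b$. (This is exactly where the hypothesis enters; the counterexample with $I=\{3,4,5\}$, $J=\{1,2,3\}$, $m=5$ fails because $I\cup J=[5]$.)

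With $a=b$, every $k\notin I\cap J$ lies outside $I$ or outside $J$, so $\theta_k=a$ for all $k\in[m]\setminus(I\cap J)$. Translating by $-a\boldsymbol 1$ gives a point of $\mathbb T^{I\cap J}$ on the diagonal $D_\Theta$, so $[\Theta]\in\mathcal Q^{I\cap J}$. There is no real obstacle; the only point requiring care is noting that membership in $\mathcal Q^I$ means some element of the coset (not $\Theta$ itself) lies in $\mathbb T^I$, which is handled by the translation constants $\alpha,\beta$ above. Note the weaker hypothesis $\abs I+\abs J<m$ would already suffice.
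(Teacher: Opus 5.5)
Your proof is correct and follows the paper's argument: the paper's one-line proof also picks $k\notin I\cup J$ and uses $\theta_i=\theta_k=\theta_j$ for all $i\notin I$, $j\notin J$. You additionally spell out the easy inclusion and the choice of coset representative, and your remark that only $I\cup J\neq[m]$ is needed (guaranteed by $|I|+|J|<m$) is accurate.
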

\begin{proof}
There is some $k \not \in I \cup J$, so $\theta_i = \theta_j = \theta_k$
for all $i \not \in I, j \not \in J$.
\end{proof}

\begin{remark*}
	Let $\mathring{\mathcal Q}^I$ be the interior of $\mathcal Q^I$. 
Each  $\mathring{\mathcal Q}^I$ consists of cosets $[\Theta ]$ 
such that if $i \in I$ and $j, k \notin I$ then 
$$
\theta _i \; \not= \; \theta _j \;=\;  \theta _k.
$$
	The Equality Principle implies that each cell $\mathring{\mathcal Q}^I$ is $\psi $-invariant.

\end{remark*}

\begin{definition*}
	The \textbf{lower half \gls{Kskeleton} of $\mathcal Q$} is the union of the $\mathcal Q$-cells of dimension $< m/2 $.  
\end{definition*}

The remarks above let us view $\mathcal K$ as a {\bf CW-complex}. The $k$-cells, $k<m/2$,
will be the $\mathcal Q^I$ for all $|I|<m/2$.
The definition of CW complex requires each $k$-cell of a CW-complex to be attached to the $k-1$-skeleton through a boundary map $f_I$ defined on $S^{k-1}$. For the $k$-cell $I=\{i_1, \dots, i_k\}$, define this boundary map as the composition $f_I = g_I \circ P$ where 
\[
	\defun{P}{S^{k-1}}{\partial [0,2\pi]^{k}}{x}{ \pi \left(\frac{1}{\|x\|_{\infty}} x + \boldsymbol{1}\right)} 
\]
is the trivial map from the ($k-1$)-sphere to the boundary of the $k$-cube,
and 
$g_I(y) = q(\Theta)$ where $\theta_{i_j}=y_j$ and the other coordinates vanish. By construction, at least one of the $\theta_{i_j}$ is equal to $0$ or $2\pi$ so $\Theta \in \cup_{J \subsetneqq I} \mathbb T^{J}$. It follows that $q(\Theta)$ belongs to the ($k-1$)-skeleton of $\mathcal K$.

\begin{theorem}
	\label{t.QsubI} Let $d = \lfloor (m-1)/2 \rfloor$.
	The quotient map $q : \Theta \mapsto [\Theta ]$ is a \say{\textbf{cellular diffeomorphism}} 
	from the $d$-skeleton of $ \mathbb{T}^m$ to $\mathcal K$,i.e.,
when restricted to $\mathbb{T}^I$, the quotient map $q$  is a diffeomorphism $\mathbb{T}^I \rightarrow  \mathcal Q^I$ where 
$$
	\mathcal Q^I \;=\; \{[\Theta] \in \mathcal Q : \textrm{ all $\theta _j$ with $ j \notin I$ are equal}\}.
$$
	The set $\mathcal Q^I$ is $\psi$-invariant.
\end{theorem}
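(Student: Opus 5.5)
The plan is to check three things for each $I\subset[m]$ with $\abs{I}\le d$: the restriction $q|_{\mathbb T^I}$ is injective, it is an immersion, and its image is the set $\mathcal Q^I$ described in the statement. Compactness of $\mathbb T^I$ then upgrades an injective immersion to an embedding, and hence to a diffeomorphism onto its image.

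\emph{Image.} By definition $\mathcal Q^I=q(\operatorname{Diag}(\mathbb T^I))=q(\mathbb T^I)$, since $q$ collapses each diagonal to a point. If $\Theta\in\mathbb T^I$, all $\theta_j$ with $j\notin I$ vanish, so they are equal. Conversely, suppose $[\Theta]$ has all $\theta_j=\beta$ for $j\notin I$. Rotating by $-\beta$ gives a representative in $\mathbb T^I$. Note that $I\ne[m]$ because $\abs{I}<m/2$, so the complement of $I$ is nonempty and the common value $\beta$ makes sense.

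\emph{Injectivity.} Suppose $\Theta,\Theta'\in\mathbb T^I$ satisfy $[\Theta]=[\Theta']$, so $\Theta'=\Theta+\tau\boldsymbol 1$. Pick some $k\notin I$, which again exists since $\abs I<m/2<m$. Then $0=\theta'_k=\theta_k+\tau=\tau$, so $\Theta=\Theta'$.

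\emph{Immersion.} The tangent space of $\mathbb T^I$ is spanned by the $\mathrm e_i$ with $i\in I$, and $\ker Tq$ is spanned by $\boldsymbol 1$. Since $\boldsymbol 1$ has a nonzero entry at $k\notin I$, it does not lie in $\operatorname{span}\{\mathrm e_i:i\in I\}$. Hence $Tq$ is injective on $T\mathbb T^I$. In cube-face coordinates built from a face $F_k$ with $k\notin I$, the map $q|_{\mathbb T^I}$ is literally the inclusion of a coordinate subtorus, which shows it is a diffeomorphism directly. Gluing over all $I$ with $\abs I\le d$ gives the cellular statement: $q$ maps the cell $\mathring{\mathbb T}^I$ onto $\mathring{\mathcal Q}^I$ and respects faces.

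\emph{Injectivity on the whole skeleton.} The main subtlety is that cells for different $I$ must not be identified by $q$. Take $\Theta\in\mathbb T^I$ and $\Theta'\in\mathbb T^J$ with $[\Theta]=[\Theta']$. Then $[\Theta]\in\mathcal Q^I\cap\mathcal Q^J=\mathcal Q^{I\cap J}$ by Lemma~\ref{l.intersection}, which applies because $\abs I,\abs J<m/2$. So both points lie in $\mathbb T^{I\cap J}$ after the unique normalization above, and therefore $\Theta=\Theta'$. This is exactly the step where the hypothesis $\abs{I}<m/2$ is essential; the counterexample preceding Lemma~\ref{l.intersection} shows the statement fails without it.

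\emph{Invariance.} A point $[\Theta]$ lies in $\mathcal Q^I$ exactly when the angles $\theta_j$, $j\notin I$, are all equal. By the Equality Principle (Theorem~\ref{t.equality}), in its version for several equal angles, these equalities persist under $\varphi_t$. Because $\psi_t([\Theta])=[\varphi_t(\Theta)]$, the set $\mathcal Q^I$ is $\psi$-invariant.
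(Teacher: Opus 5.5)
Your proof is correct and follows essentially the same route as the paper. Cube-face coordinates on a face $F_k$ with $k\notin I$ make $q|_{\mathbb T^I}$ the inclusion of a coordinate subtorus; Lemma~\ref{l.intersection} rules out identifications between different cells, which the paper phrases as each point of $\mathcal K$ lying in a unique open cell $\mathring{\mathcal Q}^I$; and the Equality Principle gives $\psi$-invariance, while your injectivity and immersion checks just spell out what the paper calls a ``trivial diffeomorphism.''
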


\begin{proof}[\bf Proof]
Given $p \in \mathcal K$, Lemma~\ref{l.intersection} above implies that 
there is a unique minimal set
	$I \subset [m]$, $|I|<m/2$ so that $p \in \mathcal Q^I$. Furthermore, $p \in \mathring {\mathcal Q}^I=q(\mathring{\mathbb{T}}_I)$. Since the sets $\mathring {\mathbb T}^I$ are disjoint, $I$ is the unique subset, $|I|<m/2$, with that property. It is now sufficient to show that its restrictions $\mathbb T^I \rightarrow \mathcal Q^I$ are
diffeomorphisms.

	Let $I \subset [m]$ have cardinality $r \le d$. After permuting variables,
assume without loss of generality that $I=\{1, \dots, r\}$. Then $\mathbb T^I$
	is the set of $ \Theta$ with $\theta_{r+1}= \dots = \theta_m=0$
	and, assuming cube face coordinates with respect to the face $\theta_m=0$, 
\[
	q(\theta_1, \dots, \theta_r, 0, \dots, 0) = [(\theta_1, \dots, \theta_r, 0 \dots, 0)].
\]
There are $m-r$ tailing zeros in the left-hand term and $m-r-1$ in the right-hand term. The preimage by $q$ of the right-hand term is
	\[
		\{ (\theta_1+\alpha, \dots, \theta_r+\alpha, \alpha,
		\dots, \alpha): \alpha \in \mathbb R\mod 2\pi \mathbb Z \}\subset \mathbb T^m.
	\]
Thus, $q: \mathbb T^I \rightarrow \mathcal Q^I$ is one-to-one, onto and a trivial
diffeomorphism.
	Invariance of $\mathcal Q^I$ follows directly from the Equality Principle 
	(Theorem \ref{t.equality} p.\pageref{t.equality}).
\end{proof}

\begin{theorem}
\label{t.lowerhalf}
Every $\mathbb{T}^I$ with $\abs{I} < m/2$ is disjoint from $V^{\textrm{max}}$ and every $\mathbb{T}^I$ with $\abs{I} \ge m/2$ meets $V^{\textrm{max}}$.
\end{theorem}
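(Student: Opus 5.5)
By Corollary~\ref{c.centroid}, $V^{\max}$ is exactly the zero set of the centroid $Z$. So both halves of the statement reduce to questions about when $\sum_k e^{i\theta_k}$ can vanish for $\Theta\in\mathbb{T}^I$. For such $\Theta$ the coordinates $\theta_j$ with $j\notin I$ are zero. Write $r=\abs{I}$, so that there are $m-r$ zero coordinates and
\[
mZ(\Theta) \;=\; (m-r) + \sum_{i\in I} e^{i\theta_i}.
\]

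\emph{Disjointness when $r<m/2$.} I will bound the real part:
\[
\operatorname{Re}(mZ) \;=\; (m-r)+\sum_{i\in I}\cos\theta_i \;\ge\; (m-r)-r \;=\; m-2r \;>\; 0.
\]
Hence $Z(\Theta)\neq O$, and $\Theta\notin V^{\max}$. This is immediate.

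\emph{Intersection when $r\ge m/2$.} I will construct explicitly a point of $\mathbb{T}^I$ with centroid zero. We need unit vectors $w_1,\dots,w_r$ with $\sum w_i = -(m-r)$. Set $s=m-r$, so that $0\le s\le r$, and split into cases.
\begin{itemize}
\item[(i)] If $s=0$ (so $I=[m]$), take the $m$th roots of unity, which sum to zero.
\item[(ii)] If $r\ge 1$ and $s\le r$, choose all $w_i = e^{i\beta_i}$ with the $\beta_i$ symmetric about $\pi$. Concretely, take every $w_i = e^{i(\pi\pm\gamma)}$, arranged so that the imaginary parts cancel in pairs and the total real part equals $-r\cos\gamma$. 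When $r$ is odd, put one vector exactly at $\pi$ and pair the rest symmetrically.
\end{itemize}
In case (ii) we need the real parts to reach $-s$ with $0\le s\le r$. By continuity and the intermediate value theorem this is possible: as the common half-angle $\gamma$ moves from $0$ to $\pi/2$, the real part of $\sum w_i$ moves continuously from $-r$ towards $0$ (or to $-1$ when $r$ is odd, because of the fixed vector at $\pi$). The leftover edge cases, namely $s=0$ with $r$ odd, I handle either with roots of unity as in (i) or with an equilateral triple plus symmetric pairs.

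\emph{Simpler alternative.} A cleaner route for the second half is to take $r-s$ of the $w_i$ forming a zero-sum configuration, and the remaining $s$ equal to $-1$. When $r-s=1$ this fails, since a single unit vector cannot sum to zero. In that case, since $r\ge s+1$ and $m\ge 2$, I fall back on the continuity argument: the sum of $r$ unit vectors ranges over the whole closed disk of radius $r$ when $r\ge 2$. Indeed, a sum of two unit vectors ranges over the disk of radius $2$, and adding further unit vectors fills the disk of radius $r$. So the point $-s$ is attained because $s\le r$; the only exception is $r=1$, which forces $s\le 1$ with $m=r+s\ge 2$, i.e.\ $s=1$, and then $w_1=-1$ works.

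This closed-disk fact, that the set of sums of $r\ge 2$ unit vectors in $\mathbb{C}$ is exactly $\{\abs{w}\le r\}$, is the one step needing care. I expect it to be the main obstacle, though it is elementary. I would prove it by induction on $r$, using that $\{a+e^{i\theta}\}$ sweeps an annulus whose inner radius is $\bigl|\abs{a}-1\bigr|$ and whose outer radius is $\abs{a}+1$.
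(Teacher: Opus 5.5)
Your proof is correct and follows essentially the same route as the paper. Your bound $\operatorname{Re}(mZ)\ge m-2\abs{I}>0$ is exactly what justifies the paper's (unproved) claim that the exemplar minimizes $\abs{Z}$ on $\mathbb{T}^I$, and your symmetric configuration at angles $\pi\pm\gamma$, with one extra angle at $\pi$ when $\abs{I}$ is odd, is the paper's construction with $\alpha=\pi-\gamma$; you use the intermediate value theorem where the paper chooses explicitly $\cos\alpha=-(m-\abs{I})/\abs{I}$, resp.\ $\cos\alpha=-(m-\abs{I}-1)/(\abs{I}-1)$, so your closed-disk alternative is valid but not needed.
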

\begin{proof} 
Suppose that $\abs{I}< m/2$. Then $\mathbb T^I$ contains an exemplar
	with $|I|$ coordinates equal to $\pi$. The centroid of Theorem~\ref{t.centroid} is $Z= 1-\frac{2 |I|}{m}$ and attains the global minimum for $|Z|$ on
	$\mathbb T^I$. Hence, this exemplar is a global maximum for $V$ on $\mathbb T^I$ with $V= (1-|Z|^2)\frac{m^2}{2}= 2 |I| (m-|I|) < m^2/2$. Hence
	$\mathbb T^I \cap V^{\max}=\emptyset$.

	If $\abs{I}=m/2$, pick $\Theta$ with $\theta_i=\pi$ for $i \in I$ and $\theta_i=0$ for $i \not \in I$. By construction, $\Theta \in \mathbb T^I$.
Its centroid is $Z=0$, so $\Theta \in \mathbb T^{I} \cap V^{\max}$.

	If $\abs{I} > m/2$, we distinguish two cases. If $\abs{I}$ is even, pick
	$\Theta \in \mathbb T^{I}$ with $\abs{I}/2$ coordinates equal to $\alpha$ and $|I|/2$ coordinates equal to $-\alpha$, 
	\[
		\cos(\alpha) = - \frac{m-\abs{I}}{\abs{I}}
	\]
	The remaining coordinates are $\theta_i=0$ for all $i \not \in I$.
	The choice of $\alpha$ guarantees that
	\[
		\sum \sin(\theta) = \sum \cos(\theta) = 0
	\]
	and hence $\Theta \in V^{\max}$.

	If $\abs{I} > m/2$ and $|I|$ odd, pick $\Theta \in \mathbb T^{I}$
	with $\frac{\abs{I}-1}{2}$ coordinates equal to $\alpha$, $\frac{\abs{I}-1}{2}$ coordinates equal to $-\alpha$ and one coordinate equal to $\pi$. In order to guarantee that $\sum \sin(\theta) = \sum \cos(\theta) = 0$, it is enough
	to choose $\alpha$ with
	\[
		\cos(\alpha) = - \frac{m-\abs{I}-1}{\abs{I}-1}.
	\]

\end{proof}

\section{Dynamics on $\mathcal Q^I$}
\label{s.templates}

Let $d<m/2$ and let $I \subset [m]$ have cardinality $d$. To the set $I$ we
associate the exemplar $\Theta_I=(\theta_1, \dots, \theta_m) \in \mathbb T^m$
with $\theta_i = \pi$ for $i \in I$ and $\theta_i=0$ for $i \not \in I$.
Let $\gls{pI} = q(\Theta_I) \in \mathcal Q^I \subset \mathcal Q$, its projection in the quotient.

\begin{theorem}
\label{t.QI}
\begin{enumerate}
		Let $I \subset [m]$ with $\abs{I}=d<m/2$. Then
\item
The restriction of $V_\mathcal{Q}$ to $\mathcal Q^I$ is a Morse function, with critical point 
		$p_J$ of index $\abs{J}$ 
for all $J \subset I$. 
\item
The Kuramoto flow $\psi$ on $\mathcal Q$ restricts to a flow, also denoted $\psi$,
on $\mathcal Q^I$. 
\item 
For every $J \subsetneqq I$ and $i \in I \setminus J$, there are  two disjoint {\bf saddle connections} $[\Theta(t)] = \psi_t([\Theta_0])$ with $\lim_{t \rightarrow -\infty} [\Theta(t)] = p_{J \cup \{i\}}$ and $\lim_{t \rightarrow \infty} [\Theta(t)] = p_J$.
\item The unstable manifold $W^u(p_I)$ in $\mathcal Q$ is precisely the cellular interior
	$\mathring {\mathcal Q}^I$, that is $\mathcal Q^I \setminus \cup_{J \subsetneqq I} \mathcal Q^J$.
\end{enumerate}
\end{theorem}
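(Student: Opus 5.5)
Via Theorem~\ref{t.QsubI}, I would pull everything back to $\mathbb{T}^I$, where $q$ restricts to a diffeomorphism $\mathbb{T}^I\to\mathcal Q^I$, and work in the explicit coordinates $(\theta_i)_{i\in I}$ with $\theta_j=0$ for $j\notin I$. Item (2) is already contained in Theorem~\ref{t.QsubI}, since $\mathcal Q^I$ is $\psi$-invariant by the Equality Principle. Write $d=|I|$. On $\mathbb{T}^I$, Theorem~\ref{t.centroid} gives
\[
V=\tfrac{m^2}{2}-\tfrac12\Bigl|\textstyle\sum_{i\in I}e^{i\theta_i}+(m-d)\Bigr|^2 .
\]
Since $\mathcal Q^I$ is invariant and $\psi$ is the gradient flow of $-V_{\mathcal Q}$ (Theorem~\ref{t.grads}), the flow on $\mathcal Q^I$ is the gradient flow of $-V_{\mathcal Q}|_{\mathcal Q^I}$ for the induced metric.

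For (1), the critical points of $V|_{\mathcal Q^I}$ are exactly the points of $\mathcal Q^I$ where the full vector field $K$ vanishes: $K$ is tangent to $\mathcal Q^I$, so it equals the restricted gradient there. By Theorem~\ref{t.lowerhalf}, $\mathcal Q^I$ misses $V^{\max}$, so by Theorem~\ref{t.antipodal} these are antipodal classes. A point of $\mathbb{T}^I$ is antipodal iff every $\theta_i\in\{0,\pi\}$, because $\theta_j=0$ for $j\notin I$ and $m-d>0$ forces the common base angle to be $0$. These are exactly the $\Theta_J$, $J\subset I$, giving the $p_J$. For nondegeneracy and index, I would use Theorem~\ref{t.eigenstructure} at $\Theta_J$ and identify $T\mathcal Q^I$ at $p_J$ with the $K$-invariant subspace of vectors in $\mathbb R^m$ modulo $\boldsymbol 1$ that are constant on $[m]\setminus I$. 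Inside this subspace I would exhibit the eigenvectors explicitly:
\begin{itemize}
\item the $|J|-1$ unstable vectors supported in $J$;
\item the $dz$-vector;
\item the $d-|J|$ stable vectors of type (d) supported on $I\setminus J$, suitably chosen from the Gram--Schmidt-type family adapted to the ordering in which $I\setminus J$ comes first among the zero coordinates.
\end{itemize}
That gives $d$ eigenvectors, which is $\dim\mathcal Q^I$, with exactly $|J|$ positive eigenvalues and none zero. So $p_J$ is nondegenerate of index $|J|$.

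For (3), fix $J\subsetneqq I$ and $i\in I\setminus J$. Consider the circle $C=\{\Theta_J+s\,\mathrm e_i\}$, which is $q$ of $\mathbb{T}^{J\cup\{i\}}$ with all coordinates in $J$ frozen at $\pi$; it equals $\mathcal Q^{\{i\}}$ translated. By the Equality Principle, the set where all coordinates in $J$ are equal and all coordinates outside $J\cup\{i\}$ are equal is invariant. Modulo diagonals it is two-dimensional, parametrized by $\theta_J-\theta_{\mathrm{rest}}$ and $\theta_i-\theta_{\mathrm{rest}}$, so a further argument is needed to get the one-dimensional circle. Restricting to the slice through $\Theta_J$ where the $J$-block and the rest block stay antipodal, I would check with $\theta_J-\theta_{\mathrm{rest}}=\pi$ that the $J$-block equation $\sum\sin(\theta_k-\theta_J)$ and the rest-block equation differ only by a sign relation, so that the relative angle $\pi$ is preserved. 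This is the computation I expect to be the main obstacle. If it works, $C$ is invariant and contains only the two equilibria $p_J$ ($s=0$) and $p_{J\cup\{i\}}$ ($s=\pi$). The flow on the circle therefore has two arcs, each running from the higher-potential point $p_{J\cup\{i\}}$ to $p_J$, because $V$ strictly decreases and the index/potential values $2u(m-u)$ increase with $u<m/2$. The two arcs are disjoint, which proves (3).

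For (4), $W^u(p_I)$ has dimension $d$ by (1) and is $\psi$-invariant. Every trajectory in $\mathring{\mathcal Q}^I$ has its $\alpha$-limit among critical points in $\mathcal Q^I$, which are the $p_J$. The set $\mathring{\mathcal Q}^I$ is invariant and open in $\mathcal Q^I$, while each $p_J$ with $J\neq I$ lies in $\partial\mathcal Q^I$, also invariant. A trajectory in the interior with $\alpha$-limit $p_J$ would therefore lie in $W^u(p_J)\cap\mathcal Q^I$. For $J\neq I$, I would argue by induction that $W^u(p_J)\cap\mathcal Q^I=\mathring{\mathcal Q}^J$, since the local unstable manifold is tangent to the $|J|$-dimensional unstable space, which lies in $T\mathcal Q^J$. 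Hence interior trajectories have $\alpha$-limit $p_I$, and $\mathring{\mathcal Q}^I\subset W^u(p_I)$. Conversely, $W^u(p_I)$ is the $d$-dimensional invariant manifold inside $\mathcal Q^I$, tangent at $p_I$ to all of $T\mathcal Q^I$. Its points near $p_I$ lie in the interior, and invariance of the interior then gives equality.
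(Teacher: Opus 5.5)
\paragraph{The gap: item (3).} Your argument fails exactly at the step you flagged. For $J\neq\emptyset$ the circle $C=\{\Theta_J+s\,\mathrm e_i\}$ is not invariant. (For $J=\emptyset$ it is $\mathcal Q^{\{i\}}$ and is fine, but shifting the $J$-coordinates by $\pi$ is not a symmetry of $K$.) Take the invariant set where the $J$-coordinates equal $\alpha$, $\theta_i=\beta$, and the other $m-|J|-1$ coordinates equal $\gamma$. There
\[
\frac{d}{dt}(\alpha-\gamma)=\sin(\beta-\alpha)-\sin(\beta-\gamma)-(m-1)\sin(\alpha-\gamma),
\]
which at $\alpha-\gamma=\pi$ equals $-2\sin(\beta-\gamma)$. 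This is nonzero away from the two equilibria: oscillator $i$ pulls the two antipodal blocks toward itself in opposite rotational senses, so the relative angle $\pi$ is not preserved. Concretely, for $m=5$, $J=\{1\}$, $i=2$, you get $K(\pi,s,0,0,0)=(-\sin s,-2\sin s,\sin s,\sin s,\sin s)$, which modulo $\boldsymbol{1}$ is not parallel to $\mathrm e_2$.

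The connecting orbit is a genuinely curved orbit inside the two-dimensional invariant set, so it cannot be written down this way. The missing idea is the paper's dimension count:
\begin{itemize}
\item Apply (1) to $J\cup\{i\}$, which still has fewer than $m/2$ elements. Then $p_J$ has index $|J|$ in the $(|J|+1)$-dimensional invariant torus $\mathcal Q^{J\cup\{i\}}$.
\item Hence its stable manifold there is one-dimensional, i.e.\ two orbits.
\item Along each orbit, $V_{\mathcal Q}$ increases as $t\to-\infty$. The $\alpha$-limit is a nonempty connected set of critical points of $V_{\mathcal Q}|_{\mathcal Q^{J\cup\{i\}}}$ with value strictly above $2|J|(m-|J|)$.
\item The values $2|J'|(m-|J'|)$ increase with $|J'|<m/2$, so the only candidate is $p_{J\cup\{i\}}$.
\end{itemize}

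\paragraph{A counting slip in item (1).} Zero-sum vectors supported on $I\setminus J$ span a space of dimension $d-|J|-1$, not $d-|J|$. So your list has only $d-1$ vectors when $J\subsetneqq I$. The missing stable direction is the block vector that is $0$ on $J$, $m-d$ on $I\setminus J$, and $-(d-|J|)$ on $[m]\setminus I$. It lies in the type (d) eigenspace (eigenvalue $2|J|-m$) and is constant on $[m]\setminus I$.

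The paper avoids this bookkeeping. It restricts $H$ to $\mathrm{span}\{\mathrm e_j: j\in J\}$, where it is positive definite, and to $\mathrm{span}\{\mathrm e_j: j\in I\setminus J\}$, where it is negative definite. The law of inertia then gives nondegeneracy and index $|J|$ at once.

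\paragraph{Items (2) and (4).} These are fine. In (4), note two points:
\begin{itemize}
\item $\dim W^u(p_I)=d$ in $\mathcal Q$ comes from Theorem~\ref{t.eigenstructure}, not from (1).
\item $W^u(p_J)\subset\mathcal Q^J$ follows from invariance of $\mathcal Q^J$ together with $p_J$ being a hyperbolic source of the restricted flow. Then a neighborhood of $p_J$ in $\mathcal Q^J$ lies in $W^u(p_J)$ and the dimensions match. Tangency alone does not give it.
\end{itemize}
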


\begin{proof}[Proof]
\begin{enumerate}
	\item A critical point of the restriction of $V_{\mathcal Q}$ to $\mathcal Q^I$ is necessarily a critical diagonal of $V$ in $\mathbb T^m$, see Theorem~\ref{t.QsubI}. According to
Theorem~\ref{t.exemplar} each of them contains one and only one exemplar. The
exemplars that project onto $\mathcal Q^I$ are precisely the $\Theta_J$, $J \subset I$, 
with $\theta_j = \pi$ for $j \in J$ and $\theta_j=0$ otherwise. The projection
of $\Theta_J$ is precisely $p_J$. Since the critical diagonals are non-degenerate critical sets except along the diagonal direction, the
$p_J$ are non-degenerate critical points. 

Let $H$ be the Hessian of $-V$ at $p_J$.
Consider now the eigendecomposition of $H$ 
for $|J|= k \le d$. Without loss
of generality, we permute variables so that $J=[k]$ and $I=[d]$. 
Also, let $(\mathrm e_1, \dots, \mathrm e_m)$ be the
standard canonical basis of $\mathbb R^m$.

We decompose the tangent space $T_{p_J} \mathbb R^d = E^u \oplus E^s$,
with $E^u$ the span of $\mathrm e_1, \dots, \mathrm e_k$ and $E^s$ the
		span of $\mathrm e_{k+1}, \dots \mathrm e_d$. We claim that those are respectively the positive and negative eigenspace of $H$. 

The Hessian $H$ was computed in Theorem \ref{t.eigenstructure}. The restriction
of $H$ to $E^u$ is 
\[
\begin{bmatrix}
	a & 1 & \dots & 1 \\
	1 & a & \dots & 1 \\
	\vdots & \vdots & \ddots & \vdots \\
	1 & 1 & \dots & a
\end{bmatrix} 
\]
with $a = m - 2k + 1 > 1$. In particular, $E^u$  
is contained in the positive eigenspace of $H$.

If $k=d$ the space $E^s$ is zero-dimensional and there is no
negative eigenspace. If $k<d<m/2$, the restriction of $H$ at $E^s$ is
\[
\begin{bmatrix}
	b & 1 & \dots & 1 \\
	1 & b & \dots & 1 \\
	\vdots & \vdots & \ddots & \vdots \\
	1 & 1 & \dots & b
\end{bmatrix} 
\]
with $b=1-m+2k\le -1 -m + 2d < -1$. This matrix is negative definite.

From the considerations above, the index of $p_J$ is precisely the dimension
of the positive definite eigenspace, which is precisely $k$.
\item 
The equality principle implies that the flow is always tangent to $\mathcal Q^I$.
\item 
	Assume that $\abs{J}=\abs{I}-1$, and consider the flow in $\mathcal Q^I$. The only
		source is $p_I$ and $p_J$ is an index $\abs{I}-1$ saddle,
		so it has a stable eigenspace $E^s$ of dimension $1$ in $T_{p_J} \mathcal Q^I$.
Therefore, its stable manifold in $\mathcal Q^I$ 
		is $1$-dimensional, and $(W^s(p_J) \cap \mathcal Q^I) \setminus
		\{p_J\}$ is the union of two disjoint open curves, that are
		orbits of $\phi$. If $t \mapsto \phi_t(u)$ is one of those
		orbits, $V_{\mathcal Q}(u) > V_{\mathcal Q}(p_J)$. The limit $\lim_{t \rightarrow -\infty}
		V_{\mathcal Q}(\phi_t(u))$ exists. Since $V_{\mathcal Q}$ is bounded above and $\mathcal Q^I$
		is compact, the limit is a critical value. There is only
		one critical value $V_{\mathcal Q}(p_I) > V_{\mathcal Q}(p_J)$ so 
		$\lim_{t \rightarrow -\infty} \phi_t(u) = p_I$.

\item
We claim that $E^u$ is also the unstable eigenspace of $p_I$ in $T_{p_I}\mathcal Q$.
The dimension of $E^u$ is precisely the dimension of the positive eigenspace
of $p_I$ in $T_{p_I}\mathbb T^m$ so it is enough to prove that each of the
$\mathrm e_j$, $1 \le j \le k$, is the projection of a vector in 
the unstable eigenspace of $p_I$ with respect to the diagonal action.

Let $\mathbf 1$ denote the vector with all coordinates 1, and $\mathbf k$
the vector with $k$ coordinates $1$ and $m-k$ zero coordinates.
Recall from Theorem~\ref{t.eigenstructure} that the null eigenspace of $-H=-HV(p_J)$ is spanned by $\mathbf 1$. The positive eigenspace
by the vectors $\mathbf x$ with $\sum_{j=1}^k x_j = 0$ and by the 
`$dz$-vector' $m \mathbf k - k \mathbf 1$.

For $j \le k$, we just write 
\[
	\mathrm e_j = \mathbf x + \frac{1}{km} (m \mathbf k - k \mathbf 1) 
		+ \frac{1}{m} \mathbf 1
\]
with $\mathbf x = \mathrm e_j - \frac{1}{k} \mathbf k$.

\end{enumerate}
\end{proof}

\section{Topological conjugacy}
\label{s.conjugacy}

\begin{figure}
\centerline{
\includegraphics[width=0.5\textwidth,trim=1in 0in 1in 0in,clip]{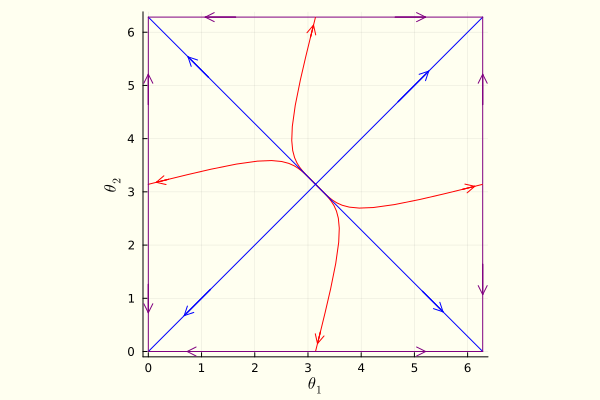}
\includegraphics[width=0.5\textwidth,trim=1in 0in 1in 0in,clip]{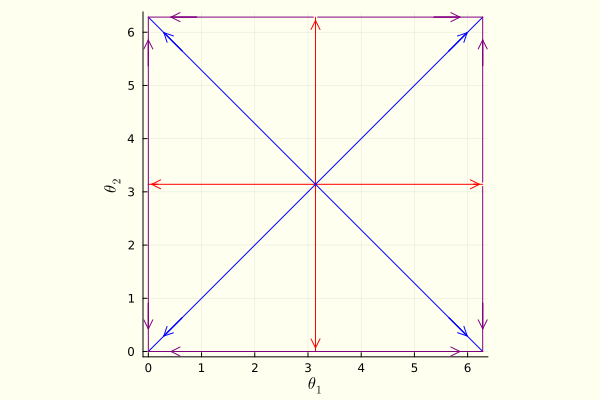}
}
\caption{Left: Quotient Kuramoto flow for $m=5$ on the template
$\theta_3=\theta_4=\theta_5=0$. Right: Perfect Morse flow in dimension
two. Those two flows are topologically equivalent.
	\label{fig-equivalence}}
\end{figure}

The Perfect Morse potential is defined on $\mathbb T^d$ as 
$M(\Theta) = \sum_{k=1}^d 1 - \cos(\theta_k)$. The vector field is
$F_0(\Theta) = -\nabla M(\Theta) = (-\sin(\theta_1), \dots, -\sin(\theta_d))^T$.
(See Figure~\ref{fig-equivalence}).

\begin{theorem}
	\label{t.equivalence} Let $I \subset [m]$ with $\abs{I}=d<m/2$. Then the flow
of $\psi$ in $\mathcal Q^I$ is topologically equivalent to the flow of the
Perfect Morse potential $M$ on $\mathbb T^d$.
\end{theorem}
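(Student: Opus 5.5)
We will work in the cube face coordinates of Theorem~\ref{t.QsubI}: via $q$, the template $\mathcal Q^I$ is identified with $\mathbb T^I\cong\mathbb T^d$, where $\theta_i$ for $i\in I$ are free and all other coordinates vanish. By Proposition~\ref{l.cfcoords}, restricted to this invariant subtorus, the vector field reads
\[
F_1(\Theta)_i=\sum_{k\in I}\sin(\theta_k-\theta_i)-\sin\theta_i-\sum_{k\in I}\sin\theta_k ,\qquad i\in I,
\]
which is a gradient-like field for $V_{\mathcal Q}|_{\mathcal Q^I}$. The Perfect Morse field $F_0=-\nabla M$ lives on the same torus. The plan is to show that $F_0$ and $F_1$ have identical ``combinatorial skeletons'' and then to build the equivalence by the standard Morse--Smale/gradient-like argument.

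\textbf{Step 1: matching critical data.} By Theorem~\ref{t.QI}(1), the critical points of $F_1$ on $\mathbb T^d$ are exactly the $2^d$ points $p_J$, $J\subset I$, with $\theta_j=\pi$ for $j\in J$ and $0$ otherwise. Each is nondegenerate of index $|J|$. These are precisely the critical points of $M$, with the same indices. \textbf{Step 2: invariant faces.} Every coordinate subtorus $\{\theta_j=0\}$ and $\{\theta_j=\pi\}$ is invariant for $F_0$. For $F_1$, the same holds: $\{\theta_j=0\}$ is invariant by the Equality Principle, since $\theta_j=\theta_m$. The set $\{\theta_j=\pi\}$ should also be invariant. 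I would check this by computing $F_1(\Theta)_j$ at $\theta_j=\pi$ and expecting a nonzero value in general. That expectation means this set is not invariant, so instead I would use the stratification by the $\mathcal Q^J$ and the unstable cells: by Theorem~\ref{t.QI}(4), $W^u(p_J)=\mathring{\mathcal Q}^J$ inside $\mathcal Q^I$, and the stable manifolds are the dual cells. For $M$, $W^u(p_J)$ is the open cube $\{\theta_j\neq0,\ j\in J;\ \theta_i=\pi,\ i\notin J\}$. I would then verify, via the Hessian computation of Theorem~\ref{t.QI}, that the stable manifolds meet the unstable ones transversally. The cross-index connections are transverse, since in a gradient flow with these dimensions the intersection $W^u(p_J)\cap W^s(p_{J'})$ is nonempty only for $J'\subset J$. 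Hence both flows are Morse--Smale with isomorphic connection structures: $W^u(p_J)\cap W^s(p_{J'})$ corresponds to cells indexed in the same way in both.

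\textbf{Step 3: building the equivalence.} I would proceed inductively on the filtration $\mathcal Q^J$, $J\subset I$. On $0$-cells, map $p_\emptyset\mapsto\mathbf 0$. Given a homeomorphism $h$ on $\bigcup_{|J|<k}\mathcal Q^J$ sending orbits to orbits, extend it over each $\mathcal Q^J$ with $|J|=k$. Choose a small sphere $S$ around the source $p_J$ inside $\mathcal Q^J$, which is a fundamental domain for $\mathring{\mathcal Q}^J\setminus\{p_J\}$, and its counterpart $S_0$ for $M$. The orbits through $S$ are parametrized by $S$; the boundary behaviour (their $\omega$-limits in lower strata) determines how $S$ must map to $S_0$ compatibly with the already-defined $h$ on lower strata. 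I would define the map $S\to S_0$ using level sets: map the point of each orbit where $V_{\mathcal Q}=c$ to the corresponding point on the $M$ side. This rescaling of time reduces the task to a homeomorphism of level sets, in the style of the proof of Palis' theorem on the stability of Morse--Smale flows. Alternatively, and more concretely, I would try to prove directly that for each $J$ the closure $\mathcal Q^J$ is a cube with faces the $\mathcal Q^{J\setminus\{j\}}$, in both flows. Then $h$ can be extended orbitwise by coning.

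\textbf{Main obstacle.} The hardest step will be continuity of the extended $h$ at the boundary strata, that is, near lower saddles, where orbits of $\mathring{\mathcal Q}^J$ pass arbitrarily close to $p_{J'}$. I would handle this using the $\lambda$-lemma and compatible tubular families near each $p_{J'}$, as in Palis--de Melo. For this I need the hyperbolicity from Theorem~\ref{t.QI}(1) and the transversality from Step 2. Since the connection graph is the Boolean lattice for both flows, the combinatorics match, and only the analytic gluing remains.
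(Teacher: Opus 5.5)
You correctly gather the ingredients: the same zeros and indices, invariance of the coordinate subtori, $W^u(p_J)=\mathring{\mathcal Q}^J$, and the Morse--Smale property. There is a genuine gap, though: Step 3 is the entire content of the theorem and you do not carry it out. Your reason for expecting it to be routine (``the combinatorics match, and only the analytic gluing remains'') is not valid.

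\begin{itemize}
\item Phase diagrams do not classify Morse--Smale flows in general. In dimension $\ge 3$, how the invariant manifolds are embedded (for instance knotted) is extra data.
\item The Palis--de~Melo construction, with the $\lambda$-lemma and compatible tubular families, only gives an equivalence between a Morse--Smale field and fields $C^1$-close to it. Your $F_0$ and $F_1$ are far apart.
\item The ``coning'' variant fails as stated. Orbits leaving $p_J$ do not end at individual points of $\partial\mathbb T^J$: an open set of them converges to the sink (the corners of the cube) and the rest to the face centres $p_{J'}$. So a homeomorphism of the lower skeleton induces no map on the sphere of directions at $p_J$.
\item The level-set variant only restates the problem. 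You would need a homeomorphism of level spheres carrying the traces of every $W^s(p_{J'})$ of $F_1$ onto those of $F_0$.
\end{itemize}

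The missing idea, which is how the paper proceeds, is not to build the equivalence by hand. Join the fields by $F_s=(1-s)F_0+sF_1$ and show that every $F_s$ is Morse--Smale:
\begin{itemize}
\item Its zeros are exactly the $p_J$. Summing the coordinate equations forces $\sum\sin\theta_k=0$; then $m-d+\sum\cos\theta_k+\frac{1-s}{s}\ge m-2d+\frac{1-s}{s}>0$ forces $\sin\theta_i=0$.
\item The function $\tfrac12\sum_{k,l}(1-\cos(\theta_k-\theta_l))+(m-d+\frac{1-s}{s})M$ is a strict Lyapunov function, so there are no periodic orbits.
\item The Equality Principle persists, so $W^u(p_J)=\mathring{\mathbb T}^J$ for every $s$.
\item Transversality holds.
\end{itemize}
Structural stability then makes the equivalence class locally constant in $s$, and compactness of $[0,1]$ chains $F_0$ to $F_1$.

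Several concrete claims also need fixing.
\begin{itemize}
\item Restricting Proposition~\ref{l.cfcoords} to $\theta_{d+1}=\dots=\theta_{m-1}=0$ gives the term $-(m-d)\sin\theta_i$, not $-\sin\theta_i$. Your formula is the quotient Kuramoto field of $d+1$ oscillators. For $d\ge 2$ that field has zeros other than the $p_J$ (its own $\mathcal V^{\max}$), which contradicts your Step 1. The coefficient $m-d$, together with $d<m/2$, is exactly what the zero count above uses.
\item For the Perfect Morse flow, $W^u(p_J)=\{\theta_j\neq 0,\ j\in J;\ \theta_i=0,\ i\notin J\}$, not $\theta_i=\pi$. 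Corrected, this shows the unstable manifolds are the same linear cells $\mathring{\mathbb T}^J$ for $F_0$, $F_1$ and every $F_s$. The stable manifolds of $F_1$, by contrast, are not the dual cells; as you noticed, $\{\theta_j=\pi\}$ is not invariant.
\item ``Nonempty only for $J'\subset J$'' does not prove transversality. The actual argument: $W^u(p_K)$ is open in the linear subtorus $\mathbb T^K$, so $T_rW^u(p_K)=\mathbb R^K\supset\mathbb R^J=E^u(p_J)$ at every $r$. Meanwhile $T_rW^s(p_J)\to E^s(p_J)$ as $r$ flows into $p_J$. Since the flow preserves transversality, $T_rW^u(p_K)+T_rW^s(p_J)=\mathbb R^d$ along the whole intersection.
\end{itemize}
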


Consider the flow $\psi$ `in coordinates', that is as the flow of an ODE on 
$\mathbb T^d$.  The Perfect Morse flow has exactly the same critical points, 
with the same index. In order to establish topological equivalence, it 
suffices to produce a continuous deformation taking one vector field 
into the other and preserving the critical points and indices.


An explicit expression for the Kuramoto vector field in cube face coordinates was
given in Proposition~\ref{l.cfcoords},
\[
	-(\nabla V_{\mathcal Q})_i = \sum_{k=1}^{m-1} \sin (\theta_k - \theta_i)
	- \sin \theta_i - \sum_{k=1}^{m-1} \sin \theta_k .
\]
When $I=[d]$ and $\theta_{d+1}= \dots = \theta_m=0$, this vector field becomes
\[
F_1(\Theta) := 
\begin{pmatrix}
	\sum_{k=1}^d \sin(\theta_k - \theta_1) - (m-d) \sin(\theta_1) 
	- \sum_{k=1}^d \sin(\theta_k) \\
	\\
	\vdots\\
	\sum_{k=1}^d \sin(\theta_k - \theta_d) - (m-d) \sin(\theta_d) 
	- \sum_{k=1}^d \sin(\theta_k) \\
\end{pmatrix}.
\]
Consider the convex linear combination $F_s(\Theta) = (1-s) F_0(\Theta) + s F_1(\Theta)$. The ODE $\dot \Theta = F_s(\Theta)$ is equal to the Perfect Morse flow when $s=0$, and to the reduced Kuramoto flow when $s=1$.

\begin{lemma}
For $0 \le s \le 1$, the fixed points of this equation are precisely the points $p_I$ from Theorem~\ref{t.QI}.
\end{lemma}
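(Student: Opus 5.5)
The plan is to check the easy inclusion directly and then prove the converse by a centroid computation, after first reading the statement correctly. The lemma should say that the zeros of $F_s$ on $\mathbb T^d$ are exactly the points $p_J$, $J\subset I$. In cube face coordinates these are the points with every $\theta_i\in\{0,\pi\}$, $1\le i\le d$, since on $\mathcal Q^I$ the coordinates outside $I$ are $0$ (Theorem~\ref{t.QsubI}).

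\emph{Easy inclusion.} If every $\theta_i\in\{0,\pi\}$, then every $\sin\theta_i$ and every $\sin(\theta_k-\theta_i)$ vanishes. Hence $F_0=F_1=0$, and so $F_s=0$ for all $s$.

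\emph{Setting up the converse.} I would rewrite $F_1$ using the centroid of the full configuration $\Theta=(\theta_1,\dots,\theta_d,0,\dots,0)\in\mathbb T^m$. Put $mZ=\sum_k e^{i\theta_k}=mre^{i\eta}$. Then
\[
\sum_{k=1}^d\sin(\theta_k-\theta_i)-(m-d)\sin\theta_i=mr\sin(\eta-\theta_i),
\qquad
\sum_{k=1}^d\sin\theta_k=mr\sin\eta .
\]
So the $i$-th equation $F_{s,i}=0$ becomes
\[
-(1-s)\sin\theta_i+smr\bigl(\sin(\eta-\theta_i)-\sin\eta\bigr)=0 .
\]
Two facts from earlier sections will be used. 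First, $r\ge 1-2d/m>0$ on $\mathbb T^I$, by the proof of Theorem~\ref{t.lowerhalf}. Second, the case $s=0$ is trivial, and the case $s=1$ already follows from Theorems~\ref{t.antipodal} and~\ref{t.lowerhalf}: zeros of $F_1$ are critical points of $V_{\mathcal Q}$ on $\mathcal Q^I$, these are antipodal because $\mathcal Q^I$ misses $V^{\max}$, and antipodality together with $\theta_j=0$ for $j\notin I$ forces $\theta_i\in\{0,\pi\}$.

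\emph{The general case $0<s<1$.} I would first show that $\sin\eta=0$, i.e.\ $Z$ is real. Expanding, each equation reads
\[
\bigl(1-s+smr\cos\eta\bigr)\sin\theta_i=smr\sin\eta\,(\cos\theta_i-1).
\]
Summing over $i$ and using $\sum_{i\le d}\sin\theta_i=mr\sin\eta$ gives
\[
\sin\eta\,\Bigl[(1-s+smr\cos\eta)\,mr-smr\sum_{i\le d}(\cos\theta_i-1)\Bigr]=0 .
\]
Now $\sum_{i\le d}(\cos\theta_i-1)=mr\cos\eta-m$, since $\sum_{i\le d}\cos\theta_i=mr\cos\eta-(m-d)$. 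So the bracket is
\[
mr\bigl(1-s+smr\cos\eta-smr\cos\eta+sm\bigr)=mr\,(1-s+sm)>0 .
\]
Hence $\sin\eta=0$. With $\sin\eta=0$ each equation reduces to $(1-s+smr\cos\eta)\sin\theta_i=0$, where $\cos\eta=\pm1$. If $\cos\eta=1$ the coefficient is positive, so $\sin\theta_i=0$ for every $i$, as required.

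\emph{Expected obstacle: $\cos\eta=-1$.} Here the coefficient is $1-s-smr$, which could vanish. I would rule this case out by showing $\cos\eta>0$, i.e.\ that $\operatorname{Re}Z>0$ at any zero. The point is that $m-d>m/2$ of the angles are $0$, so the centroid should lie in the right half-plane. Concretely, $\sin\eta=0$ gives $\sum_{i\le d}\sin\theta_i=0$, and the target is $\sum_{i\le d}\cos\theta_i>-(m-d)$. This holds because $\sum_{i\le d}\cos\theta_i\ge -d>-(m-d)$, using $d<m/2$. Therefore $mr\cos\eta=\sum_k\cos\theta_k>0$, which gives $\cos\eta=1$ and closes the argument.
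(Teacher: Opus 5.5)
Your proof is correct and is essentially the paper's argument written in terms of the full centroid: since $mr\sin\eta=S$ and $mr\cos\eta=C+(m-d)$, with $S=\sum_{k\le d}\sin\theta_k$ and $C=\sum_{k\le d}\cos\theta_k$, your summed identity is exactly the paper's $S\bigl(1-s+sm\bigr)=0$ (its $S(-m-\frac{1-s}{s})=0$ multiplied by $-s$), and your closing bound $C\ge -d>-(m-d)$ is the paper's $m-d+C+\frac{1-s}{s}>0$. The differences are cosmetic: you treat $s=0$ and $s=1$ explicitly, though the separate appeal to Theorems~\ref{t.antipodal} and~\ref{t.lowerhalf} for $s=1$ is unnecessary because your computation already covers $s=1$; and the anticipated ``obstacle'' $\cos\eta=-1$ is removed by that same inequality.
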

\begin{proof}[Proof] Indeed,
\[
F_s(\Theta) = s
\begin{pmatrix}
	\sum_{k=1}^d \sin(\theta_k - \theta_1) - \left(m-d+\frac{1-s}{s}\right) \sin(\theta_1) 
	- \sum_{k=1}^d \sin(\theta_k) \\
	\\
	\vdots\\
	\sum_{k=1}^d \sin(\theta_k - \theta_d) - \left(m-d+\frac{1-s}{s}\right) \sin(\theta_d)
	- \sum_{k=1}^d \sin(\theta_k) \\
\end{pmatrix}.
\]

Assume that $0 < s < 1$ and that $F_s(\Theta) = 0$. Expanding 
the $\sin (\theta_k - \theta_i)$ above, we retrieve that for each
$1 \le i \le d$,
	\begin{equation}\label{coordinate}
0 = \sum_{k=1}^d \sin(\theta_k) (\cos(\theta_i)-1)
	-\left(m-d+\frac{1-s}{s}+ \sum_{k=1}^d \cos(\theta_k) \right) \sin(\theta_i).
\end{equation}
	Write $S=\sum_{k=1}^d \sin(\theta_k)$ and
	$C=\sum_{k=1}^d \cos(\theta_k)$.
	After adding \eqref{coordinate} for $1 \le i \le d$, we obtain:
	\[
		0 = S(C-d) - \left(m-d+\frac{1-s}{s}+ C \right) S .
	\]
Two cases arise. If $S \ne 0$, The equation above means that
	$0=-m-\frac{1-s}{s}$ which is clearly impossible. Therefore
	$S=0$, and \eqref{coordinate} becomes
\[
0 =
	-\left(m-d+\frac{1-s}{s}+ C \right) \sin(\theta_i).
\]
Since $-d \le C \le d$, $m-d+C+\frac{1-s}{s} \ge m-2d + \frac{1-s}{s} >
\frac{1-s}{s} > 0$. Hence, $\sin(\theta_i)=0$ for all $i$, and
$\Theta$ has coordinates $0$ or $\pi$ modulo $2\pi$, 
	so $\Theta=p_I$ for some $I \subset [d]$.
\end{proof}

\begin{lemma} Assume $0<s<1$.
Let $W = \frac{1}{2} \sum_{l,k=1}^d (1-\cos(\theta_l - \theta_k))$. Then the function
$\Lambda(\Theta) =  W(\Theta) + \left(m-d+\frac{1-s}{s}\right) M(\Theta)$ is strictly decreasing along non-trivial orbits
of $F_s$. In particular, there is no nontrivial periodic orbit for $F_s$.
\end{lemma}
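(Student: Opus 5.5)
The plan is a direct computation: show that the derivative of $\Lambda$ along $F_s$ is minus a sum of squares, which vanishes only at zeros of $F_s$. Write $c = m-d+\frac{1-s}{s}$, which is positive for $0<s<1$, and $S=\sum_{k=1}^d \sin\theta_k$. Let $\boldsymbol u=(1,\dots,1)^T\in\mathbb R^d$.

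First I compute the Euclidean differential of $\Lambda$. From $W$ we get $\partial W/\partial\theta_i = -\sum_{k=1}^d \sin(\theta_k-\theta_i)$, and from $M$ we get $\partial M/\partial\theta_i = \sin\theta_i$. Hence
\[
\frac{\partial \Lambda}{\partial\theta_i} = -\sum_{k=1}^d \sin(\theta_k-\theta_i) + c\,\sin\theta_i .
\]
Comparing with the rescaled form of $F_s$ from the preceding lemma gives the key identity
\[
\frac{1}{s}F_s(\Theta) = -\nabla\Lambda(\Theta) - S\,\boldsymbol u .
\]

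Next I sum the components of $\nabla\Lambda$. The antisymmetric terms $\sin(\theta_k-\theta_i)$ cancel in pairs, so $\langle \nabla\Lambda,\boldsymbol u\rangle = cS$. Along a trajectory of $F_s$ this yields
\[
\frac{1}{s}\,\frac{d}{dt}\Lambda(\Theta(t)) = \Big\langle \nabla\Lambda,\ -\nabla\Lambda - S\boldsymbol u\Big\rangle = -\abs{\nabla\Lambda}^2 - c\,S^2 \;\le\; 0 .
\]
This step, recognising that the extra term $-S\boldsymbol u$ pairs with $\nabla\Lambda$ to give the nonpositive quantity $-cS^2$ because $c>0$, is the only real content. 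Note that $F_s$ is not literally a gradient of $\Lambda$ for the metric $I+\boldsymbol u\boldsymbol u^T$, since that would produce $-cS$ instead of $-S$. So I would not look for such a metric and would use the direct pairing instead.

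Equality in the display forces $\nabla\Lambda=0$ and $S=0$, and hence $F_s(\Theta)=0$ by the key identity. Therefore $\Lambda$ is strictly decreasing along every nontrivial orbit.

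A nontrivial periodic orbit would bring $\Lambda$ back to its starting value while strictly decreasing it, which is impossible. This proves the second assertion.
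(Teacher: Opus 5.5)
Your proof is correct and is essentially the paper's argument. Both compute $\partial\Lambda/\partial\theta_i$ and pair $\nabla\Lambda$ with $F_s$ to get $-s\|\nabla\Lambda\|^2-s\bigl(m-d+\frac{1-s}{s}\bigr)S^2\le 0$, and your coefficient with $+\frac{1-s}{s}$ is the right one (the paper's display has the sign slip $m-d-\frac{1-s}{s}$, which would spoil nonpositivity for small $s$).

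Your equality case via $\frac1s F_s=-\nabla\Lambda-S\boldsymbol u$, which gives $F_s(\Theta)=0$ directly, is a slightly cleaner version of the paper's appeal to $\theta_i\equiv 0 \bmod \pi$. Incidentally, the same identity shows $F_s=-s\bigl(I+\frac{1}{c}\boldsymbol u\boldsymbol u^T\bigr)\nabla\Lambda$. So $F_s$ is a genuine gradient field for a suitable metric after all, contrary to your aside, just not for the metric $I+\boldsymbol u\boldsymbol u^T$.
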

\begin{proof} We compute
\[
\frac{\partial \Lambda(\Theta)}{\partial \theta_i} = 
\left(C \sin(\theta_i) - S \cos(\theta_i) + \left(m-d+\frac{1-s}{s}\right) \sin(\theta_i) \right).
\]
It follows that
\[
\langle \nabla \Lambda(\Theta), F_s(\Theta) \rangle=
-s\|\nabla \Lambda(\Theta)\|^2 - s
\left(m-d-\frac{1-s}{s}\right) S^2 
\le 0,
\]    
and equality is possible only for all $\theta_i \equiv 0 \mod \pi$.
\end{proof}

\begin{lemma}
Let $I \subset [d]$. The fixed point $p_I$ of the ODE 
$\dot \Theta= F_s(\Theta)$ has the same unstable eigenspace 
regardless of $s$.
\end{lemma}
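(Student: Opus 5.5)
We compute the Jacobian $DF_s(p_I)$ directly and show that it is block diagonal with respect to the splitting $\mathbb R^d = E^u \oplus E^s$, where $E^u$ is spanned by $\mathrm e_j$ for $j\in I$ and $E^s$ by $\mathrm e_j$ for $j\in[d]\setminus I$. On $E^u$ the block will be positive definite and on $E^s$ negative definite, for every $s\in[0,1]$. Since $F_s$ is a convex combination, $DF_s=(1-s)DF_0+sDF_1$, and it suffices to treat the two endpoints.

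For $s=0$, $DF_0(p_I)=\operatorname{diag}(-\cos\theta_i)$. At $p_I$ this has entries $+1$ for $i\in I$ (where $\theta_i=\pi$) and $-1$ for $i\notin I$. So it is block diagonal, positive on $E^u$ and negative on $E^s$.

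For $s=1$, $F_1$ is the Kuramoto quotient field restricted to $\mathcal Q^{[d]}$. Its linearization at $p_I$ is given by the proof of Theorem~\ref{t.QI}(1), which uses the Hessian of Theorem~\ref{t.eigenstructure}. Let $k=\abs{I}$.
\begin{itemize}
\item On $E^u$ the restriction is the matrix with diagonal $a=m-2k+1$ and off-diagonal entries $1$. It is positive definite.
\item On $E^s$ the restriction has diagonal $b=1-m+2k$ and off-diagonal entries $1$. Its eigenvalues are $b-1<0$ (multiplicity $\abs{E^s}-1$) and $b+\dim E^s-1\le 2k-m+d-k<0$, using $d<m/2$. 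So it is negative definite.
\end{itemize}

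The delicate point is the off-diagonal blocks. The paper's decomposition used the Hessian in $\mathbb T^m$ coordinates, whose cross blocks are $-1$, not $0$. In cube face coordinates the metric $g^{ij}=I+\boldsymbol u\boldsymbol u^T$ mixes things, so we should instead differentiate the explicit formula for $F_1$. Differentiating the $i$-th component at a point whose coordinates are all $0$ or $\pi$ gives
\[
\partial_j (F_1)_i=\cos(\theta_j-\theta_i)-\cos\theta_j \qquad (j\ne i).
\]
For $i\in I$, $j\notin I$ this is $\cos(-\pi)-1=-2$, and for $i\notin I$, $j\in I$ it is $-1+1=0$. So $DF_1$ is block \emph{triangular}, not diagonal. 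For the same reason, the diagonal entries obtained by differentiating $F_1$ directly need not coincide with the $a$ and $b$ above, so the diagonal blocks should also be recomputed from this formula.

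The main obstacle is therefore to identify the correct invariant splitting. The column of $DF_1$ indexed by $j\in I$ has zero entries in rows $\notin I$, so $E^u$ itself is $DF_1$-invariant. The same holds for $DF_0$, which is diagonal, and hence for every $DF_s$. The plan is then:
\begin{enumerate}
\item Show $E^u$ is invariant under $DF_s$ for all $s$, by the column computation above.
\item Show the restriction of $DF_s$ to $E^u$ has all eigenvalues positive. Write it as $A_s=(1-s)I+sA_1$, where $A_1$ is symmetric with diagonal $m-2k+1$ (or the recomputed value) and off-diagonal entries $1$. Each $A_s$ is then positive definite.
\item Show the quotient map induced on $\mathbb R^d/E^u\cong E^s$ has all eigenvalues negative. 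This quotient map is given by the $E^s$ block, a convex combination of $-I$ and a negative definite matrix, hence negative definite.
\item Conclude. The spectrum of $DF_s$ splits into $k$ positive and $d-k$ negative eigenvalues, so the generalized positive eigenspace is an invariant subspace of dimension $k$ containing, and therefore equal to, $E^u$. Thus the unstable eigenspace of $p_I$ is $E^u$ for every $s$.
\end{enumerate}

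If the recomputed diagonal entries at $s=1$ turn out to differ, step 2 still goes through via the diagonal dominance estimate that uses $d<m/2$.
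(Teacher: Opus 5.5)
Your argument is correct and has the same skeleton as the paper's proof. The paper writes $DF_s(p_I)=s\,DF_1(p_I)+(1-s)\,\mathrm{diag}(I_k,-I_{d-k})$ and asserts that the positive eigenspace is unchanged. The difference lies in where the $s=1$ input comes from.

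The paper takes it from Theorem~\ref{t.QI}. Part~(4) there shows that each $\mathrm e_j$, $j\in I$, is a positive eigenvector modulo $\boldsymbol 1$. So $E^u$ is the unstable eigenspace of $DF_1(p_I)$, and in particular an invariant subspace. The convex-combination step is then justified, though the paper does not say so, because $DF_0(p_I)$ acts as $+1$ on $E^u$ and as $-1$ on $\mathbb R^d/E^u$. Hence eigenvalues only move by $\lambda\mapsto s\lambda\pm(1-s)$ and keep their signs.

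You instead get invariance and the signs directly from the cube-face Jacobian. This has three advantages:
\begin{itemize}
\item It is self-contained.
\item It does not lean on the block computation in part~(1) of Theorem~\ref{t.QI}, where coordinate subspaces are treated as eigenspaces of the $\mathbb T^m$ Hessian.
\item It shows why only the unstable eigenspace is independent of $s$: the upper-right block $-2s$ keeps $\mathrm{span}\{\mathrm e_j: j\notin I\}$ from being invariant when $0<k<d$ and $s>0$.
\end{itemize}

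One correction to Step~2. Carrying out the recomputation you call for gives
\begin{itemize}
\item on $E^u$: $A_1=(m-2k)I_k+2\,\mathbf 1_k\mathbf 1_k^{T}$, with diagonal $m-2k+2$ and off-diagonal entries $2$, not $1$;
\item on $E^s$: the block $(2k-m)I_{d-k}$.
\end{itemize}
Your diagonal-dominance fallback fails in general: for $m=11$, $k=5$ the diagonal entry is $3$ while the off-diagonal row sum is $8$. It is not needed, though. $A_1$ is symmetric with eigenvalues $m-2k>0$ and $m$, and $2k-m<0$ because $k\le d<m/2$. So Steps~2--4 go through as you planned.
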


\begin{proof}
Without loss of generality, assume that $I=[k]$ for some $k \le d$.
Theorem~\ref{t.QI} implies that the  positive eigenspace
for $s=1$ is the subspace $E^u \subset R^d$ with vanishing last
$d-k$ coordinates. 
The derivative of $F_s$ at $p_I$ is
	\[
		DF_s(p_I) = s DF_1(p_I) + (1-s) \begin{bmatrix}
		I_k & \\ & -I_{d-k} \end{bmatrix}		 
	\]
which has the same positive eigenspace.
\end{proof}

\begin{lemma}
For all $s$, $W^u (p_J)$ is a $|J|$-dimensional submanifold of $\mathcal Q^J$, and its closure is $\mathcal Q^J$.
\end{lemma}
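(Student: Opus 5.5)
Throughout, $J \subset [d]$ indexes the fixed points $p_J$ of $\dot\Theta = F_s(\Theta)$ on $\mathbb T^d$. I identify $\mathcal Q^J$ with the coordinate subtorus $\mathbb T^J = \{\Theta \in \mathbb T^d : \theta_i = 0 \text{ for } i \notin J\}$, as in Theorem~\ref{t.QsubI}. The plan is to show that for every $s \in [0,1]$ the vector field $F_s$ behaves on $\mathbb T^J$ exactly as $F_1$ did in Theorem~\ref{t.QI}, and then to run the same Morse-theoretic argument.

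\emph{Step 1: invariance of $\mathbb T^J$.} For $i \notin J$ we have $\theta_i = 0$. Substituting into the $i$-th component of $F_s$ gives $(1-s)(-\sin 0) + s\big(\sum_k \sin\theta_k - (m-d)\cdot 0 - \sum_k \sin\theta_k\big) = 0$. Hence $F_s$ is tangent to $\mathbb T^J$, and $\mathbb T^J$ is invariant.

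\emph{Step 2: $W^u(p_J) \subset \mathbb T^J$ and its dimension.} By the preceding lemma, the unstable eigenspace of $p_J$ is independent of $s$. It is therefore $E^u = \operatorname{span}\{\mathrm e_j : j \in J\}$, which has dimension $|J|$ and equals $T_{p_J}\mathbb T^J$. Inside the invariant manifold $\mathbb T^J$, the point $p_J$ is hyperbolic: the restriction of $DF_s(p_J)$ to $T_{p_J}\mathbb T^J = E^u$ has only positive eigenvalues, so $p_J$ is a source of the restricted flow. Its unstable manifold within $\mathbb T^J$ is therefore an open $|J|$-dimensional submanifold. Since the full unstable manifold in $\mathbb T^d$ also has dimension $|J|$ and is tangent to $E^u$, uniqueness of the local unstable manifold shows the two coincide. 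Hence $W^u(p_J)$ is a $|J|$-dimensional submanifold of $\mathbb T^J$.

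\emph{Step 3: density.} I would show that $W^u(p_J)$ is exactly the cellular interior $\mathring{\mathbb T}^J$, i.e.\ the points of $\mathbb T^J$ with $\theta_j \neq 0$ for all $j \in J$. This gives closure $\mathbb T^J$.

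For $p$ in the open cell, the $\alpha$-limit set is nonempty and compact. It consists of fixed points, because $\Lambda$ is a strict Lyapunov function for $F_s$ (the Lyapunov lemma, applied for $0<s\le 1$; for $s=0$ use $M$ directly). The fixed points are isolated (the fixed-point lemma), so the $\alpha$-limit set is a single fixed point $p_K$ with $K \subset J$.

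The cell is invariant because each face $\theta_j = 0$ is invariant by Step 1. The open cell lies in the open dense complement of $\bigcup_{K \subsetneq J} \mathbb T^K$. If $K \subsetneq J$, then $p_K$ is a saddle within $\mathbb T^J$ whose unstable manifold is contained in $\mathbb T^K$, by Step 2 applied to $K$. So $p$ would have to lie in $W^u(p_K) \subset \mathbb T^K$, which is disjoint from the open cell. This is a contradiction, so $K = J$ and $p \in W^u(p_J)$.

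Conversely, $W^u(p_J)$ is connected, contains no other fixed point, and avoids the invariant boundary $\partial\mathbb T^J$ (by uniqueness of solutions, since $p_J$ itself is not on the boundary). So it lies in the cell, giving equality.

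\emph{Expected obstacle.} The delicate point is the $s=0$ endpoint and the sign bookkeeping in the Lyapunov lemma. I would handle $s=0$ separately: there the product structure of the Perfect Morse flow makes everything explicit. I would also double-check that the $\alpha$-limit argument only needs isolated fixed points together with a strict Lyapunov function.
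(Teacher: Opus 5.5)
Your proof is correct and follows the paper's route. Invariance of $\mathcal Q^J$ comes first (your Step~1 is the equality principle written in these coordinates). Dimension and containment come from the $s$-independent unstable eigenspace. Density comes from the gradient-like structure with finitely many hyperbolic fixed points, which your Step~3 unpacks into the sharper identification $W^u(p_J)=\mathring{\mathbb T}^J$; the inference $\alpha(p)=p_K\Rightarrow p\in W^u(p_K)$ there uses hyperbolicity of $p_K$ for every $s$, which the paper also takes for granted.

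On the sign issue you flagged: since $\sum_i \partial_i\Lambda=\bigl(m-d+\frac{1-s}{s}\bigr)S$, the coefficient of $S^2$ in the paper's Lyapunov computation should read $m-d+\frac{1-s}{s}>0$; the minus sign in the paper is a typo. So $\Lambda$ is a strict Lyapunov function for all $0<s<1$, and at $s=1$ it coincides with $V_{\mathcal Q}$ restricted to the template.
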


\begin{proof}
The equality principle (Theorem~\ref{t.equality}) extends to $F_s$: if two angles of $\Theta$ are equal then they stay equal under the flow. The proof is the same. The flow of $F_s$ restricts to a flow of $Q^J$, with unique source $p_J$. Since the unstable eigenspace of $p_J$ is $|J|$-dimensional, so is $W^u(p_J)$ and it is contained in $Q^J$. Since there is a finite number of
fixed points and no periodic orbit, the closure of the unstable manifold is all of $Q^J$.
\end{proof}

\begin{proof}[Proof of Theorem~\ref{t.equivalence}]
	The dynamical systems defined by the ODE $\dot \Theta = F_s(\Theta)$
	are Morse-Smale \cite{Smale1960}: Indeed, they 
have finitely many hyperbolic fixed points, and no nontrivial periodic orbit. 
It remains to check the transversality condition.
	Let $J, K \subset I$. $\dim W^s(p_J)=d-|J|$ while $\dim W^u(K)=|K|$.
If the invariant manifolds
	$W^s(p_J)$ and $W^u(p_K)$ intersect then $p_J \in \overline{W^u(p_K)}=\mathcal Q^K$ and
	$J \subset K$. 
    
    In that situation $\dim (W^s(p_J) \cap W^u(p_K)) =\dim (W^s(p_J) \cap \mathcal Q^K
	= |K|-|J|$. 
Dimensionwise,
\[
	\dim W^s(p_J) + \dim W^u(p_K) - \dim (W^s(p_J) \cap W^u(p_K))
	= d.
\]
	The condition above would establish transversality for affine spaces, but $W^s(p_J)$ is not affine. So assume that $r \in W^s(p_J) \cap  W^u(p_K)$ is such that 
\[
	\dim T_r W^s(p_J) + \dim T_r W^u(p_K) - \dim (T_rW^s(p_J) \cap T_rW^u(p_K))
	< d.
\]
	The same inequality holds at all points of the forward orbit of $r$ by the flow associated to $F_s$. Therefore, we can assume that $r$ belongs to a small enough neighborhood of $p_J$, contained in $\mathbb T^K = \overline{W^u(p_K)}$, with $\epsilon$ small enough. But $T_r W^{u}(p_K) = T_{p_J} \mathbb T^K$ and 
$T_r W^{s}(p_K) \rightarrow T_{p_J} W^{s}(p_K)$ which is the negative eigenspace of the Hessian at $p_J$. 
Therefore,
	$W^s(p_J)$ and $W^u(p_K)$ intersect transversally.

Those vector fields $F_s$ are therefore Morse-Smale and hence, structurally
stable \cite{Robinson74} for all $0 \le s \le 1$. 
	There is a finite cover $\{(a_j, b_j)\}_{j=1, \dots, r}$ of $[0,1]$
	so that for all $s, s' \in (a_j, b_j)$, $F_s$ and $F_{s'}$ are
	topologically equivalent. Assuming the $a_j$ and $b_j$ are increasing,
	there are $0= c_1 < \dots < c_r=1$, $a_{j+1} < c_j < b_{j}$
	with $F_{c_j}$ topologically equivalent to $F_{c_{j+1}}$.
	It follows that $F_0$ and $F_1$ are topologically equivalent.
\end{proof}

\section{Skew invariant subtori}
\label{s.skew}
In this section, we show by example that the topological conjugacy in Theorem~\ref{t.equivalence} is not a difeomorphism in general. We also explain why saddle connections in the Kuramoto flow are so hard to find numerically.

\begin{figure}
\centerline{
\includegraphics[width=0.5\textwidth,trim=1in 0in 1in 0in,clip]{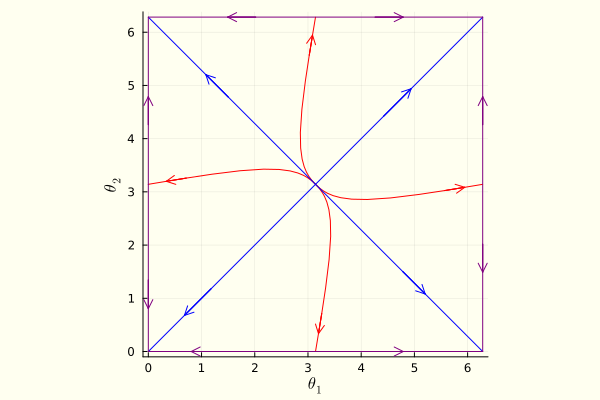}
\includegraphics[width=0.5\textwidth,trim=1in 0in 1in 0in,clip]{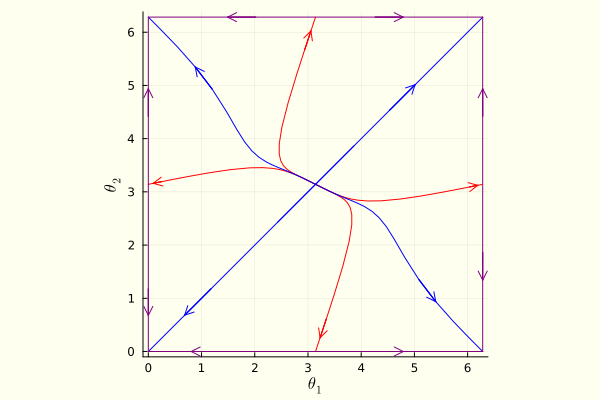}
}
	\vspace{0.5ex}
\centerline{
\includegraphics[width=0.5\textwidth,trim=1in 0in 1in 0in,clip]{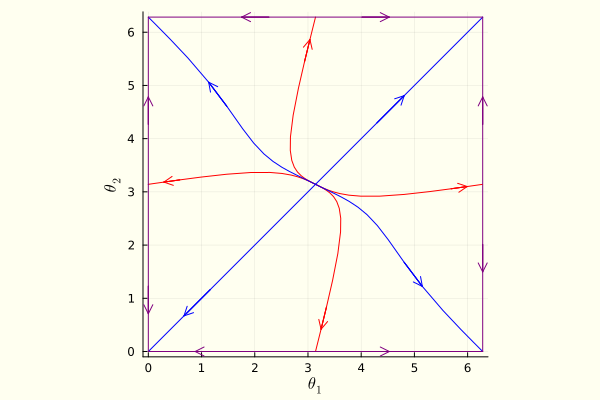}
\includegraphics[width=0.5\textwidth,trim=1in 0in 1in 0in,clip]{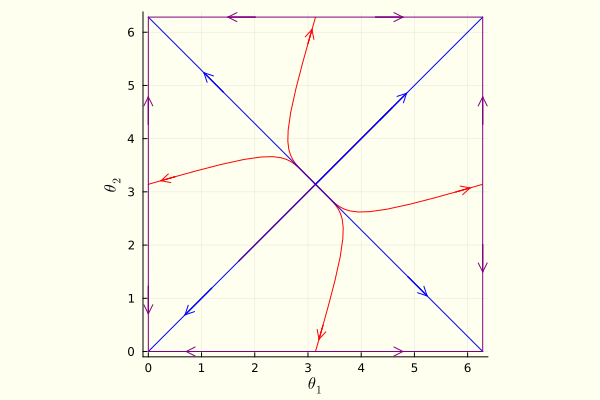}
}
	\caption{Kuramoto flow on skew subtori for partitions $(I,J,K)$
	for $(\abs{I},\abs{J},\abs{K})$ respectively: $(1,1,5)$ and $m=7$, $(1,2,4)$ and $m=7$, $(1,2,6)$ and $m=9$,
	$(2,2,5)$ and $m=9$. \label{fig-eq-2}}
\end{figure}

Let $\mathcal I=(I_1, \dots, I_r)$ be a partition of $[m]$. The equality
principle implies that the {\bf skew subtorus} $\mathcal Q^{\mathcal I}=\{ \Theta \in \mathbb T^m: i,j \in I_k \Rightarrow \theta_i = \theta_j \}$ is invariant with respect to Kuramoto flow $\phi$. 
The planar subtori of the previous two sections correspond to the particular case $\abs{I_1}=\dots=\abs{I_d}=1$, $\abs{I_{d+1}}=m-d>1$ projected into cube face coordinates where $\theta_j=0$ for $j \in I_{d+1}$. In this section we consider another important example.
By choosing one representative $i_k \in I_k$ and setting $\alpha_k = \theta_{i_k}$, we obtain an induced flow on $\mathbb T^r$. A more general formulation of
Theorem~\ref{t.equivalence} is:

\begin{theorem}\label{t.equiv2} Let $\mathcal I=(I_1, \dots, I_r)$ be a partition of $[m]$ with $\abs{I_r}\ge m/2$.  Then the flow
	of $\psi$ in $\mathcal Q^{\mathcal I}$ is topologically equivalent to the 
	flow of the
	Perfect Morse potential $M$ on $\mathbb T^{r-1}$.
\end{theorem}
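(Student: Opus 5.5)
Let $n_k=\abs{I_k}$, so $n_1+\dots+n_r=m$ and $n_r\ge m/2$. I will follow the proof of Theorem~\ref{t.equivalence} step by step, with the weighted reduced system in place of the planar one.

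\textbf{The reduced flow.} By the Equality Principle (Theorem~\ref{t.equality}), $\mathcal Q^{\mathcal I}$ is $\psi$-invariant. Fix the diagonal action by setting the common angle of the block $I_r$ to $0$. Write $\alpha_1,\dots,\alpha_{r-1}$ for the common angles on the blocks $I_1,\dots,I_{r-1}$, which gives coordinates on $\mathbb T^{r-1}$. Substituting into the Kuramoto equation, $V$ restricts to
\[
W(\alpha)=\tfrac12\sum_{k,l=1}^{r}n_kn_l\bigl(1-\cos(\alpha_k-\alpha_l)\bigr),\qquad \alpha_r=0 .
\]
Arguing as in Proposition~\ref{l.cfcoords}, the induced vector field is the gradient of $-W$ for the metric obtained by pulling back the quotient metric. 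So it is a gradient-like flow for $W$, and every nontrivial orbit strictly decreases $W$.

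\textbf{Critical points.} By Theorem~\ref{t.antipodal} and Theorem~\ref{t.exemplar}, an equilibrium of the reduced flow is either antipodal or lies in $V^{\max}$. Since $n_r\ge m/2$, we have $\sum_{k<r}n_k\le m/2$.
\begin{itemize}
\item If $n_r>m/2$, the centroid is never zero, so no equilibrium lies in $V^{\max}$. The equilibria are then exactly the $2^{r-1}$ points $p_J$, with $\alpha_k=\pi$ for $k\in J\subset[r-1]$ and $\alpha_k=0$ otherwise.
\item If $n_r=m/2$, the point with all $\alpha_k=\pi$ lies in $V^{\max}$. It is still isolated: it is the unique maximum of $W$ and a degenerate source. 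I expect this borderline case to need a separate remark, or possibly to fail hyperbolicity altogether.
\end{itemize}

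\textbf{Indices.} To compute the index of $p_J$ I will use Theorem~\ref{t.eigenstructure}. Tangent vectors to $\mathcal Q^{\mathcal I}$ are block-constant. Moving a block that sits at $\pi$ is an unstable direction, and moving a block that sits at $0$ is a stable direction. This uses the $dz$-eigenvector and the unstable and stable eigenspaces, restricted to block-constant vectors, exactly as in item (4) of the proof of Theorem~\ref{t.QI}. Hence the index of $p_J$ within $\mathcal Q^{\mathcal I}$ is $\abs{J}$, matching the Perfect Morse flow.

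\textbf{Homotopy to Perfect Morse.} As in Section~\ref{s.conjugacy}, I take the convex combination $F_s=(1-s)F_0+sF_1$ of the Perfect Morse field $F_0$ and the reduced Kuramoto field $F_1$. I will then show, for each $s\in[0,1]$:
\begin{enumerate}
\item The zeros of $F_s$ are the $p_J$. Setting $S=\sum_k n_k\sin\alpha_k$ and $C=\sum_k n_k\cos\alpha_k$, the same summation trick forces $S=0$. The coefficient of $\sin\alpha_i$ then becomes $n_r+C+(\text{positive})$, and this is positive because $C\ge-\sum_{k<r}n_k\ge -n_r$.
\item The unstable eigenspace at each $p_J$ is the coordinate subspace indexed by $J$, independent of $s$.
\item There is a Lyapunov function $\Lambda=W+c_sM$ for $F_s$, so $F_s$ has no periodic orbits.
\item The Equality Principle holds for $F_s$ on the sub-tori where some $\alpha_k$ coincide or vanish. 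Hence the closure of $W^u(p_J)$ is the coordinate subtorus $\mathbb T^J$, and the transversality argument from the proof of Theorem~\ref{t.equivalence} carries over.
\end{enumerate}
So each $F_s$ is Morse--Smale, hence structurally stable, and a compactness argument along $s\in[0,1]$ gives the topological equivalence between $F_0$ and $F_1$.

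\textbf{Main obstacle.} The hardest step is the Lyapunov estimate with weights $n_k$. The metric is no longer the simple $I+\boldsymbol u\boldsymbol u^T$, so I would first rewrite $F_1$ as $-G^{-1}\nabla W$ and look for $\Lambda$ of the form $W+c_sM$ with a weighted $M$. A second difficulty is the case $n_r=m/2$, where the top equilibrium is degenerate. There I would try to show that it is still a topological source, for instance via $\abs{Z}^2$ as a local Lyapunov function, and then invoke a version of the equivalence that allows a non-hyperbolic source.
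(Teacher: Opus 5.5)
The paper gives no separate proof of Theorem~\ref{t.equiv2}. It is presented as a more general formulation of Theorem~\ref{t.equivalence}, so the intended argument is exactly the transplant you describe: convex homotopy $F_s$, Lyapunov function, Morse--Smale, structural stability along $s\in[0,1]$. For $\abs{I_r}>m/2$ your plan works, and the step you call the main obstacle is routine. Take $S,C$ summed over $k<r$, and set $c=\frac{1-s}{s}$, $n=(n_1,\dots,n_{r-1})^T$, $\mathbf u=(1,\dots,1)^T\in\mathbb R^{r-1}$ and $\Lambda_s=W+c\sum_{k<r}n_k(1-\cos\alpha_k)$. Then $\partial_i\Lambda_s=n_i\bigl((C+n_r+c)\sin\alpha_i-S\cos\alpha_i\bigr)$ and $\sum_i\partial_i\Lambda_s=(n_r+c)S$, hence
\[
F_s=-s\Bigl(\mathrm{diag}(n)^{-1}+\frac{1}{n_r+c}\,\mathbf u\mathbf u^T\Bigr)\nabla\Lambda_s .
\]
So every $F_s$ with $0<s\le 1$ is the gradient flow of $-\Lambda_s$ for a constant metric; at $s=1$ this is the induced quotient metric $\mathrm{diag}(n)-\frac1m nn^T$. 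Two minor fixes are needed. In your item (1), weight the $i$-th equation by $n_i$ before summing; this gives $-(m+c)S=0$. Second, block directions at $0$ are in general not stable eigenvectors: for $J\ne\emptyset$ they have a component along the $dz$-vector. The index count should instead use definiteness of the Hessian form on the $\pi$-blocks and on the $0$-blocks other than $I_r$, together with Sylvester's law, as in item (1) of the proof of Theorem~\ref{t.QI}.

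\textbf{The gap: the borderline case.} The real gap is $\abs{I_r}=m/2$ with $r\ge 3$. The statement includes this case, but Theorem~\ref{t.equivalence} does not, since there $d<m/2$, i.e.\ $\abs{I_r}>m/2$. In this case $\sum_{k<r}n_k=n_r$, and the Hessian of $\Lambda_s$ at the top point $p=(\pi,\dots,\pi)$ is $-nn^T-c\,\mathrm{diag}(n)$. This is negative definite for $s<1$ but has rank one at $s=1$. So $F_1$ has a non-hyperbolic source at a point of $\mathcal V^{\sing}$; the smallest instance is $m=4$, $\mathcal I=(\{1\},\{2\},\{3,4\})$, shown in Figure~\ref{fig-singularity}.

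Consequently your item (2) and the claim that each $F_s$ is Morse--Smale fail exactly at $s=1$. The compactness argument then only gives $F_s\sim F_0$ for $s<1$. It cannot reach $F_1$, which is not structurally stable. Your closing sentence names what is needed but does not supply it.

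One way to close the gap:
\begin{enumerate}
\item Let $B=\{W\ge \max W-\delta\}$. For small $\delta$ this is a small neighborhood of $p$ bounded by a regular level set. $F_1$ crosses $\partial B$ transversally outward, and so does $F_s$ for $s$ near $1$.
\item On $\mathbb T^{r-1}\setminus B$, $F_1$ is Morse--Smale and enters through the boundary. Robinson's structural stability theorem for manifolds with boundary (the one behind Theorem~\ref{t.robustness}) then gives an equivalence with $F_s$ there.
\item Inside $B$, both flows have $p$ as their only zero and $B$ is backward invariant. So for both flows $B\setminus\{p\}\cong \partial B\times(-\infty,0]$ along orbits.
\item The boundary homeomorphism therefore extends orbitwise into $B$. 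Continuity at $p$ follows from uniform backward convergence to $p$.
\end{enumerate}
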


\begin{example*}
	Assume that $m > 5$ and consider the partition
	$\mathcal I=(\{1\}, \{2\}, \{3,4, \dots, m\})$, see for instance
    Fig. \ref{fig-eq-2} top left. The induced
	vector field on $\mathbb T^2$ is 
\begin{eqnarray*}
	\frac{\partial \alpha}{\partial t} &=&
	\sin(\beta-\alpha)  -(m-1) \sin(\alpha) - \sin(\beta)
	\\
	\frac{\partial \beta}{\partial t} &=&
	- \sin(\beta-\alpha) - (m-1) \sin(\beta) - \sin(\alpha)
\end{eqnarray*}
        Both diagonals $\alpha=\beta \mod 2\pi$ and $\alpha=-\beta \mod 2\pi$ 
	are invariant. The derivative of the vector field at the source
    $(\alpha, \beta)=(\pi,\pi)$ has eigenvectors $\begin{pmatrix}1\\1\end{pmatrix}$ and 
    $\begin{pmatrix}1\\-1\end{pmatrix}$ with respective eigenvalues $1$ and $m-4 > 1$. It follows
    that orbits not in the main diagonal are all tangent to the diagonal $\alpha=-\beta \mod 2\pi$ at
    the source. Thus,
	  there
	cannot possibly be a \em{diffeomorphism} of $\mathbb T^2$ sending
	orbits of the induced vector field into orbits of the Morse vector
	field, and in this sense topological conjugacy in Theorem~\ref{t.equivalence} cannot be replaced by a difeomorphism in general.
\end{example*}

\section{Proof of the Main Theorem}\label{s.proof}

For clarity, the Main Theorem is proved in terms of the quotient flow on $\mathcal Q$. Item (a) follows from the fact that
$\psi$ is a smooth gradient flow on a compact manifold. Item (b) is Corollary~\ref{c.centroid}. From Theorem~\ref{t.antipodal}, $\psi$ admits $\binomial{m}{u}$ fixed points of the form $[\pi, \dots, \pi, 0, \dots, 0]$ with $u$ copies of $\pi$ for all $1 \le u<m/2$. Form Theorem~\ref{t.eigenstructure} those are precisely saddles of index $u$ and item (d) follows, using again Theorem~\ref{t.centroid} for the computation of $V$. Those are all the fixed points for $0<V<m^2/2$. Their stable and unstable manifolds have codimension $\ge 1$, hence item (d). Item (e) is Theorem~\ref{t.equivalence}

\section{The high potential regime}
\label{s.high}

The dynamics of the reduced Kuramoto flow may be decomposed into a low-potential regime
with $V_{\mathcal Q} \le (m^2-c)/2$, $c=1$ for $m$ odd and $c=4$ otherwise, containing all the
isolated fixed points; a trivial region with
$(m^2-c)/2 < V_{\mathcal Q} < m^2/2-\epsilon$, 
and a high potential regime $V_{\mathcal Q} \ge m^2/2-\epsilon$
containing a neighborhood of $\gls{Vmax}:= q(V^{\max})$, the projection of $V^{\max} \subset
\mathbb T^m$ into the quotient set $\mathcal Q$. In the next few sections we 
describe the set $\mathcal V^{\max}$, its neighborhood and how it connects to
the stable manifolds $W^s(p_I)$, $\abs{I}\le d$.
Let $\gls{Vsing}$ be the set of singular points of $V^{\max}$. If
$m$ is odd, $V^{\sing} = \emptyset$. If $m$ is even, 
$V^{\sing}$ is the union of diagonals containing a point with the
same number of coordinates equal to $\pi$ and to $0$. It follows that in this case\, 
$\mathcal V^{\sing}=q(V^{\sing})$ contains
$\frac{1}{2}\binomial{m}{m/2}$ isolated points.

\begin{theorem}\label{t.trivial}
	\begin{enumerate}
		\item The normal bundle of $\mathcal V^{\max} \setminus \mathcal V^{\sing}$ is trivial.
		\item $\mathcal V^{\max} \setminus \mathcal V^{\sing}$ is locally normally hyperbolic.
	\end{enumerate}
\end{theorem}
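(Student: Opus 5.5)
The natural tool is the centroid map $Z:\mathbb T^m\to\mathbb C$ of Theorem~\ref{t.centroid}. It is invariant under the diagonal action only up to the rotation factor $e^{i\alpha}$, so $Z$ itself does not descend to $\mathcal Q$, but $\abs{Z}^2$ does. Away from $V^{\sing}$, the proof of Theorem~\ref{t.Vmax} shows that $O$ is a regular value of $Z$, so $DZ$ has rank $2$ at each point of $V^{\max}\setminus V^{\sing}$. Its kernel is exactly $T(V^{\max})$. The diagonal vector $\boldsymbol 1$ lies in this kernel, since $DZ\cdot\boldsymbol 1=iZ=0$ on $V^{\max}$.

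For (1) I would proceed as follows. At $\Theta\in V^{\max}\setminus V^{\sing}$, set $N_\Theta=(\ker DZ)^\perp=\operatorname{span}\{(\cos\theta_j)_j,(\sin\theta_j)_j\}$. This plane is contained in $E=\boldsymbol 1^\perp$, because $\sum\cos\theta_j=\sum\sin\theta_j=0$ on $V^{\max}$. The two vectors $c(\Theta)=(\cos\theta_j)_j$ and $s(\Theta)=(\sin\theta_j)_j$ give a global frame of the normal bundle of $V^{\max}\setminus V^{\sing}$, but they are not diagonal-invariant: under $T_\alpha$ they rotate by $\alpha$. To descend to $\mathcal Q$, I would use instead the invariant expressions
\[
u_{jk}=\cos(\theta_j-\theta_k),\qquad v_{jk}=\sin(\theta_j-\theta_k)
\]
for a fixed reference index $k$. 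The vectors $\big(\cos(\theta_j-\theta_k)\big)_j$ and $\big(\sin(\theta_j-\theta_k)\big)_j$ are rotations of $c,s$ by the angle $\theta_k$, so they still span $N_\Theta$ and they are diagonal-invariant. The only issue is that the choice of $k$ is not global. However, $c$ and $s$ are orthogonal with equal norms, since $\sum e^{2i\theta_j}$ need not vanish. A more canonical route is to note that $N_\Theta$ with the complex structure $J(c)=s$ is a complex line bundle. Its pullback frame $c+is$ transforms by $e^{i\alpha}$, so on $\mathcal Q$ the normal bundle is the complex line bundle associated to the circle bundle $V^{\max}\to\mathcal V^{\max}$, with character $e^{i\alpha}$. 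I would then show that this circle bundle is trivial, by exhibiting a section. The section I would try is $[\Theta]\mapsto\Theta$ normalized so that $\sum_j e^{i\theta_j}w_j$ has argument $0$, where $w_j$ are fixed generic weights. I expect this step to be the main obstacle, and for odd $m\ge5$ the paper's claim suggests that such a section exists. Alternatively, I would use the product decomposition $V^{\max}=S^1\times\mathcal V^{\max}$ asserted in the Main Theorem, obtained by taking the slice $\theta_m=0$ (cube face coordinates). On this slice the frame $c,s$ restricted to $\theta_m=0$ is a well-defined global normal frame to $\mathcal V^{\max}$ in $\mathcal Q$, after identifying $T\mathcal Q$ with $E$ via $Tq$ and projecting onto $E$, which is harmless because $c,s\in E$. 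This slice argument is the one I would actually write, since it is concrete.

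For (2), I would compute the Hessian of $-V$ at $\Theta\in V^{\max}$ via $V=\frac{m^2}{2}(1-\abs Z^2)$. The first derivative of $\abs Z^2$ vanishes because $Z=0$, and the second derivative is $2\abs{DZ\,\xi}^2$. Hence $D^2(-V)(\xi,\xi)=m^2\abs{DZ\,\xi}^2$, which is zero on $T V^{\max}$ and positive definite on $N_\Theta$. Explicitly, $H=c c^T+s s^T$ with eigenvalues $\abs{c}^2,\abs{s}^2>0$ on $N_\Theta$, and these eigenvalues are bounded below on compact pieces away from $V^{\sing}$. So the flow $\psi$ has a splitting $T\mathcal Q=T\mathcal V^{\max}\oplus N$ on which it is the identity along the first summand and expanding along the second. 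This gives local normal hyperbolicity (a repeller) in the sense of \ocite{HirschPughShub}. Near $\mathcal V^{\sing}$ the expansion rate degenerates, which is why the statement is only local.
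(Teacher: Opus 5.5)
Your proposal is correct and follows the paper's route. You frame the normal plane by $c=\Cos(\Theta)$ and $s=\Sin(\Theta)$. These are independent off $V^{\sing}$, because a linear dependence forces all $\theta_j$ to be congruent modulo $\pi$. The Hessian of $-V$ on $V^{\max}$ is $cc^T+ss^T$, and your identity $D^2(-V)(\xi,\xi)=m^2\abs{DZ\,\xi}^2=\langle c,\xi\rangle^2+\langle s,\xi\rangle^2$ is the paper's Hessian formula with $C=S=0$.

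The one real difference is how you make the frame descend to $\mathcal Q$. The paper normalizes by counterdiagonal coordinates $\sum_j\theta_j=0$. That condition fixes the representative of $[\Theta]$ only up to $\Theta\mapsto\Theta+\frac{2\pi k}{m}\boldsymbol 1$, which rotates $(\Cos,\Sin)$ by $2\pi k/m$. So the trivialization written there is not single-valued without further argument. Your slice $\theta_m=0$ is an honest global section, since $\Theta\mapsto([\Theta],\theta_m)$ identifies $\mathbb T^m$ with $\mathcal Q\times S^1$ equivariantly. This means the step you expected to be the ``main obstacle'' is immediate. Your worry that ``the choice of $k$ is not global'' is also unfounded: the fixed choice $k=m$ is exactly this slice. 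On part (1), your version is the cleaner proof.

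There are two related slips to fix.

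\begin{enumerate}
\item Your sentence that $c$ and $s$ are ``orthogonal with equal norms'' has it backwards. Since $\abs{c}^2-\abs{s}^2+2i\langle c,s\rangle=\sum_j e^{2i\theta_j}$, this holds only when $\sum_j e^{2i\theta_j}=0$, which generally fails.
\item Consequently the eigenvalues of $H$ on $N_\Theta$ are not $\abs{c}^2,\abs{s}^2$. They are the eigenvalues of the Gram matrix of $(c,s)$, namely $\frac12\bigl(m\pm\abs{\sum_j e^{2i\theta_j}}\bigr)$.
\end{enumerate}

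These are positive exactly because $\abs{\sum_j e^{2i\theta_j}}<m$ unless $\Theta$ is antipodal, i.e.\ lies in $V^{\sing}$. That confirms the positivity you already obtained from the quadratic form. It also makes precise your remark that the normal expansion rate degenerates as one approaches $\mathcal V^{\sing}$.
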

\begin{proof}[Proof of Theorem \ref{t.trivial}]
{\bf Part 1.} 
Through this argument, we will assume counter-diagonal coordinates. This
means that each time that we write coordinates $\theta_1, \dots,
\theta_m$ for some point $q(\Theta) \in \mathcal Q$, we also assume
that $\theta_1+\dots+\theta_m=0$.

From Theorem \ref{t.centroid},
	the set $\mathcal V^{\max}$ is defined by the equations
\begin{eqnarray*}
C(\Theta)&:=& 
\sum_{i=1}^m \cos(\theta_i) = 0 \\
S(\Theta)&:=&
\sum_{i=1}^m \sin(\theta_i) = 0 \\
\end{eqnarray*}
For every $\Theta$ in $\mathbb T^m$, write
\[
\Cos(\Theta) := 
\begin{pmatrix}
\cos(\theta_1) \\
\vdots \\
\cos(\theta_m) \\
\end{pmatrix}=
\nabla S(\Theta)
\hspace{1em}
\text{and}
\hspace{1em}
\Sin(\Theta) := 
\begin{pmatrix}
\sin(\theta_1) \\
\vdots \\
\sin(\theta_m) \\
\end{pmatrix} =
-\nabla C(\Theta)  
\]
and notice that for $\Theta \in V^{\max}$,
$\Cos(\Theta)$ and $\Sin(\Theta)$ are orthogonal to the diagonal.
By construction, they are also normal to the level surface
	$\mathcal V^{\max}$.
	It remains to prove that for all $\Theta \in V^{\max} \setminus V^{\sing}$,
$\Cos(\Theta)$ and $\mathbf \Sin(\Theta)$ are linearly
independent.

	Suppose for the sake of  contradiction that for some $\Theta \in V^{\max} \setminus V^{\sing}$, we have
\[
a \, \Cos(\Theta) + b \, \Sin(\Theta) = 0
\]
where $(a,b) \ne (0,0)$. We can normalize $a^2+b^2=1$ and write
$a=\sin(\alpha)$, $b=\cos(\alpha)$ so that
\[
\sin(\alpha) \, \Cos(\Theta) + \cos(\alpha) \, \Sin(\Theta) = 0
\]
For all $i$,
\[
\sin(\alpha+\theta_i)=
\sin(\alpha) \cos(\theta_i) + \cos(\alpha) \sin(\theta_i) = 0
\]
In particular, $\theta_i + \alpha \equiv 0 \mod \pi$. In particular
$\Theta$ is an exemplar so it belongs to $V^{\sing}$.

	Denote by $B=\mathcal V^{\max} \setminus \mathcal V^{\sing}$ the base space of the normal
bundle $\bundle{\mathbb R^2}{NB}{}{B}$. This
bundle admits
the global trivialization 
\[
\defun{\varphi}{B \times \mathbb R^2}{NB}
{(q(\Theta), \mathbf u)}{(q(\Theta), u_1 \, \Cos(\Theta) + u_2 \, \Sin(\Theta)).}
\]
{\bf Part 2:}
Under the notations above, $-\nabla V = S \Cos - C \Sin$ and hence the Hessian is
\begin{equation}\label{eq-Hessian}
	H= \Cos \Cos^T + \Sin \Sin^T - S\, \mathrm{diag}(\Sin) - C\, \mathrm{diag}(\Cos)
.
\end{equation}
After specialization to $\Theta \in V^{\max} \setminus V^{\sing}$,
$C=S=0$ and $\Cos, \Sin$ are linearly independent. Hence,
\[
H = \Cos \Cos^T + \Sin \Sin^T.
\]
The Hessian is a symmetric, rank 2 matrix as expected. To check positive-definiteness, it suffices to write $H$ in the (non-orthonormal) basis $(\Cos,\Sin)$. 
\[
	H_{(\Cos,\Sin)} = \begin{pmatrix} \Cos^T \Cos & \Cos^T\Sin \\
		        \Sin^T\Cos & \Sin^T \Sin
	\end{pmatrix}
\]
We recovered a Gramian matrix, that factors out as
\[
	H_{(\Cos,\Sin)} = \begin{pmatrix} \Cos  & \Sin \end{pmatrix}^T \begin{pmatrix} \Cos  & \Sin \end{pmatrix}
\]
and since $\Cos$ and $\Sin$ are linearly independent, the kernel vanishes and $H_{(\Cos,\Sin)}$ is positive definite. 
\end{proof}

The $\bf alpha$ and $\bf omega$ limit sets of $r$ by a flow $\psi_t$ are usually
defined as the set of accumulation points of $\psi_t(r)$ when $t \rightarrow -\infty$, resp. $+\infty$. Since in this paper $\psi_t$ is the flow of a Gradient vector field, there is more that can be said of the limit points.

\begin{theorem}\label{t.limits}
	For all $r \in \mathcal Q$, $\alpha(r)$ and $\omega(r)$ are points.
\end{theorem}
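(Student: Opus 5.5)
The plan is to separate the two limits by their critical levels, and to use the Łojasiewicz gradient inequality where the critical set is not isolated. Since $\psi$ is the gradient flow of $-V_{\mathcal Q}$ on the compact manifold $\mathcal Q$ (Theorem~\ref{t.grads}), standard facts apply to each $r$. The sets $\alpha(r)$ and $\omega(r)$ are nonempty, compact, connected and $\psi$-invariant. Moreover $V_{\mathcal Q}(\psi_t(r))$ is monotone with limits $c_\pm$ as $t\to\mp\infty$, so each limit set lies in the critical set of $V_{\mathcal Q}$ and inside a single level set. If $r$ is a fixed point there is nothing to prove, so assume it is not.

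First I treat the limit sets at a non-maximal level. By Theorem~\ref{t.exemplar} and Corollary~\ref{c.psifixedpoints}, the critical points of $V_{\mathcal Q}$ with value $<m^2/2$ are finitely many isolated points $p_I$. If $\omega(r)$ or $\alpha(r)$ lies at such a level, it is a connected subset of a finite set, hence a single point. This settles $\omega(r)$ in every case. Indeed, $\omega(r)\subset\mathcal V^{\max}$ would force $V_{\mathcal Q}(\psi_t(r))\to m^2/2$. But that function is strictly decreasing and bounded above by $m^2/2$, so this is impossible unless $r$ is fixed. The same argument settles $\alpha(r)$ whenever $c_-<m^2/2$.

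The remaining case, which I expect to be the main obstacle, is $\alpha(r)\subset\mathcal V^{\max}$. Here the critical set is a continuum, and connectedness alone gives nothing. Normal hyperbolicity (Theorem~\ref{t.trivial}) would handle points near the smooth part, via the unstable foliation. However, it says nothing near the singular points of $\mathcal V^{\sing}$ when $m$ is even. So I will instead use that $V_{\mathcal Q}$ is real analytic, being a trigonometric polynomial in cube face coordinates by Proposition~\ref{l.cfcoords}. By the Łojasiewicz inequality applied at each point of the compact set $\mathcal V^{\max}$, there are a neighborhood $U$ of $\mathcal V^{\max}$ and constants $C>0$, $\theta\in(0,1/2]$ such that
\[
\bigl|m^2/2 - V_{\mathcal Q}(x)\bigr|^{1-\theta}\le C\,\|\nabla V_{\mathcal Q}(x)\| \qquad (x\in U).
\]

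Now take $t_0$ with $\psi_t(r)\in U$ for all $t\le t_0$; such $t_0$ exists since $\alpha(r)\subset\mathcal V^{\max}$. Set $h(t)=m^2/2-V_{\mathcal Q}(\psi_t(r))>0$. Then $h'(t)=\|\nabla V_{\mathcal Q}\|^2$ along the orbit. Combining this with the inequality gives
\[
\frac{d}{dt}\,h^{\theta}=\theta\,h^{\theta-1}\|\nabla V_{\mathcal Q}\|^2\ge \frac{\theta}{C}\,\|\nabla V_{\mathcal Q}\|=\frac{\theta}{C}\,\Bigl\|\frac{d}{dt}\psi_t(r)\Bigr\|.
\]
Integrating over $(-\infty,t_0]$ bounds the arc length of the backward orbit by $\frac{C}{\theta}h(t_0)^\theta<\infty$. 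A curve of finite length in a compact manifold has a unique limit point, so $\alpha(r)$ is a point.

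The same estimate also reproves the finite-level cases uniformly. So I may present a single Łojasiewicz argument, with the isolated-point argument as a remark.
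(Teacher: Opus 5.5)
Your proof is correct. For the one delicate case it takes a different route from the paper.

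\textbf{The $\omega$-limit and the non-maximal levels.} The paper's treatment of $\omega(r)$ is the same as yours: the limit lies at a level below $m^2/2$, where the critical points are the finitely many isolated $p_I$.

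\textbf{The $\alpha$-limit at the maximal level.} Here the paper splits according to whether $\alpha(r)$ meets the smooth part $\mathcal V^{\max}\setminus\mathcal V^{\sing}$.
\begin{itemize}
\item If it does, normal hyperbolicity (Theorem~\ref{t.trivial}) and the local unstable fibration force the backward orbit to converge to that point.
\item If it does not, $\alpha(r)$ is a connected subset of the finite set $\{p_I\}\cup\mathcal V^{\sing}$, hence a single point.
\end{itemize}
You instead split by critical level and handle all of $\mathcal V^{\max}$ at once with the \L{}ojasiewicz inequality. This uses only that $V_{\mathcal Q}$ is a trigonometric polynomial.

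\textbf{What your route gains.}
\begin{itemize}
\item No invariant-manifold theory is needed.
\item The singular points for even $m$ need no separate treatment. They are neither isolated critical points nor normally hyperbolic, and the paper sidesteps them only through the dichotomy above.
\item You get a quantitative bound. Write $h(x)=m^2/2-V_{\mathcal Q}(x)$ and take $\epsilon_0$ small enough that $\{h\le\epsilon_0\}\subset U$ and no non-maximal critical value lies in that range. Then every $x$ with $h(x)\le\epsilon_0$ satisfies $d(x,\alpha(x))\le \frac{C}{\theta}h(x)^{\theta}$, since its backward orbit stays in $\{h\le h(x)\}\subset U$.
\item Combined with continuity of the flow, this bound already gives continuity of $\alpha$ wherever $\alpha\in\mathcal V^{\max}$, including at $\mathcal V^{\sing}$. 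The paper proves this separately, with more effort, in Section~\ref{s.alpha} and Corollary~\ref{c.alpha-retracts}.
\end{itemize}

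\textbf{What the paper's route gains.} It gives geometric information your argument does not: near smooth points, orbits leave $\mathcal V^{\max}$ exponentially along the unstable fibers, normal to $\mathcal V^{\max}$. This is used in Theorem~\ref{t.limits2}.

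\textbf{A small technical point.} When you make the \L{}ojasiewicz constants uniform over a finite subcover of $\mathcal V^{\max}$, first shrink the neighborhoods so that $h<1$ on them. Replacing each exponent by the smallest one then preserves the inequality.
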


This allows to write the {\bf alpha} and the {\bf omega} limits as ordinary limits.
\[
	\alpha(r) = \lim_{t \rightarrow -\infty} \psi_t(r)
	\hspace{1em} 
	\text{and}
	\hspace{1em} 
	\omega(r) = \lim_{t \rightarrow \infty} \psi_t(r)
.
\]

\begin{proof}
	{\bf Part 1}. The quotient space $\mathcal Q$ is compact, and 
	$V(\psi_t)$ is strictly decreasing except on fixed points.
	Therefore, the omega-limit (resp alpha-limit)
	of any point $r \in \mathcal Q$ is necessarily
	nonempty, compact and  connected. We claim it
	consists entirely of fixed points. 
Indeed, suppose that $y \in \omega(r)$ is not a fixed point.
By definition, there is $(t_i)_{i \in \mathbb N}$ with
$\lim \psi_{t_i}(r) = y$ and we can assume that $t_i < t_i+1 < t_{i+1}$.
	This implies that $V(\psi_{t_i}(r)) > V(\psi_{t_i + 1}(r)) > V(\psi_{t_{i+1} }(r))$. Passing to limits,
	\[
		\lim_{i \rightarrow \infty} V(\psi_{t_i}(r))
		=
		\lim_{i \rightarrow \infty} V(\psi_{t_{i+1} }(r)) = V(y)
	\]
while
	\[
		\lim_{i \rightarrow \infty} V(\psi_{t_i + 1}(r)) = V( \psi_1(y)) = V(y),
	\]
contradiction.

	{\bf Part 2:} The fixed points in $\mathcal Q$ are the points $s_I$ for $|I|<m/2$ and $\mathcal V^{\max}$. If $r \in \mathcal V^{\max}$, then $\alpha(r) = \omega(r) = q$ so we assume $r \not \in \mathcal V^{\max}$. Necessarily, $\omega(r)$ contains one of the $p_I$, and those points are isolated. Thus the omega limit is always a point.

	The alpha-limit of $r$ can contain a point in $\mathcal V^{\max} \setminus \mathcal V^{\sing}$. Since $\mathcal V^{\max} \setminus \mathcal V^{\sing}$ is a normally hyperbolic repeller, that point is the limit when $t \rightarrow -\infty$. Suppose that the alpha limit of $r$ is not in $\mathcal V^{\max} \setminus \mathcal V^{\sing}$. Then it must contain either one of the $p_I$ or a point in $\mathcal V^{\sing}$. Those are all isolated, so the alpha limit is always one point. 
\end{proof}


\begin{theorem}\label{t.limits2}
	\begin{enumerate}
		\item If $\alpha(p) \in \mathcal V^{\max} \setminus \mathcal V^{\sing}$, the
			$\psi_t(p)$ is normal to $\mathcal V^{\max}$ as $t \rightarrow -\infty$.
        \item
		Let $d=m/2$. All the points $p=q(\theta)$ with exactly $d$ coordinates $\theta_i=0$ have limit $\alpha(p) \in \mathcal V^{\sing}$. 
	The orbits $\psi_t(p)$ are orthogonal to the vector $\Sin(\Theta)$, 
	$\sin(\theta_i)=1$ for $i \in I$ and $\sin(\theta_i)=-1$ for $i \not \in I$.
	\end{enumerate}
\end{theorem}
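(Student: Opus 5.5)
For part (1) I would use the normal hyperbolicity established in Theorem~\ref{t.trivial}. At a point $x=q(\Theta)\in\mathcal V^{\max}\setminus\mathcal V^{\sing}$, the linearization of $\psi$ is the Hessian $H=\Cos\Cos^T+\Sin\Sin^T$.
\begin{itemize}
\item On $T_x\mathcal V^{\max}$ it vanishes, since $\Cos$ and $\Sin$ are normal to the level set $\mathcal V^{\max}$.
\item On the normal plane $N_x=\operatorname{span}(\Cos,\Sin)$ it is positive definite, by the Gramian argument.
\end{itemize}
So $\mathcal V^{\max}\setminus\mathcal V^{\sing}$ is a locally normally hyperbolic repeller with zero center rate. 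The invariant manifold theory of \ocite{HirschPughShub} then gives a local unstable manifold, which is an open neighbourhood foliated by strong unstable fibers $W^{uu}(x)$, $x\in\mathcal V^{\max}\setminus\mathcal V^{\sing}$. Each fiber is a $C^1$ disc tangent at $x$ to $N_x$, and backward orbits in a fiber converge to its base point at the exponential rate given by the smallest eigenvalue of $H|_{N_x}$.

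If $\alpha(p)=x$ (a point, by Theorem~\ref{t.limits}), I first show that $p$ lies in the fiber over $x$. Backward orbits converge exponentially to their base point while the base dynamics is trivial, since $\mathcal V^{\max}$ consists of fixed points; hence the base point of $\psi_t(p)$ must be $x$ itself. Next I check that the backward orbit enters $N_x$ tangentially: the unit vector $\dot\psi_t(p)/\|\dot\psi_t(p)\|$ tends to a unit vector of $N_x$. The reason is that within a fiber the flow is $C^1$-close to the linear flow $e^{tH|_{N_x}}$, and every direction of $N_x$ is expanding, so no direction tangent to $\mathcal V^{\max}$ can survive in the limit. The main subtlety is that $H|_{N_x}$ may have two distinct eigenvalues. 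Then the limiting direction is an eigendirection rather than an arbitrary one, but it still lies in $N_x$, and that is all part (1) requires.

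For part (2) I read the statement as concerning the invariant skew subtorus of Section~\ref{s.skew}. Take the partition $\mathcal I=(I,[m]\setminus I)$ with $|I|=m/2$, so $\theta_i=a$ for $i\in I$ and $\theta_i=b$ otherwise; by the Equality Principle (Theorem~\ref{t.equality}) this set is $\varphi$-invariant. In $\mathcal Q$ it is a circle parametrized by $\delta=a-b$. Working in counterdiagonal coordinates, I take $a=\delta/2$ and $b=-\delta/2$.
\begin{itemize}
\item The centroid is $Z=\cos(\delta/2)$, so by Theorem~\ref{t.centroid} $V=\tfrac{m^2}{2}\sin^2(\delta/2)$.
\item The Kuramoto equation reduces to $\delta'=-m\sin\delta$.
\item The only fixed points on this circle are $\delta=0$, which is $p_\emptyset$, and $\delta=\pi$, which is the singular point of $\mathcal V^{\sing}$ with half the angles at $\pi/2$ and half at $-\pi/2$.
\end{itemize}
Hence every point of the circle other than $p_\emptyset$ has $\alpha$-limit this point of $\mathcal V^{\sing}$.

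For the orthogonality claim, the velocity along the orbit is proportional to the vector equal to $+1$ on $I$ and $-1$ off $I$. At the limit point this is exactly $\Sin(\Theta)$, while $\Cos(\Theta)=\cos(\delta/2)\mathbf 1$ is along the diagonal. So in the quotient metric the orbit is orthogonal to $\Cos$ and runs along the $\Sin$ direction. I expect the main obstacle to be reconciling this computation with the exact wording of the statement, that is, which of $\Cos$ and $\Sin$ the orbit is orthogonal to and under which representative; I would settle it by carrying out this explicit computation. The part requiring most care overall is the tangency argument in part (1), where I would rely on the $C^1$ linearization along the strong unstable fibers.
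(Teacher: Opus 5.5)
Your part (1) is the paper's argument: normal hyperbolicity from Theorem~\ref{t.trivial}, with the unstable fibres tangent to $\operatorname{span}(\Cos,\Sin)$. You write it out in more detail, and it is fine.

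Part (2) has a genuine gap, because you have replaced the set in the statement by a much smaller one. Index the $d=m/2$ vanishing coordinates by $I$. The points in question then fill (up to lower strata) the $d$-dimensional invariant torus $\mathcal Q^{[m]\setminus I}=\{[\Theta]:\theta_i=\theta_{i'}\text{ for all } i,i'\in I\}$, on which the other $d$ coordinates are free. Your two-cluster circle is only the curve where those coordinates also coincide. So for $m\ge 4$ you have proved the $\alpha$-limit claim on a curve, not on the whole set. The paper's one-line appeal to the Equality Principle amounts to an argument on the whole torus, which goes as follows.
\begin{itemize}
\item By Theorem~\ref{t.limits}, $\alpha(p)$ is a fixed point, and by Theorem~\ref{t.equality} it lies in the closed invariant set $\mathcal Q^{[m]\setminus I}$.
\item A point of $\mathcal V^{\max}$ in this torus satisfies $\sum_{j\notin I}e^{i\theta_j}=-d\,e^{ia}$ with $d$ unit vectors. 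This forces $\theta_j=a+\pi$ for all $j\notin I$, so $\mathcal V^{\max}$ meets the torus only at the singular point.
\item The other fixed points in the torus are the sink and the saddles $p_J$ with $J\subset[m]\setminus I$, $|J|<d$. The point $p$ is not the sink.
\item Also $p\notin W^u(p_J)=\mathring{\mathcal Q}^J$ (Theorem~\ref{t.QI}). Points of $\mathring{\mathcal Q}^J$ have $m-|J|\ge d+1$ equal coordinates, which is incompatible with exactly $d$ of the $2d$ coordinates vanishing.
\end{itemize}

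The same restriction causes the orthogonality puzzle you flagged: your circle is exactly the exceptional orbit. As you computed, it leaves the singular point along $\Sin$, not orthogonally to it, so the orthogonality clause cannot be obtained from $\delta'=-m\sin\delta$.

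At the singular point $\Cos=0$ and $C=S=0$, so \eqref{eq-Hessian} gives $H=\Sin\Sin^T$. Thus $\Sin$ is the only expanding direction, with eigenvalue $m$. The rest of the tangent space of the torus, $\{v: v_i=0 \text{ for } i\in I,\ \sum_{j\notin I}v_j=0\}$, is neutral. Your circle is the strong unstable curve. Every other backward orbit in the torus converges only subexponentially, along a center manifold tangent to that $(d-1)$-dimensional space, while its component along the strong unstable fibres decays exponentially. Hence it approaches the singular point orthogonally to $\Sin$.

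So the clause holds for all orbits of the torus except those on your circle, and it must be proved through this degenerate linearization. Note that the paper's own proof does not address this clause at all.
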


\begin{proof}[Proof of Theorem~\ref{t.limits2}]

{\bf Part 1.} 
Because of normal hyperbolicity, the unstable manifold
$W^u(q(\Theta))$ is tangent to the normal bundle.

{\bf Part 2.} 
The equality principle guarantees that $p_I$ is the only possible
$\alpha$-limit point.
\end{proof}

\section{Continuity of the $\alpha$-limit}
\label{s.alpha}

\begin{theorem}\label{t.alpha-Lipschitz} The $\alpha$-limit, when restricted to the border
	$\partial \mathcal V^{\high}$ of the high potential region
\[
	\gls{Vhigh} :=\{ [\Theta]\in \mathcal Q: V_{\mathcal Q}(\Theta) \ge \frac{m^2}{2}-\epsilon \}.
\]
	is Lipschitz continuous.
\end{theorem}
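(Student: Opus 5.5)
A natural route is to work in the quotient near $\mathcal V^{\max}$ with coordinates adapted to the normal frame $(\Cos,\Sin)$ of Theorem~\ref{t.trivial}. Since $V_{\mathcal Q} = (1-|Z|^2)\frac{m^2}{2}$ by Theorem~\ref{t.centroid}, the boundary $\partial\mathcal V^{\high}$ is exactly the level set $\{|Z| = \delta\}$ with $\delta^2 = 2\epsilon/m^2$. The plan is to follow the backward flow from this level set down to $\mathcal V^{\max}$ and to control how fast the map $p\mapsto\alpha(p)$ can vary.

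\textbf{Step 1: flow of the centroid.} A direct computation from $\theta_j' = \sum_k \sin(\theta_k-\theta_j) = m\,\mathrm{Im}(Z e^{-i\theta_j})$ gives
\[
\frac{d}{dt}|Z|^2 = 2m\bigl(|Z|^2 - \mathrm{Re}(\bar Z^2 \tfrac1m\textstyle\sum_j e^{2i\theta_j})\bigr)\cdot\tfrac{1}{1},
\]
more precisely $\frac{d}{dt}|Z|^2 = \|K\|^2\cdot\frac{2}{m^2}\ge 0$. The component of $K$ is $\theta_j' = m(Z_2\cos\theta_j - Z_1\sin\theta_j)$, so $\|K\|^2 = m^2\,\|Z_2\Cos - Z_1\Sin\|^2$. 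On the nonsingular part, $\Cos$ and $\Sin$ are independent with Gram matrix $G(\Theta)$ positive definite, so $\|K\|^2 \ge m^2\lambda_{\min}(G)|Z|^2$. Backward in time, $|Z|$ therefore decays exponentially, $|Z(\psi_{-t}p)|\le \delta e^{-ct}$ with $c = m\lambda_{\min}$. Meanwhile the tangential displacement satisfies $|\dot\Theta|\le Cm|Z|$. Hence $\int_{-\infty}^0|\dot\Theta|\,dt \le C'\delta$ converges and $\alpha(p)$ lies within $O(\delta)$ of $p$.

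\textbf{Step 2: Lipschitz estimate.} For $p,p'\in\partial\mathcal V^{\high}$, I would compare backward orbits. The linearized backward flow contracts normal directions at rate about $e^{-ct}$ and is nearly isometric in tangential directions, because the Hessian at $\mathcal V^{\max}$ is $\Cos\Cos^T+\Sin\Sin^T$ and vanishes tangentially. The tangential part of $DK$ along the orbit is $O(|Z|)$ by \eqref{eq-Hessian}. Grönwall's inequality then gives $\|D\psi_{-t}\|\le \exp\bigl(\int_0^t C|Z(\psi_{-s}p)|ds\bigr)\le e^{C\delta/c}$, uniformly in $t$. Consequently $d(\psi_{-t}p,\psi_{-t}p')\le L\,d(p,p')$ for all $t$, and letting $t\to\infty$ gives $d(\alpha(p),\alpha(p'))\le L\,d(p,p')$. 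Equivalently, one can invoke the smooth foliation of the unstable set of a normally hyperbolic manifold by unstable fibers \cite{HirschPughShub}, whose holonomy to the base is $C^1$, hence Lipschitz on compacta.

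\textbf{Main obstacle: the singular points} when $m$ is even. There $\lambda_{\min}(G)\to 0$, because $\Cos$ and $\Sin$ become parallel, so neither the exponential rate nor the uniform bound survives. I would first isolate small neighborhoods of the finitely many points of $\mathcal V^{\sing}$. Away from them, Steps 1--2 apply with uniform constants. Near each singular point, I would use the explicit local model: the quadratic cone structure where $Z$ vanishes to second order only in one direction. From it I would try to show, using Theorem~\ref{t.limits2}, that orbits with $\alpha$-limit at the singularity form a Lipschitz graph and that $\alpha$ is Lipschitz on each side of it. A Lipschitz bound on the whole boundary then follows by compactness and a chaining argument.
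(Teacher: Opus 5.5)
Your Steps 1--2 are sound wherever $\lambda_{\min}(G)$ stays bounded below on $\mathcal V^{\high}$. For $m$ odd this holds once $\epsilon$ is small, and there the steps are the paper's argument written in the original time variable. Indeed, $Y=w/\|\nabla V\|^2=|Z|^2/\bigl(2\|Z_2\Cos-Z_1\Sin\|^2\bigr)\le 1/(2\lambda_{\min}(G))$ plays the role of $y^T$. Your Gr\"onwall bound is the length estimate of Lemma~\ref{lemma-W4}; it holds in all directions because $-v^THv\le(|C|+|S|)\|v\|^2$, the part $\Cos\Cos^T+\Sin\Sin^T$ of \eqref{eq-Hessian} being positive semidefinite.

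The genuine gap is the even case, and it cannot be closed: the claim that $\alpha$ is Lipschitz on each side of the orbits falling into a singular point is false. For $m=4$ take $\Theta=(-\frac\pi2+b+c,\,-\frac\pi2-b-c,\,\frac\pi2+b-c,\,\frac\pi2-b+c)$. For larger even $m$, append $\frac m2-2$ coordinates equal to $-\frac\pi2$ and as many equal to $\frac\pi2$; they stay fixed. Then:
\begin{itemize}
\item $C=0$ and $S=4\sin b\sin c$.
\item The family is invariant, with $b'=4\sin b\cos b\sin^2c$ and $c'=4\sin^2b\sin c\cos c$.
\item $\cos b/\cos c$ is a first integral.
\end{itemize}
So for $|b|>|c|$ the backward orbit converges to the branch $c=0$ of $\mathcal V^{\max}$ at $|b_\infty|=\arccos(\cos b/\cos c)$, while $b=\pm c$ flows back into the singular point. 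The curve $\sin b\sin c=\sqrt{\epsilon/8}$ lies in $\partial\mathcal V^{\high}$. Along it, $1-\cos b/\cos c$ vanishes to first order at the crossing $P_0$ with $b=c$. Hence $d(\alpha(P),\alpha(P_0))$ is of order $\sqrt{d(P,P_0)}$ near $P_0$. No local model or chaining argument can produce a Lipschitz bound there.

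So Theorem~\ref{t.alpha-Lipschitz} fails for every even $m$. Your Steps 1--2, or the smooth unstable fibration of a normally hyperbolic $\mathcal V^{\max}$ that you mention, prove exactly the odd case. For $m$ even one can at best hope for continuity, which is all that Corollaries~\ref{c.alpha-continuous} and~\ref{c.alpha-retracts} use.

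The paper's proof misses this because Lemma~\ref{lemma-W3} is unjustified. One has $Y=1/\bigl(2\sum_j\sin^2(\theta_j-\arg Z)\bigr)\ge 1/(2m)$, so $y(0)$ always lies above the equilibrium $1/\bigl(2(m+\sqrt{2\epsilon})\bigr)$ of the comparison equation. Solutions starting there increase, so the comparison gives no bound. In the example above, $Y=1/(4\sin^2 2b)\to\infty$ along $b=c$. For $m$ odd, that lemma should simply be replaced by your direct bound $\sup_{\mathcal V^{\high}}Y<\infty$.
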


\begin{corollary}\label{c.alpha-continuous} Let $c=1$ for $m$ odd and $c=4$ for $m$ even.
	The $\alpha$-limit, when restricted to the mid potential region,
\[
	\gls{Vmid}:=\{ [\Theta]\in \mathcal Q: (m^2-c)/2 < V_{\mathcal Q}(\Theta) <\frac{m^2}{2}-\epsilon \}
\]
	is continuous.
\end{corollary}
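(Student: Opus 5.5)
We derive Corollary~\ref{c.alpha-continuous} from Theorem~\ref{t.alpha-Lipschitz} by sliding every point of the mid region along its own orbit until it reaches the level set $\partial\mathcal V^{\high}=\{V_{\mathcal Q}=m^2/2-\epsilon\}$. Define the hitting time
\[
\tau([\Theta]) := \text{the unique } t\le 0 \text{ with } V_{\mathcal Q}(\psi_t([\Theta]))=\tfrac{m^2}{2}-\epsilon .
\]
Since $\alpha(r)=\alpha(\psi_t(r))$ for every $t$, we get $\alpha = \alpha|_{\partial \mathcal V^{\high}}\circ \Pi$ on $\mathcal V^{\mid}$, where $\Pi(r):=\psi_{\tau(r)}(r)$. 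Once $\Pi$ is continuous, continuity of $\alpha$ follows from Theorem~\ref{t.alpha-Lipschitz}.

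The first step is to show $\tau$ is well defined. Let $r\in\mathcal V^{\mid}$. By the choice of $c$, the largest critical value of $V_{\mathcal Q}$ below $m^2/2$ is at most $(m^2-c)/2$. (For $m$ odd, index $u=(m-1)/2$ gives $V=2u(m-u)=(m^2-1)/2$. For $m$ even, $u=m/2-1$ gives $(m^2-4)/2$.) Hence $r$ is not a fixed point. Along the backward orbit, $V_{\mathcal Q}$ increases strictly, and by Theorem~\ref{t.limits} it tends to $V_{\mathcal Q}(\alpha(r))$, which is a critical value at least $V_{\mathcal Q}(r)>(m^2-c)/2$. So $\alpha(r)\in\mathcal V^{\max}$ and $V_{\mathcal Q}(\psi_t(r))\to m^2/2$ as $t\to-\infty$. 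By the intermediate value theorem and strict monotonicity, there is exactly one $t\le 0$ (strictly negative if $V_{\mathcal Q}(r)<m^2/2-\epsilon$) at which the level $m^2/2-\epsilon$ is crossed.

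The second step, which is the only real point, is continuity of $\tau$. We apply the implicit function theorem to $G(t,r)=V_{\mathcal Q}(\psi_t(r))-(m^2/2-\epsilon)$. This function is smooth, and
\[
\partial_t G = -\|\nabla V_{\mathcal Q}(\psi_t(r))\|^2 .
\]
This derivative is nonzero at $(\tau(r),r)$, because $\psi_{\tau(r)}(r)$ has $V_{\mathcal Q}$-value strictly between $(m^2-c)/2$ and $m^2/2$, so it is not critical. We need $\epsilon$ small enough that $m^2/2-\epsilon$ is a regular value, which holds automatically since there are no critical values in $((m^2-c)/2,m^2/2)$. The implicit function theorem then gives a smooth local solution $t(r')$ near $r$. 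By uniqueness of the crossing time, this local solution coincides with $\tau$, so $\tau$ is smooth, hence continuous.

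The last step assembles the pieces. The map $\Pi(r)=\psi_{\tau(r)}(r)$ is continuous as a composition of the flow with $\tau$, and it takes values in $\partial\mathcal V^{\high}$. Therefore $\alpha|_{\mathcal V^{\mid}}=\alpha|_{\partial\mathcal V^{\high}}\circ\Pi$ is continuous, and in fact locally Lipschitz.
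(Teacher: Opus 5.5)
Your proof is correct and follows the paper's route: you push each point of $\mathcal V^{\mid}$ back along its orbit to the level set $V_{\mathcal Q}=m^2/2-\epsilon$ and compose with the Lipschitz $\alpha$-limit of Theorem~\ref{t.alpha-Lipschitz}. Your critical-value check (no critical values in $((m^2-c)/2,m^2/2)$) and your implicit-function argument for continuity of the hitting time fill in what the paper attributes simply to ``continuity of the flow,'' which by itself would not give continuity of the hitting map without transversality of the flow to that level set.
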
 

\begin{proof}
	Since there are no fixed points in $\mathcal V^{\mid}$, we can
	define a map $h$ that maps every $\Theta \in \mathcal V^{\mid}$ to the intersection of its
	backward Kuramoto orbit 
	with the level set $V=\frac{m^2}{2}-\epsilon$. By continuity of the flow, the map $h$ 
	is continuous. Theorem~\ref{t.alpha-Lipschitz} implies that $\alpha$ is continuous on
	$\mathcal V^{\high}$. Since $\alpha = \alpha \circ h$ on $\mathcal V^{\mid}$, the $\alpha$-limit
	is the composition of continuous functions, hence it is continuous.
\end{proof}

\begin{corollary}\label{c.alpha-retracts}
	The $\alpha$-limit induces a retraction of the high potential region $\mathcal V^{\high}$ to 
	$\mathcal V^{\max}$.
\end{corollary}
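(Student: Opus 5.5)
The plan is to show that the map $r \mapsto \alpha(r)$ is a continuous map $\mathcal V^{\high} \to \mathcal V^{\max}$ that restricts to the identity on $\mathcal V^{\max}$; that is precisely a retraction. Both pieces are cheap except continuity at $\mathcal V^{\max}$ itself, which is where the work lies.

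\textbf{Identity on $\mathcal V^{\max}$.} Every point of $\mathcal V^{\max}$ is fixed by $\psi$, so $\alpha(r)=r$ there.

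\textbf{Image lies in $\mathcal V^{\max}$.} Take $r \in \mathcal V^{\high}\setminus\mathcal V^{\max}$. Along the backward orbit $V_{\mathcal Q}$ increases, so by Theorem~\ref{t.limits} $\alpha(r)$ is a single fixed point with $V_{\mathcal Q}(\alpha(r)) > V_{\mathcal Q}(r) \ge m^2/2-\epsilon$. Choose $\epsilon$ smaller than $m^2/2 - L$, where $L=(m^2-c)/2$ bounds all values $V_{\mathcal Q}(p_I)=2|I|(m-|I|)$. Then the only fixed points in $\mathcal V^{\high}$ lie in $\mathcal V^{\max}$, so $\alpha(r)\in\mathcal V^{\max}$. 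The backward orbit also stays in $\mathcal V^{\high}$, so $\alpha$ maps $\mathcal V^{\high}$ into $\mathcal V^{\max}$.

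\textbf{Continuity on the complement of $\mathcal V^{\max}$.} For $r$ with $m^2/2-\epsilon < V_{\mathcal Q}(r) < m^2/2$, I will argue as in Corollary~\ref{c.alpha-continuous}. Pick a slightly larger level $m^2/2-\epsilon' > V_{\mathcal Q}(r)$. Since $V_{\mathcal Q}$ has no critical values strictly between $L$ and $m^2/2$, the level set $\{V_{\mathcal Q}=m^2/2-\epsilon'\}$ is regular. The first-hitting map $h$ of the backward flow to this level set is therefore continuous near $r$ by the implicit function theorem. On the level set itself, $\alpha$ is Lipschitz by Theorem~\ref{t.alpha-Lipschitz}, applied with $\epsilon'$ in place of $\epsilon$, which the argument allows since $\epsilon$ is arbitrary small. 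Then $\alpha=\alpha\circ h$ is continuous at $r$. Points on the boundary level $V_{\mathcal Q}=m^2/2-\epsilon$ are handled directly by Theorem~\ref{t.alpha-Lipschitz}, or by the same composition trick.

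\textbf{Main obstacle: continuity at points $r_0\in\mathcal V^{\max}$.} I need $\alpha(r)\to r_0$ as $r\to r_0$.

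At nonsingular points, I would use Theorem~\ref{t.trivial}, which says $\mathcal V^{\max}\setminus\mathcal V^{\sing}$ is normally hyperbolic and repelling with trivial normal bundle. By Hirsch--Pugh--Shub, a neighborhood is foliated by the unstable fibers $W^u(x)$, $x\in\mathcal V^{\max}$, and these fibers depend continuously on $x$. The fiber through $r$ is exactly $W^u(\alpha(r))$, so $\alpha$ agrees locally with the continuous fiber projection. In particular $\alpha(r)\to r_0$.

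At the finitely many singular points ($m$ even), I would argue by contradiction. Suppose $r_n\to r_0\in\mathcal V^{\sing}$ but $\alpha(r_n)$ stays away from $r_0$. The backward orbits must then leave every small ball $B$ around $r_0$. Pass to a limit of these exit segments. Because the vector field vanishes to first order on $\mathcal V^{\max}$ and the Hessian is $\Cos\Cos^T+\Sin\Sin^T$ (formula \eqref{eq-Hessian}), I will try to bound the distance traveled by backward orbits by a constant times the potential gap $m^2/2-V_{\mathcal Q}$. This is a gradient-flow length estimate, of Łojasiewicz type, since $V_{\mathcal Q}$ is analytic, and it gives $d(r,\alpha(r))\le C\,(m^2/2-V_{\mathcal Q}(r))^{\theta}$. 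Since the right-hand side tends to $0$ as $r\to r_0$, so does $d(r,\alpha(r))$.

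That Łojasiewicz bound on orbit length is the step I expect to be hardest. It would in fact settle continuity uniformly at all points of $\mathcal V^{\max}$, including the nonsingular ones.
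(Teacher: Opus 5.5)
\textbf{Verdict: correct, but by a different route at $\mathcal V^{\sing}$.} On the routine parts your argument is the paper's:
\begin{itemize}
\item the identity on $\mathcal V^{\max}$ and the image lying in $\mathcal V^{\max}$;
\item continuity off $\mathcal V^{\max}$, by composing a first-hitting map with Theorem~\ref{t.alpha-Lipschitz};
\item the unstable fibration at nonsingular points, for which the paper cites Eldering--Kvalheim--Revzen to get a disk bundle.
\end{itemize}
In the second item you are more careful than the paper: you move to a higher level $m^2/2-\epsilon'$, which is needed because backward orbits from $\mathcal V^{\high}$ never return to the level $m^2/2-\epsilon$.

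\textbf{The singular points.} The paper argues by contradiction. It takes an accumulation point $v$ of $\alpha(p_i)$, then invokes either the fibration near $v$ or a coordinate comparison when $v$ is singular. Neither step controls how far a backward orbit starting near $w$ can travel, since a fibration over a small ball around $v$ says nothing about points near $w$.

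Your \L{}ojasiewicz estimate supplies exactly that control, and it is routine rather than hard. Since $w=m^2/2-V_{\mathcal Q}$ is a trigonometric polynomial, there are $\rho,C>0$ and $\theta\in(0,\tfrac12]$ with $w^{1-\theta}\le C\|\nabla w\|$ on $B(r_0,\rho)$. Backward orbits solve $\dot x=-\nabla w(x)$, so $\tfrac{d}{dt}w^{\theta}\le-\tfrac{\theta}{C}\|\dot x\|$. Hence an orbit starting in $B(r_0,\rho/2)$ with $\tfrac{C}{\theta}w(r)^{\theta}<\rho/2$ never leaves $B(r_0,\rho)$. Its length is at most $\tfrac{C}{\theta}w(r)^{\theta}$, so it converges to a point within that distance of $r$.

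Two corrections to how you framed it:
\begin{itemize}
\item No exit-segment contradiction is needed; the estimate gives the conclusion directly.
\item The bound is a power of the potential gap, not linear in it: $\theta=\tfrac12$ where $w$ is Morse--Bott, and smaller at $\mathcal V^{\sing}$.
\end{itemize}
As you say, this handles all of $\mathcal V^{\max}$ uniformly. The normally hyperbolic route adds geometry (the fibration, orbits leaving normally), not continuity.

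\textbf{Drop the Lipschitz theorem.} Push your tool one step further. Fix $r\in\mathcal V^{\high}\setminus\mathcal V^{\max}$ and choose $T$ so that $\psi_{-T}(r)$ lies deep inside the \L{}ojasiewicz ball around $\alpha(r)$. Continuous dependence on initial conditions over time $T$, plus the length bound, then gives continuity of $\alpha$ at $r$. So you can dispense with Theorem~\ref{t.alpha-Lipschitz}.

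You should, because that theorem, which both your step and the paper's rely on, is not true in general. For $m=4$, the plane $\Theta=(\tfrac{\pi}{2}+a,\tfrac{\pi}{2}-a,-\tfrac{\pi}{2}+b,-\tfrac{\pi}{2}-b)$ is invariant, with $\dot a=2\sin a(\cos b-\cos a)$ and $\dot b=2\sin b(\cos a-\cos b)$. Hence $\tan(a/2)\tan(b/2)$ is conserved.
\begin{itemize}
\item A point of the level set $w=\epsilon$ near $(a_\epsilon,0)$ with small $b>0$ has $\alpha$-limit $(a^*,a^*)$ with $a^*\approx c\sqrt b$.
\item The point $(a_\epsilon,0)$ itself flows back to the singular point $(0,0)$.
\end{itemize}
So $\alpha$ is only H\"older there, not Lipschitz. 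Correspondingly, $Y=w/\|\nabla V\|^2\approx 1/(4a^2)$ blows up along the orbit $b=0$. This contradicts Lemma~\ref{lemma-W3}, whose comparison equation itself blows up in finite time once $y(0)>1/(2(m+\sqrt{2\epsilon}))$. Continuity, which is all the corollary needs, survives, and your estimate proves it.
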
 

\begin{proof}
	It is enough to prove the continuity of $\alpha$. 
	The argument of the previous Corollary still holds at any $p \in \mathcal V^{\high} \setminus \mathcal V^{max}$. We are left with the continuity at $\mathcal V^{\max}$. Suppose first that $v \in \mathcal V^{\max} \setminus \mathcal V^{\sing}$.
In particular $\alpha(v)=v$. Let $B$ be an open ball around $v$, and we assume that $\bar B$ is disjoint to $\mathcal V^{\sing}$.
The ball $B$ is a (noncompact) normally hyperbolic invariant manifold for the Kuramoto vector field. 
Indeed, its tangent bundle splits at each $b \in \bar B$ as $T_b{\mathcal Q}=T_b \mathcal V^{\max} \oplus E_b^u$ with $E_b^u$ two-dimensional.  The derivative of the time-$t$ flow $\psi_t$ is expanding at $E_b^u$ and null on $T_b \mathcal V^{\max}$.

	By Normal Hyperbolicity, there is a local fibration of $Q$ by unstable manifolds of the flow, covering $B$.
	This local fibration is actually a disk bundle, see for instance Theorem~1 in \ocite{ElderingKvalheimRevzen}. 
	If we restrict this fibration to $\mathcal V^{\high}$, we obtain an invariant 
	fibration $\alpha: E \rightarrow B$, $E \subset \mathcal V^{\high}$ with $E$ open containing $v$. 
	Since this holds for arbitrarily small balls $B$ around $v$, the map $\alpha$ is continuous at $v$.

	Finally, suppose that $w \in \mathcal V^{\sing}$. Let $B_0$ be a small open 
	neighborhood of $w \in \mathcal V^{\sing}$, $\bar B_0 \cap \mathcal V^{\sing}=\{w\}$.
	Assume by contradiction that there is $p_i \rightarrow w$ with $\alpha(p_i) \not \in B_0$, $p_i \in \mathcal V^{\high}$.
	By compactness, the $\alpha(p_i)$ admit an accumulation point $v$ in $\mathcal V^{\max} \setminus B_0$.
	If $v \not \in \mathcal V^{\sing}$, then there is a neighborhood $B \ni v$ and a fiver bundle $\alpha: E \rightarrow B$
	as in the previous case, and $w \in B$, contradiction. 

	We are left with the possibility that $v, w \in \mathcal V^{\sing}$. Then they must be equal. For instance, assume that
	$w=[0,0,0,\pi,\pi,\pi]$ and say $v = [0,0,\pi,\pi,\pi,0]$. Assume coordinates $\theta_1\equiv 0$.
	Because $p_i \rightarrow w$, the coordinates of $p_i$
	must satisfy as $\theta_3(p_i) \rightarrow 0$ while, for $\alpha(p_i) \rightarrow w$, we need $\theta_3(p_i) \rightarrow \pi$.
	Thus, $v=w$ and $\alpha$ is also continuous near $\mathcal V^{\sing}$.

\end{proof}

The proof of Theorem~\ref{t.alpha-Lipschitz} requires an auxiliary vector field
and a sequence of Lemmas. The auxiliary vector field is
\[
	W(\Theta) = \frac{w(\Theta)}{\| \nabla V \|^2} \nabla V
\]
	with $w(\Theta) = \frac{m^2}{2}-V(\Theta)$. The orbits of the auxiliary vector field $W$ are the orbits of the Kuramoto vector field $K=-\nabla V$ with inverted direction.

\begin{lemma}\label{lemma-W1} If $\dot{\Theta}(t) = W(\Theta(t))$, then $w(\Theta(t))= w(\Theta(0))\, e^{-t}$. In particular, for
	$\Theta(0) \in V^{\high}$,
	$w(\Theta(t)) \le \epsilon \, e^{-t}$.
\end{lemma}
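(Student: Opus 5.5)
The plan is a direct chain-rule computation along a solution of the auxiliary ODE, followed by integration of the resulting linear scalar ODE. We work on the open set where $\nabla V \neq 0$, since $W$ is only defined there. Along any solution $\Theta(t)$ of $\dot\Theta = W(\Theta)$ that stays in this set, we differentiate $w(\Theta(t)) = \frac{m^2}{2} - V(\Theta(t))$:
\[
\frac{d}{dt} w(\Theta(t)) = -\langle \nabla V(\Theta(t)), \dot\Theta(t)\rangle = -\frac{w(\Theta(t))}{\|\nabla V(\Theta(t))\|^2}\,\langle \nabla V, \nabla V\rangle = -w(\Theta(t)).
\]
The scalar function $y(t) = w(\Theta(t))$ therefore satisfies the linear ODE $\dot y = -y$. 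By uniqueness, $y(t) = y(0)e^{-t}$, which is the first assertion.

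For the second assertion, take $\Theta(0) \in V^{\high}$, so $V(\Theta(0)) \ge \frac{m^2}{2}-\epsilon$. This means $0 \le w(\Theta(0)) \le \epsilon$; the lower bound holds because $V \le m^2/2$ by Corollary~\ref{c.centroid}. Hence $w(\Theta(t)) \le \epsilon e^{-t}$ for $t\ge 0$.

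The only real issue is the domain of validity: $W$ is singular where $\nabla V = 0$. First, inside $V^{\high}$ with $\epsilon$ small, the only critical points are those of $V^{\max}$, because every other critical value is at most $(m^2-c)/2$ by Theorem~\ref{t.exemplar} and Corollary~\ref{c.psifixedpoints}. Second, $w>0$ off $V^{\max}$, and the exponential formula shows that $w$ stays positive for every finite $t$. So a trajectory starting off $V^{\max}$ never reaches $V^{\max}$ in finite time, and the identity holds on its whole maximal interval of existence.

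I would note this restriction rather than try to prove global existence. The lemma is only needed as a statement about solutions while they exist; the subsequent Lipschitz estimates control $\|W\|$ separately.
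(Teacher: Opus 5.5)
Your proof is correct and is the paper's argument: differentiate $w(\Theta(t))$ along the $W$-flow to get $\frac{d}{dt}w(\Theta(t)) = -\langle \nabla V, W\rangle = -w(\Theta(t))$, integrate, and use $w(\Theta(0))\le\epsilon$. Your remarks on the domain of $W$ (for small $\epsilon$ the only critical points in the high potential region lie in $V^{\max}$, and $w$ stays positive for finite $t$) correctly make explicit what the paper leaves implicit, though the bound $2u(m-u)\le (m^2-c)/2$ on the other critical values comes from applying Theorem~\ref{t.centroid} to the exemplars rather than from Corollary~\ref{c.psifixedpoints}.
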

		\begin{proof}
\[
	\frac{\partial w(\Theta(t))}{\partial t} =
	-\langle \nabla V(\Theta(t)), W(\Theta(t)) \rangle=- w(\Theta(t)). 
\]
	It follows that $w(\Theta(t)) = w(\Theta(0))\, e^{-t} 
			\le \epsilon\, e^{-t} $.
\end{proof}

\begin{lemma}\label{lemma-W2}
	Let $\Theta \in \mathcal V^{\high}$ and $\|\mathbf u\|=1$. Then
	the Hessian of $V$ satisfies
	$-m-\sqrt{2 w(\Theta)} \le  \mathbf u^T  HV(\Theta) \mathbf u \le \sqrt{2 w(\Theta)}$.

\end{lemma}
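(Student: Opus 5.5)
The plan is to compute the Hessian of $V$ explicitly in terms of the centroid quantities and then bound each piece separately; the constraint $\Theta\in\mathcal V^{\high}$ plays no role beyond fixing notation. With $C=\sum_i\cos\theta_i$, $S=\sum_i\sin\theta_i$ and the vectors $\Cos(\Theta)$, $\Sin(\Theta)$ of Theorem~\ref{t.trivial}, formula \eqref{eq-Hessian} gives the Hessian of $-V$. Hence
\[
HV(\Theta) \;=\; -\Cos\Cos^T-\Sin\Sin^T+S\,\mathrm{diag}(\Sin)+C\,\mathrm{diag}(\Cos).
\]
For a unit vector $\mathbf u$ this yields
\[
\mathbf u^T HV(\Theta)\mathbf u \;=\; -\Bigl|\sum_i u_i e^{i\theta_i}\Bigr|^2 \;+\; \sum_i u_i^2\bigl(S\sin\theta_i+C\cos\theta_i\bigr),
\]
using $(\Cos^T\mathbf u)^2+(\Sin^T\mathbf u)^2=\bigl|\sum_i u_i e^{i\theta_i}\bigr|^2$.

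Next I will identify $w$ in these terms. By Theorem~\ref{t.centroid}, $w(\Theta)=\frac{m^2}{2}-V(\Theta)=\frac{m^2}{2}\abs{Z}^2=\frac12(C^2+S^2)$, so $\sqrt{2w(\Theta)}=\sqrt{C^2+S^2}$.

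For the second term, apply Cauchy--Schwarz coordinatewise: $\abs{S\sin\theta_i+C\cos\theta_i}\le\sqrt{S^2+C^2}$ for each $i$. Since $\sum_i u_i^2=1$, the whole sum lies in $[-\sqrt{2w},\sqrt{2w}]$.

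For the first term, it is clearly $\le 0$, which gives the upper bound $\sqrt{2w}$. It is also bounded below by $-m$, because
\[
\Bigl|\sum_i u_i e^{i\theta_i}\Bigr|^2\le\Bigl(\sum_i\abs{u_i}\Bigr)^2\le m\|\mathbf u\|^2=m
\]
by the triangle inequality and Cauchy--Schwarz. Combining the two bounds gives $-m-\sqrt{2w}\le \mathbf u^T HV\mathbf u\le\sqrt{2w}$.

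The only step needing care is the sign bookkeeping when passing from the Hessian of $-V$ in \eqref{eq-Hessian} to $HV$. I would double-check it against the diagonal entry $-\frac{\partial^2 V}{\partial\theta_i^2}=1-\sum_k\cos(\theta_k-\theta_i)$ computed in Section~\ref{s.eigenstructure}. Everything else is elementary.
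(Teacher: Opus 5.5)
Your proof is correct and is essentially the paper's argument. Both split the Hessian from \eqref{eq-Hessian} into the Gram part $\langle \Cos,\mathbf u\rangle^2+\langle\Sin,\mathbf u\rangle^2\in[0,m]$ and a diagonal part bounded in absolute value by $\sqrt{C^2+S^2}=\sqrt{2w(\Theta)}$; the only difference is cosmetic, since the paper first rotates all angles to get $S=0$, $C\ge 0$, whereas you bound $\abs{S\sin\theta_i+C\cos\theta_i}$ directly by Cauchy--Schwarz and so skip the normalization step.
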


\begin{proof}
	We will use the notations of the proof of Theorem~\ref{t.trivial}. Recall that $H$ is minus the
	Hessian of $V$. Equation~\eqref{eq-Hessian} yields
\[
		H= \Cos \Cos^T + \Sin \Sin^T - S\, \mathrm{diag}(\Sin) - C\, \mathrm{diag}(\Cos)
.
\]
We can add a constant angle to all of the $\theta_i$ to obtain $S=0$ and $C\ge 0$. At this point $\nabla V = C \Sin$
and therefore
\[
\mathbf u^T H \mathbf u = \langle \Cos, \mathbf u \rangle^2 + \langle \Sin, \mathbf u\rangle^2
	- C \sum \cos(\theta_i) u_i^2 \ge -C \|\mathbf u\|^2
\]
We can bound $0 \le \langle \Cos, \mathbf u \rangle^2 + \langle \Sin, \mathbf u\rangle^2
\le (\|\Cos\|^2 + \|\Sin\|^2) \|\mathbf u\|^2 = m$.
	Since $w(\Theta) = \frac{m^2}{2}-V(\Theta)=\frac{1}{2} C^2$,
	\[
		-\sqrt{2 w(\Theta)} \le \mathbf u^T H \mathbf u \le m+\sqrt{2 w(\Theta)}
	\]
\end{proof}

\begin{lemma}\label{lemma-W3}
	Define $Y(\Theta):=\frac{w(\Theta)}{\|\nabla V(\Theta)\|^2}$. Let $\Theta(t)$ be a trajectory of the
	vector field $W$, that is $\dot \Theta(t) = W(\Theta(t))$ and let $y(t)=Y(\Theta(t))$.
	Then for $t \ge 0$, $y(t) \le \max\left (y(0), \frac{1}{2(m+\sqrt{2\epsilon})}\right)$.
\end{lemma}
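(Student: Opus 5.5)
The plan is to derive a scalar differential inequality for $y(t)=Y(\Theta(t))$ and then close it with a comparison (barrier) argument. Write $g(\Theta)=\|\nabla V(\Theta)\|^2$, so that $y=w/g$. Along a trajectory of $W$ we already know from Lemma~\ref{lemma-W1} that $\dot w=-w$. For $g$ the chain rule gives
\[
\dot g \;=\; 2\,\nabla V^T\, \mathrm{Hess}\,V\, \dot\Theta \;=\; 2\,\frac{w}{g}\,\nabla V^T\, \mathrm{Hess}\,V\,\nabla V .
\]
Set $\mathbf u=\nabla V/\|\nabla V\|$; this is well defined because there are no fixed points off $\mathcal V^{\max}$ in $\mathcal V^{\high}$. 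Then
\[
\dot y \;=\; \frac{\dot w}{g}-\frac{w\,\dot g}{g^2} \;=\; -y \;-\; 2y^2\,\mathbf u^T\, \mathrm{Hess}\,V\,\mathbf u .
\]

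The second step feeds in the curvature bound of Lemma~\ref{lemma-W2}. Its proof shows that $-\mathbf u^T\,\mathrm{Hess}\,V\,\mathbf u=\mathbf u^T H\mathbf u\le m+\sqrt{2w(\Theta)}$. Since $w(\Theta(t))\le\epsilon$ for $t\ge 0$ by Lemma~\ref{lemma-W1}, I obtain
\[
\dot y \;\le\; -y+2\bigl(m+\sqrt{2\epsilon}\bigr)\,y^2 \;=\; y\bigl(2(m+\sqrt{2\epsilon})\,y-1\bigr).
\]
Put $y_*=\frac{1}{2(m+\sqrt{2\epsilon})}$. The right-hand side is $\le 0$ whenever $0<y\le y_*$.

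The last step is the barrier argument. If $y(0)\le y_*$, then $y$ can never cross $y_*$ upward: at a first crossing time we would need $\dot y>0$, yet the inequality gives $\dot y\le 0$ at $y=y_*$. To make this rigorous, compare with the logistic-type solution of $\dot z=z(2(m+\sqrt{2\epsilon})z-1)$ started at $y_*$, which is constant. This yields $y(t)\le y_*=\max(y(0),y_*)$.

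The main obstacle is the case $y(0)>y_*$. There the inequality above only says $\dot y\le$ something positive, so it does not by itself keep $y\le y(0)$. In this regime I would try a sharper estimate for $\mathbf u^T H\mathbf u$ when $\mathbf u$ is the unit gradient direction. The first thing to attempt is the expansion used in Lemma~\ref{lemma-W2}: rotate so that $S=0$, which gives $\nabla V=C\,\Sin$ and $\mathbf u=\Sin/\|\Sin\|$. This direction lies in the normal plane of $\mathcal V^{\max}$, so $\mathbf u^T H\mathbf u$ is essentially the Gramian term $\langle\Cos,\mathbf u\rangle^2+\langle\Sin,\mathbf u\rangle^2$ from the proof of Theorem~\ref{t.trivial}. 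Computing $y$ itself to leading order near $\mathcal V^{\max}$ gives $y=\frac{C^2/2}{C^2\|\Sin\|^2}=\frac{1}{2\|\Sin\|^2}$. The hope is to show that $\mathbf u^T H\mathbf u\approx\|\Sin\|^2\le\frac{1}{2y}$, up to $O(\sqrt{w})$ errors. That would make $\dot y\le 0$ up to terms that the factor $e^{-t}$ coming from $w$ absorbs.

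I expect this sign and direction bookkeeping to be the crux. If the refined estimate turns out to need a slightly larger threshold, I would adjust the constant in $y_*$ accordingly rather than change the structure of the argument.
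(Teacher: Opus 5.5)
\textbf{There is a genuine gap, and no proof along these lines can close it, because the statement is false.}

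Your derivation of $\dot y=-y-2y^2\,\mathbf u^T\mathrm{Hess}\,V\,\mathbf u$, your use of Lemma~\ref{lemma-W2}, and your comparison with $\dot z=-z+2(m+\sqrt{2\epsilon})z^2$ are exactly the paper's argument. The obstacle you isolate is also a gap in the paper's own proof. The paper concludes ``in any case'' from the explicit solutions $z=1/(2(m+\sqrt{2\epsilon})\pm e^{c\pm t})$. But when $z(0)>y_*$ that solution is increasing and blows up in finite time, so the comparison gives no bound by $y(0)$. Moreover, this is the only case that ever occurs. Write $Z=|Z|e^{i\eta}$. Then $\partial V/\partial\theta_i=m|Z|\sin(\theta_i-\eta)$ and $w=m^2|Z|^2/2$, so $Y=1/\bigl(2\sum_i\sin^2(\theta_i-\eta)\bigr)\ge 1/(2m)>y_*$. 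Your barrier case is therefore empty.

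Your proposed refinement cannot work either. Finish your computation exactly, in the frame $S=0$, $C>0$, $\mathbf u=\Sin/\|\Sin\|$. The Gramian term $\|\Sin\|^2=1/(2y)$ cancels $-y$ identically and leaves
\[
\dot y=\frac{2y^2}{\|\Sin\|^2}\Bigl(\langle\Cos,\Sin\rangle^2-C\sum_i\cos\theta_i\sin^2\theta_i\Bigr).
\]
The first term is nonnegative and need not be small near $\mathcal V^{\max}$. The second term has no sign.

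Here is an explicit counterexample. Take $m=3$ and $\Theta=(0,\phi,-\phi)$ with $\pi/2<\phi<2\pi/3$. Then $K=(0,-k,k)$ with $k=\sin\phi\,(1+2\cos\phi)>0$. Hence the $W$-orbit stays in this family, with $\phi$ increasing to $2\pi/3$. Along it, $w=(1+2\cos\phi)^2/2$ can be made as small as you like, and $Y=1/(4\sin^2\phi)$ increases strictly to $1/3$. So $y(t)>y(0)=\max(y(0),y_*)$ for every $t>0$.

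What the later argument (Lemma~\ref{lemma-W4}) actually needs is only a uniform bound for $Y$ on the high region. The formula for $Y$ supplies one when $m$ is odd and $\epsilon$ is small: the denominator vanishes only at antipodal configurations, where $|Z|\ge 1/m$. It does not supply one when $m$ is even, since $Y$ is unbounded near $\mathcal V^{\sing}$.
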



\begin{proof} Differentiating $y(t)$,
	\begin{eqnarray*}
		\dot y(t) &=& - \frac{ \langle \nabla V(\Theta(t)), W(\Theta(t)) \rangle}{\| \nabla V(\Theta(t))\|^2}
		-2 \frac{w(\Theta(t)) \langle \nabla V(\Theta(t), HV(\Theta(t))\ W(\Theta(t))\rangle }{\| \nabla V(\Theta(t))\|^4}
		\\
		&=& - y(t) - 2 y(t)^2 \frac{ \langle \nabla V(\Theta(t)), HV(\Theta(t))\ \nabla V(\Theta(t))\rangle}{\| \nabla V(\Theta(t))\|^2}  .
	\end{eqnarray*}
	From Lemma~\ref{lemma-W2},
	\[
-\frac{ \langle \nabla V(\Theta(t)), HV(\Theta(t))\ \nabla V(\Theta(t))\rangle}{\| \nabla V(\Theta(t))\|^2}  
\le
	m+\sqrt{2 w(\Theta(t))} 
\]
and hence
	\[
\dot y(t) \le -y(t) + 2(m+\sqrt{2\epsilon}) y(t)^2
	\]
The solution of $y(t)$ can be bounded above by the solution of
	\[
\dot z(t) = -z(t) + 2(m+\sqrt{2\epsilon}) z(t)^2
	\]
which is of the form
	\[
		z(t) = \frac{1}{ 2(m+\sqrt{2\epsilon}) \pm e^{c \pm t} } 
	\]
	for $z(0) \ne  \frac{1}{2(m+\sqrt{2\epsilon})}$, and constant in case of equality.
In any case, for $t \ge 0$,
	\[
		y(t) \le \max\left(y(0), \frac{1}{ 2(m+\sqrt{2\epsilon}) }\right)
	\]
\end{proof}

\begin{lemma}\label{lemma-W4}
	Let $\gamma:[a,b] \rightarrow \mathcal V^{\high}$ be a parameterized curve of class $\mathcal C^2$ 
	with $w(\gamma(\tau))$ constant
and let $L_0=\int_a^b \left\| \gamma'(\tau) \right\| \dd \tau$
	be its length. Let $L(t)$ be the length of the curve obtained by the time $t$ flow of $W$ applied to $\gamma$. 
	Suppose that there is a constant $y^T$ such that $Y(\gamma(\tau))\le y^T$ for all $\tau \in [a,b]$ and 
	$\frac{1}{2(m+\sqrt{2\epsilon})} \le y^T$.
	Then $L(t) \le e^{2\sqrt{2}y^T}\, L(0)$.
\end{lemma}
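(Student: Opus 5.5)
We prove the bound by a Grönwall estimate on the speed of the transported curve. Let $\Gamma(t,\tau)=\Phi^W_t(\gamma(\tau))$, where $\Phi^W_t$ is the time-$t$ flow of $W$. Set $v(t,\tau)=\partial_\tau\Gamma(t,\tau)$, so that $L(t)=\int_a^b\|v(t,\tau)\|\dd\tau$. It suffices to show $\|v(t,\tau)\|\le e^{2\sqrt 2 y^T}\|v(0,\tau)\|$ for each $\tau$, and then integrate over $\tau$.

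\textbf{Step 1 (invariance of the level set).} By Lemma~\ref{lemma-W1}, $w(\Gamma(t,\tau))=w(\gamma(\tau))e^{-t}$. Since $w\circ\gamma$ is constant, each curve $\Gamma(t,\cdot)$ stays in a level set of $w$. Hence $\langle\nabla V,v\rangle=0$ along the flow, i.e.\ $v\perp\nabla V$.

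\textbf{Step 2 (variational equation).} The vector $v$ satisfies $\dot v=DW\,v$, with
\[
DW=\nabla V\,(\nabla Y)^T+Y\,HV .
\]
The first term is parallel to $\nabla V$, which is orthogonal to $v$. Therefore
\[
\tfrac{d}{dt}\tfrac12\|v\|^2=\langle v,\nabla V\rangle\langle\nabla Y,v\rangle+Y\,v^THV v=Y\,v^THVv .
\]
By Lemma~\ref{lemma-W2} (with the sign convention $H=-HV$, as in its proof), $v^THVv\le\sqrt{2w}\,\|v\|^2$. This gives
\[
\tfrac{d}{dt}\log\|v\|\le Y\sqrt{2w}.
\]

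\textbf{Step 3 (integrating in time).} By Lemma~\ref{lemma-W3} and the hypothesis $y^T\ge\frac{1}{2(m+\sqrt{2\epsilon})}$, we have $Y(\Gamma(t,\tau))\le y^T$ for all $t\ge0$. By Lemma~\ref{lemma-W1}, $\sqrt{2w(\Gamma(t,\tau))}=\sqrt{2w(\gamma(\tau))}\,e^{-t/2}\le\sqrt{2\epsilon}\,e^{-t/2}$. Integrating from $0$ to $t$ gives
\[
\log\frac{\|v(t,\tau)\|}{\|v(0,\tau)\|}\le y^T\sqrt{2\epsilon}\int_0^\infty e^{-s/2}\dd s=2\sqrt2\,\sqrt\epsilon\,y^T\le 2\sqrt2\,y^T,
\]
assuming $\epsilon\le1$. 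Integrating the resulting pointwise bound over $\tau$ yields $L(t)\le e^{2\sqrt2 y^T}L(0)$.

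\textbf{Main obstacle.} The delicate point is the singular behaviour of $W$ where $\nabla V=0$, that is, on $\mathcal V^{\max}$ and possibly elsewhere in $\mathcal V^{\high}$. This affects both the validity of differentiating along the flow and the cancellation of the term involving $\nabla Y$. The bound $Y\le y^T$ controls $\|\nabla V\|^2\ge w/y^T$, which is positive as long as $w>0$. So on $\mathcal V^{\high}\setminus\mathcal V^{\max}$ the field $W$ is smooth, and the $\mathcal C^2$ curve remains there for all finite $t$ because $w$ decays only exponentially and never reaches zero. I would check this first. If the constant $\epsilon$ is not assumed $\le1$, the exponent becomes $2\sqrt{2\epsilon}\,y^T$, and $\epsilon$ should be taken small, as it is throughout the paper.
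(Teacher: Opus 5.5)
Your proposal is correct relative to the paper and follows its proof step for step. The $\nabla V(\nabla Y)^T$ part of $DW$ pairs to zero with $v$ because $v\perp\nabla V$, which the paper obtains from the same orthogonality combined with the reflection $I-2\nabla V\nabla V^T/\|\nabla V\|^2$ fixing $v$. Lemma~\ref{lemma-W2} then gives $v^T HV\, v\le\sqrt{2w}\,\|v\|^2$, and Lemmas~\ref{lemma-W1} and~\ref{lemma-W3} control $Y\sqrt{2w}$. The only differences are that you apply Gr\"onwall pointwise to $\|v(t,\tau)\|$ rather than to the integrated length $L(t)$, and that you state the assumption $\epsilon\le 1$ which the paper uses silently when it drops $\sqrt{\epsilon}$.

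One caveat applies equally to your argument and to the paper's: the bound $Y(\Gamma(t,\tau))\le y^T$ comes from Lemma~\ref{lemma-W3}, and that lemma's comparison argument does not work. In the frame $S=0$, $C>0$ one has $Y=1/(2\|\Sin\|^2)\ge 1/(2m)$, which always lies above the repelling equilibrium $1/(2(m+\sqrt{2\epsilon}))$ of the comparison equation. In fact $\dot y=4y^3\bigl(\langle \Cos,\Sin\rangle^2-C\sum_i\cos\theta_i\sin^2\theta_i\bigr)$ is positive at generic points near $\mathcal V^{\max}$ for $m\ge 4$. So the lemma's conclusion, which amounts to $y(t)\le y(0)$, fails, and a bound on $Y$ along whole orbits has to come from elsewhere. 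For example, $\sup Y$ over $\mathcal V^{\high}\setminus\mathcal V^{\max}$ is finite for $m$ odd and $\epsilon<1/2$.
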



\begin{proof}
	Let $\Theta(t, \tau)$ satisfy $\frac{\partial}{\partial t} \Theta(t, \tau) = W(\Theta(t, \tau))$ with 
	$\Theta(0,\tau)=\gamma(\tau)$. 
	From Lemma~\ref{lemma-W1}, $w(\Theta(t,\tau))$ is constant for any fixed value of $t$
	and is bounded above by $\epsilon \, e^{-t}$. In the calculations below, $\Theta=\Theta(t,\tau)$ and
	$\Theta'$ stands for $\frac{\partial}{\partial \tau} \Theta(t,\tau)$. 
	Differentiating $\frac{\partial}{\partial t}\Theta(t,\tau)=W(\Theta(t,\tau))$ with respect
	to $\tau$ we obtain
	\begin{eqnarray*}
		\frac{\partial^2 \Theta(t, \tau)}{\partial \tau \partial t} &=&
	-\frac{ \langle \nabla V(\Theta), \Theta'\rangle }{\|\nabla V(\Theta)\|^2} \nabla V(\Theta)
	-2 \frac{ w(\Theta) 
	\langle \nabla V(\Theta), HV(\Theta) \Theta'\rangle 
	}{\|\nabla V(\Theta)\|^4} 
	\nabla V(\Theta) 
	\\
		&&+
		\frac{w(\Theta)}{\|\nabla V(\Theta)\|^2}HV(\Theta) \Theta'
.
	\end{eqnarray*}
	The first term vanishes because we are taking $\Theta' \perp \nabla V(\Theta)$. 
	The other two terms factor as a product
\[
	\frac{\partial^2 \Theta(t, \tau)}{\partial \tau \partial t} =
	\left( I - \frac{2}{\| \nabla V(\Theta)\|^2} \nabla V(\Theta) \nabla V(\Theta)^T \right)
	\frac{w(\Theta)}{\|\nabla V(\Theta)\|^2} HV(\Theta) \Theta'
.
\]
    The first term is a symmetry that fixes $\Theta'$. The upper bound in Lemma~\ref{lemma-W2} yields
	\[
	\left \langle \Theta', \frac{\partial^2 \Theta(t, \tau)}{\partial \tau \partial t} \right \rangle 
	\le 
	\frac{w(\Theta)\sqrt{2w(\Theta)}}{\|\nabla V(\Theta)\|^2} 
	\|\Theta'\|^2 .
	\]
	We can now plug in Lemma~\ref{lemma-W1} to obtain
	\begin{equation}\label{bound-accel}
	\left \langle \Theta', \frac{\partial^2 \Theta(t, \tau)}{\partial \tau \partial t} \right \rangle 
	\le 
		\sqrt{2}\, Y(\Theta(t,\tau))\, e^{-t/2}\, \|\Theta'\|^2 .
	\end{equation}
	We can now proceed to bound $L(T)$. Indeed,
\begin{eqnarray*}
	L(T) &=& \int_a^b \left\| \Theta'(T,\tau) \right\| \dd \tau \\
	     &\le& \int_a^b \left( \left\| \Theta'(0,\tau) \right\| 
	     + \int_0^T \frac{\partial}{\partial t} \left\| \Theta'(t,\tau) \right\| \dd t \right) \dd \tau \\
		&=& L(0) + \int_{a}^b \int_{0}^T \frac{  \left\langle \Theta'(t,\tau), 
		\frac{\partial^2}{\partial t \partial \tau}\Theta(t,\tau) \right\rangle}
		{\| \Theta'(t,\tau)\|} \dd t \dd \tau \\
		&=& L(0) + \int_{a}^b \int_{0}^T \frac{  \left\langle \Theta'(t,\tau), 
		\frac{\partial^2}{\partial \tau \partial t}\Theta(t,\tau) \right\rangle}
		{\| \Theta'(t,\tau)\|} \dd t \dd \tau \\
\end{eqnarray*}
using the smoothness of the flow and the fact that $\gamma$ is of class $\mathcal C^2$.
	The bound \eqref{bound-accel} now implies that
	\[
		L(T) \le L(0)+\sqrt{2}  \int_{a}^b \int_{0}^T Y(\Theta(t,\tau)) e^{-t/2} \|\Theta'(t,\tau)\| 
		\dd t \dd \tau
	\]

	We picked $y^T\ge \max( \sup_{a \le \tau \le b} Y(\gamma(\tau)), \frac{1}{2(m+\sqrt{\epsilon})})$. From Lemma \ref{lemma-W3},
	$Y(\Theta(t,\tau)) \le y^T$. Changing the order of integration,
	\begin{eqnarray*}
		L(T) &\le& L(0)+\sqrt{2}y^T  
		\int_{0}^T 
		\int_{a}^b 
		e^{-t/2} 
		\|\Theta'(t,\tau)\| 
		\dd t \dd \tau\\
		&=& L(0)+\sqrt{2}y^T \int_{0}^T e^{-t/2} L(t) \dd t .
	\end{eqnarray*}
	We can bound $L(T) \le l(T)$ where $l(T)$ is the solution of the integral equation
\[
	l(T)=l(0)+\sqrt{2}y^T \int_0^T e^{-t/2}\, l(t) \dd t
\]
	with $l(0)=L(0)$. Differentiating and dividing by $l(t)$,
\[
	\frac{\partial}{\partial T} \log (l(T) ) = \sqrt{2}y^T e^{-T/2}
\]
	hence
\[
	\log (l(T))  = \log l(0) + 2\sqrt{2}y^T (1-e^{-T/2}).
\]
	Thus,
\[
	L(T) \le l(T) \le e^{2 \sqrt{2} y^T}\, L(0) .
\]
\end{proof}

\begin{proof}[Proof of Theorem~\ref{t.alpha-Lipschitz}]
	This is a particular case of Lemma~\ref{lemma-W4} applied to the level set
	$V(\Theta)=\frac{m^2}{2}-\epsilon$. 
Let $y^T$ be the maximum of 
	$\frac{1}{2(m+\sqrt{2\epsilon})}$ and the supremum of $Y(\Theta)$ on the level set $V(\Theta)=\frac{m^2}{2}-\epsilon$.
The Lipschitz constant is $\lambda=e^{2 \sqrt{2} y^T}$. 

	Fix $\Theta_1$, $\Theta_2$ at that level set
	and let $\gamma(\tau)$ be the minimizing geodesic connecting $\Theta_1$ and $\Theta_2$.
	For any fixed $T$, the length $L(T)$ of $\Theta(T, \tau)$ is at most $\lambda L(0)$.
	Since the curves $L(T)$, $T \in \mathbb N$ are uniformly bounded and share the same Lipschitz constant,
	Arzel\`a-Ascoli theorem guarantees that the limit is a curve 
	of length at most $\lambda L(0)$. We also know that the limit is in $\mathcal V^{\max}$.
	Therefore, the distance between $\alpha(\Theta_1)$ and $\alpha(\Theta_2)$ is no more than $\lambda L(0)$.
\end{proof}

\section{Cell decomposition of $\mathcal V^{\max}$}
\label{sec-cell}

In this section, we will use cube face coordinates $\theta_1 \equiv 0$
on $\mathcal V^{\max}$, with $0 \le \theta_1 < 2\pi$ for the other coordinates.
Each possible ordering of the coordinates $\theta_1=0, \theta_2,
\dots, \theta_m$ will define a {\bf cell} in $\mathcal V^{\max}$.
For instance,
the {\bf canonical (open) $m-2$-cell} is the set of all the points of $\mathcal V^{\max}$
that can be written as
\[
	\mathring{\mathcal C} = \{ [ \theta_1, \dots, \theta_{m}] \in \mathcal V^{\max}:
0 = \theta_1 < \theta_2 < \dots < \theta_{m} < 2\pi \}.
\]

\begin{lemma} The set $\mathring{\mathcal C}$ is open in $\mathcal V^{\max}$, and path
	connected. 
\end{lemma}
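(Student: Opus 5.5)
Openness is the easy half. In cube face coordinates $\theta_1\equiv 0$, the set $\mathring{\mathcal C}$ is cut out of $\mathcal V^{\max}$ by the strict inequalities $0<\theta_2<\dots<\theta_m<2\pi$. These conditions are open in $\mathcal Q$: they say all coordinates are pairwise distinct mod $2\pi$ and appear in a fixed cyclic order. Intersecting an open set with $\mathcal V^{\max}$ gives an open subset of $\mathcal V^{\max}$.

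Two remarks are worth recording here. First, on $\mathring{\mathcal C}$ all angles are distinct, so no point of $\mathring{\mathcal C}$ is antipodal. Hence $\mathring{\mathcal C}$ avoids $\mathcal V^{\sing}$, and by Theorem~\ref{t.Vmax} it is a smooth $(m-2)$-manifold. Second, the Equality Principle (Theorem~\ref{t.equality}) is not needed here, since the claim is purely topological.

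For path connectedness, I will parametrize $\mathring{\mathcal C}$ by the gaps $g_k=\theta_{k+1}-\theta_k$, for $k=1,\dots,m$, with $\theta_{m+1}=2\pi$. These satisfy $g_k>0$ and $\sum g_k=2\pi$, so the ordered configurations form the open simplex $\Delta$. The condition $Z=0$ reads
\[
\sum_{k=1}^m e^{i(g_1+\dots+g_{k-1})}=0 .
\]
Equivalently, the unit vectors $e^{i\theta_k}$ are the consecutive edges of a closed equilateral $m$-gon whose turning angles are the $g_k$, all positive and summing to $2\pi$. Thus $\mathring{\mathcal C}$ is identified with the space of \emph{convex} equilateral $m$-gons with unit sides, modulo rotation, with a marked first edge pointing in direction $0$.

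The main step, and the main obstacle, is to show that this space is path connected. My first attempt is a star-shaped retraction onto the regular polygon $e=(0,2\pi/m,\dots,2(m-1)\pi/m)$. Straight-line interpolation $(1-s)\Theta+se$ in the $\theta$ coordinates preserves the ordering but not $Z=0$. I therefore want to correct it back to $\{Z=0\}$ while staying inside the ordered region, using the fact that $0$ is a regular value of $Z$ along the way. The cleanest route is to show that for convex equilateral polygons the family is connected via a known result: the space of convex polygons with prescribed side lengths is homeomorphic to a cell. Concretely, I would fix $\theta_1,\dots,\theta_{m-2}$ in increasing order with sufficiently controlled gaps. Then I would solve uniquely for $\theta_{m-1},\theta_m$, which close the polygon via the two-link "elbow" with a convex choice. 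This exhibits $\mathring{\mathcal C}$ as the graph of a continuous function over a convex, hence connected, open region $U\subset\Delta'$ in the $(m-3)$ free gap variables.

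What needs checking is that $U$ is convex, or at least connected. The condition for the elbow to close is that the partial sum $P=\sum_{k\le m-2}e^{i\theta_k}$ has $|P|<2$, together with the ordering constraints; the bound $|P|\le 2$ is forced automatically. If convexity of $U$ fails, I would fall back on a direct deformation. In that fallback, I move any convex equilateral polygon to the regular one by successively equalizing adjacent turning angles: rotate a pair of consecutive edges about the segment joining their outer endpoints, a "flip" which keeps the polygon closed. One then checks that convexity is preserved when averaging the two turning angles.
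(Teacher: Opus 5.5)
\textbf{The gap is path connectedness.} Your openness argument is fine. One slip there: $\mathring{\mathcal C}\subset\mathcal Q$ has dimension $m-3$, not $m-2$, since $\dim\mathcal Q=m-1$; your own count of $m-3$ free gap variables confirms this. Path connectedness is the real content of the lemma, and none of your routes establishes it.

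Your graph description is correct. On the cell, $0<|P|<2$, because $|P|=2\cos(g_{m-1}/2)$ with $0<g_{m-1}<\pi$. The closing angles are then forced to be $\phi\mp\beta$, where $-P=|P|e^{i\phi}$ and $\beta=\arccos(|P|/2)$. But precisely because $\mathring{\mathcal C}$ is the graph of a continuous function over $U$, the projection $\mathring{\mathcal C}\to U$ is a homeomorphism. So ``$U$ is connected'' is a restatement of the lemma, not a reduction. Convexity of $U$ is also not evident: $U$ is cut out by $|P|<2$, $\theta_{m-2}<\phi-\beta$ and $\phi+\beta<2\pi$, all nonlinear in the gaps.

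The unnamed ``known result'' on convex polygons is neither stated precisely nor used. It would have to concern labelled sides in a fixed cyclic order.

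The fallback flip does not exist in the plane. Two unit edges with fixed outer endpoints determine the middle vertex up to reflection in the chord, so there is no continuous motion at all; rotating about the chord leaves the plane. The reflection itself swaps $\theta_k$ and $\theta_{k+1}$ and produces the turning angle $-g_k<0$. It therefore sends $\mathring{\mathcal C}$ to another cell $\mathring{\mathcal C}_\sigma$ instead of averaging anything. A genuine planar move needs at least two free vertices (a four-bar linkage). Even then you would still have to prove that convexity is preserved and that the process converges to the regular polygon.

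\textbf{The missing idea.} You need a \emph{global} way to push your straight segment back onto $\{Z=0\}$. Regularity of the value $0$ only gives corrections near $\mathcal V^{\max}$, and $(1-s)\Theta+se$ can leave any such neighbourhood. The paper's proof is exactly your first attempt, with the backward Kuramoto flow as the correction. The ordered region $0<\theta_2<\dots<\theta_m<2\pi$ is convex, so $\Theta(t)=t\Theta+(1-t)\Omega$, with $\Omega=(0,2\pi/m,\dots,2(m-1)\pi/m)$, stays in it. Then $t\mapsto\alpha(\Theta(t))$ is a path in $\mathcal V^{\max}$ from $\Omega$ to $\Theta$, since both endpoints already lie in $\mathcal V^{\max}$ and are fixed by $\alpha$. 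Continuity comes from Section~\ref{s.alpha}. The Equality Principle (Theorem~\ref{t.equality}), which you dismissed as unnecessary, is precisely what stops the backward orbits from changing the strict cyclic order. One still has to check that the limit does not fall on $\partial\mathcal C$.

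If you want a dynamics-free correction in your polygon language, the Douady--Earle conformal barycenter does the job. For $m\ge 3$ distinct points $e^{i\theta_k}$ there is a unique $w$ in the unit disk, depending continuously on $\Theta$, such that the images under $z\mapsto(z-w)/(1-\bar w z)$ have centroid $0$. This disk automorphism preserves distinctness and cyclic order, and $w=0$ when $Z(\Theta)=0$. After rotating the first point to angle $0$, you get a retraction of the convex ordered region onto $\mathring{\mathcal C}$.
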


\begin{proof}
	By construction, $\mathring{\mathcal C}$ is an open subset of $\mathcal V^{\max}$.
	We can show path-connectedness as follows:
let $\omega=2\pi/m$ and $\Omega=(0,\omega,2\omega, \dots, (m-1)\omega)$.
	For any $\Theta$ with $q(\Theta) \in \mathring{\mathcal C}$ and $\theta_1=0$,
	we write $\Theta(t)=t \Theta + (1-t) \Omega$ and then
	$\alpha(\Theta(t))$ is a continuous path on $\mathring{\mathcal C}$.
\end{proof}

If $m=3$, the canonical cell is the point $\{[0,\pi/3, 2\pi/3]\}$.
If $m=4$, the canonical cell is the line $\{[0, \alpha, \pi, -\alpha]\}$
with $0 < \alpha < \pi$.

\begin{figure}
	\centerline{\includegraphics[width=\textwidth]{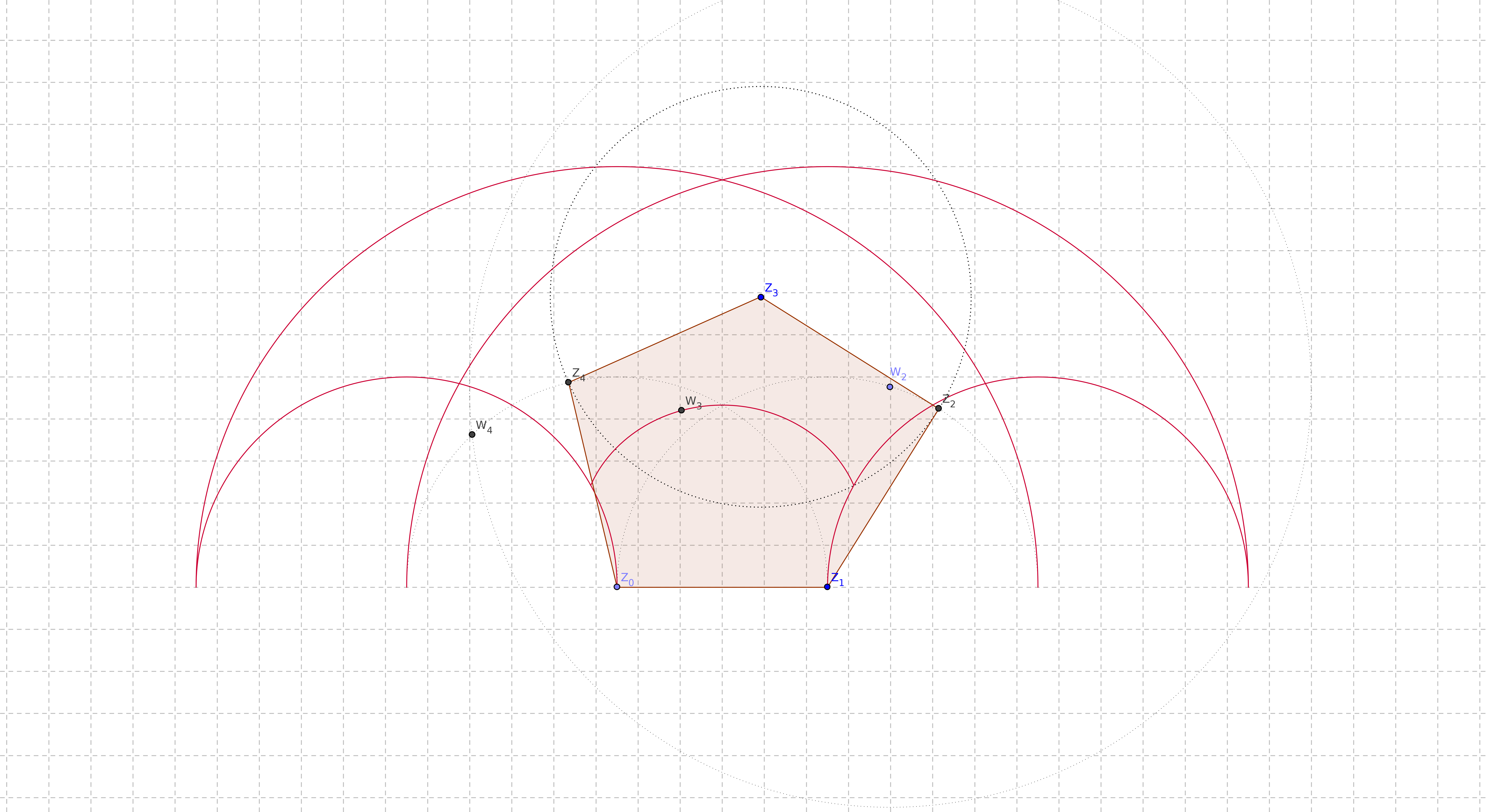}}
\caption{\label{geogebra1} The point $Z_3$ lies in the region delimited by
	the 5 red lines.}
\end{figure}

The canonical cell for $m=5$ can be constructed geometrically as
explained below (See Figure~\ref{geogebra1}).
Instead of representing each angle as a point in $\mathbb R \mod 2\pi \mathbb Z$, this time
we will represent it by a segment of unit length in the plane.
For instance, $[0=\theta_1, \theta_2, \dots, \theta_m]$ will be represented by the closed positively oriented polygon
\[
(Z_1=1, \dots, Z_m)
\]
where $Z_{k}=\sum_{j \le k} e^{i \theta_j}$.  The centroid condition is equivalent to the assertion $Z=\frac{1}{m}Z_m=0$, thus the last side $Z_m Z_1$ has length one.
Back to the case $m=5$, the point $Z_3$ must be at distance $\le 2$ from
$Z_m=0$ and from $Z_1$. So it belongs to the intersection of two radius 2 disks.
Since the polygon is positively oriented, it also belongs to the upper half-plane. There are three extra conditions for the angles at $Z_1$, $Z_3$ and $Z_4$ not to be reflex.

For every permutation $\sigma$ of $\{2, \dots, m\}$, we define
\[
	\mathring{\mathcal C}_{\sigma} = \{ [ 0=\theta_1, \theta_2, \dots, \theta_{m}] \in \mathcal V^{\max}:
0=\theta_1 < \theta_{\sigma(2)} < \dots < \theta_{\sigma(m)} < 2\pi \}.
\]
The $m-2$-cell $\mathcal C_{\sigma}$ is obtained by permuting the coordinates
of the canonical cell $\mathcal C$, but always fixing $\theta_1=0$. 
In the sequel we will consider {\bf closed} cells, 
\[
	\mathcal C_{\sigma} = \{ [ 0=\theta_1, \theta_2, \dots, \theta_{m}] \in \mathcal V^{\max}:
0=\theta_1 \le \theta_{\sigma(2)} \le \dots \le \theta_{\sigma(m)} \le 2\pi \}.
\]
For concision, each $m-2$-cell will be associated to a unique label. The
label will be a word in the alphabet $\{-, 0, a, b, \dots\}$. 
The symbol
$0$ is associated to $\theta_1=0$, the symbol $a$ to $\theta_2$,
the symbol $b$ to $\theta_3$ and so on. The label associated to
$\mathcal C_{\sigma}$ is the word starting with $0$ and followed by the 
the symbols for $\theta_{\sigma(2)}$, $\theta_{\sigma(3)}$, etc...

For instance, the label $\cell{0-a-b-d-c}$ represents the {\bf closed cell}
\[
0 =\theta_1 \le \theta_2 \le \theta_4 \le \theta_3 \le 2 \pi
.\]

Lower dimensional cells can be obtained by omitting a separator.
For instance,
$\cell{0a-b-d-c}$ will mean the $1$-cell
\[
0 =\theta_1 = \theta_2 \le \theta_4 \le \theta_3 \le 2 \pi
.
\]

This construction is formalized below:

\begin{definition*}
A {\bf sentence} is a list starting with $0$ and containing all the $m-1$
symbols $a, b, c, \dots$ without repetition. It contains one
or more instances of the separator. There cannot be adjacent separators
nor a separator at the end of the sentence.
\end{definition*}

To each sentence, we associate a system of equalities and inequalities
by the following algorithm: replace $0$ by the expression $0=\theta_1$,
$a$ by $\theta_2$, $b$ by $\theta_3$ and so on.
replace each separator by  the inequality symbol $\le$, whenever two $\theta_i$
are adjacent to each other introduce an equal symbol $=$, and always terminate by
$\le 2\pi$.

For instance,
$\cell{0af-bcd-e}$ becomes 
\[
0=\theta_1 =
\theta_2
=
\theta_7
\le
\theta_3
=
\theta_4
=
\theta_5
\le
\theta_6
\le 2 \pi
\]

Different sentences can lead to equivalent systems of equalities-inequalities,
viz. $\cell{0a-bc}$ and $\cell{0a-cb}$ become $0=\theta_1=\theta_2 \le \theta_3 = \theta_4 \le 2\pi$.

\begin{definition*}
	A {\bf word} in a sentence is a maximal sub-list without
	a separator. It cannot be empty. 
	It is said to be {\bf ordered} when the symbols appear in 
	increasing order.
\end{definition*}

\begin{definition*} A sentence is in {\bf canonical form} if and only if
	every word is ordered.
\end{definition*}

In the example above, $\cell{0a-bc}$ is the canonical form for $\cell{0a-cb}$. Different sentences in canonical form represent different systems of inequalities in $[0,2 \pi)^n$. But some of those systems may fail to have solutions in $\mathcal V^{\max}$.

\begin{example*}
	The sentence $\cell{0ab-c-d}$ corresponds to a system of equalities and inequalities without solution in $\mathcal V^{\max}$. 
The reason is that $[Z_5 Z_3]$ is a lenght 3 segment while $[Z_3 Z_4]$ and $[Z_4 Z_5]$ have both length 1. Since it fails the triangule inequality, the triangle $Z_5, Z_3, Z_4$ cannot exist.
\end{example*}

\begin{example*}
	The sentences $\cell{0ab-cde}$, $\cell{0a-b-cde}$ and $\cell{0-ab-cde}$ give rise to different systems of equalities-inequalities. Because of the triangular inequality, the solution set in $\mathcal V^{\max}$ is the same: $\{[0,0,0,\pi,\pi,\pi]\}$.
\end{example*}

This suggests the following restriction:

\begin{definition*}
	A sentence is {\bf valid} if and only if it satisfies the following
	conditions:
\begin{enumerate}
	\item \label{v1} Words have size $1$ to $m/2$.
	\item \label{v2} If a word has size $m/2$, then there are only
		two words in the sentence (with size $m/2$).
\end{enumerate}
\end{definition*}

This defines uniquely the
closed cells associated to all the valid sentences in canonical form. The intersection
of two of those cells is a lower dimensional cell or empty, and the union
of all $m-2$-dimensional cells is $\mathcal Q$.

It remains to define the {\bf border operator}.
The border of a cell $c$ is the union of the cells obtained by removing one separator from the valid sentence, as long as the sentence remains valid. Excepcionally, if a word of size $m/2$ appears, the other words collapse so that the sentence remains valid.

The {\bf orientation} is given by the following rule: all $m-2$ cells are
positively oriented. If $c' \in \partial c$ is such that the $i$-th
independent variable of $c$ collapses with the $i+1$-st (counting from $i=0$,
and modulo the dimension of $c$) then $c'$ counts in $\partial c$
with sign $(-1)^i$. 
Equivalently, removing the $k$-th separator makes the new sentence count
as $(-1)^{k-1}$ in the border of the original sentence.

\begin{example*}
\[
\partial\cell{0-ab-c-d-e} =
\cell{0-ab-cd-e} -\cell{0-ab-c-de} +\cell{0e-ab-c-d} 
\]
The $0$-cells $\partial\cell{0ab-cde}$ and 
$\partial\cell{0de-abc}$ do not show up in the sum above because
of the dimension. But notice that
\begin{eqnarray*}
	\partial \cell{0-ab-cd-e} &=&
\cell{0ab-cde} 
-\cell{0e-ab-cd} 
\\
\partial \cell{0-ab-c-de} &=&
\cell{0ab-cde} 
- \cell{0de-abc} 
\\
\partial \cell{0e-ab-c-d} &=&
\cell{0e-ab-cd} 
-\cell{0de-abc} 
\end{eqnarray*}
and therefore $\partial \partial \cell{0-ab-c-d-e}=0$ as expected.
\end{example*}

\begin{figure}
	\centerline{\includegraphics[clip, trim= 0 300 0 110]{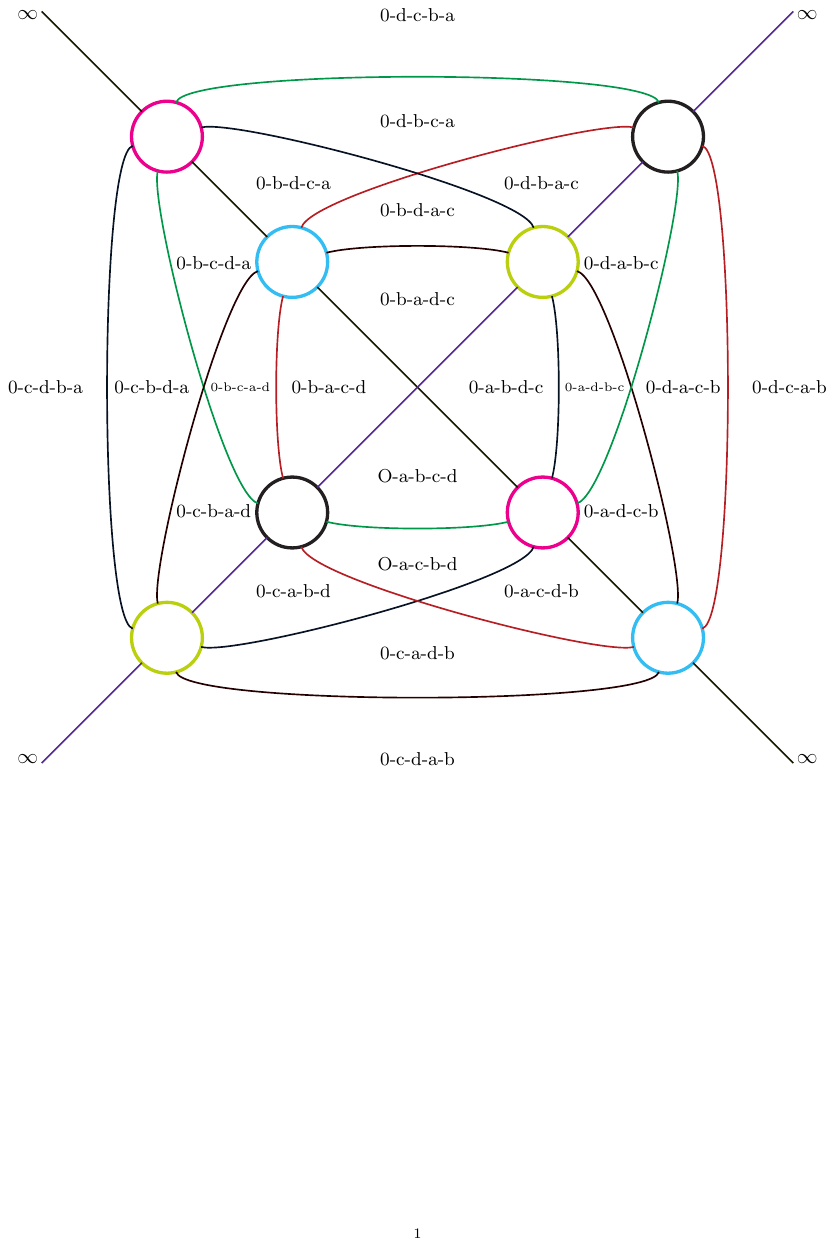}}
	\caption{Cell decomposition of $\mathcal V^{\max}$ when $m=5$. This picture
	shows a sphere with 4 handles,
	under stereographic projection, so the point at
	infinity is a regular point (and actually a $0$-cell). 
	The 4 handles (in different colors) are the union of 6 $1$-cells
	and 6 $0$-cells each.\label{atlas}}
\end{figure}
\begin{theorem}\label{t.cw} The set of cells in $\mathcal V^{\max}$ induced by the ordered valid sentences plus the empty sentence $\emptyset$ 
is a CW-complex for $\mathcal V^{\max}$.
\end{theorem}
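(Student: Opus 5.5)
The plan is to build the CW structure cell by cell, using the polygon picture of Section~\ref{sec-cell}. A point of $\mathcal V^{\max}$ with $\theta_1=0$ corresponds to a closed equilateral $m$-gon with unit sides. After sorting the angles, that polygon is convex and positively oriented. The proof has three parts: a closed-cell homeomorphism, a partition statement, and a check that boundaries land in lower skeleta.

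\textbf{Step 1 (cells are closed balls).} Fix a valid sentence in canonical form with $k$ words of sizes $n_1,\dots,n_k$. Each word collapses to a single edge of length $n_j$, so the closed cell is identified with the space of convex $k$-gons whose side lengths are $n_1,\dots,n_k$ in the prescribed cyclic order, with the first edge horizontal and non-strict convexity allowed. Conditions (\ref{v1})--(\ref{v2}) of validity are exactly the strict triangle (polygon) inequalities $n_j<m/2$, together with the degenerate case $n_1=n_2=m/2$, which is a single point. When the inequalities are strict, the space of convex polygons with fixed side lengths is homeomorphic to a closed $(k-2)$-ball. I would parametrize it by the $k-1$ exterior turning angles. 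These are nonnegative, sum to $2\pi$, and are subject to the two closing conditions. This presents the cell as a compact convex-like set of dimension $k-2$. To show it is a ball, I would either use a Morse/star-shapedness argument or invoke the classical fact that the moduli space of convex planar polygons is a disk. The interior, where all inequalities are strict, is the open cell. This matches the count: dimension $=$ (number of separators) $-1$ plus the $k$ edges, minus the closing constraints, giving $k-2$.

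\textbf{Step 2 (partition).} I will show that every point of $\mathcal V^{\max}$ lies in exactly one open cell. Given $\Theta$ with $\theta_1=0$, sort the coordinates and group equal ones into words. The canonical ordering inside each word makes the sentence unique. The resulting sentence is automatically valid, because a word of size greater than $m/2$ would put $Z$ nearer to that direction, so $Z\neq0$. A word of size exactly $m/2$ forces the rest to be antipodal, and this yields the singular points $\mathcal V^{\sing}$ of Theorem~\ref{t.Vmax}. The empty sentence accounts for the degenerate base case.

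\textbf{Step 3 (attaching maps).} The boundary of a closed cell consists of the points where some inequality becomes an equality. In the polygon picture, this is where a turning angle vanishes and two adjacent edges merge into one word. The resulting sentence has one fewer separator, so it is a cell of lower dimension. If the merge creates a word of size $m/2$, the polygon is forced to be a segment and the remaining words collapse, which is the exceptional rule in the border operator. So the attaching map is the restriction of the homeomorphism from Step 1 to the boundary sphere, and it lands in the $(k-3)$-skeleton. Closure-finiteness is automatic since there are finitely many sentences, and the weak topology agrees with the subspace topology for a finite complex.

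\textbf{Main obstacle.} The hardest step is Step 1, proving each closed cell is a ball. The difficulty is greatest near the singular points when $m$ is even, where whole faces collapse to a point. I would handle it by checking that the map from the ball is a quotient that collapses only a face to a point, which preserves the ball type. If that fails, I would fall back on attaching maps that are not injective on the boundary, which a CW structure still allows.
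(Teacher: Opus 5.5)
Your route differs from the paper's. The paper's sketch writes a closed $k$-cell as $\mathcal V^{\max}\cap\Delta$, where $\Delta$ is a $(k+2)$-simplex cut out by the (in)equalities of the sentence. It intersects $\Delta$ with the tangent $k$-plane of $\mathcal V^{\max}$ at an interior point to get a convex piece, and pushes that piece onto the cell along flow orbits. Attaching is handled, as in your Step~3, by observing that boundary points have one separator fewer.

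You instead identify a closed cell with the space of weakly convex planar polygons whose side lengths are the word sizes in the given cyclic order. Everything then reduces to the fact that such a polygon space is a closed ball. Your version makes the face structure transparent:
\begin{itemize}
\item validity is exactly the polygon inequality;
\item faces are vanishing turning angles, i.e.\ merging adjacent words;
\item a merge that would create a word longer than $m/2$ gives an empty face;
\item the even-$m$ collapse is the degenerate two-gon.
\end{itemize}
It also places $\mathcal V^{\max}$ among equilateral planar polygon spaces. There, for generic side lengths (here, $m$ odd), this same structure, indexed by cyclically ordered partitions into short blocks, is known to be a regular CW complex (G.~Panina). The paper's version stays within the dynamics and reuses the flow machinery it has already built.

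Neither version actually establishes the ball property. The paper does not check that its flow map is a homeomorphism onto the cell. You defer to the convex-polygon theorem, plus a collapse argument for even $m$. That collapse involves several faces, not one: for $m=6$ the top cell \cell{0-a-b-c-d-e} contains the three singular vertices \cell{0ab-cde}, \cell{0de-abc} and \cell{0ae-bcd}, each with the local model of Theorem~\ref{t-blow-up}.

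One correction: your dimension count is off by one. With $k$ words, fixing the first edge horizontal leaves $k-1$ free directions, and closing the polygon imposes two conditions. So the closed cell is a $(k-3)$-ball, not a $(k-2)$-ball. In turning angles, it is the zero set of the two closing equations inside the $(k-1)$-simplex $\{\tau_j\ge 0,\ \sum_{j=1}^k\tau_j=2\pi\}$; this simplex is, up to a linear change of variables, the paper's $\Delta$. As a check, three words give a triangle, hence a $0$-cell, and for $m=5$ the five-word cells are the $2$-cells of a surface. Thus the cell dimension is the number of separators minus two, and the attaching map lands in the $(k-4)$-skeleton.
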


\begin{example*} If $m=4$, the valid 1-cells are labelled as follows:
\begin{itemize}
	\item $\cell{0-a-b-c}$ with border $\cell{0a-bc} - \cell{0c-ab}$.
	\item $\cell{0-c-b-a}$ with border $\cell{0c-ab} - \cell{0a-bc}$.
	\item $\cell{0-b-c-a}$ with border $\cell{0b-ac} - \cell{0a-bc}$.
	\item $\cell{0-a-c-b}$ with border $\cell{0a-bc} - \cell{0b-ac}$.
	\item $\cell{0-c-a-b}$ with border $\cell{0c-ab} - \cell{0b-ac}$.
	\item $\cell{0-b-a-c}$ with border $\cell{0b-ac} - \cell{0c-ab}$.
\end{itemize}
	Each pair of consecutive $1$-cells is a topological circle. In box
	coordinates, it becomes a  
	line segment. The 
three topological circles intersect two by two on one point.
\end{example*}

\begin{example*}
If $m=5$, 
\[
	\partial \cell{0-a-b-c-d} =
\cell{0a-b-c-d} 
-\cell{0-ab-c-d} 
+\cell{0-a-bc-d} 
-\cell{0-a-b-cd} 
+
\cell{0d-a-b-c} 
\]
and the other ($m-2$)-cells are permutations hereof. 
	The map of Figure~\ref{atlas} was obtaining by patching 
those cells together. Also, notice that there are
precisely $4!=24$ two-cells. Each cell has $5$ edges, and each edge belongs to the border of exactly two
	two-cells. Hence, there are precisely $24\time 5/2=60$ one-cells.

	The one-cells are all of the form $\theta_i=\theta_j$ for a pair $i \ne j$. The permutation $(ij)$ acts
	on $\mathcal V^{\max}$ by swapping coordinates $\theta_i$ and $\theta_j$. It is an isometry, and fixes
	the line $\theta_i=\theta_j$ which is therefore a geodesic. Two such geodesics can only intersect at
	a right angle, so the number of $0$-cells is $24 \times 5 / 4 = 30$. Alternatively, $0$-cells are all
	defined by the choice of two distinct cardinality $2$ subsets in $[5]$, so there are $\frac{5!}{2! 2!}=
	30$ of them.

	The Euler characteristic of $\mathcal V^{\max}$
is therefore
\[
\chi = 24-60+30=-6=2-2g.
\]
so its genus is $4$. 
\end{example*}

\begin{proof}[Sketch of the proof of Theorem~\ref{t.cw}]
The hard part is to show that cells are homeomorphic to balls.
We start in dimension $0$.
	Labels with 
at most two separators correspond to a unique point in $\mathcal V^{\max}$. 
If $m$ is odd and there are words of size $k$, $l$ and $m-k-l$, there is
a unique closed triangle with a horizontal segment of size $k$ passing
through the origin, then (in trigonometrical order) 
a horizontal segment of size $l$ and another of size $m-k-l$.
The case $m$ even is simpler.

Cells with three separators are lines, corresponding to quadrilaterals
with side given by the length of the 4 words. The two extreme figures
(flat quadrilaterals) are 0-cells.

In general, $k$-cells are associated to a system of linear equalities
and inequalities in the nonlinear surface $\mathcal V^{\max}$. This means that
a $k$-cell $c$ is always the intersection of $\mathcal V^{\max}$ with a 
$k+2$ simplex $\Delta$.
	Let $\Omega$ be an interior point of $c$ and let 
$T=T_{\Omega}\mathcal V^{\max}$ be the tangent plane at $\Omega$. Let
$\Delta'$ be the intersection of $T$ with the interior of $\Delta$,
so $\Delta'$ is topologically an open $k$-dimensional ball. It can
be deformation retracted to $c$ along the orbits of the flow.

The easy part of the proof is to construct a map attaching the border
of a $k$-cell $c$ to the ($k-1$)-skeleton. Since points in the border have at least one separator less, they belong to the ($k-1$)-skeleton. 
\end{proof}

\section{The singularities of $\mathcal V^{\max}$}
\label{sec-sing}
\begin{figure}
	\centerline{
\includegraphics[width=0.7\textwidth,trim=1in 0in 1in 0in,clip]{include/restricted124.png}
}
	\caption{\label{fig-singularity}
	Reduced flow when $m=4$ restricted to
	the plane $\theta_3 = \theta_4 = 0$. The point $(\pi,\pi)$
	is the projection of $(\pi,\pi,0,0) \in V^{\sing}$.
    }
\end{figure}
If $m \ge 4$ is odd, the set $\mathcal V^{\max}$ is a smooth connected manifold. But if $m=2d \ge 4$ is even,
the set $\mathcal V^{\max}$ admits $\frac{1}{2}\binomial{m}{d}$ isolated singularities. Their  
counterdiagonal coordinates are permutations of 
$p=[-\pi/2: \dots : -\pi/2 : \pi/2 : \dots : \pi/2]$. The aim of this section is
to describe those singularities. Because singularities can be obtained
from each other by permuting coordinates, we will always consider the canonical singularity
$p$ with coordinates above. 

\medskip
\noindent \\
{\bf Trivial case.} Assume $m=4$. In the notation of the previous section, the singularity is the 0-cell $\cell{0a-bc}$.
The incident 1-cells are $\cell{0-a-b-c}$, $\cell{0-b-c-a}$, $\cell{0-a-c-b}$, $\cell{0-c-b-a}$. 
The tangent vectors of $\mathcal V^{\max}$ at the singularity $p$ are multiples of $[1,-1,1,-1]^T$ or of
$[1,-1,-1,1]^T$. The reader can quickly check that near $p$, $\mathcal V^{\max}$ is exactly the union of two orthogonal 
lines and that both are orthogonal to $[1,1,-1,-1]$.
The normal bundle collapses at $\mathcal V^{\sing}$, see Figure~\ref{fig-singularity} for what this means to the flow.
\medskip
\noindent \\
{\bf Easy case.} Assume now $m=6$. The canonical singularity is the 0-cell $p=\cell{0ab-cde}$. Its {\bf blowup}
is the set $S_p \subset S^{m-2} \subset T_p\mathcal Q$ of all vectors of the form $\gamma'(0)$, 
where $\gamma: [0,\delta) \rightarrow V^{\max}$ is
a curve with $\gamma(0)=p$ and $\|\gamma'(0)\|=1$. 
We assume
{\bf counterdiagonal coordinates} 
$\gamma(t)=[ -\pi/2 + t a_1, -\pi/2 + t a_2, -\pi/2 + t a_3, \pi/2 + t b_1, \pi/2 + t b_2,
\pi/2 + t b_3]+O(t^2)$ for $t$ small enough. It is assumed that
$\sum a_j + \sum b_j=0$.
Let $mZ$ be the map defined in the proof of Theorem~\ref{t.Vmax} in section~\ref{t.centroid}. Its derivative 
is the matrix
\[
	\begin{bmatrix}
		-\cos(t a_1) &
		-\cos(t a_2) &
		-\cos(t a_3) &
		\cos(t b_1) &
		\cos(t b_2) &
		\cos(t b_3) \\
		\sin(t a_1) &
		\sin(t a_2) &
		\sin(t a_3) &
		-\sin(t b_1) &
		-\sin(t b_2) &
		-\sin(t b_3) &
\end{bmatrix}.
\]
Direct calculation shows that $\gamma'(0)$ is normal to  
the vector  $[-1,-1,-1,1,1,1]^T$. Therefore,
$S_p$ is contained in the unit sphere of
the subspace $a_1+a_2+a_3=b_1+b_2+b_3=0$.
This sphere is 3-dimensional. But notice that 
\[
0=	\sum \sin(\gamma(t)) = \frac{1}{2} \left(\sum a_j^2 - \sum b_j^2\right) t^2 + O(t^3)
\]
so that we can normalize $\sum a_j^2 = \sum b_j^2 = \frac{1}{2}$. This implies that
$S_p \subset S^1 \times S^1 \subset S^3$, assuming the canonical embedding
$(x_1,x_2), (y_1, y_2) \rightarrow \frac{1}{\sqrt{2}} (x_1,x_2,y_1,y_2)$.
It turns out that $S_p$ is actually the product $S^1 \times S^1$.

Indeed, there are 36 possible 3-cells of $\mathcal V^{\max}$ that contain 
$p$ as a vertex. Those can be obtained
from $\cell{0-a-b-c-d-e}$ by permuting $0,a,b$ and $c,d,e$ separately, and then
rotating so that the cell description starts at $0$, e.g. 
$\cell{a-b-0-c-d-e}$ is rewritten as $\cell{0-c-d-e-a-b}$.
As all cells are isometric, we consider only $\cell{0-a-b-c-d-e}$.
Its edges (1-cells) incident to $p$ are $\cell{0-ab-c-de}$, $\cell{0-ab-cd-e}$,
$\cell{0a-b-c-de}$ and $\cell{0a-b-cd-e}$. The tangent vectors
to the edges are the columns of the matrix
\[
\frac{1}{\sqrt{12}}
\begin{bmatrix}
-2 & -2 & -1 & -1  \\
 1 &  1 & -1 & -1  \\
 1 &  1 &  2 &  2  \\
-2 & -1 & -2 & -1  \\
 1 & -1 &  1 & -1  \\
 1 &  2 &  1 &  2  \\
\end{bmatrix} .
\]
There is an obvious product structure on the matrix above, so we consider now
all the possible $(a_1, \dots, a_3)$. In addition to having $a_1+a_2+a_3=0$,
we can always find a permutation of the rows of 
\[
\begin{bmatrix}
-2 & -1 \\
 1 & -1 \\
 1 &  2 \\
\end{bmatrix}
\]
so that $(a_1, a_2, a_3)$ belongs to the cone generated by the columns. 
It follows that
the first factor $S^1$ is divided into six equal arcs, each corresponding to
a certain ordering of the $a_i$. 
Columns itself correspond to configurations with two of the $a_i$ equal. 
The same holds for the second factor. Thus
$S_p=S^1 \times S^1$, and one can think of each cell incident to $p$ as 
projecting in $S_p=S^1 \times S^1$ as a product of arcs of lenght $\pi/3$.

Among all possible row permutations, the columns of the matrix above maximize
the linear form $[-1,0,1]$. For each ordering of the $a_i$, the matrix obtained
by permutation will maximize a linear form obtained by permuting $[-1,0,1]$.
The set of possible linear forms so obtained is a regular hexagon.
In the language of fans, the set of incident cells and their intersections 
projects onto the dual fan to the product of two regular hexagons, one for 
each factor of $S^1$.
\smallskip
\noindent \\
{\bf General case.} At this point we can summarize the general case as follows,
the proof is the same as above.

\begin{theorem}\label{t-blow-up} Let $m=2d$, and consider the singularity
	$p=[-\pi/2: \dots: -\pi/2:\pi/2 : \dots : \pi/2]$
	with blowup $S_p$ as above. Then, $S_p=S^{d-2}\times S^{d-2} \subset S^{2d-3}$. 
	A typical incident $2d-3$-dimensional cells of $\mathcal V^{\max}$ is
	\cell{0-a-b-c\dots}. It projects on $S_p$ as
	the product $C_1 \cap S^{d-2} \times C_2 \cap S^{d-2}$, where each
	$C_i$ is the cone
	generated by the columns of the matrix
\[
\begin{bmatrix}
	-(d-1) & -(d-2) & -(d-3) & \dots & -2 & -1 \\
	     1 & -(d-2) & -(d-3) & \dots & -2 & -1 \\
	     1 &      2 & -(d-3) & \dots & -2 & -1 \\
	     1 &      2 &      3 & \dots & -2 & -1 \\
          \vdots & \vdots & \vdots & &\vdots & \vdots & \\
	     1 & 2 & 3 & \dots & (d-2) & (d-1)        
\end{bmatrix}
.
\]
     This cell is orthogonal to the vector $[-1,0, \dots,1,1,0, \dots, -1]^T$.
     The projection of each of the $d!^2$ incident cells can be constructed 
in the same way, by permuting the rows of the matrix above, independently for
each factor $C_1$ and $C_2$.
\end{theorem}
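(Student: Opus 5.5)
The plan is to imitate the $m=6$ computation, working in counterdiagonal coordinates at $p$. We split the index set into the block $A$ of coordinates equal to $-\pi/2$ and the block $B$ of coordinates equal to $\pi/2$, each of size $d$. We write a curve in $\mathcal V^{\max}$ through $p$ as $\gamma(t)=p+t(a,b)+O(t^2)$ with $a,b\in\mathbb R^d$ and $\sum a_j+\sum b_j=0$.

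The first step is a Taylor expansion of the two defining equations $C=S=0$ from Theorem~\ref{t.trivial}. For $C=\sum\cos\theta_i$, the linear term is $t(\sum a_j-\sum b_j)$. Its vanishing, combined with the counterdiagonal condition, forces $\sum a_j=\sum b_j=0$. This places $S_p$ in the unit sphere $S^{2d-3}$ of a codimension-two subspace, and shows that $\gamma'(0)\perp[-1,\dots,-1,1,\dots,1]^T$.

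The second step handles $S=\sum\sin\theta_i$, whose linear term vanishes identically. Its quadratic term is $\tfrac12(\sum a_j^2-\sum b_j^2)t^2$ (up to a sign convention), so we need $\|a\|=\|b\|$. I would also include the $O(t^2)$ correction of $\gamma$. This correction only contributes a linear term of $S$ at order $t^2$, and that linear term is the same combination as before, so it can be absorbed. Hence $S_p\subset\{\|a\|^2=\|b\|^2=\tfrac12\}$, which is $S^{d-2}\times S^{d-2}$ with the scaled canonical embedding.

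For the converse inclusion and the product structure, I would build curves explicitly. Given unit-scaled $(a,b)$ with zero sums, take $\gamma(t)=p+t(a,b)+t^2 c\,(\mathbf 1_A-\mathbf 1_B)$ and solve $S=0$ for $c$ via the implicit function theorem. This works because the transverse derivative of $S$ in the direction $\mathbf 1_A-\mathbf 1_B$ is nonzero. A further correction in the counterdiagonal-orthogonal direction then handles $C$. This gives $S_p=S^{d-2}\times S^{d-2}$.

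Next comes the cell structure. Near $p$, a point of the cell \cell{0-a-b-c\dots} is determined by the orderings of the $\theta_i$ within $A$ and within $B$, since the two blocks stay near $-\pi/2$ and $\pi/2$ respectively. A tangent direction $a$ with $\sum a_j=0$ and a prescribed ordering $a_1\le\dots\le a_d$ lies in the cone spanned by the vectors where the first $k$ entries equal $-(d-k)$ and the last $d-k$ entries equal $k$, for $k=1,\dots,d-1$. These are exactly the columns of the displayed matrix, i.e.\ the extreme rays of the ordered chamber of the $A_{d-1}$ Weyl fan, intersected with the sum-zero hyperplane. The same holds independently for $b$, so the projection is $C_1\cap S^{d-2}\times C_2\cap S^{d-2}$.

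Permuting rows within each block gives the $d!\cdot d!$ chambers, which tile each sphere factor. The orthogonality claim is a direct check against the given vector, because the cell's coordinates satisfy $\theta_1$-type equalities that are linear in $(a,b)$.

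The main obstacle is the second-order analysis: showing that every direction with $\|a\|=\|b\|$ is actually realized by a curve, so that the quadratic constraint is the only one. I would handle this with the explicit implicit-function construction above, checking nondegeneracy of the transverse derivatives at $p$.
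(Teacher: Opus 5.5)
You follow the paper's route for the parts it actually argues. The linear term of $C$ forces $\sum a_j=\sum b_j=0$, the quadratic term of $S$ forces $\|a\|=\|b\|$, and the columns are the extreme rays of the chamber $\{a_1\le\dots\le a_d,\ \sum a_j=0\}$. This is the paper's $m=6$ computation, and the paper's proof consists only of that.

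There is a genuine gap in the step you flag as the main obstacle: realizing every direction with $\|a\|=\|b\|$.
\begin{itemize}
\item At $p$ all cosines vanish, so $\nabla S(p)=\Cos(p)=0$. Thus $S$ has zero derivative in \emph{every} direction at $p$; this is exactly why $p$ is singular. Your claim that its transverse derivative along $\mathbf 1_A-\mathbf 1_B$ is nonzero is false.
\item In fact $\mathbf 1_A-\mathbf 1_B=-\Sin(p)=\nabla C(p)$ is the transverse direction for $C$. Along your ansatz $\gamma(t)=p+t(a,b)+t^2c(\mathbf 1_A-\mathbf 1_B)$ you get $C=2dc\,t^2+O(t^3)$. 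Meanwhile the $c$-dependence of $S$ cancels through order $t^4$: for $\|a\|=\|b\|$, $S=-\tfrac{t^4}{24}(\sum a_j^4-\sum b_j^4)+O(t^5)$ whatever $c$ is.
\item The proposed correction of $C$ ``in the counterdiagonal-orthogonal direction'' also fails. That direction is $\mathbf 1$, which is quotiented out and only rotates the centroid, $Z\mapsto e^{i\alpha}Z$, so it cannot bring $Z$ to $0$.
\item The same mix-up is behind your remark that the $O(t^2)$ correction of $\gamma$ contributes a linear term to $S$. It contributes nothing, since $\nabla S(p)=0$.
\end{itemize}

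The repair is to swap the roles of $C$ and $S$. Since $dC(p)(\mathbf 1_A-\mathbf 1_B)=2d\neq 0$, the set $\{C=0\}$ is a smooth hypersurface near $p$ with tangent space $\{\sum a_j=\sum b_j=0\}$. Because $\nabla S(p)=0$, the Hessian of $S$ restricted to this hypersurface at $p$ is the restriction of $\mathrm{diag}(I_d,-I_d)$, namely $\|a\|^2-\|b\|^2$. This is nondegenerate of signature $(d-1,d-1)$, so the Morse lemma makes $\mathcal V^{\max}$ locally the cone $\|x\|=\|y\|$, giving $S_p=S^{d-2}\times S^{d-2}$ at once.

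Equivalently, scale the $b$-part by $\lambda$ and solve $C=0$ with an $O(t^3)$ correction along $\mathbf 1_A-\mathbf 1_B$. Then apply the implicit function theorem to $F(t,\lambda)=S/t^2$, where $F(0,\lambda)=\tfrac12(\|a\|^2-\lambda^2\|b\|^2)$ and $\partial_\lambda F(0,1)=-\|b\|^2\neq 0$. Strict orderings of $a$ and $b$ persist along such curves, so this also shows each incident cell projects onto the full product of chambers.

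With this fix your argument is analytic. The paper instead gets the reverse inclusion combinatorially, from the $d!^2$ incident cells whose projections tile each factor. Your version does not depend on the sketched cell decomposition.

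Finally, the orthogonality clause does not follow from ``linear equalities in $(a,b)$''. What you can check directly is that $[-1,0,\dots,0,1]$ takes the value $(d-k)+k=d$ on every column. Hence $[-1,0,\dots,0,1\,|\,1,0,\dots,0,-1]^T$ annihilates every pair of unnormalized columns. It is not, however, a pointwise property of the projected cell in $S_p$.
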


Let $\mathcal P \subset \mathbf 1^T \subset \mathbb R^d$ be the convex hull 
of all the points
obtained by permutation of $(-1,0,0,\dots,1)$. Then the set of all incident
cells of $\mathcal V^{\max}$ incident to $p$, of any dimension, projects into
$S_p$ as the dual fan of $\mathcal P \times \mathcal P$.

\section{The homology of $\mathcal V^{\max}$}
\label{sec-homology}

Rather than proceeding with cell decompositions of $\mathcal V^{\max}$
in high dimension,
we will resort to long exact sequences to compute the Betti numbers
of $\mathcal V^{\max}$. This section is more technical than the rest of the paper.
Recall that $\mathcal Q=q(\mathbb T^m)$ denotes the quotient of $\mathbb T^m$ with
respect to the diagonal action. Then $\mathcal V^{\max} \subset \mathcal Q$ is the maximal locus of
the Kuramoto potential in the quotient.

\begin{theorem}\label{th-homology}
	Let $d=\lfloor{\frac{m-1}{2}}\rfloor$.
	The Betti numbers 
	$\beta_k=\dim\left(H_k(\mathcal V^{\max}, \mathbb Z)\right)$ 
	of $\mathcal V^{\max}$ are:
\[
	\beta_k= \left\{
\begin{array}{cl}
	0 & \text{if $k \ge m-2$,}\\ \\
	\binomial{m-1}{k+2}&\text{if $m-d-1 \le k \le m-3$,}\\ \\
	\binomial{m-1}{k}+ 
	\binomial{m-1}{k+2}& 
	\text{if $k=m-d-2$, and}\\ \\
	\binomial{m-1}{k} & \text{if $0 \le k \le m-d-3$.}
\end{array}
		\right.
\]
\end{theorem}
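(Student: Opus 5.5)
The plan is to combine duality with the Morse-theoretic description of the complement of $\mathcal V^{\max}$. We treat $\mathcal V^{\max}$ as a compact subset of the closed orientable $(m-1)$-manifold $\mathcal Q\cong\mathbb T^{m-1}$. Alexander--Lefschetz duality gives $H_k(\mathcal Q,\mathcal Q\setminus\mathcal V^{\max})\cong \check H^{m-1-k}(\mathcal V^{\max})$. For $m$ odd this is ordinary cohomology of a smooth closed manifold. For $m$ even, $\mathcal V^{\max}$ is a finite CW complex (Theorem~\ref{t.cw}), so Čech and singular cohomology agree. We then feed this into the long exact sequence of the pair:
\[
H_k(\mathcal Q\setminus\mathcal V^{\max})\xrightarrow{\iota_*}H_k(\mathcal Q)\to H^{m-1-k}(\mathcal V^{\max})\to H_{k-1}(\mathcal Q\setminus\mathcal V^{\max})\xrightarrow{\iota_*}H_{k-1}(\mathcal Q).
\]
Everything then reduces to computing $H_*(\mathcal Q\setminus\mathcal V^{\max})$ and the map $\iota_*$.

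\emph{Step 1: $\mathcal Q\setminus\mathcal V^{\max}\simeq\mathcal K$.} There are no critical points in $\mathcal V^{\mid}$, and Theorem~\ref{t.alpha-Lipschitz} together with Corollary~\ref{c.alpha-retracts} show that $\mathcal V^{\high}\setminus\mathcal V^{\max}$ is a collar $\partial\mathcal V^{\high}\times(0,1]$ swept out by the flow lines. Hence $\mathcal Q\setminus\mathcal V^{\max}$ deformation retracts onto the sublevel set $\{V_{\mathcal Q}\le m^2/2-\epsilon\}$. That sublevel set contains only the hyperbolic critical points $p_I$, $|I|<m/2$, and standard Morse theory for a gradient flow retracts it onto the union of their unstable manifolds. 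By Theorem~\ref{t.QI}(4) this union is $\bigcup_{|I|\le d}\mathring{\mathcal Q}^I=\mathcal K$. Theorem~\ref{t.QsubI} identifies $\mathcal K$ with the $d$-skeleton of the standard cube CW structure on $\mathbb T^m$. All cellular differentials of the torus vanish, so $H_k(\mathcal K)\cong\Lambda^k\mathbb Z^m$, free of rank $\binom mk$, for $k\le d$, and $H_k(\mathcal K)=0$ for $k>d$.

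\emph{Step 2: the map $\iota_*$.} For $k\le d$ the composite $H_k(\mathcal K)\to H_k(\mathbb T^m)\xrightarrow{q_*}H_k(\mathcal Q)$ is $\Lambda^k\mathbb Z^m\to\Lambda^k(\mathbb Z^m/\mathbb Z\mathbf 1)$. The reason is that $q$ restricted to $\mathbb T^I$ is the inclusion of the corresponding coordinate subtorus class. This map is surjective with free kernel $\mathbf 1\wedge\Lambda^{k-1}$ of rank $\binom m k-\binom{m-1}k=\binom{m-1}{k-1}$. We now read off the exact sequence:
\begin{itemize}
\item For $k\le d$, the cokernel of $\iota_*$ in degree $k$ is zero, and the kernel of $\iota_*$ in degree $k-1$ contributes rank $\binom{m-1}{k-2}$.
\item For $k=d+1$, we get all of $H_{d+1}(\mathcal Q)$, of rank $\binom{m-1}{d+1}$, plus the kernel in degree $d$, of rank $\binom{m-1}{d-1}$.
\item For $k>d+1$, we get $H_k(\mathcal Q)$, of rank $\binom{m-1}k$.
\end{itemize}
Setting $j=m-1-k$ and using $\binom{m-1}{k}=\binom{m-1}{j}$, these become exactly the four cases of the statement for $\beta^j$: $\binom{m-1}{j+2}$ when $j\ge m-d-1$, the sum of the two binomials when $j=m-d-2$, $\binom{m-1}j$ when $j\le m-d-3$, and $0$ when $j\ge m-2$ (since $\dim \mathcal V^{\max}=m-3$). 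All groups involved are kernels or cokernels of maps between free groups with free cokernels, so they are free. The universal coefficient theorem then turns the cohomology ranks into homology Betti numbers.

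\emph{Expected obstacle.} The delicate step is Step 1 near $\mathcal V^{\sing}$ for $m$ even. There the collar structure of $\mathcal V^{\high}\setminus\mathcal V^{\max}$ is not given by a normal bundle, and I would rely on continuity of $\alpha$ (Corollary~\ref{c.alpha-retracts}) and the $\mathcal C^0$ flow-box description rather than on normal hyperbolicity. A second point to check is that the level $L$ used for the Morse retraction exceeds every saddle value; for $m$ even this uses that the middle-level antipodal points lie in $\mathcal V^{\max}$ rather than being saddles. I would also double-check that the duality isomorphism is compatible with the maps in the exact sequence, which is standard once $\mathcal V^{\max}$ is known to be a compact ENR.
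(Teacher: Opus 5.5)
Your argument is correct, but it runs Alexander duality in the opposite direction from the paper.

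\textbf{The paper's route.} The paper dualizes $\mathcal K$, via the low and mid potential region that retracts onto it, and gets $H_k(\mathcal Q,\mathcal V^{\high})\cong H^{m-1-k}(\mathcal K)$. It then runs the long exact sequence of the pair $(\mathcal Q,\mathcal V^{\high})$. To split that sequence it must prove that $\pi_*\colon H_k(\mathcal Q)\to H_k(\mathcal Q,\mathcal V^{\high})$ is injective for $k\ge m-d-1$. This needs a commutative square relating Poincar\'e duality, Alexander duality and the restriction $H^{m-1-k}(\mathcal Q)\to H^{m-1-k}(\mathcal K)$ (Proposition~\ref{pi-injective}). At the end it transfers the answer from $\mathcal V^{\high}$ to $\mathcal V^{\max}$ through the $\alpha$-retraction of Section~\ref{s.alpha}.

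\textbf{Your route.} You dualize $\mathcal V^{\max}$ itself, so the only map in your sequence is $\iota_*\colon H_*(\mathcal K)\to H_*(\mathcal Q)$. You compute it concretely as $\Lambda^k$ of $\mathbb Z^m\to\mathbb Z^m/\mathbb Z\mathbf 1$. Its surjectivity, with kernel of rank $\binom{m-1}{k-1}$, is exactly the dual of the paper's injectivity of $\pi_*$. Your ranks agree with the statement in all four ranges.

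\textbf{Your flagged worries do not arise.} No naturality of duality is needed: you use only the group isomorphism $H_k(\mathcal Q,\mathcal Q\setminus\mathcal V^{\max})\cong\check H^{m-1-k}(\mathcal V^{\max})$, and you compute the map topologically. The behavior at $\mathcal V^{\sing}$ is also irrelevant. Retracting $\mathcal Q\setminus\mathcal V^{\max}$ onto $\{V_{\mathcal Q}\le m^2/2-\epsilon\}$ only uses that $[m^2/2-\epsilon,m^2/2)$ contains no critical value, flowing along $-\nabla V_{\mathcal Q}/\|\nabla V_{\mathcal Q}\|^2$. It does not use continuity of $\alpha$ or normal hyperbolicity. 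So your route bypasses Section~\ref{s.alpha} entirely, which is its main gain.

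\textbf{What your route costs.} You obtain cohomology of a possibly singular compactum. You therefore need $\mathcal V^{\max}$ to be locally contractible so that \v{C}ech and singular theories agree. For $m$ even, it is safer to cite triangulability of the compact real-analytic set $\mathcal V^{\max}$ than the sketched Theorem~\ref{t.cw}. You then apply the universal coefficient theorem, which is harmless for ranks.

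\textbf{Shared input.} The one nontrivial dynamical input is that the sublevel set retracts onto the union $\mathcal K$ of unstable manifolds. This is the same fact the paper uses in Proposition~\ref{deformation-retract}, so on that point the two arguments stand on equal footing.
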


The table of Betti numbers looks like Pascal's triangle, except for the coefficients in {\bf boldface} below. Those are the sum of two non-adjacent terms in Pascal's triangle,
and the term in between is erased.

\centerline{
\begin{tabular}{rr|rrrrrrrrr}
$m$ & $d$ & 
	$\beta_0$ &
	$\beta_1$ &
	$\beta_2$ &
	$\beta_3$ & $\beta_4 $ & $\beta_5$ & $\beta_6$ & $\beta_7$ & $\beta_8$
	\\ \hline 
	4 & 1 & 1 & {\bf 4} \\
	5 & 2 & 1 & {\bf 8} & 1 \\
	6 & 2 & 1 &5 &{\bf 15} &1 \\
	7 & 3 & 1 &6 &{\bf 30} &6 &1 \\
	8 & 3 & 1 &7 &21 &{\bf 56} &7 &1\\
	9 & 4 & 1 &8 &28 &{\bf 112} &28 &8 &1\\
	10 & 4 & 1 & 9 & 36 & 84 & {\bf 210} & 36 & 9 & 1\\ 
	11 & 5 & 1 & 10 & 45 & 120 & {\bf 420} & 120 & 45 & 10 & 1\\ 
\end{tabular}}

The lower part $\mathcal K$ of $\mathcal Q$ was defined as the union of all the cells $Q^I$, $|I|<m/2$.
Recall that for $\abs{I} < m/2$, the quotient flow admits an index $\abs{I}$ fixed point
$p_I=q(\Theta)$ where $\Theta$ is the exemplar
$\theta_i=0 \,\forall i\not \in I$,
$\theta_i=\pi$ otherwise. 
Therefore,
$\mathcal K$ is precisely  the union of all the fixed 
points outside $\mathcal V^{\max}$ and their unstable manifolds.
%
For instance if $m=5$, $\mathcal K$ is the union of $\binomial{5}{2}$ sub-tori of dimension 2 of the form
$\mathcal Q^{\{1,2\}}=q(\mathbb T_{\{1,2\}}) = \{[\theta_1, \theta_2, 0,0,0]\}$, and permutations. 

By construction, $\dim(\mathcal K) = d=\lfloor \frac{m-1}{2} \rfloor$.
For any $\abs{I} \le d$, we consider the trivial embedding of 
$\mathcal Q^I \rightarrow \mathbb T^I$, taking a subset of quotient space $\mathcal Q$
into the $m$-torus $\mathbb T^m$. This embedding can be extended to all of $\mathcal K$. 
Its image is precisely the {\bf $d$-skeleton} of the
canonical cell complex of the $m$-torus
$\mathbb T^m$.

The cohomology subcomplexes of the torus $\mathbb T^m$ are 
worked out in \ocite{Hatcher}, example 3.23.
Let $c^1, \dots, c^m$ the generators of $H^1(\mathbb T^m)$.
The cohomology ring of $\mathbb T^m$ is
the exterior algebra $\bigwedge_{\mathbb Z}[c^1, \dots,
c^m]$. The cohomology ring $H^*(\mathcal K)$ is obtained by
setting to zero all the monomials of degree $d+1$. For instance,
if $m=4$ and $m=5$,
$H^*(\mathcal K,\mathbb Z)$ is the $\mathbb Z$-span of the unit $1$, 
the $c^i$, and the expressions $c^i c^j = -c^j c^i$. 

The sets $\mathcal Q$, $\mathcal K$ and $\mathcal V^{\max}$ all have the structure of CW-complexes.
The main idea to prove Theorem~\ref{th-homology} is to deduce the homology of $\mathcal V^{\max}$ 
from the cohomology of $\mathcal K$. Here is the first step.

\begin{proposition}\label{deformation-retract} There is a neighborhood $\mathcal K \subset \mathcal U \subset \mathcal Q$
with the following properties:
	\begin{enumerate}
		\item The Kuramoto vector field $K=-\nabla V_{\mathcal Q}$ is transversal to $\partial \mathcal U$,
			and points always inwards.
		\item The set $\mathcal K$ is a deformation retract of $\bar {\mathcal U}$.
	\end{enumerate}
\end{proposition}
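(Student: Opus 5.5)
The plan is to take $\mathcal U$ to be a sublevel set of the quotient potential,
\[
\mathcal U \;=\; \{[\Theta]\in\mathcal Q : V_{\mathcal Q}([\Theta]) < c\},
\]
with $c$ strictly between the largest critical value on $\mathcal K$ and $m^2/2$. By Theorem~\ref{t.lowerhalf} and the critical-value computation, that largest value is $L=2d(m-d)$ with $d=\lfloor (m-1)/2\rfloor$, and $L<m^2/2$. The interval $(L,m^2/2)$ contains no critical value, since every critical point is either some $p_I$ with $V=2\abs{I}(m-\abs{I})\le L$ or lies in $\mathcal V^{\max}$. Hence $\partial\mathcal U=V_{\mathcal Q}^{-1}(c)$ is a smooth compact hypersurface. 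Property (1) then comes for free: on $\partial\mathcal U$ we have $\nabla V_{\mathcal Q}\neq 0$, and $K=-\nabla V_{\mathcal Q}$ is normal to the level set and points toward lower potential, i.e.\ inward. Also $\mathcal K\subset\mathcal U$, because each $\mathcal Q^I$ with $\abs{I}\le d$ is compact and invariant, and the maximum of $V_{\mathcal Q}$ on it is at $p_I$, with value $2\abs{I}(m-\abs{I})\le L<c$ (proof of Theorem~\ref{t.lowerhalf}).

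For (2) I would use the flow to show that $\bar{\mathcal U}$ is exactly the union of $\mathcal K$ with the stable sets of points of $\mathcal K$. By Theorem~\ref{t.limits}, every $r\in\bar{\mathcal U}$ has a single $\omega$-limit $\omega(r)$, a critical point with $V\le c$, hence some $p_J$. Dually, by Theorem~\ref{t.QI}(4), $\mathcal K=\bigcup_{\abs{I}\le d}W^u(p_I)$ is the union of all unstable manifolds of the saddles. So $\mathcal K$ is the \emph{maximal invariant set} (the attractor) of the forward-invariant compact set $\bar{\mathcal U}$. The reason is that any full orbit in $\bar{\mathcal U}$ has its $\alpha$-limit at a critical point with $V\le c$, which is some $p_I$, so the orbit lies in $W^u(p_I)\subset\mathcal K$.

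The retraction itself is the step I expect to be the main obstacle, since the naive map $r\mapsto\lim_{t\to\infty}\psi_t(r)$ is discontinuous across stable manifolds of saddles. Instead I would use the standard attractor argument. $\bar{\mathcal U}$ is a compact attracting neighbourhood (an isolating block whose exit set is empty) of the attractor $\mathcal K$, and $\mathcal K$ is a compact CW complex, hence an ANR, by the CW structure of Section~\ref{s.invariantsubtori}. First choose a neighbourhood $N$ of $\mathcal K$ that retracts onto $\mathcal K$ by a retraction $\rho$ homotopic to the inclusion within a slightly larger neighbourhood; this exists because $\mathcal K$ is an ANR. Then choose $T$ so that $\psi_T(\bar{\mathcal U})\subset N$, using uniform attraction of the compact set $\bar{\mathcal U}$ to $\mathcal K$, which follows from the maximal-invariant-set property by a standard compactness argument. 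The concatenation of $s\mapsto\psi_{sT}$ with $\rho$ then gives a homotopy from $\mathrm{id}_{\bar{\mathcal U}}$ to a map into $\mathcal K$. Since $\psi_T$ does not fix $\mathcal K$ pointwise, this map is not literally a retraction, but $\psi_T|_{\mathcal K}$ is isotopic to the identity within $\mathcal K$. This yields that $\mathcal K\hookrightarrow\bar{\mathcal U}$ is a homotopy equivalence, which suffices for the homological use that follows. To get a strict deformation retraction one then invokes the homotopy extension property of the CW pair $(\bar{\mathcal U},\mathcal K)$, or Whitehead's theorem for the CW pair. Alternatively, one can carry out the handle-by-handle Morse-theoretic argument: pass the sublevel sets through the critical values $2u(m-u)$ and observe that each $\mathcal Q^I$ is exactly the core cell attached at $p_I$.
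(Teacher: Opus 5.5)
Your proof is correct, but it takes a different route from the paper.

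\textbf{The paper's route.} The paper takes $\mathcal U$ to be a thin tubular neighbourhood of $\mathcal K$. It checks $\langle K,N_I\rangle\ge\frac12\sum_{i\notin I}\theta_i^2-O(\epsilon^3)>0$ for the inward normal $N_I$ to the $\epsilon$-neighbourhood of each maximal template $\mathcal Q^I$. It then smooths the corners of the union using the functions $d_h$. Finally, it builds the retraction by hand along the orthogonal trajectories of the hypersurfaces $\psi_t(\partial\mathcal U)$.

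\textbf{Your route.} You take a regular sublevel set $\{V_{\mathcal Q}<c\}$ with $L<c<m^2/2$, so (1) is immediate. You get (2) from soft topology:
\begin{itemize}
\item $\mathcal K$ is the maximal invariant set of the trapping region $\bar{\mathcal U}$, by the $\alpha$-limit argument and Theorem~\ref{t.QI}(4).
\item Hence $\psi_T(\bar{\mathcal U})$ lands in an ANR retraction neighbourhood of $\mathcal K$ for large $T$, which makes the inclusion a homotopy equivalence.
\item The homotopy extension property then upgrades this to a strong deformation retraction.
\end{itemize}

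\textbf{Comparison.} Your route needs no $\epsilon$-estimates and no smoothing. It also sidesteps the delicate point of the paper's construction: continuity at $\mathcal K$ of the map sending a point to the endpoint of its normal leaf. That is the same kind of issue you rightly flag for $r\mapsto\omega(r)$. Moreover, with $c=m^2/2-\epsilon$ your $\mathcal U$ is exactly $\mathcal Q\setminus\mathcal V^{\high}$. So the preliminary retraction ``along the Kuramoto flow'' in the proof of Proposition~\ref{pi-injective} becomes unnecessary. What the paper's route buys is a trapping neighbourhood inside any prescribed neighbourhood of $\mathcal K$. Sublevel sets cannot give this, since they always contain $\{V_{\mathcal Q}\le L\}$. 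The statement as written does not require it.

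\textbf{One detail.} You call $(\bar{\mathcal U},\mathcal K)$ a CW pair without comment. Instead, justify the homotopy extension property in one of two ways:
\begin{itemize}
\item by Borsuk's theorem, since $\mathcal K$ is a closed ANR in the compact metric ANR $\bar{\mathcal U}$;
\item or by a triangulation of $\bar{\mathcal U}$ compatible with the finitely many linear subtori forming $\mathcal K$.
\end{itemize}
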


\begin{proof}
	{\bf Step 1.} Fix $\epsilon > 0$ small enough, and let $\mathcal U_{\infty}$ be the set of all $[\Theta] \in \mathcal Q$ with
	$d([\Theta], \mathcal K) < \epsilon$. For all $[\Theta] \in \partial \mathcal U_{\infty}$, we have
\[
	\epsilon = d([\Theta], \mathcal K) = \min_{|J|<m/2} d([\Theta], \mathcal Q^J). 
\]
	Assume that the minimum is attained at some $I \subset [m]$, $|I|=d$. 
	Let $[\eta]$ be the closest point,
	$\eta_i=\theta_i$ for $i \in I$ and $\eta_i = \bar \eta+ 2 k_i \pi$ for $i \not \in I$, $k_i \in \mathbb Z$.
	By $2\pi \mathbb Z^m$ action we may asssume that $k_i=0$. By diagonal action we assume   
	$\sum_{i \not \in I} \theta_i = 0$ and hence $\bar \eta=0$. In that case, 
	\[
		d ([\Theta], \mathcal Q^I)^2 = \sum_{i \not \in I} \theta_i^2.
	\]
	Define a vector field $N_I(\Theta) = -\frac{1}{2}\nabla d ([\Theta], \mathcal Q^I)^2 = -\sum_{i \not \in I} \theta_i \mathrm f_i=
	-\sum_{i \not \in I} \theta_i \mathrm e_i$ (See {\em counterdiagonal coordinates} 
	in Section \ref{sec-quotient}). The vector field $N_I$ extends to a neighborhood of $q^{-1}(\mathcal K)
	\subset \mathbb T^m$.
	We compute now the inner product
	\begin{eqnarray*}
		\left \langle K(\Theta), N_I(\Theta) \right \rangle &=&
		\sum_{i \not \in I} -\theta_i \sum_k \sin(\theta_k - \theta_i) \\
		&=& -\left(\sum_k \sin(\theta_k)\right) 
		\sum_{i \not \in I} \theta_i \cos(\theta_i) 
		+ \left(\sum_k \cos \theta_k \right) \sum_{i \not \in I} \theta_i \sin \theta_i.
	\end{eqnarray*}
	Since $|\theta_i|<\epsilon$ for all $i \not \in I$, $\sum_k(\cos \theta_k) > (m-d)(1-O(\epsilon^2)) 
	+ \sum_{k \in I} \cos(\theta_k) > 1/2$ once $\epsilon$ is small enough. On the other hand,
	$|\sum_{i \not \in I} \theta_i \cos(\theta_i)| < O(\epsilon^3)$.
	Therefore,
	\[
		\left \langle K(\Theta), N_I(\Theta) \right \rangle \ge \frac{1}{2}\sum_{i \not \in I} \theta_i^2 - O(\epsilon^3) > 0
	\]
	once $\epsilon>0$ is small enough.

{\bf Step 2}. From the previous step, the Kuramoto flow enters transversely the $\epsilon$-neighborhood of each $\mathcal Q^I$.
	Since $\mathcal U_{\infty}$ is the union of those neighborhoods, it is transversal to $\partial \mathcal U_{\infty}$ at all smooth points. We can now replace
	$\mathcal U_{\infty}$ by a slightly larger neighborhood $\mathcal U$ so that $\partial \mathcal U$ is smooth.

	Indeed, $x \in \mathcal U_{\infty}$ if and only if
	\[
		d(x,K) = \min_I d(x, \mathcal Q_I) < \epsilon.
	\]
Define instead
\[
	d_h(x,K) = \frac{1}{\left(\sum_I d(x, \mathcal Q_I)^{-h}\right)^{1/h}}
\]
for $h\ge 1$. We have always 
\[
	d_h(x,K) \le d(x,K)=d_{\infty}(x,K)=\lim_{h \rightarrow \infty} d_{h}(x,K) 
\]
	It follows that $\mathcal U_h =\{ x: d_h(x,K) < \epsilon \}$ contains $\mathcal U_{\infty}$. The border $\partial \mathcal U_h$
is smooth, and by taking $h$ large enough we can still guarantee that the flow enters $\mathcal U:=\mathcal U_h$.

{\bf Step 3}. Consider now the surfaces $S_t = \Psi_{t}(\partial \mathcal U)$ for all $t \ge 0$, where $\Psi_t$ is the 
Kuramoto flow on the quotient $\mathcal Q$. On the limit, $\lim_{t \rightarrow \infty} d_H(S_t, \mathcal K)=0$ where
$d_H$ stands for the Hausdorff distance, $d_H(X,Y)=\max \left(\sup_{x \in X} (d(x, Y)), \sup_{y \in Y} (d(y,X))\right)$. Let $N(q)$ be the outer normal vector of $S_t \ni q$ at $q$. This
defines a smooth vector field on $\mathcal U \setminus \mathcal K$. The map $f: \mathcal U \rightarrow \partial \mathcal U$ obtained
by integrating orbits of $\dot q = N(q)$ until they reach $S_0=\partial \mathcal U$ is a smooth map, and its derivative has rank $m-2$.
Therefore $f: \mathcal U \setminus \mathcal K \rightarrow \partial \mathcal U$ is a submersion, and the orbits of $N$ define a
one dimensional foliation of $\mathcal U \setminus \mathcal K$. Each leave has a limit point on $\mathcal K$. Each smooth
point of $K$ is the limit of infinitely many leaves, orthogonal to it. Define a deformation retract
from $\bar {\mathcal U}$ onto $\mathcal K$ by setting $r(q,0)=q$, and requiring for each fixed $q$ that $r(q,t)$ remains in the
same leave $\gamma$ as $r(q,0)$, that $r(q,1)$ is the limit point of $\gamma$ in $\mathcal K$ and that distances 
along $\gamma$ satisfy 
	\[
	d_{\gamma} (r(q,t), r(q,1)) = (1-t) d_{\gamma}(r(q,0), r(q,1)).
	\]
\end{proof}

\begin{proposition}\label{deformation-retract-2}
The high potential region
	$\mathcal V^{\high} \subset \mathcal Q$ contains
a neighborhood of $\mathcal V^{\max}$, and deformation retracts to it.
It admits a cellular decomposition as 
a subset of $\mathcal Q$.
\end{proposition}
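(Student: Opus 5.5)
The plan is to treat the three assertions separately, reusing the machinery of Section~\ref{s.alpha} and the Corollaries that follow Theorem~\ref{t.alpha-Lipschitz}.

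\textbf{Neighborhood.} By Theorem~\ref{t.centroid}, $V_{\mathcal Q}=(1-|Z|^2)m^2/2$, and $\mathcal V^{\max}=\{Z=0\}$. Hence $\mathcal V^{\high}=\{|Z|^2\le 2\epsilon/m^2\}$ is the preimage of a closed disk under the continuous map $Z$. It therefore contains the open set $\{|Z|^2<2\epsilon/m^2\}$, which is a neighborhood of $\mathcal V^{\max}$. We choose $\epsilon$ smaller than $c/2$, with $c$ as in Corollary~\ref{c.alpha-continuous}. Then, by Theorem~\ref{t.exemplar} and the critical values $2u(m-u)$, the only fixed points in $\mathcal V^{\high}$ are those of $\mathcal V^{\max}$.

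\textbf{Deformation retraction.} Corollary~\ref{c.alpha-retracts} gives a continuous retraction $\alpha:\mathcal V^{\high}\to\mathcal V^{\max}$. I would define the homotopy along backward orbits. Write $w=m^2/2-V_{\mathcal Q}$ and use the auxiliary field $W$, whose flow $\Phi_s$ satisfies $w(\Phi_s(x))=w(x)e^{-s}$ by Lemma~\ref{lemma-W1}. Reparametrize time by $\tau\in[0,1]$ with $s=-\log(1-\tau)$, and set
\[
r(x,\tau)=\Phi_{-\log(1-\tau)}(x)\ (\tau<1),\qquad r(x,1)=\alpha(x),\qquad r(v,\tau)=v\ \text{for } v\in\mathcal V^{\max}.
\]
Since $W$ is parallel to $\nabla V$, it pushes $V$ up, so the homotopy stays inside $\mathcal V^{\high}$. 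It fixes $\mathcal V^{\max}$ pointwise, starts at the identity, and ends at $\alpha$. The only delicate point is continuity at $\tau=1$ and near $\mathcal V^{\max}$, where $W$ is singular.

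To handle $\tau\to1$, I would use the uniform length bound of Lemma~\ref{lemma-W4}, combined with Lemma~\ref{lemma-W3} ($Y$ stays bounded). These imply that the orbit segment from $\Phi_s(x)$ to $\alpha(x)$ has length $O(\int_s^\infty \|W\|)$. We have $\|W\|=w/\|\nabla V\|=\sqrt{wY}=O(e^{-s/2})$, so $d(r(x,\tau),\alpha(x))\le C\sqrt{1-\tau}$ uniformly in $x$. Together with continuity of $\alpha$, this gives joint continuity.

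Near $\mathcal V^{\max}\setminus\mathcal V^{\sing}$ I would instead invoke the normally hyperbolic disk-bundle structure used in Corollary~\ref{c.alpha-retracts}. There the fibers are unstable disks, and the homotopy is the fiberwise radial contraction. Near the finitely many singular points the same estimate should do: points with small $w$ stay within $O(\sqrt{w})$ of their $\alpha$-limit. I expect the main obstacle to be exactly this uniformity of $Y$ (equivalently, a lower bound $\|\nabla V\|^2\gtrsim w$) near $\mathcal V^{\sing}$ for $m$ even. There $\Cos$ and $\Sin$ become dependent, and $\|\nabla V\|^2=C^2\|\Sin\|^2$ (after rotating so that $S=0$) may degenerate. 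My first attempt would be to bound $\|\Sin\|^2\ge$ const on $\mathcal V^{\high}$ after that rotation, using that $\Sin=0$ forces an antipodal, hence non-maximal, configuration.

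\textbf{Cellular decomposition.} $\mathcal V^{\high}$ is the closed sublevel set $\{-V_{\mathcal Q}\le -m^2/2+\epsilon\}$ of a real analytic function on the analytic manifold $\mathcal Q$. By {\L}ojasiewicz's triangulation of semianalytic sets there is a triangulation of $\mathcal Q$ in which $\mathcal V^{\high}$, $\partial\mathcal V^{\high}$ and $\mathcal V^{\max}$ are subcomplexes. Alternatively, one can take the product of the cells of Theorem~\ref{t.cw} with the fiber interval via $\alpha$, away from the singular points. This yields the required cellular structure as a subset of $\mathcal Q$.
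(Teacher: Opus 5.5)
Your deformation is the paper's. It also takes $r_t$ to be the time-$t/(1-t)$ flow of $W$, sets $r_1=\alpha$, and simply cites continuity of $\alpha$. You are right that this gives joint continuity at $\tau=1$ only with a uniform rate.

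\textbf{Where your rate argument works and where it fails.} For odd $m$ it works: antipodal configurations then have $|Z|\ge 1/m$, so $\|\Sin\|^2$ is bounded below on $\mathcal V^{\high}$. For even $m$ it fails exactly at the point you flagged. After rotating to $S=0$, $C\ge0$, one has $\nabla V=C\Sin$ and $w=C^2/2$, so $Y=1/(2\|\Sin\|^2)$. Your lower bound breaks because an antipodal configuration with equally many angles $0$ and $\pi$ \emph{is} maximal: these are precisely the points of $\mathcal V^{\sing}$. Concretely, take $m=4$ and $[\Theta]=[(x,-x,\pi,\pi)]$. Then $Z=-(1-\cos x)/2$ and $\|\Sin\|^2=2\sin^2x$, so $Y=1/(4\sin^2x)\to\infty$ as $x\to0$. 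Equivalently, $\|\nabla V\|\approx\sqrt2\,x^3$ while $w\approx x^4/2$, so $\|\nabla V\|$ is of order $w^{3/4}$, not $w^{1/2}$. Your bound $d(r(x,\tau),\alpha(x))\le C\sqrt{1-\tau}$ therefore has no finite constant.

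Lemma~\ref{lemma-W3} cannot rescue this. Since $Y\ge 1/(2m)$ always, $y$ sits above the repelling equilibrium $1/(2(m+\sqrt{2\epsilon}))$ of the comparison Riccati equation, and comparison solutions there blow up. Moreover, the curve above is invariant: the equality principle preserves $\theta_3=\theta_4$, and the flow-equivariant involution $\Theta\mapsto(-\theta_2,-\theta_1,-\theta_3,-\theta_4)$ fixes the curve. So $Y$ genuinely blows up along a backward orbit.

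\textbf{The missing ingredient} is a {\L}ojasiewicz gradient inequality in place of a bound on $Y$. $V_{\mathcal Q}$ is real analytic, and $\mathcal V^{\high}$ contains no critical points besides the compact set $\mathcal V^{\max}$. Hence there are $c>0$ and $\theta\in[\tfrac12,1)$ with $\|\nabla V_{\mathcal Q}\|\ge c\,w^{\theta}$ on $\mathcal V^{\high}$. One can check directly that $\theta=\tfrac34$ works, since $\|\Sin\|^2\gtrsim|Z|$ near $\mathcal V^{\sing}$. Along an orbit the arclength element is $|dw|/\|\nabla V\|$. So the backward orbit from $x$ to $\alpha(x)$ has length at most $w(x)^{1-\theta}/(c(1-\theta))$. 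This gives, uniformly on $\mathcal V^{\high}$,
\begin{itemize}
\item $d(r(x,\tau),\alpha(x))\le C\,(w(x)(1-\tau))^{1-\theta}$, and
\item $d(r(x,\tau),x)\le C\,w(x)^{1-\theta}$.
\end{itemize}
Joint continuity follows, including at $\mathcal V^{\max}$ and at $\tau=1$. As a bonus you recover existence and continuity of $\alpha$ without Section~\ref{s.alpha}. You also no longer need a separate fiberwise contraction near smooth points, which would not glue with the flow homotopy anyway.

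\textbf{Remaining parts.} Your neighborhood argument is fine. For the cell structure, {\L}ojasiewicz's triangulation of semianalytic sets is a legitimate and sturdier replacement for the paper's one-line ``cut the cells of $\mathcal Q$ by the level sets''. Drop your alternative product construction, though. The $\alpha$-fibers over smooth points of $\mathcal V^{\max}$ are $2$-disks, not intervals, and that construction says nothing at $\mathcal V^{\sing}$.
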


\begin{proof}
	Corollary~\ref{c.alpha-retracts} shows that $\alpha: \mathcal V^{\high} \rightarrow \mathcal V^{\max}$
is a retraction. In order to produce a deformation retract, we define $r_t(q)$ as the time $t/(1-t)$ flow
	of the vector field $W$ from the proof of Theorem~\ref{t.alpha-Lipschitz}. The limit when $t \rightarrow 1$
	is $r_1(q)=\alpha(q)$ and continuity of $\alpha$ was already established. The cellular decomposition
	is obtained by cutting the cells of $\mathcal Q$ by the level sets $V_{\mathcal Q}=m^2/2$, actually $\mathcal V^{\max}$, 
	and $V_{\mathcal Q}=m^2/2-\epsilon$.
\end{proof}

\begin{proposition}\label{pi-injective}
Let $d= \dim(\mathcal K)=\lfloor \frac{m-1}{2}\rfloor$. 
Then,
\begin{equation}\label{eq-cohomology}
	\dim H_k(\mathcal Q,\mathcal V^{\high}; \mathbb Z) = 
\left\{
	\begin{array}{cl}
		\binomial{m}{k+1} &
		\text{for $k \ge m-d-1= \lceil \frac{m-1}{2} \rceil$} \\
		0 & \text{otherwise}
	\end{array}
	\right.
\end{equation}
	Moreover, if $k \ge m-d-1$ and $\pi: C_k (\mathcal Q) 
	\rightarrow C_k(\mathcal Q)/ C_k(\mathcal V^{\high})$ is the quotient
	map of cell complexes,
	then the induced homology map
	$\pi_*: H_k(\mathcal Q) \rightarrow H_k(\mathcal Q,\mathcal V^{\high})$
	is injective.
\end{proposition}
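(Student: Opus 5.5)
Since $\mathcal Q$ is an $(m-1)$-torus and the Kuramoto flow is a gradient flow, the plan is to exploit the dichotomy between $\mathcal V^{\high}$ and the lower half $\mathcal K$. First, reversing time in Proposition~\ref{deformation-retract} and the remark that in the mid potential region there are no fixed points, every point of $\mathcal Q\setminus \mathcal U$ flows backward (under $\psi_{-t}$) into $\mathcal V^{\high}$, because its $\alpha$-limit is either in $\mathcal V^{\max}$ or, if it were some $p_I$, the point would lie in $W^u(p_I)\subset \mathcal K$ (Theorem~\ref{t.QI}(4)). Using that the flow points inward on $\partial\mathcal U$ and that $\mathcal V^{\high}$ is a sublevel set of $-V_{\mathcal Q}$, I would show $\mathcal Q\setminus \mathcal U$ deformation retracts onto $\mathcal V^{\high}$ (slide along backward orbits until hitting the level $V=m^2/2-\epsilon$, continuity as in Corollary~\ref{c.alpha-continuous}). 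Hence $H_k(\mathcal Q,\mathcal V^{\high})\cong H_k(\mathcal Q,\mathcal Q\setminus\mathcal U)$.

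Next apply excision and Lefschetz duality: $H_k(\mathcal Q,\mathcal Q\setminus \mathcal U)\cong H_k(\bar{\mathcal U},\partial\mathcal U)\cong H^{m-1-k}(\bar{\mathcal U})$, since $\bar{\mathcal U}$ is a compact $(m-1)$-manifold with smooth boundary (Step~2 of Proposition~\ref{deformation-retract}); orientability is inherited from the torus. By Proposition~\ref{deformation-retract}, $H^{j}(\bar{\mathcal U})\cong H^{j}(\mathcal K)$, and by Theorem~\ref{t.QsubI} $\mathcal K$ is cellularly the $d$-skeleton of the standard cell structure of the torus. I must be careful here: the lower half uses only $|I|<m/2$ cells, i.e. subsets of $[m]$ after quotienting, so via the embedding into $\mathbb T^m$ the relevant count uses $\binom{m}{j}$ cells in dimension $j$ (the $d$-skeleton of $\mathbb T^m$ with all coboundaries zero). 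Thus $H^j(\mathcal K)$ is free of rank $\binom{m}{j}$ for $j\le d$, is zero for $j>d$, with the top group $H^d$ free since the cellular coboundary vanishes. Setting $j=m-1-k$ gives rank $\binom{m}{m-1-k}=\binom{m}{k+1}$ when $m-1-k\le d$, i.e. $k\ge m-d-1$, and zero otherwise, which is \eqref{eq-cohomology}.

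For injectivity of $\pi_*$, I would identify $\pi_*$ under duality with the restriction map $H^{m-1-k}(\mathcal Q)\to H^{m-1-k}(\bar{\mathcal U})\cong H^{m-1-k}(\mathcal K)$, since Lefschetz duality is natural with respect to the inclusion $\bar{\mathcal U}\subset\mathcal Q$ (cap product with the fundamental class). Then I need restriction from $H^j(\mathcal Q)=\bigwedge^j\mathbb Z^{m-1}$ to $H^j(\mathcal K)$ injective for $j\le d$. The composite $\mathcal K\hookrightarrow \mathcal Q$ realizes, via the $d$-skeleton embedding into $\mathbb T^m$ followed by $q$, the map $q^*:H^*(\mathcal Q)\to H^*(\mathbb T^m)$ followed by restriction to the $d$-skeleton; $q^*$ is injective in cohomology (it is the exterior power of the injection $H^1(\mathcal Q)\to H^1(\mathbb T^m)$, dual to the surjection $\mathbb Z^m\to\mathbb Z^m/\mathbb Z\mathbf 1$, with a split image since $\mathbf 1$ is primitive), and restriction of $\mathbb T^m$ to its $d$-skeleton is injective in degrees $\le d$ (isomorphism for $j<d$, and injective for $j=d$ by cellular cochains). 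The main obstacle I expect is the first step, namely making rigorous that the complement of $\mathcal U$ retracts onto $\mathcal V^{\high}$ (and that the singular points $\mathcal V^{\sing}$ and the trajectories limiting onto them cause no trouble), together with checking the naturality of the duality isomorphism so that $\pi_*$ truly corresponds to restriction.
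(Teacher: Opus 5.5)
Your proposal is correct and follows essentially the paper's route. You identify $H_k(\mathcal Q,\mathcal V^{\high})$ by duality with $H^{m-1-k}(\mathcal K)$; your excision plus Lefschetz duality on $\bar{\mathcal U}$ is exactly the construction the paper sketches in its footnote to justify Alexander duality. You then read off the ranks from the $d$-skeleton of $\mathbb T^m$, and get injectivity of $\pi_*$ from the commutative square that identifies it, via Poincar\'e and Lefschetz duality, with the restriction $H^{m-1-k}(\mathcal Q)\to H^{m-1-k}(\mathcal K)$.

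Your proof that this restriction is injective is more careful than the paper's. You factor it through the split injection $q^*$ followed by restriction from $\mathbb T^m$ to its $d$-skeleton, whereas the paper only observes that each basis class $c^J$ restricts to a nonzero class.
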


\begin{proof}
The first part of the proposition follows from 
	Alexander duality. The set $\mathcal Q \setminus \mathcal V^{\high}$ (low and mid potential region) 
	is compact and locally contractable.
It can be deformation retracted first to $\mathcal U$ along the Kuramoto flow,
and then to $\mathcal K$ as in Proposition \ref{deformation-retract}.
Hence $\mathcal Q \setminus \mathcal V^{\high}$ and $\mathcal K$ have isomorphic cohomology.
	In this specific case, Theorem 3.44 in \ocite{Hatcher} directly implies that
\[
	H_k(\mathcal Q , \mathcal Q \setminus \mathcal V^{\high}; \mathbb Z) \approx
	H^{m-k-1}(\mathcal K, \mathbb Z) 
.
\]
The cohomology ring of $\mathcal K$ has $m$ independent generators $c^i$, $1 \le i \le m$ and vanishes in dimension $>d$. 
This implies the first statement.

	For the second statement, we will refer to the proof of Theorem 3.44 in \ocite{Hatcher}. 
We first choose coordinates $\hat \Theta \mapsto
	[\hat \theta_1, \dots, \hat \theta_{m-1}, 0]$ in $\mathcal Q$.
This induces an ismorphism between $H_k(\mathbb T^{m-1})$
and $\mathcal Q$.
	A basis for the homology of $\mathbb T^{m-1}$ is given by
the homology class $c_I$ of $\mathbb T_I \subset \mathbb T^{m-1}$,
	where $\abs{I}=k$ and $1 \le i_1, \dots, i_k \le m-1$. This defines a 
basis for the homology of $\mathcal Q$.

A basis for the cohomology of $\mathcal Q$ is given by
$c^J$, with $|J|=m-k-1$, $1 \le j_{k+1}, \dots, j_m \le m-1$,
and $\langle c^J, c^I \rangle = \pm 1$ for $I \cap J = \emptyset$.
Signs can be adjusted so that the Poincar\'e duality isomorphism is
	\[
		c^J \mapsto c_{[m-1] \setminus J} .
	\]
Since $m-k-1 \le d$, each of those $c^J$ defines a non-zero cohomology class
in $\mathcal K$. This provides an injective map $H_k(\mathcal Q) \rightarrow
H^{m-1-k}(\mathcal K) \approx H_k(\mathcal Q,\mathcal V^{\high})$. The following diagram
	commutes\footnote{
We may follow the steps of the proof of Theorem 3.44, yet we may simplify some of the hypotheses. Let $\mathcal U \supset \mathcal K$ be an open neighborhood that deformation retracts onto $\mathcal K$, that
is assumed small enough. By taking a convenient subdivision of the chain complexes, the same construction as above produces a map 
$H_k(\mathcal Q;\mathbb Z) \rightarrow H^{m-k-1}(\mathcal U)$. By Poincar\'e duality,
$H^{m-k-1}(\mathcal U) \approx H_{k}(\mathcal U,\mathcal U \setminus \mathcal K;\mathbb Z) \approx
	H_k(\mathcal Q, \mathcal V^{\high};\mathbb Z)$. Since $\mathcal U$ deformation retracts onto $\mathcal K$, $H^{m-k-1}(\mathcal U)=H^{m-k-1}(k)$. This is the construction of the Alexander duality 
	$H^{m-k-1}(\mathcal K) \rightarrow H_k(\mathcal Q, \mathcal V^{\high}; \mathbb Z)$.}
\[ 
\minCDarrowwidth1em
\begin{CD}
H_k(\mathcal Q;\mathbb Z) @>\pi_*>> H_k(\mathcal Q,\mathcal V^{\high};\mathbb Z)
\\
	@A\text{Poincar\'e duality}AA @AA\text{Alexander duality}A \\
	H^{m-k-1}(\mathcal Q)
@>i^*>>
H^{m-k-1}(\mathcal K) 
\\
\end{CD}
	\]
and the vertical maps are isomorphisms. Therefore, $\pi_*$ is injective.
\end{proof}

\paragraph{Proof of the theorem} 
All the homology classes in this section are over the ring  
$\mathbb Z$ of integers.
Since $\mathcal Q \approx \mathbb T^{m-1}$,
\[
	\dim (H_k(\mathcal Q)) = 
\binomial{m-1}{m-k-1} = \binomial{m-1}{k}
\]

The long exact homology sequence for the pair $(\mathcal Q,\mathcal V^{\high})$ is
\begin{equation}\label{long}
\minCDarrowwidth1em
\begin{CD}
\dots 
@>\partial>>  
H_k(\mathcal V^{\high}) @>i_*>>
	H_k(\mathcal Q) @>\pi_*>> 
	H_k(\mathcal Q, \mathcal V^{\high}) 
@>\partial>>
H_{k-1}(\mathcal V^{\high})
@>>> 
\dots \\ 
@. @. @. \dots @>\pi_*>>
	H_{0}(\mathcal Q, \mathcal V^{\high})
@>>>
		0 .
\end{CD}
\end{equation}

The homology of $\mathcal V^{\high}$ for small dimensions is easy to compute.

\begin{lemma}\label{k-small} Let $d=\lfloor\frac{m-1}{2}\rfloor$.
	Under the assumptions above, if $k< m-d-2$, then $H_k(\mathcal V^{\high}) \approx
	H_k(\mathcal Q)$.
\end{lemma}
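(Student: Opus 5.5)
The plan is to read the claim directly off the long exact sequence \eqref{long} of the pair $(\mathcal Q,\mathcal V^{\high})$, using the vanishing part of Proposition~\ref{pi-injective}. That proposition gives $H_j(\mathcal Q,\mathcal V^{\high};\mathbb Z)=0$ for every $j<m-d-1$.

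Fix $k<m-d-2$. The portion of \eqref{long} we need is
\[
H_{k+1}(\mathcal Q,\mathcal V^{\high}) \xrightarrow{\partial} H_k(\mathcal V^{\high}) \xrightarrow{i_*} H_k(\mathcal Q) \xrightarrow{\pi_*} H_k(\mathcal Q,\mathcal V^{\high}).
\]
Since $k+1<m-d-1$ and $k<m-d-1$, both outer groups vanish by Proposition~\ref{pi-injective}. Exactness at $H_k(\mathcal V^{\high})$ then makes $i_*$ injective, because its kernel is the image of the zero group. Exactness at $H_k(\mathcal Q)$ makes $i_*$ surjective, because its image is $\ker\pi_*$, which is all of $H_k(\mathcal Q)$. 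Hence $i_*\colon H_k(\mathcal V^{\high})\to H_k(\mathcal Q)$ is an isomorphism.

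The only genuine input is the vanishing statement in Proposition~\ref{pi-injective}. That statement comes from Alexander duality, $H_j(\mathcal Q,\mathcal V^{\high})\cong H^{m-j-1}(\mathcal K)$, combined with the fact that $H^i(\mathcal K)=0$ for $i>d=\dim\mathcal K$. I will double-check the index bookkeeping: $j<m-d-1$ is equivalent to $m-j-1>d$, so the degree $m-j-1$ lies strictly above the dimension of $\mathcal K$. That is exactly the range where the cohomology of the $d$-dimensional CW complex $\mathcal K$ vanishes.

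I will also check that the sequence is used with compatible (cellular) chain complexes. Proposition~\ref{deformation-retract-2} gives $\mathcal V^{\high}$ a cellular structure as a subcomplex of $\mathcal Q$, so \eqref{long} is the standard exact sequence of a CW pair. This is the step where I expect the only friction: making sure the Alexander duality identification of Proposition~\ref{pi-injective} applies to the same pair $(\mathcal Q,\mathcal V^{\high})$ that appears in \eqref{long}, rather than to $(\mathcal Q,\mathcal Q\setminus\mathcal K)$. This is resolved by the deformation retractions already established. Under the reverse flow, $\mathcal Q\setminus\mathcal K$ retracts onto a set containing $\mathcal V^{\high}$, and the mid region carries no fixed points. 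So the two relative homologies agree by excision and homotopy invariance.
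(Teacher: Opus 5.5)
Your argument is correct and is essentially the paper's proof: both $H_{k+1}(\mathcal Q,\mathcal V^{\high})$ and $H_k(\mathcal Q,\mathcal V^{\high})$ vanish by \eqref{eq-cohomology}, so the long exact sequence collapses to $0\to H_k(\mathcal V^{\high})\to H_k(\mathcal Q)\to 0$. Your final paragraph, reconciling $(\mathcal Q,\mathcal Q\setminus\mathcal K)$ with $(\mathcal Q,\mathcal V^{\high})$, is not needed for this lemma, because Proposition~\ref{pi-injective} is already stated for the pair $(\mathcal Q,\mathcal V^{\high})$ (and that comparison would rest on homotopy invariance plus the five lemma, not excision).
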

\begin{proof}
	From \eqref{eq-cohomology},
$H_{k+1}(\mathcal Q, \mathcal V^{\high})=0$ and $H_k(\mathcal Q, \mathcal V^{\high})=0$. Hence, 
\[
\minCDarrowwidth1em
\begin{CD}
	0 @>\partial>> H_k(\mathcal V^{\high}) @>i_*>> H_k(\mathcal Q )
@>\pi_*>> 0
\end{CD}
\]
meaning that $H_k(\mathcal V^{\high}) \approx
	H_k(\mathcal Q)$.
\end{proof}

\begin{proof}[Proof of Theorem~\ref{th-homology}]
	Since $\mathcal V^{\high}$ deformation retracts onto $\mathcal V^{\max}$ along the reverse Kuramoto flow, it
	is enough to compute the Betti numbers of $\mathcal V^{\high}$. 
The long exact sequence of homology is:
\[ 
\minCDarrowwidth1em
\begin{CD}
\dots 
@>\partial>>  
H_k(\mathcal V^{\high}) @>i_*>>
	H_k(\mathcal Q ) @>\pi_*>> 
	H_k(\mathcal Q, \mathcal V^{\high}) 
@>\partial>>
H_{k-1}(\mathcal V^{\high})
@>>> 
\dots  
\end{CD}
\]

If $k < m-d-2$, Lemma~\ref{k-small} 
guarantees that $\dim H_k(\mathcal V^{\high}) = \binomial{m-1}{k}$.

Suppose next that $k \ge m-d-1$. By Proposition~\ref{pi-injective}, $\pi_*$ is 
injective and therefore $i_*$ vanishes in dimension $\ge k$ at
the sequence below:
\[ 
\minCDarrowwidth1em
\begin{CD}
\dots 
@>\partial>>
	H_{k+1}(\mathcal V^{\high}) @>i_*>>  
	H_{k+1}(\mathcal Q ) @>\pi_*>> 
	H_{k+1}(\mathcal Q, \mathcal V^{\high}) 
@>\partial>>
H_{k}(\mathcal V^{\high})
@>i_*>> 0
.
\end{CD}
\]
It follows that  
$H_{k+1}(\mathcal Q, \mathcal V^{\high}) 
=H_{k+1}(\mathcal Q) \otimes H_{k}(\mathcal V^{\high})$.
This gives the value of $\beta_k = \binomial{m-1}{k+2}$ for $d <  k \le m-3$.

It remains to compute the Betti numbers for  $k=m-d-2=\lceil \frac{m-2}{2} \rceil$.
The exact sequence is
\[ 
\minCDarrowwidth1em
\begin{CD}
0
	@>i_*>>
	H_{k+1}(\mathcal Q) @>\pi_*>> 
	H_{k+1}(\mathcal Q, \mathcal V^{\high}) 
@>\partial>>
H_{k}(\mathcal V^{\high})
	@>i_*>>
	H_{k}(\mathcal Q ) @>\pi_*>> 
0
.
\end{CD}
\]
From Proposition~\ref{pi-injective},
\[
	\binomial{m}{k+2} = \dim (H_{k+1}(\mathcal Q, \mathcal V^{\high})) =
	\binomial{m-1}{k+1} + \mathrm{rank}(\partial)
\]
so $\mathrm{rank}(\partial) = \binomial{m-1}{k+2}$. Therefore,
\[
	\dim (H_k(\mathcal V^{\high})) = \binomial{m-1}{k+2} + \binomial{m-1}{k}
.
\]

\end{proof}

\section{Imprints}
\label{sec-imprints}


\begin{definition*} Let $p \in \mathcal Q$ be a saddle. The {\bf imprint} of $p$
	is the closure of the set of all $\alpha(\Theta)$ so that $\Theta \in W^s(p)$ but $\Theta \ne p$.
\end{definition*}

We consider first the imprints of minimal dimension in $\mathcal V^{\max}$ for $m$ odd. In that case
we can write $m=2d+1$ and the saddles have index at most $d$. Given $|I|=d$, we write
$p_I=q(\Theta)$ where $\theta_i = \pi$ for $i=I$ and $\theta_i=0$ otherwise. The stable manifold
$W^s(p_I)$ is $d$-dimensional.

Recall the notation: If $J \subset [m]$, then $\mathcal Q^J$ stands for the subtorus in $\mathcal Q$ with $\theta_i=\theta_j$ for all $i, j \not \in J$. In particular $\mathcal Q^{\emptyset}$ is the sink $[0, \dots, 0]$, and for $|J|<(m-1)/2$ the notation
$\mathcal Q^{J}$ stands for the {\bf template} $\overline{W^u(p_J)}$. For $1<|J|<m-1$, $\mathcal Q^{J}$ is $|J|$-dimensional while
$\mathcal Q^{[m]\setminus J}$ is $m-|J|$-dimensional, and their intersection is one-dimensional. 
If $|J| \ge m-1$ then $\mathcal Q^J = \mathcal Q$.

\begin{theorem}\label{t.imprint1} 
	If $m=2d+1$ and $\abs{I}=d$,
	then the imprint of $p_I$ is the set 
$\mathcal Q^{[m]\setminus I}\ \cap\ \mathcal V^{\max}$.  The imprint is a $d-1$-sphere and is not contractible to a point
in $\mathcal V^{\max}$. It is a totally geodesic surface.
\end{theorem}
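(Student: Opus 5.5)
The plan is to locate $W^s(p_I)$ inside an invariant subtorus, compute $\alpha$ on it, and then handle the topology.

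\textbf{Step 1: $W^s(p_I)\subset \mathcal Q^{[m]\setminus I}$.} By the Equality Principle (Theorem~\ref{t.equality}), the set $\mathcal Q^{[m]\setminus I}$ of cosets with all $\theta_i$, $i\in I$, equal is $\psi$-invariant. It is the skew subtorus of the partition consisting of $I$ and the singletons of $[m]\setminus I$, and has dimension $d+1$. By Theorem~\ref{t.eigenstructure} (with the roles of the blocks permuted), the stable eigenvectors at $p_I$ have zero entries on $I$ and entries summing to zero on $[m]\setminus I$. These $d$ vectors are tangent to $\mathcal Q^{[m]\setminus I}$. Hence the local stable manifold of $p_I$ inside this invariant submanifold already has dimension $d=\dim W^s(p_I)$. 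By uniqueness of local stable manifolds, $W^s(p_I)\subset\mathcal Q^{[m]\setminus I}$.

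\textbf{Step 2: the $\alpha$-limits lie in $\mathcal V^{\max}$.} Let $\Theta\in W^s(p_I)\setminus\{p_I\}$. Then $V_{\mathcal Q}(\alpha(\Theta))>V_{\mathcal Q}(p_I)=2d(d+1)$, and $\alpha(\Theta)$ is a fixed point in $\mathcal Q^{[m]\setminus I}$ by Theorem~\ref{t.limits}. The non-maximal fixed points there are exemplars with constant entries on $I$ (Theorem~\ref{t.exemplar}), and all of them have $V\le 2d(d+1)$. So $\alpha(\Theta)\in\mathcal V^{\max}$, and the imprint is contained in $\Sigma:=\mathcal Q^{[m]\setminus I}\cap\mathcal V^{\max}$.

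\textbf{Step 3: $\Sigma$ is a $(d-1)$-sphere.} Rotate so that $\theta_i=\pi$ for $i\in I$. Then $\Sigma$ is cut out by $\sum_{k\notin I}(1-\cos\theta_k)=1$ and $\sum_{k\notin I}\sin\theta_k=0$. With $s_k=\sin(\theta_k/2)$, the first equation is the round sphere $\sum s_k^2=\tfrac12$ in $\mathbb R^{d+1}$. The second is the odd function $g(s)=2\sum s_k\sqrt{1-s_k^2}=0$, where every $\sqrt{1-s_k^2}\ge 1/\sqrt2$. I would check that $0$ is a regular value of $g$ on this sphere, and that $\{g=0\}$ meets each half great circle from $s$ to $-s$, with $s$ on the equator $\sum s_k=0$, exactly once. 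Monotonicity of $g$ along such arcs should follow from the lower bound on $\sqrt{1-s_k^2}$. This gives a homeomorphism of $\Sigma$ onto $S^{d-1}$, and I expect it to be the main technical obstacle.

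\textbf{Step 4: surjectivity and equality with the imprint.} By Theorem~\ref{t.trivial}, each $v\in\Sigma$ has a $2$-dimensional unstable fiber. Its intersection with the invariant $(d+1)$-manifold $\mathcal Q^{[m]\setminus I}$ is a $1$-dimensional unstable curve. Inside $\mathcal Q^{[m]\setminus I}$ the flow is Morse–Smale; by analogy with Theorem~\ref{t.equiv2} it should be equivalent to a perfect Morse flow. A small $(d-1)$-sphere in $W^s(p_I)$ around $p_I$ maps continuously onto $\Sigma$ under $\alpha$ (Corollary~\ref{c.alpha-continuous}). I would show this map has degree $\pm1$, for instance because $\overline{W^s(p_I)}$ is a closed $d$-disk meeting the template $\mathcal Q^I=\overline{W^u(p_I)}$ transversally in the single point $p_I$. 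Hence the imprint is all of $\Sigma$.

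\textbf{Non-contractibility.} The same disk $\overline{W^s(p_I)}$ gives a relative class in $H_d(\mathcal Q,\mathcal V^{\high})$. Its intersection number with $\mathcal Q^I$ is $\pm1$, so under the Alexander duality of Proposition~\ref{pi-injective} it pairs nontrivially with the class of $\mathcal Q^I$ in $H^d(\mathcal K)$. Its image under $\partial$ is $[\Sigma]\in H_{d-1}(\mathcal V^{\high})\cong H_{d-1}(\mathcal V^{\max})$. This image is nonzero provided the relative class is not in the image of $\pi_*$. I would verify this by noting that the classes of $H_d(\mathcal Q)$ pair with $\mathcal Q^I$ through the cup-product structure computed there, and showing the pairing of $[\overline{W^s(p_I)}]$ with $\mathcal Q^I$ is not realized by any of them.

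\textbf{Totally geodesic.} The permutations of the coordinates in $I$ act on $\mathcal Q$ by isometries preserving $V_{\mathcal Q}$, hence preserving $\mathcal V^{\max}$ with its induced metric. Their common fixed set is $\mathcal Q^{[m]\setminus I}$. So $\Sigma$ is a component of the fixed set of a group of isometries of $\mathcal V^{\max}$, and is therefore totally geodesic.
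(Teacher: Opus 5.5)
Steps 1--3 and the fixed-point-set argument for total geodesy are sound. Step 3 does more than the paper, which never shows directly that $\Sigma$ is homeomorphic to $S^{d-1}$. Two fixes there: use the meridians joining the poles $\pm\mathbf 1/\sqrt{2(d+1)}$, not half circles between antipodal equator points; and it suffices that the derivative of $g$ along a meridian is positive at each zero, which the bound $\sqrt{1-s_k^2}\ge 1/\sqrt2$ gives for $d\ge2$.

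The gap is surjectivity in Step 4: the reason you give for $\deg(\alpha|_S)=\pm1$ carries no information. In cube face coordinates $\mathcal Q\setminus\mathcal Q^I\cong\mathbb T^d\times(\mathbb T^d\setminus\{0\})$. Up to homotopy, $S$ is $\{\ast\}\times\partial B_\delta(0)$, which bounds $\{\ast\}\times(\mathbb T^d\setminus B_\delta(0))$. So $S$ is null-homologous off the template, and meeting $\mathcal Q^I$ once transversally cannot force a nonzero degree.

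The surrounding claims are also false. $\mathcal Q^{[m]\setminus I}$ contains the sphere of equilibria $\Sigma$, so the flow on it is neither Morse--Smale nor equivalent to a perfect Morse flow. (Theorem~\ref{t.equiv2} needs a block of size at least $m/2$; here the largest block is $I$, of size $d<m/2$.) Moreover, the unstable fibres at points of $\Sigma$ are $2$-dimensional, not $1$-dimensional, and lie inside $\mathcal Q^{[m]\setminus I}$, since $\Cos$ and $\Sin$ are constant on $I$ there.

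Your non-contractibility check fails for a related reason. The pairing $\pm1$ with $\mathcal Q^I$ \emph{is} realized by a class of $H_d(\mathcal Q)$: for instance the coordinate torus $\{\theta_i=0,\ i\in I\}$ in cube face coordinates, which meets $\mathcal Q^I$ once. You have to use all $d$-cells of $\mathcal K$:
\begin{itemize}
\item For $J\neq I$ with $|J|\le d$, $\mathcal Q^J$ is closed, invariant and does not contain $p_I$. So the disk $\overline{W^s(p_I)}$ meets $\mathcal K$ only at $p_I$.
\item Intersection numbers with cycles of $\mathcal K$ are well defined on $H_d(\mathcal Q,\mathcal V^{\high})$. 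If the relative class $\rho_I$ of the disk equalled $\pi_*x$, then $x$ would meet each $\mathcal Q^J$, $|J|=d$, with intersection number $\pm\delta_{IJ}$.
\item Its Poincar\'e dual would then pull back under $q$ to $\pm c^I=\pm\prod_{i\in I}c^i\in H^d(\mathbb T^m)$. But $q^*H^d(\mathcal Q)=\Lambda^d\{\sum a_kc^k:\sum a_k=0\}$ is killed by contraction with $\mathbf 1$, and $c^I$ is not.
\end{itemize}
Hence $\partial\rho_I\neq0$. Since $\alpha$ is a homotopy equivalence $\mathcal V^{\high}\to\mathcal V^{\max}$ (Proposition~\ref{deformation-retract-2}), you get $0\neq\alpha_*\partial\rho_I=\deg(\alpha|_{S'})\,[\Sigma]$ with $S'=W^s(p_I)\cap\partial\mathcal V^{\high}$. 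This gives $\deg\neq0$, hence surjectivity and imprint $=\Sigma$, and also $[\Sigma]\neq0$ in $H_{d-1}(\mathcal V^{\max})$. So this argument should replace your degree argument rather than rely on it.

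The paper proceeds differently. It works homotopically in $\mathcal Q\setminus\mathcal Q^I$, where $S$ is not null-homotopic (lift to the universal cover) even though it is null-homologous. It obtains surjectivity chamber by chamber, using that the Equality Principle preserves the ordering of the $y_j$ along backward orbits.
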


\begin{proof} Recall that the maximum of the Kuramoto potential $V_{\mathcal Q}$
is equal to $m^2/2$ and is attained at the set $\mathcal V^{\max}$.
	Pick $\epsilon>0$ small enough, and consider the level set
$\mathcal V^{\epsilon} = V_{\mathcal Q}^{-1}(m^2/2-\epsilon)$ in $\mathcal Q$. Every orbit of the 
reduced Kuramoto flow $\psi_t$
	passing through the mid potential region $\mathcal V^{\mid}$
must necessarily cross $\mathcal V^{\epsilon}$ for $t$ negative enough. The level set can be only crossed once, 
and contains no fixed point. This defines
a smooth map $A_{\epsilon}: p \rightarrow \psi_t(p)$ from the mid potential region onto $\mathcal V^{\epsilon}$. By parameterizing orbits by the value of $V$, one actually obtains a deformation retract from the mid potential region onto $\mathcal V^{\epsilon}$.

In order to simplify notations, let $I=[d]$ and
assume cube coordinates with $\theta_m=0$. In that system of coordinates,
$p_I=(\pi, \dots, \pi, 0, \dots, 0)$ with $d$ copies of $\pi$ and
$m-d-1$ tailing zeros. The unstable manifold $W^u(p_I)$ is
	the set of all $\{ \theta_1, \dots, \theta_d, 0, \dots, 0\}$ with
	$\theta_i \ne 0 \mod 2\pi$. Its closure is the set $\mathcal Q^I$.
For any $\delta > 0$, let $\mathcal C_{\delta}$ be the set of points of the form
\[
	(\pi, \dots, \pi, \delta y_1, \dots, \delta y_{m-d-1}),\hspace{2em} \mathbf y \in S(E^s),
\]
	where $S(E^s)$ is the unit sphere on the negative eigenspace of the Hessian $H$ , which we identify to a submanifold in quotient space. 
	The set $\mathcal C_{\delta}$ is an
	$m-d-2$-dimensional smooth topological sphere
	with center $p_I$, and `winding around' the subtorus $\mathcal Q^I$. It is not an Euclidean sphere because we are not assuming an orthonormal basis, but it is contained in a $2 \delta$-neighborhood of $p_I$. The sphere $\mathcal C_{\delta}$ cannot be continuosly collapsed to a point in $\mathcal Q \setminus \mathcal Q^I$. Indeed, consider the projection of $\mathcal Q$ onto the last $m-d-1$ coordinates $\theta_{d+1}, \dots, \theta_{m-1}$, minus the point $[0, \dots, 0]$ origin. It is a retraction. If $\mathcal C_{\delta}$ could be collapsed into a point on $\mathcal Q \setminus \mathcal Q^I$ by a continous deformation, then its retract on $\mathbb T^{m-d-1} \setminus \{ [0, \dots, 0]\}$ could also be collapsed to a point. This retract is the geometrical sphere of radius $\delta$ around $[0, \dots, 0]$. Suppose that $H(y,s)$ is a homotopy with $H(y,0)=y$ and $H(y,1)$ constant. Then $H$ lifts to a continuous homotopy in $\mathbb R^{m-d-1} \setminus \{0\}$, which is impossible.
\medskip

	The flow $\psi_t$ deformation retacts $\mathcal C_{\delta}$ into a set $\mathcal D_{\epsilon,\delta}=A_{\epsilon}(\mathcal C_{\delta}) \subset \mathcal V^{\epsilon}$. Hence,
$\mathcal C_{\delta}$ and $\mathcal D_{\delta,\epsilon}$ have the same homotopy class
in the space $\mathcal Q \setminus \mathcal Q^I$. 
As $\delta \rightarrow 0$, $\mathcal D_{\epsilon,\delta}$ accumulates on 
$W^s(p_I) \cap \mathcal V^{\delta}$. This also induces a homotopy equivalence from
$\mathcal D_{\epsilon,\delta}$ to the limit set $\mathcal D_{\epsilon}$ in $\mathcal Q \setminus \mathcal Q^I$. 

	The imprint is $\mathcal D=\lim_{\epsilon \rightarrow 0} \mathcal D_{\epsilon}$. From Corollary~\ref{c.alpha-retracts}, the high potential set $\mathcal V^{\high}$ deformation retracts onto $\mathcal V^{\max}$. This induces a homotopy from $\mathcal D_{\epsilon}$ to $\mathcal D$, and $\mathcal D$ has the homotopy class of a $m-d-2$-dimensional sphere in $\mathcal V^{\max}$.

By construction, all the points in $\mathcal C_{\delta}$ belong to
$\mathcal Q^{[m] \setminus I}$. By the equality principle, this is also
true for points of $\mathcal D_{\delta,\epsilon}$. Passing to the limits,
both $\mathcal D_{\epsilon}$ and $\mathcal D$ are subsets of $\mathcal Q^{[m] \setminus I}$.

Reciprocally, $\mathcal D$ contains all the cells of the form $\theta_1=\dots=\theta_d$ in
	$\mathcal V^{\max}$. There is one $d-1$ cell for each strict ordering of 
	$0 < \theta_j < 2\pi$, $j=d+1, \dots, m$. Indeed, each of those cells contains
	the $\alpha$ limit of the orbits obtained by ordering the variables $y_j$ in the
	same way. For instance, the cell $\cell{0a-b-c-d}$ is obtained as the $\alpha$-limit
	of the points $(\pi,\pi,\delta y_1,\delta y_2,\delta y_3,\delta y_4)$ with $\delta \rightarrow 0$
	and $0<y_1<y_2<y_3<y_4<\pi$.

	Finally, we need to show that the imprint is totally geodesic. This follows from the
	fact that the imprint is fixed by the subgroup $S_{d} \subset S_m$ of permutations of the
	first $d$ variables. Any vector tangent to the imprint must be orthogonal to the
    dz-vector 
    \[
    \begin{pmatrix}
    z , 
    \dots ,
    z ,
    -d ,
    \dots,
    -d
    \end{pmatrix}^T
    \]
    with $z=d+1$, $d$ entries $z$, and $z$ entries $-d$ (See Section \ref{s.eigenstructure}).
    Since the imprint is codimension 1 in $\mathcal V^{\max}$, the tangent space to the imprint is precisely
    the space of vectors tangent to $\mathcal V^{\max}$ that are orthogonal to the dz-vector.

    Any curve $\gamma(t)$ tangent to the imprint
    is also permutation invariant. In particular, if $\gamma(t)$ is a curve in the imprint parameterized
    by arc length, the geodesic vector field $\gamma''(t)$ is permutation invariant. Hence it is tangent to the
    dz-vector and orthogonal to the imprint. Thus, geodesics of $\mathcal V^{\max}$ that are tangent to the imprint
    stay in the imprint.

\end{proof}

\begin{remark*}There is also a cohomological approach for the non-triviality of the imprint that may be more
	intuitive to many readers.
The {\bf winding number} of a closed plane curve $\gamma$ around the origin
is 
\[
	w(\gamma) = \frac{1}{2\pi} \oint_{\gamma} \frac{1}{x^2+y^2} (x\mathrm dy - y \mathrm dx).
\]
The winding number is not defined when the curve crosses the origin. Since 
the form $\frac{1}{x^2+y^2} (x \mathrm dy - y \mathrm dx)$ is closed, the winding number is a topological invariant with values in $\mathbb Z$. 

If $\gamma$ is a closed curve in $\mathbb R^4$, we can define the winding number
around the coordinate space $x_3=x_4=0$ in the same way,
\[
	w(\gamma) = \frac{1}{2\pi} \oint_{\gamma} \frac{1}{x_3^2+x_4^2} (x_3 \mathrm dx_4 - x_4 \mathrm dx_3).
\]
Again, the winding number is not defined when the curve crosses the space $x_3=x_4=0$. It is a topological invariant with values on $\mathbb Z$.

Extending the definition to the quotient set $\mathcal Q = q(\mathbb T^5)$, we can define the winding number around the template (subtorus) 
	$\mathcal Q^{\{3,4,5\}} = \overline{W^u(q(p_{1,2}))}$ as the sum
\[
	w(\gamma) = \sum_{a,b \in \mathbb Z}
	\frac{1}{2\pi} \int_{\eta} \frac{1}{(x_3-2\pi a)^2+(x_4-2\pi b)^2} (x_3 -2 \pi a) \mathrm dx_4 - (x_4 - 2 \pi b) \mathrm dx_3.
\]
In the integral above, $\eta$ is a lifting of the closed curve $\gamma: [0,T] \rightarrow \mathcal Q$ to the universal cover $\mathbb R^4$ of $\mathcal Q$. The curve $\eta$ is not necessarily closed. Because $w$ is biperiodic, if $\gamma_1$ and $\gamma_2$ are homologous,
then $w(\gamma_1) = w(\gamma_2)$.

	Suppose that $m=5$ so $d=2$. Take $\mathcal C_{\delta}$ as circle with center
	$p_{\{1,2\}}$ around $\mathcal Q^{\{3,4,5\}}$. The same limit procedure
	as before will keep $w(\mathcal C_{\delta})=w(\mathcal D_{\epsilon,\delta})=w(\mathcal D_{\epsilon})=w(\mathcal D)$ invariant and equal to one. Therefore, $\mathcal D$ has the homology of a circle and is a circle. 

The very same construction can be generalized to higher dimensions.
The operator $w$ defines a cohomology class, and this argument proves that
the homology of $\mathcal D$ is the homology of a $S^{m-d-2}$-sphere. This is
a slightly weaker result than the homotopy argument.
\end{remark*}

\begin{theorem}\label{t.imprint2}
	If $m=2d+2$ and $\abs{I}=d$,
	then the imprint of $p_I$ is the set 
$\mathcal Q^{[m]\setminus I}\ \cap\ \mathcal V^{\max}$.  The imprint is a $d+2$-pinched $d$-sphere, that is a $d$ sphere
	with $d+2$ distinguished pairs of antipodal points identified.
\end{theorem}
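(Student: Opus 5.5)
I will follow the proof of Theorem~\ref{t.imprint1} step by step, changing only the parity bookkeeping. Take $I=[d]$ and use cube face coordinates with $\theta_m=0$, so that $p_I=(\pi,\dots,\pi,0,\dots,0)$ with $m-d-1=d+1$ trailing zeros. The stable eigenspace $E^s$ at $p_I$ has dimension $z-1=d+1$ by Theorem~\ref{t.eigenstructure}, so the small sphere $\mathcal C_\delta=\{(\pi,\dots,\pi,\delta \mathbf y): \mathbf y\in S(E^s)\}$ is a $d$-sphere linking $\mathcal Q^I$.

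\emph{First inclusion.} The Equality Principle (Theorem~\ref{t.equality}) keeps $\theta_1=\dots=\theta_d$ along orbits. Hence $\mathcal C_\delta$, its image $\mathcal D_{\epsilon,\delta}=A_\epsilon(\mathcal C_\delta)$, and in the limit the imprint $\mathcal D$ all lie in $\mathcal Q^{[m]\setminus I}$. By Corollary~\ref{c.alpha-retracts} the $\alpha$-limit is continuous on $\mathcal V^{\high}$, so $\mathcal D\subset \mathcal Q^{[m]\setminus I}\cap \mathcal V^{\max}$.

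\emph{Reverse inclusion.} I use the cell decomposition of Section~\ref{sec-cell}. The set $\mathcal Q^{[m]\setminus I}\cap\mathcal V^{\max}$ is the union of the cells whose sentence places the $d$ symbols of $I$ in a single word. In that situation the merged block acts as one vector of length $d$ together with $d+2$ unit vectors. As in the last step of Theorem~\ref{t.imprint1}, each strict ordering of the $\mathbf y$-coordinates gives orbits whose $\alpha$-limits fill the corresponding open $d$-cell. Taking closures then gives equality.

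\emph{Topology.} This is the step I expect to be the main obstacle. The set $\mathcal Q^{[m]\setminus I}\cap\mathcal V^{\max}$ is the zero set of the centroid for a length-$d$ vector plus $d+2$ unit vectors. That system is parametrized by the $d$-sphere $S(E^s)$ through the limit map $\mathbf y\mapsto \lim_{\epsilon\to0}\lim_{\delta\to0}A_\epsilon(\ldots)$. The map is injective except where the orbit's $\alpha$-limit lands in $\mathcal V^{\sing}$. Because $m$ is even, the singular points inside $\mathcal Q^{[m]\setminus I}$ are exactly those where the block $I$ together with one extra index $j\notin I$ forms a word of size $d+1=m/2$, and the remaining $d+1$ indices form the other word. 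There are $d+2$ choices of $j$, which gives $d+2$ singular points.

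For each such point I would use Theorem~\ref{t-blow-up} restricted to $\mathcal Q^{[m]\setminus I}$. In the blowup $S_p=S^{d-1}\times S^{d-1}$, the first factor degenerates to a point because the $I$-coordinates are equal. The local picture is then two transversal branches, which correspond to the two antipodal directions $\pm\mathbf y_j$ in $S(E^s)$: these are the directions where $y_j$ is extreme relative to the rest, either first or last in the ordering. The $m=4$ trivial case, with two crossing lines, should be used as the check.

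Concretely, I will show the map from the $d$-sphere is a quotient map that identifies exactly these $d+2$ antipodal pairs and is a homeomorphism elsewhere. Homeomorphism elsewhere follows from continuity of $\alpha$ and from the normal hyperbolic fibration near smooth points used in Corollary~\ref{c.alpha-retracts}. Verifying that each pair is identified to a single point, and that no other points are identified, is where I would spend the effort, by computing the two limiting flat configurations explicitly.
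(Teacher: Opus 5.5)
\paragraph{Verdict.} Your route is the paper's route, but two steps in your topology argument need repair: the local analysis at the singular points, and the claimed injectivity of the limit map.

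\paragraph{What matches the paper.} The paper's proof of this theorem is only the remark that the argument of Theorem~\ref{t.imprint1} goes through, except that some singular points of $\mathcal V^{\max}$ are hit twice. You make this explicit, and correctly. The doubled points are the $d+2$ points $s_j$, where $I\cup\{j\}$ and its complement form two antipodal words of size $m/2$. Their preimages are the two unit vectors $\pm u_j\in S(E^s)$ at which all $y_k$, $k\notin I\cup\{j\}$, are equal.

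\paragraph{First problem: the blowup factor is $S^0$, not a point.} Restricting the blowup of Theorem~\ref{t-blow-up} to $\mathcal Q^{[m]\setminus I}$ does not collapse the first factor to a point. Vectors $(a_1,\dots,a_{d+1})$ with $\sum a_i=0$, fixed norm and equal entries on $I$ are $\pm c\,(1,\dots,1,-d)$. So that factor becomes $S^0$, and the link of $s_j$ in the imprint is $S^0\times S^{d-1}$.
\begin{itemize}
\item This $S^0$ is exactly what produces the pinch. A single point would give one cone over $S^{d-1}$, i.e.\ a manifold point.
\item The sign in $S^0$ records whether $\theta_j$ lies just before or just after the $I$-block. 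That is how the two pieces match neighbourhoods of $u_j$ and $-u_j$.
\item The two pieces are cones, not transversal smooth sheets. For $d=1$, the two smooth curves crossing at $s_j$ are \emph{not} the two pieces coming from $\pm u_j$.
\end{itemize}

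\paragraph{Second problem: injectivity is not justified.} Let $\Phi\colon S(E^s)\to\mathcal V^{\max}$ send $\mathbf y$ to the $\alpha$-limit of the orbit of $W^s(p_I)$ tangent to $\mathbf y$. You claim $\Phi$ is injective off $\{\pm u_j\}$, but this does not follow from continuity of $\alpha$ and the disk-bundle structure near smooth points. Those facts give continuity of $\Phi$ and that each fibre $W^u(v)$ is a $2$-disk. They do not give that $W^u(v)$ contains only one orbit of $W^s(p_I)$.

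The equality principle also does not give $\Phi^{-1}(s_j)=\{\pm u_j\}$, because unequal angles can coalesce in the backward limit at a singular point. For example, take $m=4$, $I=\{1\}$. The two stable branches of $p_{\{1\}}$ in the invariant torus $\theta_2=\theta_3$ keep $\theta_1\neq\theta_4$, yet both come from $\cell{0c-ab}$.

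\paragraph{A repair.} Use $\Phi$ only for the set equality, and read off the topology of $\mathcal Q^{[m]\setminus I}\cap\mathcal V^{\max}$ from its cells.
\begin{itemize}
\item A face $(S_1|\dots|S_r)$ of the braid arrangement $y_k=y_l$ on $S(E^s)\cong S^d$ corresponds to the cell whose sentence is the $I$-word followed by $S_1,\dots,S_r$. Both have dimension $r-2$.
\item The only faces that collapse are those containing a word of size $m/2$. These are the vertices $(\{j\}|\text{rest})$ and $(\text{rest}|\{j\})$, i.e.\ $\pm u_j$, and both go to $s_j$.
\item Merging the $I$-word with a neighbouring word produces only these same points $s_j$.
\item No closed chamber contains both $u_j$ and $-u_j$, and closed cells are balls (Theorem~\ref{t.cw}).
\end{itemize}
Together these give a cellular homeomorphism from $S^d/(u_j\sim -u_j)$ onto the imprint. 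The paper does not carry out this step either.
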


\begin{proof}
	The proof is exactly the same as the proof of Theorem~\ref{t.imprint1} with a 
	minor difference: some of the singularities of $\mathcal V^{\max}$ appear twice in the imprint.
	For instance, if $d=1$, the imprint for $I=\{1\}$ is all of $\mathcal V^{\max}$ and contains 3 
	double points $\cell{0a-bc}$, $\cell{0b-ac}$ and $\cell{0c-ab}$.
\end{proof}

\begin{theorem}\label{t.imprint3}
	Let $\abs{I}=1$, then the imprint of $p_I$ is all of $\mathcal V^{\max}$. 
	In general, the imprint of $p_I$ is the set $\mathcal Q^{[m]\setminus I}\ \cap\ \mathcal V^{\max}$. 
\end{theorem}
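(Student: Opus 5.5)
The plan is to prove the general statement, imprint of $p_I$ equals $\mathcal Q^{[m]\setminus I}\cap\mathcal V^{\max}$, by two inclusions, mirroring the proof of Theorem~\ref{t.imprint1}. The case $\abs{I}=1$ then follows by observing that for $I=\{i\}$ the set $[m]\setminus I$ has $m-1$ elements. Hence $\mathcal Q^{[m]\setminus I}=\mathcal Q$, as noted before Theorem~\ref{t.imprint1}, and the imprint is all of $\mathcal V^{\max}$.

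\emph{Inclusion ``$\subseteq$''.} After permuting variables we take $I=[k]$, so $p_I$ has exemplar $(\pi,\dots,\pi,0,\dots,0)$. By Theorem~\ref{t.eigenstructure}, the stable eigenspace at the exemplar consists of vectors with a head of $k$ zeros. Hence $W^s(p_I)$ is tangent there to the set where $\theta_1=\dots=\theta_k$. We show $W^s(p_I)\subset \mathcal Q^{[m]\setminus I}$ as follows. The set $\mathcal Q^{[m]\setminus I}=\{\theta_1=\dots=\theta_k\}$ is $\psi$-invariant by the Equality Principle (Theorem~\ref{t.equality}). Inside it, $p_I$ has all its stable directions, since its dimension count matches $\dim W^s(p_I)=m-1-k$. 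By uniqueness of the local stable manifold, $W^s_{\mathrm{loc}}(p_I)$ lies in this invariant submanifold, and flowing backward keeps it there. Since $\mathcal Q^{[m]\setminus I}$ is closed and invariant, the $\alpha$-limits of points of $W^s(p_I)\setminus\{p_I\}$ stay in it. These limits lie in $\mathcal V^{\max}$: the $\alpha$-limit exists by Theorem~\ref{t.limits}, and the only critical values above $V(p_I)$ that are reachable inside the invariant set, other than saddles $p_J$, are on $\mathcal V^{\max}$. To rule out $\alpha(\Theta)=p_J$, we use that the equilibria $p_J$ lying in $\{\theta_1=\dots=\theta_k\}$ with $V(p_J)>V(p_I)$ would require a saddle connection. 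Such connections we exclude by counting: inside the invariant torus, $p_I$ has index $0$ (it is a sink there), and a generic orbit in its basin comes from the source set. The closure of the set of $\alpha$-limits remains in the closed set $\mathcal Q^{[m]\setminus I}\cap\mathcal V^{\max}$.

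\emph{Inclusion ``$\supseteq$''.} We run the cell argument of Theorem~\ref{t.imprint1}. Take a small sphere $\mathcal C_\delta$ around $p_I$ in the stable directions, i.e.\ points $(\pi,\dots,\pi,\delta y_{k+1},\dots,\delta y_m)$. We claim every top cell of $\mathcal Q^{[m]\setminus I}\cap\mathcal V^{\max}$, indexed by a strict ordering of the free angles $\theta_{k+1},\dots,\theta_m$ around the word $0\,a\dots$ of length $k$, contains $\alpha$-limits of points of $W^s(p_I)$ as $\delta\to0$ with the $y_j$ in the same order. Inside the invariant torus $\{\theta_1=\dots=\theta_k\}$, the equilibrium $p_I$ is an attractor. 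Its basin of attraction there is open, and we show that its backward limits fill the top-dimensional part of $\mathcal V^{\max}\cap\mathcal Q^{[m]\setminus I}$. The argument is that this basin's boundary consists of stable manifolds of lower-potential saddles $p_J$ with $J\supsetneq I$ restricted to the torus, together with the equality hyperplanes. By the Equality Principle the cyclic ordering of the angles is preserved along orbits, so each ordering-cell of the basin maps under $\alpha$ into the corresponding cell of $\mathcal V^{\max}$. Continuity of $\alpha$ (Corollary~\ref{c.alpha-retracts}) and a degree/homotopy argument as in Theorem~\ref{t.imprint1} then show this map is onto the cell. Taking the closure recovers the lower-dimensional cells.

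\emph{Main obstacle.} The hard step is this surjectivity onto each cell when $\abs{I}<d$. The basin of $p_I$ inside the invariant torus is then not all of it, because other saddles of higher potential are present. So one must check that points of $\mathcal V^{\max}\cap\mathcal Q^{[m]\setminus I}$ near which other saddles' stable manifolds do not accumulate are reached. I would first try to show that the complement of the basin in the torus has positive codimension, consisting of stable manifolds of saddles. Its $\alpha$-images then have empty interior in the cell, and density plus closedness of the imprint give equality.
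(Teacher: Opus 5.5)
There is a genuine gap, and it starts from a wrong local picture at $p_I$. With $I=[k]$, the invariant torus $\mathcal Q^{[m]\setminus I}=\{\theta_1=\dots=\theta_k\}$ has dimension $m-k$, which is one more than $\dim W^s(p_I)=m-1-k$. The $dz$-vector of Theorem~\ref{t.eigenstructure} is constant on $I$ and has eigenvalue $m$, so it is an unstable direction tangent to this torus. Hence $p_I$ has index $1$ inside the torus, not $0$; for $\abs{I}=1$ the torus is all of $\mathcal Q$. Your inclusion $W^s(p_I)\subset\mathcal Q^{[m]\setminus I}$ still holds, because every stable eigenvector vanishes on $I$ and is therefore tangent to the invariant torus. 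But $W^s(p_I)$ is a codimension-one separatrix there, and generic points of the torus flow to the sink $p_\emptyset$. Your plan for ``$\supseteq$'' (an open basin whose complement has positive codimension, then density plus closedness) is therefore inverted and cannot be carried out.

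Second, saddle connections into $p_I$ cannot be excluded. Theorem~\ref{t.QI}(3), applied inside $\mathcal Q^{I\cup\{j\}}$, gives orbits in $W^s(p_I)\setminus\{p_I\}$ with $\alpha$-limit $p_{I\cup\{j\}}\notin\mathcal V^{\max}$ whenever $\abs{I}+1<m/2$. These saddles have higher, not lower, potential than $p_I$. So for ``$\subseteq$'' you should read the imprint as its part on $\mathcal V^{\max}$, as the Conclusions do. The inclusion then follows immediately from $W^s(p_I)\subset\mathcal Q^{[m]\setminus I}$ and closedness.

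More importantly, these connections are the missing idea for ``$\supseteq$''. Viewed on the small sphere of stable directions at $p_I$, the $\alpha$-limit is discontinuous along the directions lying in $W^u(p_J)$, $J\supsetneq I$. So Corollary~\ref{c.alpha-retracts} does not give you a continuous map on which to run a degree argument. The paper handles this as follows:
\begin{itemize}
\item Backward orbits near such a direction pass close to $p_{I\cup\{j\}}$ and then follow $W^s(p_{I\cup\{j\}})$. Hence the imprint of $p_I$ contains the imprint of $p_{I\cup\{j\}}$.
\item Downward induction on $\abs{I}$, starting from Theorems~\ref{t.imprint1} and~\ref{t.imprint2}, then supplies the faces $\theta_1=\dots=\theta_k=\theta_j$ of each ordering cell.
\item The open ordering cells of the stable sphere land in the corresponding top cells of $\mathcal V^{\max}\cap\mathcal Q^{[m]\setminus I}$.
\end{itemize}
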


\begin{figure}
	\centerline{\resizebox{0.5\textwidth}{!}{\includegraphics{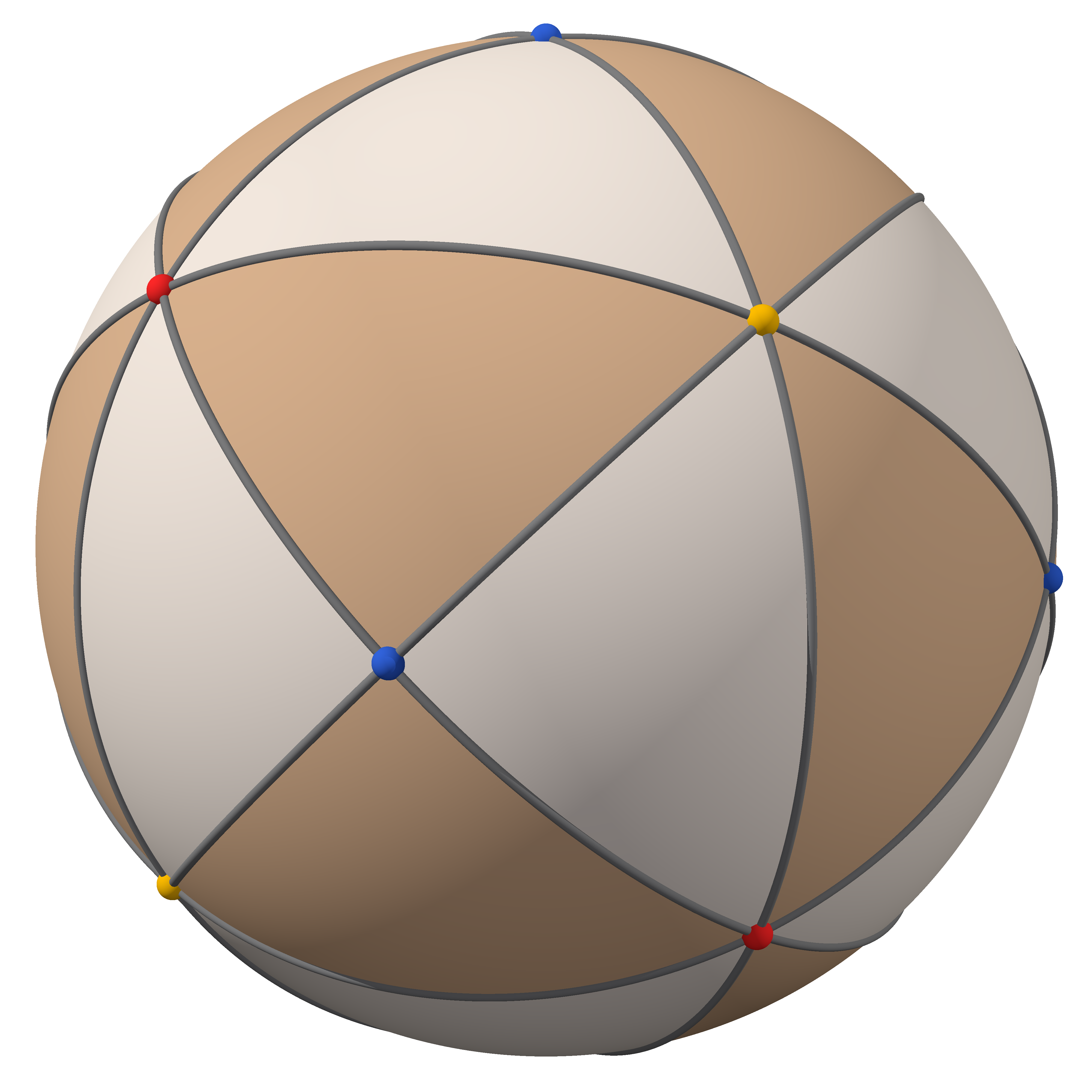}}}
	\caption{The sphere $S(E^s)$ for $m=5$, each triangle corresponds to a possible ordering of the
	$y_j$. Red and yellow dots correspond to directions $\pm \mathrm{f}_i$ in tangent space, where
	$\mathrm{f_i}$ is the projection onto $T\mathcal Q$ of the canonical basis vector $\mathrm e_i$.
	Blue dots represent orbits with two pairs of equal coordinates. Those orbits come directly from
	$\mathcal V^{\max}$.
	Figure by \ocite{Piesk}.
        \label{beachball}}
\end{figure}

\begin{proof}
We will only prove the case $m=5$ and explain how to generalize for larger values of $m$.
	Assume without loss of generality that $I=\{1\}$. Let $S(E^s)$ be the unit sphere in 
the negative eigenspace of the Hessian $H$ at $p_I$, write
\[
	S_I=\{	q(\pi, \delta y_1, \dots, \delta y_{4})),\hspace{2em} \mathbf \mathbf y \in S(E^s)\}.
\]
	(See figure \ref{beachball}).
	An orthogonal basis for $E^s$ was provided by Theorem~\ref{t.eigenstructure}. One can show by induction that
	all possible strict orderings for the variables $y_j$ appear in $S(E^s)$.
	One can decompose $S_I$ as follows: there are $24$ two-dimensional cells corresponding to the
strict orderings. The two-cells are separated by 12 circles of the form $y_i=y_j$. Circles may intersect
in straight angle, for instance $y_1=y_2$ and $y_3=y_4$. They may also intersect at angle of $\pi/3$ in a triple intersection,
	for instance $y_2=y_3=y_4$. 
	Triple intersections are special. They represent points in $S_I$ that belong to the unstable manifold of an
	index $2$ cell, in the previous example $(\pi,\pi,0,0,0)$. 

	A typical cell is $y_1<y_2<y_3<y_4$. Its border is the union of the arcs
	$y_1=y_2<y_3<y_4$, $y_1 < y_2 = y_3 < y_4$ and $y_1<y_2<y_3 \le y_4$. There is one straight angle intersection,
	$y_1=y_2 < y_3=y_4$ and two intersections of angle $pi/3$, viz. 
	$y_1< y_2=y_3=y_4$ and $y_1=y_2=y_3<y_4$ . 
	The $\alpha$-limit maps the two-dimensional and one-dimensional cells into respectively,
	$\cell{0-a-b-c-d}$, $\cell{0-ab-c-d}$, $\cell{0-a-bc-d}$ and $\cell{0-a-b-cd}$. The straight angle intersection
	in mapped onto $\cell{0-ab-cd}$. Orbits infinitesimally close to the intersections of angle $\pi/3$ pass arbitrarily
	close to the saddles $[\pi,\pi,0,0,0]$ and $[\pi,0,0,0,\pi]$ so their imprints accumulate into the imprints of 
	those saddles.  

	We claim that the cells $\cell{0d-a-b-c}$ and $\cell{0a-b-c-d}$ are actually contained in the imprint of $p_I$.
	To see this, consider an infinitesimal circle around $[\pi,\pi,0,0,0]$. It accumulates onto an infinitesimal
	circle on the stable eigenspace of $p_{\{1,2\}}$. Therefore, the imprint of $p_{\{1,2\}}$ is contained
	in the imprint of $p_{\{1\}}$. The same hold for the imprint of $p_{\{1,5\}}$.
	Since the same is true for all orderings of the $y_i$, the imprint of $p_i$ is all of $\mathcal V^{\max}$.

	For the general situation, one proceeds by induction on $d-|I|$ where $m=2d+1$ or $m=2d+2$. The initial case
	was done in Theorems~\ref{t.imprint1} and \ref{t.imprint2}. 
	For the induction step, it is sufficient to establish that the imprint of $p_{[k]}$ contains the imprint of $p_{[k+1]}$.
	As before, let $E^s$ be the negative Eigenspace of $H$ at $p_{[k]}$, and consider the $m-k-1$-infinitesimal sphere
	\[
		S_{[k]}=\{ q(\pi, \dots, \pi, \delta y_{m-k+1}, \dots, \delta y_m), y \in S(E^s)\}
	\]
	The restriction $y_{m-k-2}=\dots=y_{m}$ defines a pair of points in $S_{[k]}$, both in $W^u(p_{[k+1]})$.
	Let $q$ be one of those points. The backward orbits of an infinitesimal $m-k-2$-sphere around $q$
	pass infinitesimally close to the infinitesimal sphere $S{[k+1]}$ around $p_{[k+1]}$. Therefore,
	those orbits accumulate onto the imprint of $p_{[k+1]}$.
\end{proof}

\begin{figure}
\centerline{\includegraphics[width=\textwidth]{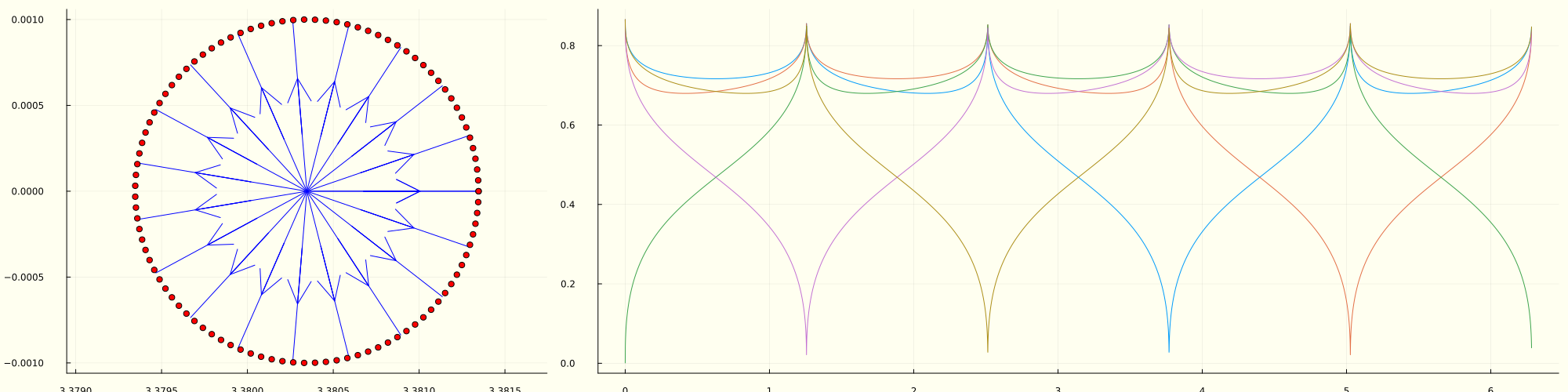}}
	\centerline{\includegraphics[width=\textwidth]{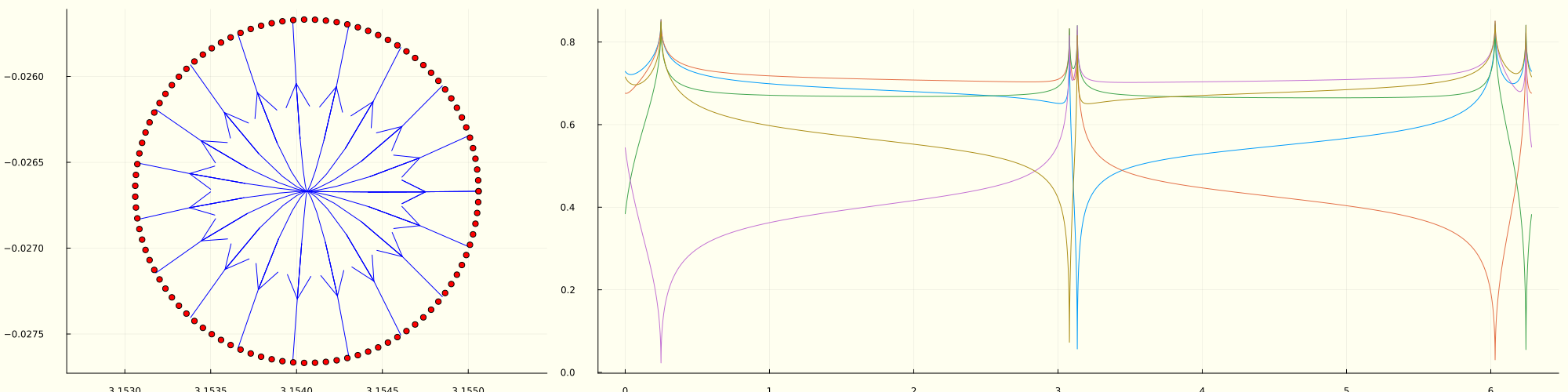}}

	\caption{Top left: orbits of the (quotient) Kuramoto flow crossing a circle of radius $1/100$ around the point
	$\left [0,\frac{2\pi}{5}, \frac{4\pi}{5},\frac{6\pi}{5},\frac{8\pi}{5}\right] \in \mathcal V^{\max}$.
	The circle is normal to $\mathcal V^{\max}$ and the orbits are linear, because the two eigenvalues of the Hessian $H$
	are the same. Top right: the intersection of the orbit of the circle with the 
	surface $V=m-2$ was computed numerically. The plots show the Euclidean distance to each saddle of index $1$. 
	Bottom left and right: same experiment
	with a generic (random) point in $\mathcal V^{\max}$. Orbits leaving $\mathcal V^{\max}$ are tangent to one of the
	eigenvectors of the Hessian $H$, most of them to the same eigenvector. Orbits connectiong $\mathcal V^{\max}$
	to different index $1$ saddle may be tangent for $t \rightarrow -\infty$.
	\label{fig-Vmax-saddle}}
\end{figure}

	One can also ask about the geometry of the orbits of the Kuramoto flow near $\mathcal V^{\max}$.
	Figure~\ref{fig-Vmax-saddle} pictures orbits in a normal slice around a couple of points in $\mathcal V^{\max}$. Most of
	those orbits go to the sink, and some of them are connected to the
	index $1$ saddles. The evidence suggests that
	the orbit of a typical infinitesimal circle around $\mathcal V^{\max}$ accumulates on a circuit 
	connecting the saddles $p_{\{i\}}$, $i=1, \dots, m$, through their respective templates. Up to sign, this circuit
	is homologous in the skeleton $K$ to the concatenation of the paths $t \mapsto 2 \pi t \mathrm f_i$,
	$t\in [0,1]$. Here, {\em typical}
	means that the center of the circle belongs to one of the open $m-3$-dimensional cells. The ordering in the
	concatenation depends on which cell and on the orientation of the circle. In this case, connections with
	index $\ge 2$ saddles are precluded. But when crossing boundaries of $m-3$-dimensional cells, connections
	to index $\ge 2$ saddles do occur as described in Theorems~\ref{t.imprint1} and \ref{t.imprint2}.

\section{Conclusions}
\label{s.conclusions}

In the present paper we have given a complete topological description of the dynamics of the simplified
Kuramoto model, viz.
\[
	\dot \theta_i = \sum_{j=1}^m \sin(\theta_i - \theta_j)
.
\]
For low values of the potential, the dynamics can be described by the structure of {\bf templates}. Those templates
are linear subtori, and they arise as the unstable manifolds of the saddles in quotient space. Inside each template,
the Kuramoto vector field is topologically conjugated to the gradient field of a Morse function.

The dynamics is trivial for intermediate values of the potential, and contains a source $\mathcal V^{\max}$ which is a possibly singular compact surface of codimension 2. 
In the case $m=5$, $\mathcal V^{\max}$ is a smooth two-dimensional surface of genus 4.
A general cell decomposition for $\mathcal V^{\max}$ was obtained for general $m$, and its homology was computed.
We obtained also a description of the {\bf imprints} of the saddles on $\mathcal V^{\max}$, that is the
intersection of $\mathcal V^{\max}$ with the closure of the stable manifold of the saddle.

The isolated singularities of the Kuramoto vector field on quotient space are finite and hyperbolic. There is only one
surface of non-isolated singularities that is $\mathcal V^{\max}$, which is normally hyperbolic. We claim
that most of our conclusions are valid under small perturbations of the model. However, we should restrict the
perturbations to those that are {\bf diagonally invariant}, so they induce a flow in quotient space $\mathcal Q$.
For instance, a more general formulation of Kuramoto's model is
\[
	\dot \theta_i = \omega_i - \sum_{j=1}^m a_{ij} \sin(\theta_i - \theta_j)
.
\]
A trivial change of coordinates reduces this system to the case $\sum_i \omega_i=0$. For coefficients $|\omega_i|$ small enough
and $|a_{ij}-1|$ small enough, this is a perturbation of the simplified Kuramoto model studied in this paper. Recall that $\psi_t$ is the (unperturbed) Kuramoto flow $K$ in $\mathcal Q$ for $\omega=0$ and $a_{ij}=1$. 

\begin{theorem}\label{t.robustness}
There is a $\mathcal C^2$-neighborhood of the Kuramoto vector field on $\mathcal Q$ so that,
for any $\tilde K$ in that neighborhood, the flow $\tilde \psi_t$ of $\tilde K$ is topologically
conjugate to $\psi_t$ on $\mathcal V \setminus \mathcal V^{\high}$, in the sense that there is a
homeomorphism $g: \mathcal Q \setminus \mathcal V^{\high} \rightarrow \mathcal Q \setminus \mathcal V^{\high}$
and a reparameterization $\gamma: \mathbb R \times  \mathcal Q \setminus \mathcal V^{\high}  \rightarrow \mathbb R$,
strictly increasing in the first parameter,
so that for all $t \ge 0$,
\begin{equation}\label{conj}
\begin{CD}
\mathcal Q \setminus \mathcal V^{\high}
 @>\text{\normalsize
$\qquad 
\psi_t(\Theta)\qquad$}>> 
\mathcal Q \setminus \mathcal V^{\high}
\\
@V\text{\normalsize$
g
$}VV @VV\text{\normalsize$
g
$}V
\\
\mathcal Q \setminus \mathcal V^{\high} @>\text{\normalsize$\qquad 
{\tilde\psi} _{\gamma(t,\Theta)}(\Theta)  
\qquad$}>>
\mathcal Q \setminus \mathcal V^{\high}
\end{CD}
\end{equation}

\end{theorem}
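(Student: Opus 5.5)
The region $\mathcal Q\setminus\mathcal V^{\high}=\{V_{\mathcal Q}<m^2/2-\epsilon\}$ is handled by a standard structural-stability argument for a Morse--Smale flow on a compact manifold with boundary, where the flow enters transversally through the boundary (backward-exiting). First I fix the unperturbed picture. By Theorems~\ref{t.exemplar} and \ref{t.eigenstructure}, the fixed points of $\psi$ in this region are the finitely many hyperbolic points $p_I$, $\abs{I}<m/2$. Since $\psi$ is a gradient flow, there are no periodic orbits. The boundary $\partial\mathcal V^{\high}$ is a regular level set of $V_{\mathcal Q}$ (there are no critical values in the mid regime), and $K$ is transverse to it, pointing inward into $\mathcal Q\setminus\mathcal V^{\high}$. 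Next I need transversality of $W^u(p_J)$ and $W^s(p_I)$. By Theorem~\ref{t.QI}, $W^u(p_J)=\mathring{\mathcal Q}^J$, and the transversality inside each template was proved in the proof of Theorem~\ref{t.equivalence}. I upgrade it to all of $\mathcal Q$ by the same argument: intersections only occur when $I\subset J$, and near $p_I$ the tangent space of $\mathcal Q^J$ together with the full stable eigenspace of $p_I$ in $T_{p_I}\mathcal Q$ spans everything, because $E^u(p_I)\subset T\mathcal Q^J$. Then I push this forward along the flow using $\lambda$-lemma type continuity of tangent spaces. This makes $\psi$ restricted to the compact region $N=\mathcal Q\setminus\mathrm{int}\,\mathcal V^{\high}$ a Morse--Smale flow with inflowing boundary.

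Second, the perturbation. For $\tilde K$ $\mathcal C^1$-close, hence certainly $\mathcal C^2$-close, to $K$, several things persist. Transversality to the level set $V_{\mathcal Q}=m^2/2-\epsilon$ persists since that boundary is compact. The hyperbolic fixed points persist as $\tilde p_I$ with the same indices, and no new fixed points appear, since $\|K\|$ is bounded below away from small neighborhoods of the $p_I$. The absence of periodic orbits persists: $V_{\mathcal Q}$ is a Lyapunov function for $\tilde K$ outside small neighborhoods of the $p_I$, and near each $p_I$ a local Lyapunov function exists by hyperbolicity, so gluing these gives a global Lyapunov function for $\tilde K$. Stable and unstable manifolds vary continuously on compact pieces, so their transversality also persists. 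Hence $\tilde K$ is again Morse--Smale on $N$ with inflowing boundary.

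Third, the conjugacy. I invoke the structural stability of Morse--Smale flows (Palis--Smale, Robinson \cite{Robinson74}), in the version for manifolds with boundary where the vector field is transverse to the boundary. If needed, I reduce to the closed case by doubling $N$ along $\partial N$ with the reversed field, or by capping off with a repeller, so that the boundary becomes a transverse section. This yields a homeomorphism $g$ sending $\psi$-orbits to $\tilde\psi$-orbits preserving time orientation, which is exactly the diagram with a reparametrization $\gamma$ increasing in $t$. I restrict to $t\ge0$ because forward orbits stay in $N$. Finally, I adjust $g$ on $\partial N$ so that it maps the boundary level set to itself. This can be done by flowing along $\tilde\psi$ from the entrance set, since both flows cross $\partial N$ exactly once.

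The main obstacle is the second step, specifically global transversality of $W^s(p_I)$ with $\mathring{\mathcal Q}^J$ in the full quotient rather than inside a template. The paper's argument in Theorem~\ref{t.equivalence} is only sketched, so I would redo it carefully with the $\lambda$-lemma at $p_I$. The boundary version of structural stability is a secondary technical point, which I would handle by the doubling trick.
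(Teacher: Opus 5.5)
\textbf{Main line.} Your main line is the paper's own proof. The paper settles this theorem in one sentence, as a direct consequence of Robinson's structural stability theorem for flows on manifolds with boundary \cite{Robinson80}. Your Steps 1--2 supply the hypothesis checks the paper leaves implicit. Your global transversality argument is correct, and simpler than you fear:
\begin{itemize}
\item $\mathcal Q^J$ is a flat linear subtorus, so $T_r\mathring{\mathcal Q}^J$ is the same subspace at every $r$.
\item $T_rW^s(p_I)\to E^s(p_I)$ as $r\to p_I$ in $W^s_{\mathrm{loc}}(p_I)$, and spanning is an open condition.
\item Both manifolds are $\psi$-invariant, so transversality at $\psi_t(r)$ for large $t$ gives it at $r$.
\end{itemize}
No $\lambda$-lemma is needed.

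There are two small slips. First, the implication runs the other way: $\mathcal C^2$-close implies $\mathcal C^1$-close, and that is all you use. Second, checking that $\tilde K$ is itself Morse--Smale is unnecessary, since structural stability of $K$ already gives the equivalence for every nearby $\tilde K$.

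\textbf{The doubling step fails.} This is the step you name as your way of handling the boundary.
\begin{itemize}
\item Take the plain double $N\cup_{\partial N}\bar N$, with $K$ on $N$ and the reflected $-K$ on $\bar N$. The mirror point $\bar p_I$ has index $m-1-\abs{I}$, and its unstable manifold is the mirror image of $W^s(p_I)$.
\item This unstable manifold reaches $\partial N$ exactly along the trace $W^s(p_I)\cap\partial N$. The trace is nonempty, because generic points of $W^s(p_I)$ have $\alpha$-limit in $\mathcal V^{\max}$.
\item Past $\partial N$ it continues into $N$ as $W^s(p_I)$ itself. So $W^u(\bar p_I)$ and $W^s(p_I)$, both of dimension $m-1-\abs{I}<m-1$, coincide on an open set whenever $\abs{I}\ge 1$.
\end{itemize}
That is a non-transverse saddle connection, so the double is not Morse--Smale and the closed-manifold theorem cannot be applied to it.

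There is a second problem with doubling the perturbed field. The reflected, reversed copy of $\tilde K$ has tangential component $-\tilde K_\parallel$ along $\partial N$, while $\tilde K$ has $+\tilde K_\parallel$. The glued field is therefore discontinuous. Only the gradient $K$ is exactly normal to the level set.

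\textbf{Fix.} You have two options.
\begin{itemize}
\item Cite the boundary version directly, as the paper does.
\item Close up generically. Either glue by a suitably chosen diffeomorphism of $\partial N$, or cap $\mathcal V^{\high}$ with a generic Morse--Smale repelling field. Choose it so that, on $\partial N$, the traces of the unstable manifolds coming from the added side are transverse to the traces $W^s(p_J)\cap\partial N$. Then extend $\tilde K$ by interpolating only on the added side, so it is unchanged on $N$.
\end{itemize}
With the second option, your final step of sliding $g$ along orbits near $\partial N$ is still required, and it works as you describe.
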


Theorem~\ref{t.robustness} is a direct consequence of the theorem on structural stability for flows on manifolds with boundary, due to \ocite{Robinson80}*{Theorem C}. 

Therefore, our conclusions
hold, up to topological conjugacy, for perturbations of the Kuramoto model. In particular, they hold for small
perturbations of the parameters $\omega_i \simeq 0$ and $a_{ij} \simeq 1$. 
The only exception is
$\mathcal V^{\max}$: normal hyperbolicity guarantees an invariant set close to $\mathcal V^{\max}$, but the perturbation
may introduce nontrivial dynamics inside that set.

\renewcommand{\MR}[1]{}


\begin{bibsection}

\begin{biblist}
\bib{DorflerBullo}{article}{
   author={D\"orfler, Florian},
   author={Bullo, Francesco},
   title={Synchronization in complex networks of phase oscillators: a
   survey},
   journal={Automatica J. IFAC},
   volume={50},
   date={2014},
   number={6},
   pages={1539--1564},
   issn={0005-1098},
   review={\MR{3214901}},
   doi={10.1016/j.automatica.2014.04.012},
}
\bib{ElderingKvalheimRevzen}{article}{
   author={Eldering, Jaap},
   author={Kvalheim, Matthew},
   author={Revzen, Shai},
   title={Global linearization and fiber bundle structure of invariant manifolds},
   journal={Nonlinearity},
   volume={31},
   date={2018},
   number={9},
   pages={4202--4245},
   issn={0951-7715},
   review={\MR{3841342}},
   doi={10.1088/1361-6544/aaca8d},
}

\bib{Hatcher}{book}{
   author={Hatcher, Allen},
   title={Algebraic topology},
   publisher={Cambridge University Press, Cambridge},
   date={2002},
   pages={xii+544},
   isbn={0-521-79160-X},
   isbn={0-521-79540-0},
   review={\MR{1867354}},
}
\bib{HirschPughShub}{book}{
   author={Hirsch, M. W.},
   author={Pugh, C. C.},
   author={Shub, M.},
   title={Invariant manifolds},
   series={Lecture Notes in Mathematics},
   volume={Vol. 583},
   publisher={Springer-Verlag, Berlin-New York},
   date={1977},
   pages={ii+149},
   review={\MR{0501173}},
}

	\bib{IzhikevichKuramoto}{article}{
   author={Izhikevich, E.M.},
   author={Kuramoto, Y.},
   title={Weakly coupled oscillators},
   booktitle={Encyclopedia of Matthematical Physics: Five-Volume Set},
   year={2006},
   pages={V448--453},
	doi={10.1016/B0-12-512666-2/00106-1},
   }


\bib{Kuramoto75}{article}{
   author={Kuramoto, Yoshiki},
   title={Self-entrainment of a population of coupled non-linear
   oscillators},
   conference={
      title={International Symposium on Mathematical Problems in Theoretical
      Physics},
      address={Kyoto Univ., Kyoto},
      date={1975},
   },
   book={
      series={Lecture Notes in Phys.},
      volume={39},
      publisher={Springer, Berlin-New York},
   },
   isbn={3-540-07174-1},
   date={1975},
   pages={420--422},
   review={\MR{0676492}},
}
\bib{Kuramoto}{book}{
   author={Kuramoto, Y.},
   title={Chemical oscillations, waves, and turbulence},
   series={Springer Series in Synergetics},
   volume={19},
   publisher={Springer-Verlag, Berlin},
   date={1984},
   pages={viii+156},
   isbn={3-540-13322-4},
   review={\MR{0762432}},
   doi={10.1007/978-3-642-69689-3},
}

\bib{Kuramoto2026}{misc}{
      title={Half a century of the theory of synchronization}, 
      author={Kuramoto,Yoshiki},
      year={2026},
      note={Preprint, arXiv, \url{https://arxiv.org/abs/2602.20505 }},
      date={2026},
}

\bib{Piesk}{misc}{
	author={Tilman Piesk},
	note={Disdyakis polyhedra, figure licensed under Common Contents CC BY-SA 4.0,
	and downloaded on Sept 24, 2025 from
	\url{https://commons.wikimedia.org/wiki/File:Disdyakis_6.png}},
	date={2020}
}
\bib{Robinson74}{article}{
   author={Robinson, Clark},
   title={Structural stability of vector fields},
   journal={Ann. of Math. (2)},
   volume={99},
   date={1974},
   pages={154--175},
   issn={0003-486X},
   review={\MR{0334283}},
   doi={10.2307/1971016},
}
\bib{Robinson80}{article}{
   author={Robinson, Clark},
   title={Structural stability on manifolds with boundary},
   journal={J. Differential Equations},
   volume={37},
   date={1980},
   number={1},
   pages={1--11},
   issn={0022-0396},
   review={\MR{0583334}},
   doi={10.1016/0022-0396(80)90083-2},
}
\bib{Smale1960}{article}{
   author={Smale, Stephen},
   title={Morse inequalities for a dynamical system},
   journal={Bull. Amer. Math. Soc.},
   volume={66},
   date={1960},
   pages={43--49},
   issn={0002-9904},
   review={\MR{0117745}},
   doi={10.1090/S0002-9904-1960-10386-2},
}
\bib{Strogatz}{article}{
   author={Strogatz, Steven H.},
   title={From Kuramoto to Crawford: exploring the onset of synchronization
   in populations of coupled oscillators},
   journal={Phys. D},
   volume={143},
   date={2000},
   number={1-4},
   pages={1--20},
   issn={0167-2789},
   review={\MR{1783382}},
   doi={10.1016/S0167-2789(00)00094-4},
}


\end{biblist}
\end{bibsection}
\end{document}